\documentclass[a4paper,11pt]{amsart}
\usepackage{amsmath}  
\usepackage{amssymb}
\usepackage{amsthm}  
\usepackage{amscd}  

\theoremstyle{plain}
\newtheorem{thm}{Theorem}[section]
\newtheorem{prop}[thm]{Proposition}
\newtheorem{lemma}[thm]{Lemma}
\newtheorem{cor}[thm]{Corollary}
\newtheorem*{main}{Main Theorem 1}

\theoremstyle{definition}
\newtheorem{defn}[thm]{Definition}

\theoremstyle{remark}
\newtheorem*{rem}{Remark}
\newtheorem*{exam}{Example}

\numberwithin{equation}{section}

\pagestyle{plain}

\title[Harmonic map]
{Harmonic maps into Grassmannian manifolds}
\author{Yasuyuki Nagatomo}
\address{Department of Mathematics, MEIJI UNIVERSITY, 
Higashi-Mita, Tama-ku, Kawasaki-shi, Kanagawa 214-8571, JAPAN} 

\email{yasunaga@meiji.ac.jp}

\subjclass{32H02, 53C07}
\date{}

\begin{document}

\begin{abstract}

A harmonic map from a Riemannian manifold into a Grassmannian manifold
is characterized by a vector bundle, a space of sections of this bundle 
and a Laplace operator. 
We apply our main theorem (itself a generalization of a theorem of
Takahashi \cite{TTaka})
to  generalize the theory of do Carmo and Wallach \cite{DoC-Wal} and to describe the
moduli space of harmonic maps satisfying the gauge and the
Einstein--Hermitian
conditions from a compact Riemannian manifold into a Grassmannian.
As an application, several rigidity results are exhibited. 
In particular
we generalize a rigidity theorem due to Calabi \cite{Cal} in the case of
holomorphic isometric immersions of compact K\"ahler manifolds into complex 
projective spaces. 
Finally, we also construct moduli spaces of holomorphic
isometric embeddings of the complex projective line into complex quadrics
of low degree.

\end{abstract}

\maketitle

\section{Introduction}

What this paper attempts to do is to bring ideas from the gauge theory of vector bundles 
into the study of harmonic maps. 

One of our main purposes is to generalize the
Theorem of 
Takahashi \cite{TTaka}. 
Let $f:M\to S^{N-1}$ be a smooth map from a Riemannian manifold $(M,g)$ 
into the standard sphere $S^{N-1}$. 
The standard sphere can be considered as a unit sphere  
of a Euclidean space $\mathbf R^N$. 
If we fix an orthonormal basis 
$e_1,\cdots,e_N$ 
of $\mathbf R^N$ and 
the associated coordinates are denoted by 
$(x_1, \cdots, x_N)$, 
then each coordinate function 
$x_A$ ($A=1,\cdots,N$) can be regarded as a function on $S^{N-1}$ 
by restriction. 
We can pull-back each $x_A$ by $f:M\to S^{N-1}$ 
to obtain a function on $M$, which is also denoted by the same symbol. 
Then (a version of) the Theorem of Takahashi states 
\begin{thm}\label{ThmTaka} \cite{TTaka} 
A map $f:M\to S^{N-1}$ is a harmonic map if and only if 
there exists a function $h:M\to \mathbf R$ such that 
$\Delta x_A=hx_A$ for all $A=1,\cdots, N$, 
where $\Delta$ is the Laplace operator of $(M,g)$. 
Under these conditions, we have $h=|df|^2$. 
\end{thm}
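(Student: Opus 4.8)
The plan is to realize $f$ through the composite map $F=\iota\circ f\colon M\to\mathbf R^N$, where $\iota\colon S^{N-1}\hookrightarrow\mathbf R^N$ is the standard isometric embedding, so that $F=(x_1,\dots,x_N)$ with $\sum_A x_A^2=1$. Because the target $\mathbf R^N$ is flat and the problem is componentwise there, the tension field of $F$ is, up to the sign fixed by the convention that $\Delta$ has nonnegative spectrum, simply $\tau(F)^A=-\Delta x_A$. Thus the entire statement will follow once I understand how $\tau(F)$ splits into its parts tangent and normal to $S^{N-1}$.

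First I would invoke the composition formula for tension fields, $\tau(\iota\circ f)=d\iota(\tau(f))+\operatorname{tr}_g(\nabla d\iota)(df,df)$. Here $\nabla d\iota$ is the second fundamental form $II$ of the unit sphere in $\mathbf R^N$, for which $II(X,Y)=-\langle X,Y\rangle\,F$ with $F$ the outward unit normal, i.e.\ the position vector. Tracing against $g$ gives $\operatorname{tr}_g II(df,df)=-|df|^2F$, so that
\[
\tau(F)=d\iota(\tau(f))-|df|^2F .
\]
The first term is tangent to $S^{N-1}$ and the second is normal (a multiple of $F$); since $d\iota(\tau(f))\perp F$, this is precisely the orthogonal decomposition of $\tau(F)$.

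Combining the two expressions for $\tau(F)$ yields $-\Delta x_A=\bigl(d\iota(\tau(f))\bigr)_A-|df|^2x_A$ for every $A$. If $f$ is harmonic then $\tau(f)=0$, the tangential term drops out, and I read off $\Delta x_A=|df|^2x_A$, which gives both the stated equation and the identity $h=|df|^2$. Conversely, if $\Delta x_A=h\,x_A$ for all $A$, then $\tau(F)=-hF$ is purely normal; equating the tangential components in the displayed decomposition forces $d\iota(\tau(f))=0$, hence $\tau(f)=0$ by injectivity of $d\iota$, so $f$ is harmonic, and matching the normal parts gives $h=|df|^2$.

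The genuinely delicate points here are bookkeeping rather than conceptual, and I expect the main obstacle to be pinning down the sign conventions consistently: relating the analyst's Laplacian $\Delta$ (nonnegative spectrum) to the geometer's tension field, and fixing the sign of $II$ for the sphere so that the normal part comes out as $-|df|^2F$ rather than $+|df|^2F$. As a cross-check that bypasses the composition formula entirely, I would differentiate the constraint $\sum_A x_A^2=1$ twice: the product rule for $\Delta$ gives $\sum_A x_A\Delta x_A=\sum_A|\nabla x_A|^2=|df|^2$, and then multiplying $\Delta x_A=h\,x_A$ by $x_A$ and summing over $A$ recovers $h=\sum_A x_A\Delta x_A=|df|^2$, independently confirming the value of $h$.
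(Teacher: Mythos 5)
Your proof is correct, but it follows the classical extrinsic route rather than the one this paper uses. The paper does not prove Theorem \ref{ThmTaka} directly: it cites it and then recovers it as the rank-one special case of the Grassmannian machinery, namely identifying $S^{N-1}$ with $Gr_{N-1}(\mathbf R^N)$, viewing the $x_A$ as the sections of the universal quotient bundle corresponding to the basis vectors $e_A$, and deriving the key identity $\Delta t - H_{\tau(f)}s + At = 0$ (equation \eqref{Baseq}) from the second fundamental forms $H$, $K$ of the sub- and quotient bundles of $\underline{W}$; since $Q$ has rank one, the mean curvature operator $A$ is the function $-|df|^2$, and Theorems \ref{harmGrirr} and \ref{genTak} then give the statement. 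Your argument via the composition formula $\tau(\iota\circ f)=d\iota(\tau(f))+\operatorname{tr}_g(\nabla d\iota)(df,df)$ and the second fundamental form $II(X,Y)=-\langle X,Y\rangle F$ of the unit sphere produces exactly the same orthogonal decomposition — your identity $-\Delta x_A=(d\iota(\tau(f)))_A-|df|^2x_A$ is the componentwise shadow of \eqref{Baseq}, with $H_{\tau(f)}s$ playing the role of the tangential term and $At$ the normal one — but it is more elementary and self-contained, requiring only submanifold geometry of the sphere. What it does not buy is the generalization: the paper's formulation in terms of $H$, $K$ and $A$ is precisely what survives when the target is a higher-rank Grassmannian, where $A$ is a genuine endomorphism rather than a function and the clean eigenvalue statement must be replaced by the zero property or the Einstein--Hermitian condition. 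Your signs are handled consistently, and the cross-check $\sum_A x_A\Delta x_A=\sum_A|\nabla x_A|^2=|df|^2$ obtained by differentiating $\sum_A x_A^2=1$ is a sound independent confirmation of $h=|df|^2$.
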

In the proof of Theorem \ref{ThmTaka}, the position vector 
$f(x) \in \mathbf R^N$ $(x \in M)$, 
considered as $f:M\to \mathbf R^N$, 
plays a central role. 

First of all, our concern is with a map from $(M,g)$ into a 
real or complex Grassmannian manifold $Gr_p(W)$ 
with a standard metric of Fubini-Study type, 
where $W$ is a real or complex vector space 
with a scalar product. 
Let $S\to Gr_p(W)$ be the tautological bundle. 
Since $S\to Gr_p(W)$ is a subbundle of a trivial bundle 
$\underline{W}=Gr_p(W)\times W \to Gr_p(W)$, 
we have a quotient bundle $Q\to Gr_p(W)$, 
which is called the universal quotient bundle. 
The scalar product on $W$ gives an identification of $Q\to Gr_p(W)$ 
with the orthogonal complement of $S \to Gr_p(W)$ in $\underline{W} \to
Gr_p(W)$. 
Consequently, vector bundles $S,Q \to Gr_p(W)$ are 
equipped with fibre metrics and connections. 

When the standard sphere $S^{N-1}$ 
is identified with the real Grassmannian of 
oriented $(N-1)$-planes in $\mathbf R^N$, 
the position vector $f(x) \in \mathbf R^N$
can be considered as a section of the 
universal quotient bundle $Q\to Gr_{N-1}(\mathbf R^{N})$, 
which is also the normal bundle. 
Then $\mathbf R^N$ induces sections of the bundle and 
we recover functions $x_1,\cdots,x_N$ by relations 
$x_A=\left(e_A,f(x)\right)$, 
where $(,)$ is the inner product on $\mathbf R^N$. 
The differential of the position vector can be 
regarded as the second fundamental form of 
the subbundle $Q \to \underline{\mathbf R^N}$ 
in the sense of Kobayashi \cite{Kob}. 
Moreover, since the bundle has a preferred connection, 
the Laplace operator on the bundle is well-defined. 
Hence we can reformulate the Theorem of Takahashi from the viewpoint of 
vector bundles 
when these geometric structures are pulled back. 
Our first result is 
\begin{main}{\rm (Theorem \ref{main theorem})} 
Let $(M,g)$ be an $n$-dimensional Riemannian manifold and 
$f:M\to Gr_p(W)$ a smooth map.  
We fix an inner product or a Hermitian inner product 
$(\cdot,\cdot)$ on $W$, 
which gives a Riemannian structure on $Gr_p(W)$. 

Then, the following two conditions are equivalent. 
\begin{enumerate}
\item $f:M\to Gr_p(W)$ is a harmonic map. 
\item $W$ has the zero property for the Laplacian. 
\end{enumerate}
Under these conditions, we have for an arbitrary 
$t \in W$, 
$$
\Delta t=-At, \quad \text{and} \quad \left|df\right|^2=-\text{\rm
trace}\,A, 
$$
where the vector space $W$ is regarded as a space of sections of the
pull-back bundle 
$f^{\ast}Q \to M$. 
\end{main}
See Definition \ref{zero} for the zero property for the Laplacian. 
The bundle homomorphism 
$A$ on $f^{\ast}Q \to M$, called the {\it mean curvature operator}, is defined in \S 3
as the trace of the composition of the second fundamental forms.

A second generalization of the Theorem of Takahashi is provided by
Theorem \ref{harmGrirr}. 
In both cases, 
the mean curvature operator plays a crucial role. 

In \S4, we introduce three natural functionals and 
use the Gauss-Codazzi equations for vector bundles 
to compute their Euler-Lagrange equations 
in terms of the second fundamental forms. 
The first two functionals are well-known: they are the Yang-Mills functional 
for the pull-back of the universal quotient bundle and 
the energy funtional of a map. 
The third functional is obtained as the $L^2$-norm of the mean curvature operator $A$ of mappings. 
It is shown that  $f:M\to Gr_p(W)$ is an extremal of all three functionals 
if $f:M\to Gr_p(W)$ is totally geodesic or 
if $f:M\to Gr_p(W)$ is holomorphic and the pull-back of the quotient bundle 
is an Einstein-Hermitian bundle over a compact K\"ahler 
manifold $M$. 
The functional defined by $A$ turns out to have a lower bound which relates to the total energy 
of the map. 
A map which minimizes the functional defined by $A$ has the property that its mean curvature operator is proportional to 
the identity of the pull-back bundle. 
This property shall be referred to as the {\it Einstein-Hermitian} condition. 

From this point of view,
a vector bundle together with a  finite-dimensional vector space 
of sections on the bundle induces a map into a Grassmannian. 
Such a map is called the {\it induced map} (Definition \ref{induced map}). 
A famous example of an induced map is the Kodaira embedding of 
an algebraic manifold into a complex projective space, 
which is induced by a holomorphic line bundle and its space of
holomorphic sections. 
Though it is well-known that a holomorphic map between K\"ahler
manifolds is harmonic, 
our Main Theorem 1 with Lemma \ref{hollap} yields the fact. 

We apply Main Theorem 1 to obtain a generalization of the theory of do
Carmo and Wallach 
\cite{DoC-Wal} in \S5. 
In the original theory the Theorem of Takahashi is applied 
to classify minimal immersions of spheres into spheres.
A minimal immersion is a special case of a harmonic map with constant
energy density. 
We are now concerned with a harmonic map from
a compact Riemannian manifold into a Grassmannian, 
satisfying the {\it gauge} and the {\it Einstein-Hermitian} conditions 
(see \S5 for the definition). 
After specifying a vector bundle with a connection on a Riemannian manifold, 
a {\it standard map} is defined as the map induced by some eigenspace of
the Laplacian acting on sections of the vector bundle. 
We show that those harmonic map is obtained as a deformation of a
standard map 
and corresponds to a pair $(W,T)$, 
where $W$ is an eigenspace of the Laplacian with $L^2$-scalar product 
and 
$T$ is a semi-positive Hermitian transformation of $W$ (Theorem \ref{GGenDW}). 
To establish this correspondence,  we introduce the notion of {\it gauge
equivalence} of maps 
while 
the equivalence relation used in the original do Carmo-Wallch theory is
called 
{\it image equivalence}. 
From the viewpoint of gauge theory, even in the original do Carmo-Wallch theory, 
we can find a bundle isomorphism between flat bundles preserving metrics and connections, 
which is realized by the position vector. 
Due to this realization, the gauge theoretic side of the theory might have been overlooked in the literature. 
This means that the gauge condition is automatically satisfied in the original theory. 
In addition, the mean curvature operator is regarded as a function in the case that the target 
is the sphere or the complex projective space, 
because the universal quotient bundle is of real or complex rank one in each case. 
Thus, the Einstein-Hermitian condition is also automatically satisfied in the original theory. 
This could give an explanation why the Einstein-Hermitian condition of maps has not been considered previously. 
Nevertheless, to go further we require these two conditions to find a generalization of do Carmo-Wallach theory. 
Moreover, introducing bundle isomorphisms in the theory naturally leads us into the notion of gauge equivalence, 
which enables us to eventually give a geometric meaning 
to the compactification of the moduli spaces by the natural topology induced from $L^2$-scalar product. 
(see Remark after Corollary \ref{traceIT}). 
To construct the moduli space by image equivalence, 
the action of the centralizer of the holonomy group needs to be taken into account. 

When $f$ is a holomorphic map from a compact K\"ahler manifold into a
Grassmannian with a K\"ahler structure, 
we obtain Thereoms \ref{HGenDW} and \ref{HGenDWI}. 
In particular, the equation obtained in Theorem \ref{HGenDW} has an
interpretation from the point of view 
of moment maps. 
Two moment maps are considered: one defined  by the action of the gauge
group, and the other 
specified by the action of some finite-dimensional Lie group.
Then, the interplay of both momemt maps implies a rigidity result 
(Theorem \ref{rigid}), 
which can be regarded as a generalization of Calabi's rigidity theorem 
\cite{Cal} (see also Theorem \ref{Calabi}) 
in the case that the domain is a {\it compact} K\"ahler manifold. 

Next, we take a homogeneous vector bundle with a canonical 
connection over a compact reductive Riemannian homogeneous space $G/K$. 
Then, each invariant subspace of the eigenspace of the Laplace operator on the vector bundle
induces a $G$-equivariant map from $G/K$ into a 
Grassmannian, which is also called a standard map. 
We give a sufficient condition for a standard map being harmonic 
satisfyng gauge and Eistein-Hermitian conditions (Lemma \ref{stharm}). 
Since a standard map is $G$-equivariant, the energy density is constant, 
its value being expressible through its eigenvalue.
In this section a few examples of standard maps are displayed, some of
which 
are related to K\"ahler or quaternion-K\"ahler moment maps.  
We shall use our Main Theorem 1 to obtain a classification of 
harmonic maps satisfying the gauge and the Einstein-Hermitian conditions
(Theorem \ref{GenDW}). 
As a result, the moduli problem is connected with the representation
theory of compact Lie groups. 

In \S6, two inequalities given by Joyce \cite{Joy} are our main concern. 
Joyce obtains some estimates of the dimension of the eigenspaces of the
Laplacian acting on the space of functions 
on compact minimal Lagrangian submanifolds of complex projective space. 
Theorem \ref{harmGrirr} implies the same estimates for minimal
Lagrangian submanifolds of complex projective space. 
For one of the inequalities, such an estimate is generalized to 
minimal Lagrangian submanifolds of compact irreducible Hermitian
symmetric spaces. 

In section $7$, we apply the generalization of do Carmo-Wallach Theory 
to obtain various rigidity theorems and moduli spaces. 
We give an alternative proof of the Theorem of Bando-Ohnita \cite{Ban-Ohn}, 
which states the rigidity of minimal immersion of the complex projective
line into complex projective spaces. 
A similar method yields rigidity of holomorphic isometric embeddings between complex projective spaces, 
which is Calabi's result \cite{Cal}. 
In particular, we obtain a new proof of a result due to Calabi 
\cite{Cal} on the rigidity of holomorphic isometric immersions of 
{\it compact} K\"ahelr manifolds 
into the complex projective spaces (Theorem \ref{Calabi}). 
Though the so-called local rigidity is not proved in the present paper, 
it is also shown that the Fubini-Study metric on the projective space is
induced from the $L^2$-scalar product 
on the space of holomorphic sections of an Einstein-Hermitian line 
bundle in Theorem \ref{Calabi}. 
We use Theorem \ref{Calabi} to obtain a result on representaion theory (Theorem \ref{linecl}), 
which is itself of independent interest. 
Then Theorem \ref{linecl} is applied to get a rigidity theorem of {\it projectively flat immersion} defined in \S7.
Toth defines the notion of polynomial minimal immersion between complex projective spaces \cite{Tot}. 
In this notion, the gauge condition is implicitly supposed to be satisfied. 
Then, we show that every harmonic map between complex projective spaces 
satisfying the gauge condition for the canonical connection on the hyperplane bundle 
is automatically 
a polynomial map in the sense of Toth by Theorem \ref{harmGrirr}. 
Finally, we use Theorem \ref{HGenDWI} to describe the moduli spaces of holomorphic isometric 
embeddings of complex projective lines into 
complex quadrics. 
At this stage, these examples manifest the difference of gauge 
equaivalence and image equivalence. 

In the final section, we compare the generalized  do Carmo-Wallach
construction with 
the well-known ADHM-construction of instantons.  
Though both the harmonic map equation and the anti-self-dual equation are non-linear, 
linear equations naturally emerge in our geometric setting, 
which lead us to a description of moduli spaces in linear algebraic terms.  
 
The author would like to express his gratitude to 
Professors O.Macia and  M.Takahashi for many valuable comments and
discussions. 

This research is supported by JSPS KAKENHI Grant Number 26400074.

\section{Preliminaries}

We review some standard material, mostly in order to fix our notation 
in the present paper.
\subsection{A harmonic map}
Let $M$ and $N$ be Riemannian manifolds and $f:M\to N$ 
be a (smooth) map. 
We define {\it the energy density} 
$e(f):M\to \mathbf R$ of $f$ as 
$$
e(f)(x):=|df|^2=\sum_{i=1}^{\text{\rm dim}\, M}|df(e_i)|^2, 
$$
where we use both Riemannian metrics on $M$ and $N$ 
and $e_1,\cdots,e_{\text{\rm dim}\, M}$ denotes 
an orthonormal basis of 
the tangent space $T_xM$. 
Then, {\it the tension field} $\tau(f)$ of $f$ is defined to be 
$$
\tau(f):=\text{\rm trace}\,\nabla df
=\sum_{i=1}^{\text{\rm dim}\, M}(\nabla_{e_i}df)(e_i), 
$$
which is a section of the pull-back bundle 
$f^{\ast}TN\to M$ 
of the tangent bundle $TN \to N$. 

\begin{defn}\cite{Ee-Sam}
A map $f:M\to N$ is called a {\it harmonic map} if the tension field 
vanishes ($\tau(f)\equiv 0$).
\end{defn}

The symmetric form $\nabla df$ with values in 
$f^{\ast}TN\to M$ 
is called {\it the second fundamental form}.  
We say that a map $f:M\to N$ is a {\it totally geodesic map} 
if $\nabla df\equiv 0$. 
By definition, a totally geodesic map is a harmonic map. 

If we suppose that $f:M\to N$ is an isometric immersion, 
then the tension field is a mean curvature vector, 
the second fundamental form is just the one in submanifold geometry 
and a harmonic map is nothing but a minimal immersion. 

\subsection{geometry of Grassmannian manifolds}
First of all, we focus our attention on 
a {\it real} Grassmannian manifold.

Let $W$ be a real $N$-dimensional vector space 
with an inner product $(\cdot, \cdot)$ 
and an orientation. 

Let $Gr_p(W)$ be a Grassmannian manifold of oriented $p$-planes 
in $W$ 
with the homogeneous Riemannian metric 
$g_{Gr}$ 
induced by the inner product on $W$. 
To define $g_{Gr}$ more precisely, 
let $S \to Gr_p(W)$ be the tautological vector bundle. 
It is a homogeneous vector bundle with a metric $g_S$ 
induced by $(\cdot,\cdot)$ 
on $W$ and a canonical connection. 
We have an exact sequence of vector bundles:
$$
0 \to S \to \underline{W} \to Q \to 0,
$$
where $\underline{W}\to Gr_p(W)$ is a trivial vector bundle 
of fibre $W$, and $Q\to Gr_p(W)$ is the quotient bundle. 
The vector bundle $Q\to Gr_p(W)$ is also regarded as a homogeneous 
vector bundle with the induced metric $g_Q$ and a canonical connection. 
The tangent bundle is identified with $S^{\ast}\otimes Q \cong 
S\otimes Q$ and the Riemannian metric $g_{Gr}$ is identified with the 
tensor product of $g_S$ and $g_Q$ : $g_{Gr}=g_S\otimes g_Q$. 
We call $g_{Gr}$ a Riemannian metric of Fubini-Study type. 
To emphasize the role of the inner product $(\cdot,\cdot)$, 
we denote by $\left(Gr_p(W),(\cdot,\cdot)\right)$ a Grassmannian with a metric of Fubini-Study type induced from 
$(\cdot,\cdot)$. 

We fix an orthonormal basis $w_1, \cdots, w_N$ 
of $W$ which is compatible with its orientation. 
We denote by $\mathbf R^p$ 
the subspace spanned by $w_1, \cdots, w_p$ 
and by $\mathbf R^q$ the orthogonal complementary subspace. 
The orthogonal projection to $\mathbf R^p$ is denoted by 
$\pi_p$ and the orthogonal projection to $\mathbf R^q$ by 
$\pi_q$. 
Using the orthogonal projection $\pi_q$, 
$W$ can be considered as a subspace of sections of $Q\to Gr_p(W)$. 
Explicitly, we have 
$$
\pi_Q(w)_{[g]}:=\left[g, \pi_q(g^{-1}w_A) \right]\, \in \, 
\Gamma\left(Q=G\times_{K_0} \mathbf R^q\right), \,w\in W,\,g\in G
$$
where 
$G=\text{SO}(N)$ and $K_0=\text{SO}(p)\times \text{SO}(q)$. 
The inner product $(,)_W$ gives a bundle injection 
$i_Q:Q \to \underline{W}$:
$$
i_Q\left([g, v]\right)=\left([g], gv\right), \quad v\in \mathbf R^q.
$$
In short, $Q\to Gr_p(W)$ is also regarded as the orthogonal complementary 
bundle 
$S^{\bot} \to Gr_p(W)$ to $S\to Gr_p(W)$. 
We can define a connection $\nabla^Q$ on  $Q\to Gr_p(W)$. 
If $t$ is a section of $Q\to Gr_p(W)$, 
then we have 
$$
\nabla^Q t = \pi_Q d\left(i_Q(t)\right).
$$
The connection $\nabla^Q$ is nothing but the canonical connection. 

In a similar way, 
we can use 
$i_S:S \to \underline{W}$:
$$
i_S\left([g, u]\right)=\left([g], gu\right), \quad u\in \mathbf R^p.
$$
and $\pi_S:\underline{W} \to S$:
$$
\pi_S(w_A):=\left[g, \pi_p(g^{-1}w_A) \right]\, \in \, 
G\times_K \mathbf R^p. 
$$
to express the canonical connection $\nabla^S$:
$$
\nabla^S s = \pi_S d\left(i_S(s)\right), \quad s \in \Gamma(S).
$$

In this context, 
since $S\to Gr_p(W)$ is a subbundle of $\underline{W} \to Gr_p(W)$, 
it is natural to introduce the second fundamental form 
$H$ in the sense of Kobayashi \cite{Kob}, 
which is a $1$-form with values in $\text{Hom}(S, Q) \cong 
S^{\ast}\otimes Q$:
$$
di_S s= \nabla^S s + H(s), \quad 
H(s)=\pi_Q d\left(i_S(s)\right).
$$
If $s=\pi_S(w)$, we can compute
$$H(s)=\left[g, \pi_{\mathfrak{m}}(g^{-1}dg)\pi_p (g^{-1}w)\right],
$$
here we use a standard decomposition of Lie algebra $\mathfrak g$ of $G$:
$$
\mathfrak g=\mathfrak k \oplus \mathfrak m, \quad 
\mathfrak k=\mathfrak{so}(p) \oplus \mathfrak{so}(q), 
$$
and the obvious orthogonal projection $\pi_{\mathfrak{m}}$. 

The tangent bundle of $Gr_p(W)$ can be expressed in two ways:
$$
T:=TGr_p(W)=G\times_{K_0} \mathfrak{m}, \quad T=S^{\ast}\otimes Q.
$$
Using the latter expression, the cotangent bundle 
$T^{\ast}$ is considered as
$$
T^{\ast}=S\otimes Q^{\ast}. 
$$
Hence the second fundamental form 
$H \in \Omega^1(S^{\ast}\otimes Q)$ 
can also be regarded as a section of 
$T^{\ast} \otimes T =
S\otimes S^{\ast} \otimes Q\otimes Q^{\ast}$. 

\begin{lemma}
The second fundamental form $H$ can be regarded as the identity 
transformation of the tangent bundle $T$. 
\end{lemma}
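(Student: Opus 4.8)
The plan is to exploit the explicit homogeneous formula for $H$ recorded just above the statement and to recognise the Maurer--Cartan ingredient $\pi_{\mathfrak m}(g^{-1}dg)$ as the canonical soldering form of the reductive homogeneous space $Gr_p(W)=G/K_0$.

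First I would observe that $H=\pi_Q\,d\,i_S$ is tensorial in its section argument, so that it is genuinely a bundle-valued $1$-form and not merely a differential operator. Indeed, for $s\in\Gamma(S)$ and a smooth function $f$ the Leibniz rule gives $H(fs)=(df)\,\pi_Q(i_S s)+f\,H(s)$, and $\pi_Q(i_S s)=0$ because $i_S s$ lands in the subbundle $S\subset\underline W$ on which $\pi_Q$ vanishes. Hence $H$ is a section of $T^{\ast}\otimes\mathrm{Hom}(S,Q)=T^{\ast}\otimes(S^{\ast}\otimes Q)=T^{\ast}\otimes T$, and, by tensoriality, it suffices to evaluate it on the spanning sections $s=\pi_S(w)$, $w\in W$.

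Next I would unwind the displayed formula $H(\pi_S(w))=[g,\pi_{\mathfrak m}(g^{-1}dg)\,\pi_p(g^{-1}w)]$. Here $\pi_p(g^{-1}w)\in\mathbf R^p$ is exactly the fibre representative of $s$ at $[g]$, while $\pi_{\mathfrak m}(g^{-1}dg)$ is an $\mathfrak m$-valued $1$-form; under the block description $\mathfrak m\cong\mathrm{Hom}(\mathbf R^p,\mathbf R^q)=S^{\ast}\otimes Q=T$, an element of $\mathfrak m$ acts on $\mathbf R^p$ to produce an element of $\mathbf R^q$. Evaluating on a tangent vector $v$ therefore gives $H(v)=\pi_{\mathfrak m}\big((g^{-1}dg)(v)\big)$, read as an element of $\mathrm{Hom}(S,Q)=T_{[g]}$ acting on the $S$-slot. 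In other words, viewed as a $T$-valued $1$-form, $H$ equals the canonical form $\theta:=\pi_{\mathfrak m}(g^{-1}dg)$.

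The crux is then the standard identification of $\theta$: on $G/K_0$ with $T=G\times_{K_0}\mathfrak m$, the soldering form sends a tangent vector $v=[g,X]$ (with $X\in\mathfrak m$) to $X$, so that $[g,\theta(v)]=[g,X]=v$; thus $\theta$, read as a section of $T^{\ast}\otimes T$, is precisely the tautological identity endomorphism of $T$. Combining this with the previous step yields $H(v)=v$ for every $v$, that is, $H=\mathrm{id}_T$. The main obstacle is bookkeeping rather than analysis: one must keep the identifications $\mathfrak m\cong\mathrm{Hom}(\mathbf R^p,\mathbf R^q)$, $T\cong S^{\ast}\otimes Q$ and $T^{\ast}\otimes T\cong\mathrm{End}(T)$ mutually consistent, and verify that $K_0$-equivariance promotes the pointwise identity to a globally well-defined section of the associated bundle.
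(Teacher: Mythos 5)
Your proof is correct and follows essentially the same route as the paper's: both reduce the claim to the fact that the $\mathfrak m$-component of the Maurer--Cartan form, applied to a tangent vector $[g,X]$ with $X\in\mathfrak m$, returns $X$ itself — the paper verifies this by evaluating $H$ on Killing vector fields, while you phrase the same computation as the tautological identification of the soldering form of $G/K_0$ with $\mathrm{id}_T$. Your preliminary tensoriality check is a harmless (and correct) addition that the paper leaves implicit.
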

\begin{proof}
Let $R_g$ be the right translation of $G$ 
and $\pi:G\to Gr_p(W)$ a natural projection. 
We may evaluate on a Killing vector field 
$X^M=d\pi dR_g X=\left[g, \pi_{\mathfrak m}\left(g^{-1}Xg\right)\right]$, 
$X\in \mathfrak m$.   
Indeed, 
\begin{align*}
H_{X^M}=&\left[g,\pi_{\mathfrak m}\left(g^{-1}dg\right)\right]d\pi dR_g X
=\left[g, \pi_{\mathfrak m}\left(g^{-1}dg\right)
\left(g^{-1} X g\right)_{\mathfrak m}\right]\\
=&\left[g, \pi_{\mathfrak m}\left(g^{-1}X g\right)\right]
=X_M.
\end{align*}
\end{proof}
\begin{cor}
The second fundamental form $H$ is parallel.
\end{cor}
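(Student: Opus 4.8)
The plan is to deduce the parallelism directly from the preceding Lemma, which identifies $H$ with the identity endomorphism of the tangent bundle $T$. The guiding principle is the universal fact that the identity section of an endomorphism bundle is parallel for \emph{any} connection: if $\nabla$ is a connection on a vector bundle $E$ and $\nabla^{\mathrm{End}}$ is the induced connection on $\mathrm{End}(E)=E^{\ast}\otimes E$, then for every section $s$ and every tangent vector $X$ one has $(\nabla^{\mathrm{End}}_X\mathrm{id})(s)=\nabla_X s-\nabla_X s=0$, so that $\nabla^{\mathrm{End}}\mathrm{id}=0$. Thus, once $H$ is recognized as $\mathrm{id}_T$ under the identification $\Omega^1(S^{\ast}\otimes Q)\cong T^{\ast}\otimes T=\mathrm{End}(T)$, its parallelism should be immediate.

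First I would make the bundle identification precise. Writing $T=S^{\ast}\otimes Q$ and hence $T^{\ast}=S\otimes Q^{\ast}$, a one-form with values in $\mathrm{Hom}(S,Q)=S^{\ast}\otimes Q$ lives in $T^{\ast}M\otimes S^{\ast}\otimes Q=(S\otimes Q^{\ast})\otimes(S^{\ast}\otimes Q)=T^{\ast}\otimes T=\mathrm{End}(T)$, which is exactly the space in which the previous Lemma places $H=\mathrm{id}_T$.

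The key point, and the step I expect to be the main obstacle, is to check that the connection with respect to which $H$ is being differentiated really is the endomorphism connection $\nabla^{\mathrm{End}}$ on $\mathrm{End}(T)$. By construction the covariant derivative of $H$ combines the Levi-Civita connection on the form factor $T^{\ast}M$ with the tensor product of the canonical connections $\nabla^{S}$ and $\nabla^{Q}$ on $S^{\ast}\otimes Q$. To see that this agrees with $\nabla^{\mathrm{End}}$ I would use that $Gr_p(W)$ is a Riemannian symmetric space, so that its canonical connection coincides with the Levi-Civita connection; consequently the Levi-Civita connection on $T^{\ast}M=S\otimes Q^{\ast}$ is itself the tensor product of the canonical connections on $S$ and $Q^{\ast}$, and all four connection factors in $S\otimes Q^{\ast}\otimes S^{\ast}\otimes Q$ are canonical connections. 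Tracking these tensor factors then shows that the connection differentiating $H$ is precisely the connection induced on $\mathrm{End}(T)$ by the connection on $T$.

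With this compatibility in hand, the conclusion follows at once: $\nabla H=\nabla^{\mathrm{End}}\mathrm{id}_T=0$. As an alternative I could argue homogeneously, noting that $H$ and the connection are $G$-invariant, so $\nabla H$ is a $G$-invariant tensor and it suffices to evaluate it at the base point $o=eK_0$, where a short computation in the reductive decomposition $\mathfrak g=\mathfrak k\oplus\mathfrak m$ shows the relevant component vanishes; but the identity-endomorphism argument is cleaner and avoids explicit coordinate computations.
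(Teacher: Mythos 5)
Your argument is correct and is exactly the route the paper intends: the preceding Lemma identifies $H$ with $\mathrm{id}_T$ as a section of $T^{\ast}\otimes T$, and since the Grassmannian is symmetric the Levi-Civita connection on $T^{\ast}M=S\otimes Q^{\ast}$ agrees with the tensor product of the canonical connections, so the covariant derivative of $H$ is the induced endomorphism connection applied to the identity section, which vanishes. The compatibility check you flag as the main obstacle is indeed the only point requiring care, and you handle it correctly.
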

We can also define the second fundamental form 
$K \in \Omega^1(Q^{\ast}\otimes S)$ of a subbundle 
$i_Q:Q \to \underline{W}$: 
$$
di_Q t=\nabla^Q t + K(t)
$$
For a vector $w \in W$, we have two sections 
$s=\pi_S(w)$ and $t=\pi_Q(w)$, 
each of which is sometimes called {\it the section corresponding to }$w$. 
From our expression, we have 
\begin{prop}
If $s$ and $t$ are the sections corresponding to $w \in W$, then 
$$
\nabla^S s = -K(t), \quad 
\nabla^Q t = -H(s).
$$
\end{prop}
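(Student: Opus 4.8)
The plan is to exploit a single structural fact: a constant vector $w\in W$, viewed as a section of the trivial bundle $\underline{W}\to Gr_p(W)$, is parallel for the flat connection $d$, and the orthogonal splitting $\underline{W}=S\oplus Q$ (realized by $i_S$ together with the identification of $Q$ with $S^{\bot}$ via $i_Q$) lets one read off both identities by comparing components. First I would record that under this splitting the constant section $w$ decomposes as
$$
w=i_S\big(\pi_S(w)\big)+i_Q\big(\pi_Q(w)\big)=i_S(s)+i_Q(t).
$$
Since $w$ is constant, differentiating gives $dw=0$, hence
$$
d\big(i_S(s)\big)+d\big(i_Q(t)\big)=0.
$$

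Next I would substitute the two defining equations for the second fundamental forms,
$$
d\big(i_S(s)\big)=\nabla^S s+H(s),\qquad d\big(i_Q(t)\big)=\nabla^Q t+K(t),
$$
recalling that $\nabla^S s$ and $K(t)$ take values in $S$ while $H(s)$ and $\nabla^Q t$ take values in $Q$. These equations already exhibit each derivative decomposed into its $S$- and $Q$-components relative to $\underline{W}=S\oplus Q$. Projecting the vanishing sum $d(i_S s)+d(i_Q t)=0$ onto each summand, the $S$-component yields $\nabla^S s+K(t)=0$ and the $Q$-component yields $H(s)+\nabla^Q t=0$, which are exactly the asserted relations $\nabla^S s=-K(t)$ and $\nabla^Q t=-H(s)$.

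There is no serious obstacle here; the whole content is that the trivial connection on $\underline{W}$ splits as the direct sum of $\nabla^S\oplus\nabla^Q$ plus the off-diagonal terms $H$ and $K$, so the parallelism of $w$ forces the diagonal and off-diagonal pieces to cancel in pairs. The only point demanding attention is careful bookkeeping of the inclusions $i_S$ and $i_Q$, so that $\nabla^S s$ is compared with $K(t)$ inside $S$ and $\nabla^Q t$ with $H(s)$ inside $Q$; once this identification is made explicit, the statement follows immediately without further computation.
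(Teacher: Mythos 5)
Your proof is correct. The paper gives no explicit argument for this proposition (it simply writes ``From our expression, we have'' after the defining equations $di_S s=\nabla^S s+H(s)$ and $di_Q t=\nabla^Q t+K(t)$), and your computation --- writing the constant section as $w=i_S(s)+i_Q(t)$, differentiating to get $dw=0$, and separating the $S$- and $Q$-components --- is exactly the standard argument the paper is implicitly invoking.
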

\begin{lemma}\label{sksymHK}
The second fundamental forms $H$ and $K$ satisfy 
$$
g_Q(H s, t)=-g_S(s,K t).
$$
\end{lemma}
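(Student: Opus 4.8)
The plan is to exploit the fact that, under the identifications $i_S$ and $i_Q$, the bundles $S$ and $Q$ sit inside the trivial bundle $\underline{W}$ as mutually orthogonal complements, while the trivial connection $d$ on $\underline{W}$ is compatible with the constant inner product $(\cdot,\cdot)$. First I would record that the fibre metrics are the restrictions of $(\cdot,\cdot)$, namely $g_S(s_1,s_2)=(i_S s_1, i_S s_2)$ and $g_Q(t_1,t_2)=(i_Q t_1, i_Q t_2)$, and that $i_Q\circ\pi_Q$ and $i_S\circ\pi_S$ are the orthogonal projections of $\underline{W}$ onto the summands $Q=S^{\perp}$ and $S$.

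Next I would rewrite both sides of the claimed identity as pairings in $\underline{W}$. Since $H(s)=\pi_Q d(i_S s)$ and $i_Q t$ already lies in $S^{\perp}$, the projection is absorbed by the pairing and
$$
g_Q(Hs,t)=(i_Q\pi_Q d(i_S s), i_Q t)=(d(i_S s), i_Q t).
$$
Symmetrically, using $K(t)=\pi_S d(i_Q t)$ and $i_S s\in S$,
$$
g_S(s,Kt)=(i_S s, i_S\pi_S d(i_Q t))=(i_S s, d(i_Q t)).
$$

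The decisive step is then to differentiate the identity $(i_S s, i_Q t)\equiv 0$, which holds pointwise because $S$ and $Q$ are orthogonal in $\underline{W}$. Because $d$ is metric compatible for the constant inner product,
$$
0=d(i_S s, i_Q t)=(d(i_S s), i_Q t)+(i_S s, d(i_Q t))=g_Q(Hs,t)+g_S(s,Kt),
$$
which is exactly the asserted relation. The tangential terms coming from the splittings $d(i_S s)=\nabla^S s+H(s)$ and $d(i_Q t)=\nabla^Q t+K(t)$ drop out automatically, since $\nabla^S s$ is $S$-valued and hence orthogonal to $i_Q t\in S^{\perp}$, while $\nabla^Q t$ is $Q$-valued and hence orthogonal to $i_S s\in S$; alternatively one differentiates the orthogonality relation directly.

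I expect the only real subtlety to be bookkeeping about where each object lives: making sure the abstract $Q$-valued form $Hs$ is paired through $g_Q$ rather than through the ambient $(\cdot,\cdot)$, and checking that the projections $\pi_Q$, $\pi_S$ may be dropped precisely because the other factor already lies in the appropriate summand. Once this orthogonality bookkeeping is settled, the lemma is an immediate consequence of differentiating a constant, and no computation with the homogeneous-space descriptions (the $[g,\cdot]$ formulas) is needed.
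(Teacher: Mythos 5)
Your argument is correct and is essentially identical to the paper's own proof: the paper also regards $s$ and $t$ as sections of $\underline{W}$, writes $g_Q(Hs,t)=(ds,t)_W$ and $g_S(s,Kt)=(s,dt)_W$, and obtains the sign from differentiating the orthogonality relation $(s,t)_W=0$. Your version merely makes explicit the bookkeeping (absorbing $\pi_Q$ and $\pi_S$ into the pairing) that the paper leaves implicit.
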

\begin{proof}If we regard $s$ and $t$ as sections of $\underline{W}\to Gr_p(W)$, 
then
$$
g_Q(H s, t)=(ds ,t)_W=-(s,dt)_W=-g_S(s, K t).
$$
\end{proof}

The orthonormal basis $w_1,\cdots,w_N$ of $W$ provides us with the 
corresponding sections 
$s_A=\pi_S(w_A)$ and 
$t_A=\pi_Q(w_A)$. 


It follows from the definition that 
$$
w^{\alpha} \otimes w_r \in \mathbf R^{p^{\ast}}\otimes \mathbf R^q \cong \mathfrak m, 
\quad \alpha=1,\cdots,p, \, r=1,\cdots,q
$$
is an orthonormal basis of $\mathfrak m$. 
However, if $\mathfrak m$ is regarded as a subspace of $\mathfrak g$, 
we should adopt the identification between 
$\mathbf R^{p^{\ast}}\otimes \mathbf R^q$ 
and $\mathfrak m$ in a following way:
\begin{align*}
w^{\alpha} \otimes w_r &\in \mathbf R^{p^{\ast}}\otimes \mathbf R^q 
\leftrightarrow 
w^{\alpha} \otimes w_r-w_{\alpha} \otimes w^r \in \mathfrak m, \\
w^r \otimes w_{\alpha} &\in \mathbf R^{q^{\ast}}\otimes \mathbf R^p 
\leftrightarrow 
w^{\alpha} \otimes w_r-w_{\alpha} \otimes w^r \in \mathfrak m
\end{align*}
For simplicity, we use the invariant inner products to identify 
$\mathbf R^{p^{\ast}}$ and  $\mathbf R^{q^{\ast}}$ with 
$\mathbf R^{p}$ and  $\mathbf R^{q}$, respectivley. 
As a result, we have 
$$
w_{\alpha} \otimes w_r \in \mathbf R^{p}\otimes \mathbf R^q 
\leftrightarrow 
w^{\alpha} \otimes w_r-w_{\alpha} \otimes w^r \in \mathfrak m. 
$$
Then the second fundamental form 
$K \in \Omega^1(Q^{\ast} \otimes S)$ can also be regarded as 
a section of $Q\otimes S \otimes Q \otimes S$. 
Under the irreducible decomposition, 
$K$ corresponds to a constant section of 
$\mathbf R \subset Q\otimes S \otimes Q \otimes S$. 
More explicitly, our identification between 
$\mathfrak{m}$ and $\mathbf R^{p^{\ast}}\otimes \mathbf R^q 
\cong \mathbf R^p\otimes \mathbf R^q$ 
yields 
$$
H=Id_S \otimes Id_Q, \quad\text{and}\quad K=-Id_S\otimes Id_Q. 
$$
Obviously, we have
\begin{lemma}
The second fundamental tensor $K$ is also parallel.
\end{lemma}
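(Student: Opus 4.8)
The plan is to deduce the parallelism of $K$ from that of $H$, which was already recorded in the Corollary following the first Lemma of this subsection. Two routes are available, and I would lead with the shorter one, using the second as a conceptual check.

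For the direct route I would invoke the explicit identifications obtained just above the statement, namely $H=Id_S\otimes Id_Q$ and $K=-Id_S\otimes Id_Q$. The identity endomorphism of any vector bundle is parallel with respect to the connection induced on its endomorphism bundle; hence $Id_S$ and $Id_Q$ are parallel for the connections induced by $\nabla^S$ and $\nabla^Q$. Since the connection on $S\otimes S^{\ast}\otimes Q\otimes Q^{\ast}$ is the tensor-product connection, the Leibniz rule gives $\nabla(Id_S\otimes Id_Q)=0$, and therefore $\nabla K=-\nabla(Id_S\otimes Id_Q)=0$. Equivalently, under these identifications $K=-H$, so the Corollary applies verbatim.

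The second route avoids the explicit formula and rests on Lemma \ref{sksymHK}: for every tangent vector and all sections $s,t$ one has $g_Q(Hs,t)=-g_S(s,Kt)$. This says precisely that $K=-H^{\ast}$, the fibrewise metric adjoint taken with respect to $g_S$ and $g_Q$. Because both metrics are parallel for the canonical connections, the adjoint map $\text{Hom}(S,Q)\to\text{Hom}(Q,S)$ is itself parallel and so commutes with covariant differentiation; differentiating $K=-H^{\ast}$ then yields $\nabla K=-(\nabla H)^{\ast}=0$ by the Corollary.

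I do not expect a genuine obstacle here: the assertion is essentially a one-line consequence of the parallelism of $H$. The only point deserving care is the bookkeeping of which connection acts on which tensor factor, that is, ensuring that the metric-compatibility of $\nabla^S$ and $\nabla^Q$ is exactly what lets the fibrewise adjoint (second route) or the tensor Leibniz rule (first route) commute with $\nabla$. Once this is checked, parallelism of $K$ is immediate.
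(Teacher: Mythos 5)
Your direct route is exactly the paper's (implicit) argument: the lemma follows immediately from the identification $K=-Id_S\otimes Id_Q$ established just above it, since the identity endomorphisms are parallel for the induced tensor-product connection. The proposal is correct and adds only a harmless second route via the adjoint relation of Lemma \ref{sksymHK}.
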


\begin{prop}\label{Riemmet}
For arbitrary tangent vectors $X$ and $Y$, 
we have
$$
g_{Gr}(X,Y)=\sum_A g_S(K_X t_A, K_Y t_A)=\sum_A g_Q(H_X s_A, H_Y s_A).
$$
\end{prop}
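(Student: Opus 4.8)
The plan is to reduce both asserted identities to a single algebraic input --- a completeness (Parseval) relation for the sections $s_A,t_A$ corresponding to the orthonormal basis $w_1,\dots,w_N$ --- and then to feed in the two structural facts already established: that the second fundamental form $H$ is the identity endomorphism of $T=\mathrm{Hom}(S,Q)$, and that $H$ and $K$ are adjoint up to sign, $g_Q(H_X s,t)=-g_S(s,K_X t)$ (Lemma \ref{sksymHK}), so that $K_X=-(H_X)^{\ast}$. Under the first of these, $H_X$ regarded as an element of $\mathrm{Hom}(S,Q)=T$ is simply $X$ itself.

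The key step, and the only genuinely geometric one, is to show that at every point, for all $s,s'\in S$ and $t,t'\in Q$,
$$\sum_A g_S(s,s_A)\,g_S(s',s_A)=g_S(s,s'),\qquad \sum_A g_Q(t,t_A)\,g_Q(t',t_A)=g_Q(t,t').$$
I would prove this fibrewise. Over a point $[V]\in Gr_p(W)$ the fibres are $S_{[V]}=V$ and $Q_{[V]}=V^{\perp}$, with $s_A=\pi_S(w_A)=\pi_V(w_A)$ and $t_A=\pi_Q(w_A)=\pi_{V^{\perp}}(w_A)$, while $g_S,g_Q$ are the restrictions of $(\cdot,\cdot)_W$. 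For $t\in V^{\perp}$ one has $g_Q(t,t_A)=(t,\pi_{V^{\perp}}w_A)_W=(t,w_A)_W$ since $t\perp V$, whence $\sum_A g_Q(t,t_A)\,t_A=\pi_{V^{\perp}}\bigl(\sum_A (t,w_A)_W\,w_A\bigr)=\pi_{V^{\perp}}(t)=t$, which is just the expansion of $t$ in the orthonormal basis $\{w_A\}$ followed by projection. Pairing with $t'$ yields the second relation, and the first is identical with $V$ in place of $V^{\perp}$. Equivalently, $\sum_A s_A\otimes s_A=\mathrm{Id}_S$ and $\sum_A t_A\otimes t_A=\mathrm{Id}_Q$.

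With these in hand the two equalities become formal. Using $H_X=X$, I would compute
$$\sum_A g_Q(H_X s_A,H_Y s_A)=\sum_A g_S\bigl(s_A,(H_X^{\ast}H_Y)s_A\bigr)=\mathrm{trace}_S(H_X^{\ast}H_Y),$$
the second equality being the completeness relation applied to the endomorphism $H_X^{\ast}H_Y$ of $S$. The right-hand side is exactly the Hilbert--Schmidt pairing on $\mathrm{Hom}(S,Q)=S^{\ast}\otimes Q$, which is the tensor metric $g_S\otimes g_Q=g_{Gr}$; hence $\sum_A g_Q(H_X s_A,H_Y s_A)=g_{Gr}(X,Y)$. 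For the $K$-side I would substitute $K_X=-(H_X)^{\ast}=-X^{\ast}$ and apply the completeness relation on $Q$:
$$\sum_A g_S(K_X t_A,K_Y t_A)=\sum_A g_S(X^{\ast}t_A,Y^{\ast}t_A)=\mathrm{trace}_Q(X\,Y^{\ast})=\mathrm{trace}_S(Y^{\ast}X),$$
the last step by cyclicity, again equal to $g_{Gr}(X,Y)$ by symmetry of the pairing; the sign in $K_X=-(H_X)^{\ast}$ squares away. The main obstacle is thus concentrated entirely in the completeness relation: once the pointwise identification of $S,Q$ with $V,V^{\perp}$ and the Parseval expansion are in place, the remainder is trace manipulation, the only point requiring care being the verification that the natural Hilbert--Schmidt pairing on $\mathrm{Hom}(S,Q)$ coincides with $g_{Gr}=g_S\otimes g_Q$.
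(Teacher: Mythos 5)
Your proof is correct, but it takes a genuinely different route from the paper's. The paper argues in coordinates at the base point $[e]$: it writes $X=\xi_{\alpha}^r\,w^{\alpha}\otimes w_r$ and $Y=\eta_{\beta}^s\,w^{\beta}\otimes w_s$ in the explicit orthonormal basis of $\mathfrak m\cong\mathbf R^{p^{\ast}}\otimes\mathbf R^q$, notes that at $[e]$ only the sections $t_r=w_r$ ($r=1,\dots,q$) are nonzero, evaluates $K_Xt_r$ directly from the identification $K=-\mathrm{Id}_S\otimes \mathrm{Id}_Q$, and checks that both sides equal $\sum_{r,\alpha}\xi_{\alpha}^r\eta_{\alpha}^r$ (the case of a general point being implicit by homogeneity). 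You instead isolate the pointwise Parseval identities $\sum_A s_A\otimes s_A=\mathrm{Id}_S$ and $\sum_A t_A\otimes t_A=\mathrm{Id}_Q$, which follow immediately from orthogonal projection of the orthonormal basis $\{w_A\}$, and then reduce both sums to the single trace $\mathrm{trace}_S\bigl(H_X^{\ast}H_Y\bigr)$ via $H_X=X$ and $K_X=-H_X^{\ast}$ (Lemma \ref{sksymHK}); the conclusion then rests on the observation that the Hilbert--Schmidt pairing on $\mathrm{Hom}(S,Q)$ is exactly $g_S\otimes g_Q=g_{Gr}$. Your version is basis-free, works at an arbitrary point without appealing to equivariance, and makes the Remark after the Proposition ($g_{Gr}=-\mathrm{trace}_Q HK$) transparent; the paper's version is more concrete but tied to the coordinate description of $\mathfrak m$. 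The only point to flag is that your Parseval relation is stated for the real (symmetric) case; in the Hermitian setting the pairing $\sum_A g(t,t_A)\,g(t',t_A)$ must carry the appropriate conjugation, though this does not affect the argument where the Proposition is stated.
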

\begin{proof}
The key fact is $g_{Gr}=g_S\otimes g_Q$, when identifying 
$TGr \to Gr_p(W)$ with $S\otimes Q \to Gr_p(W)$. 
At $[e] \in Gr_p(W)$, where $e$ is the unit elememt of $G$, 
assume that 
$$
X=\xi_{\alpha}^r w^{\alpha}\otimes w_r, \quad 
Y=\eta_{\beta}^s w^{\beta}\otimes w_s.
$$
From the definition of $g_{Gr}$, it follows that
$$
g_{Gr}(X,Y)=\sum_{r,\alpha} \xi_{\alpha}^r \eta_{\alpha}^r. 
$$
On the other hand, 
\begin{align*}
\sum_A g_S(K_X t_A, K_Y t_A)
=&\sum_{r} g_S(K_X t_r, K_Y t_r)
=\sum_{r,\alpha,\beta} g_S(\xi_{\alpha}^rw^{\alpha}, 
\eta_{\beta}^rw^{\beta}) \\
=&\sum_{r,\alpha} \xi_{\alpha}^r \eta_{\alpha}^r.
\end{align*}
\end{proof}
\begin{rem}
Lemma \ref{sksymHK} and Proposition \ref{Riemmet} give us 
$$
g_{Gr}=-\text{\rm trace}_Q\,HK=-\text{\rm trace}_S\,KH.
$$
\end{rem}

We can easily compute $(\nabla^S)^2$ and $(\nabla^Q)^2$:
$$
\nabla^S_X(\nabla^S s_A)(Y)=K_YH_X s_A, \quad 
\nabla^Q_X(\nabla^Q t_A)(Y)=H_YK_X t_A.
$$
In particular, we know that sections $s_A$ and $t_A$ 
are eigensections of the Laplacian 
($\Delta s_A = q s_A, \quad \Delta q_A = p q_A$).

Next, we consider a {\it complex} Grassmannian manifold. 
The main difference of a complex Grassmannian from a real Grassmannian 
is that we can use the Hodge decomposition, because a complex Grassmannian 
is a K\"ahler manifold. 
More precisely, let $W$ be a complex vector space with a Hermitian inner 
product $(\cdot, \cdot)_W$ and 
$Gr_p(W)$ a complex Grassmannian of $p$-planes in $W$. 
We can define homogeneous vector bundles 
$S\to Gr_p(W)$ and $Q \to Gr_p(W)$ 
with induced Hermitian metrics $g_S$ and $g_Q$ by $W$, respectively. 
Canonical connections give holomorphic structures to 
$S\to Gr_p(W)$ and $Q \to Gr_p(W)$. 
In particular, $W$ can be regarded as the space of holomorphic sections of 
$Q \to Gr_p(W)$. 
The holomorphic tangent bundle $T$ is identified with 
$S^{\ast}\otimes Q$ and 
the holomorphic cotangent bundle is 
$S\otimes Q^{\ast}$. 
The identification includes Hermitian metrics and connections. 
The second fundamental form $H \in \Omega^1(\text{Hom}(S,Q))$ is of type 
$(1,0)$. 
This means that $H$ can be considered as a section of 
$S\otimes Q^{\ast}\otimes S^{\ast}\otimes Q$ and we obtain 
$H=id_S \otimes id_Q$ in a similar way. 
The second fundamental form 
$K \in \Omega^1(\text{Hom}(Q,S))$ is of type $(0,1)$ and recognized as 
$-id_S \otimes id_Q \,\in\, \Gamma(S^{\ast}\otimes Q \otimes S\otimes Q^{\ast})$.

\section{Harmonic maps into Grassmannians} 
In this section, we shall prove the main theorems. 
We denote by $\underline{W} \to M$ a trivial vector bundle $M\times W \to M$. 

\begin{defn}
Let 
$V \to M$ be a vector bundle and 
$W$ a space of sections of 
$V \to M$. 
We define an evaluation map 
$ev:\underline{W} \to V$ in such a way that 
$ev(t)(x):=t(x) \in V_x$ for $t\in W$. 
Hence an evaluation map is a bundle homomorphism. 
The vector bundle $V\to M$ 
is called to be {\it globally generated by} 
$W$ 
if the evaluation homomorphism 
$ev: \underline{W} \to V$ 
is surjective. 
\end{defn}

We have an exact sequence of vector bundles on 
$Gr_p(W)$:
$$
0 \to S \to \underline{W} \to Q \to 0, 
$$
where $S\to Gr_p(W)$ is a tautological vector bundle, 
$\underline{W}\to Gr_p(W)$ is a trivial vector bundle of rank $N$, 
and $Q \to Gr_p(W)$ is the quotient bundle. 
Then $W$ can be regarded as a space of sections of 
$Q\to Gr_p(W)$ which is globally generated by $W$.   

We fix an inner product or a Hermitian inner product $(\cdot, \cdot)$ on 
a linear space $W$ 
according to a ground field. 
We call $(\cdot, \cdot)$ a {\it scalar product}. 
Then, as explained in \S 2, 
the $\text{SO}(N)$ or $\text{SU}(N)$ structure of $W$ provides us with 
a Riemannian structure on the Grassmannian 
$Gr_p(W)$ 
and the vector bundles 
$S\to Gr_p(W)$ and $Q\to Gr_p(W)$ can be regarded as homogeneous vector bundles 
on $Gr_p(W)$ with fibre metrics and canonical connections. 

Let $f:M\to Gr_p(W)$ be a smooth map. 
Pulling back $Q \to Gr_p(W)$ to $M$, 
we obtain a vector bundle 
$f^{\ast}Q \to M$, 
which is denoted by $V\to M$. 
Though $W$ also gives sections of $V \to M$, 
the linear map $W\to \Gamma(V)$ might not be an injection. 
Even in such a case, $W$ is still called a space of sections. 
Then the pull-back bundle $V \to M$ 
is also globally generated by $W$. 

If $f:M \to Gr_p(W)$ is a smooth map, 
then we also pull back a fiber metric and a connection on $Q\to Gr_p(W)$ 
to obtain a fibre metric $g_V$ and a connection $\nabla^V$
on $V\to M$. 

In a similar way, the pull-back bundle 
$f^{\ast}S \to M$ is denoted by 
$U \to M$ which has a pull-back fibre metric $g_U$ 
and a pull-back connection $\nabla^U$. 

The second fundamental forms are also pulled back 
and denoted by the same symbols 
$H \in \Gamma(f^{\ast}T^{\ast}\otimes U^{\ast}\otimes V)$ 
and 
$K \in \Gamma(f^{\ast}T^{\ast}\otimes V^{\ast}\otimes U)$. 
If we restrict bundle-valued linear forms $H$ and $K$ on 
the pull-back bundle 
$f^{\ast}T^{\ast} \to M$ 
to linear forms on $M$, 
$H$ and $K$ are nothing but the second fundamental forms 
of subbundles 
$U \to \underline{W}$ and $V \to \underline{W}$, respectively, 
where $\underline{W}$ is a trivial vector bundle $M\times W \to M$. 

From now on, we assume that $M$ is a Riemannian manifold 
with a metric $g$. 
Then, we use the Riemannian structure on $M$ 
and the pull-back connection on $V\to M$ 
to define 
the Laplace operator 
$\Delta^V=\Delta=\nabla^{V^{\ast}}\nabla^V
=-\sum_{i=1}^n \nabla^{V}_{e_i}\left(\nabla^V\right)(e_i)$ 
acting on sections of $V\to M$ 
and 
a bundle homomorphism 
$A \in \Gamma\left(\text{Hom}\,V \right)$ 
is defined as the trace of the composition of the 
second fundamental forms $H$ and $K$:
$$
A:=\sum_{i=1}^n H_{e_i}K_{e_i}, 
$$
where $n$ is the dimension of $M$ 
and $\{e_i\}_{i=1,2,\cdots n}$ is an orthonormal basis of 
the tangent space of $M$. 
The bundle homomorphism $A \in \Gamma\left(\text{Hom}\,V \right)$ 
is called the {\it mean curvature operator} 
of $f$. 
\begin{rem}
Since we use the same symbols for the second fundamental forms and their 
pull-backs, the definition of $A$ may cause confusion. 
We sometimes express $A$ as $\sum H_{df(e_i)}K_{df(e_i)}$. 
\end{rem}

We now describe properties of 
$A \in \Gamma\left(\text{Hom}\,V\right)$. 

\begin{lemma}\label{Nonpos}
The mean curvature operator $A$ is a non-positive symmetric 
(or Hermitian) operator. 
\end{lemma}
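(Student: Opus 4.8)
The plan is to reduce the statement to the single identity of Lemma \ref{sksymHK}, read as the assertion that $K$ is the negative of the fibrewise adjoint of $H$. For each tangent vector $X$ the pulled-back second fundamental forms give bundle maps $H_X\colon U\to V$ and $K_X\colon V\to U$, and Lemma \ref{sksymHK} states that $g_V(H_X u, v)=-g_U(u,K_X v)$ for all $u\in U$ and $v\in V$. Denoting by $H_X^{\ast}\colon V\to U$ the adjoint of $H_X$ with respect to the fibre metrics $g_U$ and $g_V$, this is exactly the statement $K_X=-H_X^{\ast}$. The first step is therefore simply to record this identification; it holds pointwise on $M$ because pulling back by $f$ preserves the fibre metrics and the second fundamental forms.

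The second step is to substitute $K_{e_i}=-H_{e_i}^{\ast}$ into the definition $A=\sum_{i=1}^n H_{e_i}K_{e_i}$, giving
$$
A=-\sum_{i=1}^n H_{e_i}H_{e_i}^{\ast}.
$$
From here both claims are immediate fibrewise linear algebra. Symmetry (Hermiticity) follows because each summand is self-adjoint, since $(H_{e_i}H_{e_i}^{\ast})^{\ast}=H_{e_i}H_{e_i}^{\ast}$, and a sum of self-adjoint endomorphisms is self-adjoint. Non-positivity follows by testing against an arbitrary $v\in V$:
$$
g_V(Av,v)=-\sum_{i=1}^n g_V(H_{e_i}H_{e_i}^{\ast}v,v)=-\sum_{i=1}^n g_U(H_{e_i}^{\ast}v,H_{e_i}^{\ast}v)=-\sum_{i=1}^n \left|H_{e_i}^{\ast}v\right|^2\le 0.
$$

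There is essentially no analytic obstacle here; the whole content is the correct interpretation of Lemma \ref{sksymHK}. The one point requiring care is the sign and the direction of the adjoint: one must verify that the fibre metrics used to form $H_X^{\ast}$ are precisely the pull-backs $g_U$ and $g_V$ appearing in the lemma, so that the minus sign is the genuine geometric one rather than a convention-dependent artifact. It is also worth observing that the expression $A=-\sum_i H_{e_i}H_{e_i}^{\ast}$ is manifestly independent of the choice of orthonormal frame $\{e_i\}$, confirming that $A$ is a well-defined bundle endomorphism. The same computation works verbatim in the real and the complex cases, the only difference being whether $g_U,g_V$ are inner products or Hermitian inner products, which is why the conclusion is phrased as \emph{symmetric (or Hermitian)}.
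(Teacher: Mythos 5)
Your proof is correct and rests on exactly the same key fact as the paper's, namely Lemma \ref{sksymHK}; the paper simply carries out the computation $g_V(At,t)=-\sum_i g_U(K_{e_i}t,K_{e_i}t)=g_V(t,At)$ directly, whereas you repackage the lemma as the adjoint identity $K_X=-H_X^{\ast}$ and write $A=-\sum_i H_{e_i}H_{e_i}^{\ast}$. This is a cosmetic reformulation of the same argument, not a different route.
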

\begin{proof}
It follows from Lemma \ref{sksymHK} that 
\begin{align*}
g_V\left(At,t\right)
=&\sum_{i=1}^n g_V\left( H_{e_i}K_{e_i}t,t\right)
=-\sum_{i=1}^n g_U\left(K_{e_i}t,K_{e_i}t\right) \\
=&\sum_{i=1}^n g_V\left(t, H_{e_i}K_{e_i}t\right)
=g_V(t,At)
\end{align*}
for an arbitrary $t\in W$. 
It immediately yields the desired result. 
\end{proof}

\begin{lemma}\label{ed}
The energy density $e(f)$ is equal to $-\text{\rm trace}\,A$. 
\end{lemma}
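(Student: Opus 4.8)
The plan is to reduce the statement to the pointwise tensorial identity $g_{Gr} = -\operatorname{trace}_Q HK$ recorded in the Remark following Proposition \ref{Riemmet}, which expresses the Fubini--Study type metric as minus the $Q$-trace of the composition of the two second fundamental forms. First I would write out the energy density at a point $x \in M$ as
\[
e(f)(x) = \sum_{i=1}^n \left| df(e_i) \right|^2 = \sum_{i=1}^n g_{Gr}\!\left( df(e_i), df(e_i) \right),
\]
where $\{e_i\}$ is an orthonormal basis of $T_xM$ and each $df(e_i)$ is a tangent vector to $Gr_p(W)$ at $f(x)$.

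Next I would evaluate the identity $g_{Gr}(X,Y) = -\operatorname{trace}_Q(H_X K_Y)$ on $X = Y = df(e_i)$ at the point $f(x)$ and pull back the second fundamental forms $H,K$ along $f$, so that the trace is now taken over the fibre $V_x = (f^{\ast}Q)_x$. Summing over $i$ and commuting the ($i$-independent) trace past the finite sum gives
\[
e(f)(x) = -\sum_{i=1}^n \operatorname{trace}_V\!\left( H_{df(e_i)} K_{df(e_i)} \right) = -\operatorname{trace}_V \sum_{i=1}^n H_{df(e_i)} K_{df(e_i)}.
\]
By the definition of the mean curvature operator, in the form $A = \sum_i H_{df(e_i)} K_{df(e_i)}$ noted in the Remark preceding Lemma \ref{Nonpos}, the bracketed sum is precisely $A$, whence $e(f) = -\operatorname{trace} A$, as claimed.

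Should I wish to avoid quoting the Remark's identity directly, an equivalent route uses Proposition \ref{Riemmet}: writing $g_{Gr}(df(e_i),df(e_i)) = \sum_A g_S(K_{df(e_i)} t_A, K_{df(e_i)} t_A)$, applying the skew-adjointness $g_Q(Hs,t)=-g_S(s,Kt)$ of Lemma \ref{sksymHK} to rewrite each summand as $-g_Q(H_{df(e_i)} K_{df(e_i)} t_A, t_A)$, and finally using the completeness relation $\sum_A g_Q(t_A,\,\cdot\,)\,t_A = \operatorname{id}_V$ for the generating sections $t_A$ (valid because $\{w_A\}$ is orthonormal and $t_A = \pi_Q(w_A)$) to identify $\sum_A g_Q(B t_A, t_A)$ with $\operatorname{trace}_V B$.

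I do not anticipate a serious obstacle here: the content is entirely tensorial and pointwise, and the computation is forced once the metric is rewritten through $H$ and $K$. The only point requiring care is the bookkeeping of the trace after pullback, namely that the $Q$-trace on $Gr_p(W)$ becomes the $V$-trace on $M$, together with checking that the summation in the definition of $A$ matches $\sum_i H_{df(e_i)} K_{df(e_i)}$ exactly; in the Hermitian case one also notes that $\operatorname{trace} A$ is real since $A$ is Hermitian by Lemma \ref{Nonpos}, so the two possible orderings in the trace pairing produce the same real energy density.
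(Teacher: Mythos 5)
Your proposal is correct and follows essentially the same route as the paper: the paper's proof likewise invokes Proposition \ref{Riemmet} (equivalently, the identity $g_{Gr}=-\operatorname{trace}_Q HK$ from the subsequent Remark) to write $g_{Gr}(df(e_i),df(e_i))=-\operatorname{trace}\,H_{df(e_i)}K_{df(e_i)}$ and then sums over $i$ to obtain $-\operatorname{trace}A$. Your alternative derivation via Lemma \ref{sksymHK} and the completeness of the sections $t_A$ is just an unwinding of how that Remark follows from Proposition \ref{Riemmet}, so it adds detail but not a genuinely different argument.
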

\begin{proof}
We use Proposition \ref{Riemmet} to obtain 
$$
e(f)=\sum_{i=1}^n g_{Gr}(df(e_i),df(e_i))
=-\sum_{i=1}^n \text{\rm trace}\,H_{df(e_i)}K_{df(e_i)}
=-\text{\rm trace}\,A 
$$
\end{proof}

Let $t$ be a section of $V \to M$. 
We denote by $Z_t$ the zero set of $t$:
$$
Z_t:=\left\{ x \in M \,|\,t(x)=0\right\}.
$$
\begin{defn}\label{zero}
A space of sections $W$ of a vector bundle $V\to M$ has the 
{\it zero property} for 
the Laplacian if 
$ Z_t \subset Z_{\Delta t}$ for an arbitrary $t \in W$.
\end{defn}

\begin{exam}
If $W$ is an eigenspace for the Laplacian, then $W$ has the zero property.
\end{exam}

\begin{thm}\label{main theorem}
Let $(M,g)$ be an $n$-dimensional Riemannian manifold and 
$f:M\to Gr_p(W)$ a smooth map.  
We fix a scalar product 
$(\cdot,\cdot)$ on $W$, 
which gives a Riemannian structure on $Gr_p(W)$. 

Then, the following two conditions are equivalent. 
\begin{enumerate}
\item $f:(M,g)\to \left(Gr_p(W), (\cdot,\cdot)\right)$ is a harmonic map. 
\item $W$ has the zero property for the Laplacian. 
\end{enumerate}
Under these conditions, we have for an arbitrary 
$t \in W$, 
$$
\Delta t=-At, \quad \text{and} \quad e(f)=-\text{\rm trace}\,A.
$$
\end{thm}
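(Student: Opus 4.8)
The plan is to compute $\Delta t$ explicitly for a section $t \in W$ of the pull-back bundle $V=f^{\ast}Q\to M$ and to relate it simultaneously to the mean curvature operator $A$ and to the tension field $\tau(f)$; the equivalence of the two conditions then falls out by comparing zero sets. First I would fix $w\in W$ and set $s=\pi_S(w)$, $t=\pi_Q(w)$, the sections of $U=f^{\ast}S$ and $V=f^{\ast}Q$ corresponding to $w$. Applying the standard second-order composition formula for a pulled-back section, $\nabla^2(f^{\ast}\sigma)(X,Y)=(\nabla^Q)^2\sigma(df(X),df(Y))+\nabla^Q_{(\nabla df)(X,Y)}\sigma$, and tracing against an orthonormal frame, the Bochner Laplacian of $M$ becomes
$$
\Delta t=-\sum_{i=1}^n \nabla^Q_{df(e_i)}(\nabla^Q t)(df(e_i))-\nabla^Q_{\tau(f)}t ,
$$
where the Hessian $\nabla^Q_{df(e_i)}(\nabla^Q t)(df(e_i))$ is computed on $Gr_p(W)$ and the second term records the trace $\tau(f)$ of $\nabla df$. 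Keeping this tension-field correction term is essential; discarding it would trivialize the statement.

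Next I would evaluate the two pieces using the structural facts from \S2. The identity $\nabla^Q_X(\nabla^Q t)(Y)=H_Y K_X t$ turns the first sum into $\sum_i H_{df(e_i)}K_{df(e_i)}t=At$ by the very definition of the mean curvature operator. For the correction term, the relation $\nabla^Q t=-H(s)$ gives $\nabla^Q_{\tau(f)}t=-H_{\tau(f)}s$. Combining these,
$$
\Delta t=-At+H_{\tau(f)}s. \qquad (\star)
$$

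From $(\star)$ both implications follow. If $f$ is harmonic then $\tau(f)\equiv 0$, so $\Delta t=-At$; since $A$ is a bundle endomorphism, $t(x)=0$ forces $(\Delta t)(x)=-A(x)t(x)=0$, i.e.\ $Z_t\subset Z_{\Delta t}$, which is the zero property of Definition \ref{zero}. For the converse I would argue pointwise: at $x\in Z_t$ the relation $(\star)$ reduces to $(\Delta t)(x)=H_{\tau(f)(x)}s(x)$, and the zero property forces the right-hand side to vanish. The crucial geometric observation is that $t(x)=\pi_Q(w)=0$ means precisely that $w$ lies in the fibre $S_{f(x)}$, in which case $s(x)$ equals $w$ itself under $i_S$; together with the fact that $H$ is the identity of $T=\text{Hom}(S,Q)$ (so that $H_{\tau}s=\tau(s)$), this yields $\tau(f)(x)(w)=0$ for every $w\in S_{f(x)}$. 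Hence the homomorphism $\tau(f)(x)\in \text{Hom}(S_{f(x)},Q_{f(x)})$ annihilates all of $S_{f(x)}$ and therefore vanishes; as $x$ is arbitrary, $\tau(f)\equiv 0$ and $f$ is harmonic. Finally $\Delta t=-At$ is $(\star)$ with $\tau(f)=0$, and $e(f)=-\text{trace}\,A$ is exactly Lemma \ref{ed}.

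The main obstacle I anticipate is the converse implication (2)$\Rightarrow$(1): extracting $\tau(f)\equiv 0$ from the purely set-theoretic zero property requires the sharp identification of $Z_t$ with the condition $w\in S_{f(x)}$ together with the structural fact $H=\mathrm{id}_T$, so that the surviving term $H_{\tau(f)}s$ genuinely probes the entire fibre $S_{f(x)}$ as $w$ ranges over $W$. The forward direction and the two closing identities are then routine consequences of $(\star)$ and the earlier lemmas.
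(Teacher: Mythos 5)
Your proposal is correct and follows essentially the same route as the paper: your identity $(\star)$ is exactly the paper's basic equation $\Delta t - H_{\tau(f)}s + At = 0$, derived from the same Hessian computation, and your converse argument (choosing $w \in S_{f(x)}$ so that $t(x)=0$ while $s(x)$ sweeps out the fibre, then invoking $H=\mathrm{id}_T$ to kill $\tau(f)$) is the paper's argument with the last step made explicit. No gaps.
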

\begin{proof}
The pull-back bundle of the tautological vector bundle and the universal quotient bundle 
are denoted by $U \to M$ and $V \to M$, respectively. 

Let $X$ and $Y$ be tangent vectors of $M$ and $t \in \Gamma(V)$. 
We consider the second fundamental form $K\in \Omega^1(V^{\ast}\otimes U)$. 
Since $\nabla K=0$ on $Gr_p(W)$, we have 
\begin{align*}
(\nabla_X K)(Y;t)&=\nabla^U_X(K_Y t)-K_{\nabla_X Y}t-K_Y(\nabla^V_X t) \\
&=\nabla^U_X(K_Y t)-K_{\tilde{\nabla}_X Y-(\nabla_X df)(Y)}t-K_Y(\nabla^V_X t) \\
&=K_{(\nabla_X df)(Y)}t, 
\end{align*}
where $\nabla$ is the Levi-Civita connection on $M$ and 
$\tilde \nabla$ is the Levi-Civita connection on $Gr_p(W)$. 
In particular, we obtain
$$
-\delta^{\nabla} K = K_{\tau(f)},
$$
where $\tau(f)$ is the 
tension field of $f:M\to Gr_p(W)$. 

Next we fix a vector $w \in W$ and take the corresponding sections 
$s \in W \subset \Gamma(U)$ and $t \in W \subset \Gamma(V)$ to $w$. 
Then we have 
\begin{align*}
\nabla^V_X \left(\nabla^V t\right) (Y)=&
\nabla^V_X \left(\nabla^V_Y t\right) -\nabla^V_{\nabla_X Y} t \\
=&\nabla^Q_{df(X)} \left(\nabla^Q_{df(Y)} t\right) 
-\nabla^Q_{\tilde{\nabla}_X Y-(\nabla_X df)(Y)} t \\
=&\nabla^Q_{df(X)} \left(\nabla^Q t\right) (df(Y))+\nabla^Q_{(\nabla_X df)(Y)}t \\
=&H_YK_X t-H_{(\nabla_X df)(Y)}s.
\end{align*}
In particular, we obtain 
\begin{eqnarray}\label{Baseq} 
\Delta t -H_{\tau(f)}s +\sum_{i=1}^n H_{e_i}K_{e_i} t=
\Delta t -H_{\tau(f)}s +At=0.
\end{eqnarray}

First, we assume condition (1). 
The assumption that $f:M \to Gr_p(W)$ is harmonic yields that 
the equation (\ref{Baseq}) reduces to 
\begin{eqnarray}\label{Baseqmin}
\Delta t +At=0. 
\end{eqnarray}
We immediately conclude that $W$ has the zero property. 

Conversely, suppose condition (2). 
For an arbitrary vector $u \in U_x$, $x\in M$, we can find 
an element $w \in W$ such that the corresponding sections 
$s\in \Gamma(U)$ and $t\in \Gamma(V)$ satisfy 
$$
s(x)=u, \quad \text{and}\quad t(x)=0. 
$$
The equation (\ref{Baseq}) gives us 
$$
H_{\tau(f)}s=\Delta t+At. 
$$
Since $W$ has the zero property for the Laplacian and  
$t(x)=0$,  
it follows that $\Delta t(x)=0$. 
Hence we have 
$$
H_{\tau(f)}u=0, 
$$
and so $\tau(f)=0$, which means that $f$ is a harmonic map. 

\end{proof}

\begin{thm}\label{harmGrirr}
Let $(M,g)$ be an $n$-dimensional Rimannian manifold 
and $f:M\to Gr_p(W)$ a map. 
We fix a scalar product $(\cdot,\cdot)$ on $W$. 

Then, the following two conditions are equivalent. 
\begin{enumerate}
\item $f:(M,g)\to \left(Gr_p(W), (\cdot,\cdot)\right)$ is a harmonic map and there exists a function 
$h(x)$ such that 
$A_{x}=-h(x)Id_V$ for 
an arbitrary $x \in M$.  
\item There exists a function $h$ on $M$ such that 
$$
\Delta t = ht \quad \text{for an arbitrary}\,\,t\in W. 
$$
\end{enumerate}
Moreover, under the above conditions, we have 
$$
e(f)=qh, 
$$
where $e(f)$ is the energy density of $f$. 
\end{thm}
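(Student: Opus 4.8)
The plan is to deduce everything from Theorem \ref{main theorem}, viewing the present statement as the special case in which the mean curvature operator is pointwise a scalar multiple of the identity. The one structural input needed beyond the Main Theorem is that $W$ globally generates $V=f^{\ast}Q\to M$: the evaluation map $\underline{W}\to V$ is surjective, so every vector of a fibre $V_x$ is realised as $t(x)$ for some $t\in W$.

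First I would treat $(1)\Rightarrow(2)$. Assuming $f$ harmonic, Theorem \ref{main theorem} gives $\Delta t=-At$ for every $t\in W$. Substituting the hypothesis $A_x=-h(x)\,Id_V$ yields $\Delta t=-At=-(-h\,t)=h\,t$ pointwise, which is exactly condition $(2)$ with the same function $h$.

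For the converse $(2)\Rightarrow(1)$ I would argue in two stages. The identity $\Delta t=h\,t$ shows at once that $t(x)=0$ forces $\Delta t(x)=h(x)t(x)=0$, so $Z_t\subset Z_{\Delta t}$ and $W$ has the zero property (Definition \ref{zero}); Theorem \ref{main theorem} then makes $f$ harmonic and also supplies $\Delta t=-At$. Comparing with $\Delta t=h\,t$ gives the section identity $At=-h\,t$ for all $t\in W$. Evaluating at a point $x$ and using that $A$ is a bundle homomorphism, $A_x\bigl(t(x)\bigr)=-h(x)\,t(x)$; since $W$ globally generates $V$, the vectors $t(x)$ exhaust $V_x$, whence $A_x=-h(x)\,Id_V$. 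This global-generation step, rather than any computation, is the only delicate point: it is what upgrades an identity holding on the distinguished sections in $W$ to a pointwise identity of endomorphisms of $V$, and it is the reason the hypothesis must be imposed on all of $W$ rather than on a spanning subfamily of sections.

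Finally, the energy density follows by taking traces. By Lemma \ref{ed} we have $e(f)=-\text{trace}\,A$, and since $A_x=-h(x)\,Id_V$ acts on the rank-$q$ bundle $V=f^{\ast}Q$, we get $\text{trace}\,A=-qh$, hence $e(f)=qh$.
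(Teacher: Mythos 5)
Your proposal is correct and follows essentially the same route as the paper: both directions are reduced to the identity $\Delta t + At = 0$ from Theorem \ref{main theorem} (equivalently equation \eqref{Baseq} with $\tau(f)=0$), the zero property is used to get harmonicity in the converse direction, and the energy density comes from Lemma \ref{ed} by taking the trace of $A=-h\,Id_V$ on the rank-$q$ bundle. The only difference is that you make explicit the global-generation step needed to pass from $A_x(t(x))=-h(x)t(x)$ for $t\in W$ to $A_x=-h(x)\,Id_V$, which the paper leaves implicit.
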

\begin{proof}
First, suppose condition (1). 
We have from equation (\ref{Baseq}) that 
$$
\Delta t + At=\Delta t-h(x)t=0, 
$$
for an arbitrary $t\in W$. 

Conversely, suppose condition (2). 
Theorem \ref{main theorem} yields that $W$ has the zero property, and so 
$f$ is harmonic. 

It follows from (\ref{Baseq}) that 
$$
\Delta t+At=0, 
$$
for an arbitrary $t\in W$. 
Then condition (2) yields that $A=-h Id_{V}$ on $V \to M$. 

Lemma \ref{ed} gives 
$\text{\rm trace}\,A=-qh$. 
\end{proof}

Next, suppose that $f:M \to Gr_p(W)$ is an isometric immersion. 
Instead of the tension field, we can use the mean curvature vector 
to obtain similar results. 
In this case, note that $e(f)=n$, because $f$ is an isometric immersion. 
By replacing harmonicity by minimality, 
we obtain a straightforward generalization of Theorem of Takahashi \cite{TTaka}.  
\begin{thm}\label{genTak}
Let $(M,g)$ be an $n$-dimensional Riemannian manifold 
and $f:(M,g)\to \left(Gr_p(W), (\cdot,\cdot) \right)$ an isometric immersion. 

Then, the following two conditions are equivalent. 
\begin{enumerate}
\item $f:M\to Gr_p(W)$ is a minimal immersion and there exists a function 
$h(x)$ such that 
$A_{x}=-h(x)Id_V$ for 
an arbitrary $x \in M$.  
\item There exists a function $h$ on $M$ such that 
$$
\Delta t = ht 
\quad \text{for an arbitrary}\,\,\,t\in W. 
$$
\end{enumerate}
Moreover, under the above conditions, we have 
$$
n=qh.
$$
Hence the function $h$ is really a constant function. 
As a consequence, $t$ is an eigensection with an eigenvalue $\frac{n}{q}$. 
\end{thm}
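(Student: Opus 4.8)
The plan is to obtain this statement as a direct specialization of Theorem \ref{harmGrirr} to isometric immersions, using only two elementary facts about such maps. First I would observe that when $f$ is an isometric immersion its tension field coincides with the mean curvature vector, so that harmonicity is synonymous with minimality (as recalled in \S2). Under this dictionary, condition (1) above is precisely condition (1) of Theorem \ref{harmGrirr} with ``harmonic map'' rephrased as ``minimal immersion'', while condition (2) is word-for-word identical. Hence the equivalence of (1) and (2) requires no new argument: it is exactly the content of Theorem \ref{harmGrirr}.

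For the quantitative (``moreover'') part I would invoke the energy-density identity $e(f)=qh$ furnished by Theorem \ref{harmGrirr}. The only additional input is that an isometric immersion has energy density equal to the dimension of the domain: for an orthonormal frame $e_1,\dots,e_n$ of $T_xM$ one computes $|df(e_i)|^2 = g_{Gr}(df(e_i),df(e_i)) = g(e_i,e_i) = 1$, and summing over $i$ gives $e(f)=n$. Comparing the two expressions for $e(f)$ then forces $n=qh$, so that $h\equiv n/q$ is a globally constant function.

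With $h$ identified as the constant $n/q$, I would substitute back into condition (2) to read off $\Delta t = \frac{n}{q}\,t$ for every $t\in W$. This is exactly the assertion that each such section is an eigensection of the Laplacian with eigenvalue $n/q$, which completes the proof.

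Since the heavy lifting has already been carried out in Theorem \ref{harmGrirr}, I do not expect a genuine obstacle here. The step most deserving of care is verifying that the pointwise condition $A_{x}=-h(x)Id_V$ transfers unchanged between the two theorems: it is this condition, rather than minimality alone, that pins down the scalar structure of the Laplacian and ultimately yields the constant eigenvalue. The only other bookkeeping point is to confirm that the fibre rank appearing in $e(f)=qh$ is indeed $q=\operatorname{rank}V$, the rank of the pulled-back quotient bundle, so that the identity $n=qh$ is read off with the correct constant.
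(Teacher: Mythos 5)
Your proposal is correct and is essentially the paper's own argument: the paper derives Theorem \ref{genTak} from Theorem \ref{harmGrirr} by noting that for an isometric immersion the tension field is the mean curvature vector (so harmonicity equals minimality) and that $e(f)=n$, whence $n=qh$ and $h$ is the constant $n/q$. No further comment is needed.
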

\begin{rem}
This gives us the original form of Theorem of Takahashi \cite{TTaka}. 
Indeed, in the case that the target is a sphere, 
$V\to M$ is of rank $1$, and so the mean curvature operator 
$A$ can be always considered as a function. 
\end{rem}

\section{Functionals}

Let $f:(M,g)\to \left(Gr_p(W), (\cdot,\cdot)\right)$ be a smooth map. 
We denote by $V \to M$ the pull-back bundle of the universal quotient bundle over $Gr_p(W)$ by $f$. 

Since the connection on $\underline{W}\to M$ is flat, 
it follows from the Gauss-Codazzi equations for vector bundles 
that 
$$
R^V(X,Y)=H_YK_X-H_XK_Y, \quad 
(\nabla_X K)(Y)=(\nabla_Y K)(X).
$$
We have
\begin{align*}
(\nabla_X R^V)(Y,Z)
=&H_{(\nabla_X df)(Z)}K_Y+H_ZK_{(\nabla_X df)(Y)}\\
&-H_{(\nabla_X df)(Y)}K_Z-H_YK_{(\nabla_X df)(Z)}.
\end{align*}
In particular, 
\begin{align}\label{YMV}
(\delta^{\nabla} R^V)(X)=&-(\nabla_{e_i} R^V)(e_i,X) \\
=&-H_{(\nabla_{e_i} df)(X)}K_{e_i}-H_XK_{\tau(f)} \notag \\
&+H_{\tau(f)}K_X+H_{e_i}K_{(\nabla_{e_i} df)(X)}.\notag 
\end{align}
On the other hand, we obtain
\begin{equation}\label{A}
\nabla_X A=H_{(\nabla_{e_i} df)(X)}K_{e_i}+
H_{e_i}K_{(\nabla_{e_i} df)(X)}.
\end{equation}

Immediately, we have
\begin{lemma}
Let $f:(M,g)\to \left(Gr_p(W), (\cdot,\cdot)\right)$ be a smooth map. 
If $f$ is a totally geodesic map, 
then the pull-back connection on $V \to M$ is a 
Yang-Mills connection and $A$ is parallel. 
\end{lemma}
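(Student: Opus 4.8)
The plan is to read the statement off directly from the two curvature identities already established before the lemma, namely formula (\ref{YMV}) for the codifferential $\delta^{\nabla}R^V$ of the curvature and formula (\ref{A}) for the covariant derivative $\nabla A$ of the mean curvature operator. The only additional input required is the defining property of a totally geodesic map, so this should be a short deduction rather than a fresh computation.

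First I would recall that, by definition, $f$ being totally geodesic means $\nabla df \equiv 0$. Tracing this gives $\tau(f) = \mathrm{trace}\,\nabla df = 0$, so in particular $f$ is harmonic. Consequently every expression of the form $(\nabla_{e_i} df)(X)$, as well as every occurrence of $\tau(f)$, vanishes identically on $M$.

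Next I would substitute these vanishings into (\ref{YMV}). Each of the four terms on its right-hand side carries as a subscript either $(\nabla_{e_i} df)(X)$ or $\tau(f)$, so every term is zero and $(\delta^{\nabla}R^V)(X)=0$ for all $X$. Since the Bianchi identity $d^{\nabla}R^V=0$ holds automatically, the condition $\delta^{\nabla}R^V=0$ is exactly the Yang--Mills equation, whence the pull-back connection on $V\to M$ is a Yang--Mills connection. In the same way, both terms on the right-hand side of (\ref{A}) contain the factor $(\nabla_{e_i} df)(X)$, so $\nabla_X A = 0$ for every $X$; that is, $A$ is parallel.

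Since the whole argument is a matter of substitution, I do not expect a serious obstacle. The one point worth checking carefully is that the term lists in (\ref{YMV}) and (\ref{A}) are complete, i.e. that no curvature or torsion contribution survives that is not proportional to $\nabla df$. This is precisely guaranteed by the way (\ref{YMV}) and (\ref{A}) were derived, where the Gauss--Codazzi relations $R^V(X,Y)=H_YK_X-H_XK_Y$ and $(\nabla_X K)(Y)=(\nabla_Y K)(X)$ were used to cancel exactly those terms that are independent of $\nabla df$.
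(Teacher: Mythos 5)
Your proof is correct and is essentially identical to the paper's own argument: the paper likewise observes that $\nabla df=0$ makes every term in \eqref{YMV} and \eqref{A} vanish, yielding the Yang--Mills equation and $\nabla A=0$. You have simply spelled out the substitution more explicitly than the paper does.
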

\begin{proof}
By definition, $\nabla df=0$. 
The result follows from \eqref{YMV} and \eqref{A}. 
\end{proof}

We shall present another occurrence in which the pull-back connection is 
a Yang-Mills connection and $A$ is parallel. 
To do so, 
we need 

\begin{lemma}\label{hollap}
Let $M$ be a K\"ahler manifold and $V\to M$ a holomorphic 
vector bundle with a Hermitian metric. 
We take a compatible connection $\nabla$ on $V\to M$. 
Then, for an arbitrary holomorphic section $t \in \Gamma(V)$, we 
have
$$
\Delta t=K_{EH}t, 
$$
where $K_{EH}$ is the mean curvature in the sense of Kobayashi \cite{Kob}:
$$
K_{EH}=\sqrt{-1}\sum_{i=1}^n R(e_{i},Je_{i})
$$
where $J$ is the complex structure of $M$, 
$e_1,Je_1,\cdots, e_n,Je_n$ 
is an orthonormal basis and $R$ is the curvature of the 
compatible connection. 
\end{lemma}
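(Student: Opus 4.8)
The plan is to prove this by a Weitzenb\"ock-type computation that reduces the rough Laplacian to a curvature contraction, holomorphicity being used to kill the mixed second derivatives. First I would write the Laplacian in the frame-independent form $\Delta t=-\sum_v \nabla^2_{v,v}t$, where $\nabla^2_{X,Y}:=\nabla_X\nabla_Y-\nabla_{\nabla_X Y}$ is the second covariant derivative built from $\nabla$ on $V\to M$ and the Levi-Civita connection on $M$, and the sum runs over the orthonormal frame $e_1,Je_1,\dots,e_n,Je_n$. Since $\nabla^2_{X,Y}$ is tensorial and bilinear, this invariant form frees the computation from Christoffel bookkeeping and lets me complexify the frame freely.

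Next I would pass to the complexified frame $Z_i=\tfrac12(e_i-\sqrt{-1}\,Je_i)$ and $\bar Z_i=\tfrac12(e_i+\sqrt{-1}\,Je_i)$, so that $e_i=Z_i+\bar Z_i$ and $Je_i=\sqrt{-1}(Z_i-\bar Z_i)$. Expanding bilinearly, the diagonal terms $\nabla^2_{Z_i,Z_i}$ and $\nabla^2_{\bar Z_i,\bar Z_i}$ cancel and one is left with the identity $\nabla^2_{e_i,e_i}t+\nabla^2_{Je_i,Je_i}t=2\nabla^2_{Z_i,\bar Z_i}t+2\nabla^2_{\bar Z_i,Z_i}t$. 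Hence $\Delta t=-2\sum_i\bigl(\nabla^2_{Z_i,\bar Z_i}t+\nabla^2_{\bar Z_i,Z_i}t\bigr)$.

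The two hypotheses now enter at exactly two points. Because $\nabla$ is the compatible (Chern) connection, its $(0,1)$-part is $\bar\partial$, so holomorphicity of $t$ gives $\nabla_{\bar Z_i}t=0$ identically; and because $M$ is K\"ahler, $\nabla$ preserves the type decomposition, so $\nabla_{Z_i}\bar Z_i$ is again of type $(0,1)$ and therefore $\nabla_{\nabla_{Z_i}\bar Z_i}t=0$ as well. Together these force $\nabla^2_{Z_i,\bar Z_i}t=\nabla_{Z_i}(\nabla_{\bar Z_i}t)-\nabla_{\nabla_{Z_i}\bar Z_i}t=0$. I would then invoke the curvature identity $\nabla^2_{\bar Z_i,Z_i}-\nabla^2_{Z_i,\bar Z_i}=R(\bar Z_i,Z_i)$, valid since the Levi-Civita connection is torsion-free, to obtain $\nabla^2_{\bar Z_i,Z_i}t=-R(Z_i,\bar Z_i)t$.

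Finally I would substitute to get $\Delta t=2\sum_i R(Z_i,\bar Z_i)t$, and compute $R(Z_i,\bar Z_i)=\tfrac{\sqrt{-1}}{2}R(e_i,Je_i)$ by expanding and using the antisymmetry of $R$ together with $R(e_i,e_i)=R(Je_i,Je_i)=0$; this yields $\Delta t=\sqrt{-1}\sum_i R(e_i,Je_i)t=K_{EH}t$. The main obstacle is not conceptual but bookkeeping: keeping the type decomposition, the torsion-free identity, and the numerical factors of the complexified frame mutually consistent, and in particular making sure the K\"ahler condition is used precisely where it guarantees that $\nabla$ respects bidegree, since that is exactly what makes the mixed term $\nabla^2_{Z_i,\bar Z_i}t$ vanish and isolates the curvature contribution.
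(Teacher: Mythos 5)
Your proof is correct and follows essentially the same route as the paper: both complexify the orthonormal frame to $Z_i,\bar Z_i$, use holomorphicity of $t$ together with the K\"ahler property (so that $\nabla_{Z_i}\bar Z_i$ stays of type $(0,1)$) to kill the mixed term $\nabla^2_{Z_i,\bar Z_i}t$, and identify what remains of the rough Laplacian with $\sqrt{-1}\sum_i R(e_i,Je_i)t$. You merely make explicit the Ricci-identity step and the role of the K\"ahler condition that the paper leaves implicit; the signs and factors all check out.
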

\begin{proof}
We put $Z_i=\frac{1}{2}\left(e_i-\sqrt{-1}Je_i \right)$. 
We can extend the vectors $Z_i$ locally to get 
a local holomorphic frame denoted by the same symbols. 
On the one hand, we have 
$$
\nabla_{Z_i}\left(\nabla t \right)(\overline{Z_i})=
\nabla_{Z_i}\left(\nabla_{\overline{Z_i}} t \right)
-\nabla_{D_{Z_i}{\overline{Z_i}}}t=0,
$$
because $t$ is a holomorphic section. 

On the other hand, we get 
$$
\sum_{i=1}^n\nabla_{Z_i}\left(\nabla t \right)(\overline{Z_i})
=\frac{1}{4}\left\{-\Delta t
+ \sum_{i=1}^n \sqrt{-1}R(e_i,Je_i)t\right\}.
$$
\end{proof}
\begin{cor}
Under the hypothesis of Lemma \ref{hollap}, 
$V \to M$ is an Einstein-Hermitian vector bundle 
if and only if the space of holomorphic sections $H^0(M,V)$ 
of $V\to M$ is an eigenspace of the Laplacian.
\end{cor}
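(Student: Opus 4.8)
The plan is to use Lemma \ref{hollap} as a dictionary: on holomorphic sections the Laplacian $\Delta$ acts exactly as the mean curvature endomorphism $K_{EH}$, so the two conditions in the corollary are two readings of the single identity $\Delta t=K_{EH}t$. Recall that $V\to M$ is \emph{Einstein--Hermitian} precisely when $K_{EH}=\lambda\,Id_V$ for a (real) constant $\lambda$, and that $H^0(M,V)$ \emph{is an eigenspace} means that there is a single constant $\lambda$ with $\Delta t=\lambda t$ for every $t\in H^0(M,V)$.

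For the implication Einstein--Hermitian $\Rightarrow$ eigenspace I would simply substitute. If $K_{EH}=\lambda\,Id_V$ with $\lambda$ constant, then Lemma \ref{hollap} gives $\Delta t=K_{EH}t=\lambda t$ for every holomorphic section $t$. Thus each $t\in H^0(M,V)$ is an eigensection for the common eigenvalue $\lambda$, so $H^0(M,V)$ lies in a single eigenspace. This direction is purely formal and uses nothing beyond Lemma \ref{hollap}.

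For the converse I would run the dictionary backwards. Assuming $\Delta t=\lambda t$ on $H^0(M,V)$ with $\lambda$ constant, Lemma \ref{hollap} yields $K_{EH}t=\Delta t=\lambda t$, that is,
$$
\left(K_{EH}-\lambda\,Id_V\right)t=0\qquad\text{for every }t\in H^0(M,V).
$$
It remains to upgrade this from ``annihilates every holomorphic section'' to ``vanishes as a bundle endomorphism.'' Evaluating at a point $x\in M$, the Hermitian endomorphism $\left(K_{EH}-\lambda\,Id_V\right)_x$ kills $t(x)$ for every $t\in H^0(M,V)$; if the values $t(x)$ span the fibre $V_x$, then this endomorphism is zero at $x$, and letting $x$ vary gives $K_{EH}=\lambda\,Id_V$, i.e.\ $V\to M$ is Einstein--Hermitian.

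The one genuinely non-formal step, and hence the main obstacle, is exactly this last passage: it requires that $V\to M$ be globally generated by its holomorphic sections $H^0(M,V)$, so that fibrewise the section values span. In the situations to which the corollary is applied---most notably the pull-back of the universal quotient bundle, which is globally generated by $W$ by construction---this holds automatically, so I would either impose global generation as the standing hypothesis or verify it in each application. I would also remark that the corollary forces the proportionality factor to be \emph{constant}: the eigenspace condition builds this in, so the equivalence is with the strong Einstein--Hermitian condition rather than with $K_{EH}$ being merely a function multiple of $Id_V$.
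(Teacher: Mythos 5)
Your proof is correct and is essentially the argument the paper intends (the corollary is stated without proof as an immediate consequence of Lemma \ref{hollap}): both directions are the substitution $\Delta t=K_{EH}t$. Your observation that the converse needs the fibres $V_x$ to be spanned by values of holomorphic sections is a genuine and correct caveat about the statement as written; in the paper's applications (e.g.\ Proposition \ref{EH}) the bundle is globally generated by $W\subset H^0(M,V)$, so the hypothesis is satisfied there.
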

\begin{prop}\label{EH}
Let $M$ be a compact K\"ahler manifold and $Gr_p(W)$ a complex Grassmannian 
or a complex quadric. 
Suppose that $f:M\to Gr_p(W)$ is a holomorphic map. 
Then the mean curvature $K_{EH}$ of the pull-back bundle $f^{\ast}Q\to M$ of the quotient bundle 
equals the mean curvature operator $A$ of $f$ up to sign. 
\end{prop}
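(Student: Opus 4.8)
The plan is to reduce everything to the Gauss--Codazzi identity $R^V(X,Y)=H_YK_X-H_XK_Y$ recorded at the start of \S4 and then to exploit the type decomposition of the second fundamental forms that holomorphicity forces. On the target, $H$ is of type $(1,0)$ and $K$ of type $(0,1)$; since $f$ is holomorphic, $df\circ J=\tilde J\circ df$, so the pulled-back forms retain these types on $M$. Concretely, I would first establish the pointwise identities
$$
H_{JX}=\sqrt{-1}\,H_X,\qquad K_{JX}=-\sqrt{-1}\,K_X,
$$
for every tangent vector $X$, where $J$ is the complex structure of $M$. These are nothing but the statements that $H$ is a $(1,0)$-form and $K$ a $(0,1)$-form, transported through $df$, together with the fact that the compatible connection on $f^{\ast}Q\to M$ used in Lemma \ref{hollap} is exactly the pull-back connection $\nabla^V$ for which the Gauss--Codazzi identity was derived.

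Next I would feed these into the curvature. Choosing an orthonormal basis $e_1,Je_1,\dots,e_m,Je_m$ with $m=\dim_{\mathbf{C}}M$, a one-line computation gives
$$
R^V(e_i,Je_i)=H_{Je_i}K_{e_i}-H_{e_i}K_{Je_i}
=\sqrt{-1}\,H_{e_i}K_{e_i}+\sqrt{-1}\,H_{e_i}K_{e_i}
=2\sqrt{-1}\,H_{e_i}K_{e_i}.
$$
Summing over $i$ and multiplying by $\sqrt{-1}$ as in Lemma \ref{hollap} yields $K_{EH}=-2\sum_{i=1}^m H_{e_i}K_{e_i}$. On the other hand, the same type identities give $H_{Je_i}K_{Je_i}=(\sqrt{-1})(-\sqrt{-1})H_{e_i}K_{e_i}=H_{e_i}K_{e_i}$, so that the defining sum for $A$ over all $2m$ real basis vectors collapses to $A=2\sum_{i=1}^m H_{e_i}K_{e_i}$. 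Comparing the two expressions then produces $K_{EH}=-A$, which is precisely the assertion up to sign.

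The one genuine point to watch is the bookkeeping between real and complex dimensions: the operator $A$ of \S3 is a trace over the real tangent space (a sum of $2m$ terms), whereas $K_{EH}$ in Lemma \ref{hollap} is indexed by the $m$ complex directions, so the two factors of $2$ must be tracked carefully --- it is exactly their cancellation that yields a clean equality rather than a spurious constant. For the complex-quadric case there is nothing new analytically: the quadric is K\"ahler and carries the analogous holomorphic sub- and quotient-bundle structure sitting inside a flat trivial bundle, so both the Gauss--Codazzi identity and the $(1,0)$/$(0,1)$ splitting of $H$ and $K$ hold verbatim, and the identical computation applies. I therefore expect the main obstacle to lie not in the curvature computation, which is short, but in making the type decomposition of the \emph{pulled-back} forms rigorous and in verifying that the compatible connection of $f^{\ast}Q\to M$ coincides with $\nabla^V$; once these are in place the identity $K_{EH}=-A$ is immediate.
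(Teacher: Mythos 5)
Your argument is correct, but it is genuinely different from the one in the paper. The paper's proof never touches the curvature directly: it combines Theorem \ref{main theorem} (holomorphicity $\Rightarrow$ harmonicity $\Rightarrow$ $\Delta t=-At$ for all $t\in W$) with Lemma \ref{hollap} ($\Delta t=K_{EH}t$ for holomorphic sections), and then uses the fact that $W$ globally generates $f^{\ast}Q\to M$ to conclude $A=-K_{EH}$. Your route instead proves the identity pointwise from the Gauss--Codazzi relation $R^V(X,Y)=H_YK_X-H_XK_Y$ together with the type identities $H_{JX}=\sqrt{-1}\,H_X$, $K_{JX}=-\sqrt{-1}\,K_X$, which the paper records for the target Grassmannian and which are transported to $M$ precisely because $df$ intertwines the complex structures. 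The computation $R^V(e_i,Je_i)=2\sqrt{-1}\,H_{e_i}K_{e_i}$ and $H_{Je_i}K_{Je_i}=H_{e_i}K_{e_i}$ gives $K_{EH}=-2\sum_iH_{e_i}K_{e_i}=-A$, and your factor-of-two bookkeeping between the real trace defining $A$ and the complex-indexed sum defining $K_{EH}$ is exactly right. What your approach buys is a purely local identity that bypasses the Laplacian, the harmonicity of holomorphic maps, and the global-generation step entirely; what the paper's approach buys is brevity given machinery it has already built, and it keeps the emphasis on the eigenvalue equation $\Delta t=-At$ that drives the rest of the paper. Both proofs share the one genuine verification you correctly flag: that the pull-back connection $\nabla^V$ is the Chern connection of the pull-back metric and holomorphic structure (the paper needs this to apply Lemma \ref{hollap} to $f^{\ast}Q$; you need it to identify the $R$ in the definition of $K_{EH}$ with $R^V$). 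This is standard for holomorphic $f$ since $(f^{\ast}\nabla)^{0,1}=f^{\ast}(\bar\partial_Q)=\bar\partial_{f^{\ast}Q}$, but it deserves the sentence you give it, and the same care is warranted for the quadric, where the type decomposition of $H$ and $K$ comes from the complex structure on the rank-two real quotient bundle rather than from a complex Grassmannian structure on the ambient space.
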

\begin{proof}
Since $f$ is holomorphic, 
it follows that the map $f$ is a harmonic map 
and $W$ can be regarded as a space of holomorphic sections of the pull-back 
of the universal quotient bundle. 
Then Theorem \ref{main theorem} implies that $\Delta t+At=0$ for $t\in W$. 
On the other hand, Lemma \ref{hollap} yields that $\Delta t=K_{EH}t$ for all holomorphic sections of $f^{\ast}Q \to M$. 
Since $W$ globally generates $f^{\ast}Q \to M$, it follows that $A=-K_{EH}$. 
\end{proof}

\begin{cor}
Let $M$ be a compact K\"ahler manifold and $Gr_p(W)$ a complex Grassmannian 
or a complex quadric. 
Suppose that $f:M\to Gr_p(W)$ is a holomorphic map such that 
the pull-back bundle $V\to M$ 
is an Einstein-Hermitian vector bundle 
with respect to the pull-back metric. 
Then  the pull-back connection is a 
Yang-Mills connection and the mean curvature operator $A$ is parallel. 
\end{cor}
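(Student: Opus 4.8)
The plan is to establish the two assertions in turn: the parallelism of the mean curvature operator comes almost immediately from Proposition \ref{EH}, and I then feed it, together with the type decomposition forced by holomorphicity, into the Gauss-Codazzi identities \eqref{YMV} and \eqref{A} to obtain the Yang-Mills equation.

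For the parallelism of $A$, note that since $f$ is holomorphic, Proposition \ref{EH} gives $A=-K_{EH}$, where $K_{EH}$ is the mean curvature of $V=f^{\ast}Q\to M$ computed with the pull-back metric and connection. The hypothesis that $V\to M$ is Einstein-Hermitian means exactly that $K_{EH}=c\,Id_V$ for a constant $c$; since $Id_V$ is parallel and $c$ is constant, $A=-c\,Id_V$ satisfies $\nabla A=0$. This is the easy half.

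For the Yang-Mills equation I would start from \eqref{YMV}. A holomorphic map between K\"ahler manifolds is harmonic, so $\tau(f)=0$ and the two terms of \eqref{YMV} carrying $\tau(f)$ vanish, leaving
$$
(\delta^{\nabla}R^V)(X)=-H_{(\nabla_{e_i}df)(X)}K_{e_i}+H_{e_i}K_{(\nabla_{e_i}df)(X)}=:-P(X)+Q(X).
$$
With the same notation, \eqref{A} reads $\nabla_X A=P(X)+Q(X)$, so the parallelism $\nabla A=0$ obtained above forces $P(X)+Q(X)=0$ for every $X$. The crucial input is now the interaction with complex types: on a complex Grassmannian (or a complex quadric) $H$ is of type $(1,0)$ and $K$ is of type $(0,1)$, and these types are preserved under the pull-back along the holomorphic map $f$. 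Moreover, for a holomorphic map the second fundamental form satisfies $\nabla^{0,1}(\partial f)=0$, so $\nabla df$ has no $(1,1)$ component: $\nabla df=(\nabla df)^{2,0}+(\nabla df)^{0,2}$, the $(2,0)$ part taking values in the $(1,0)$ tangent space and the $(0,2)$ part in the $(0,1)$ tangent space. Hence $H$ selects only the $(2,0)$ value while $K$ selects only the $(0,2)$ value, which shows that $P(X)$ depends on $X$ only through $X^{1,0}$ and that $Q(X)$ depends on $X$ only through $X^{0,1}$.

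From $P+Q=0$ I would then separate the types: taking $X$ of type $(1,0)$ kills $Q$ and forces $P\equiv 0$, while taking $X$ of type $(0,1)$ kills $P$ and forces $Q\equiv 0$. Therefore $(\delta^{\nabla}R^V)(X)=-P(X)+Q(X)=0$ for all $X$, i.e.\ the pull-back connection is Yang-Mills. I expect the one delicate point to be the justification that the $(1,1)$-part of $\nabla df$ vanishes for holomorphic $f$ and that the $(2,0)$ and $(0,2)$ parts take values in the holomorphic and anti-holomorphic tangent bundles respectively, since it is exactly this that lets $H$ and $K$ isolate the dependence on $X^{1,0}$ and $X^{0,1}$. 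As a conceptual check, the conclusion matches the classical fact that an Einstein-Hermitian (Chern) connection over a compact K\"ahler manifold is automatically Yang-Mills, its curvature being of type $(1,1)$ with $\sqrt{-1}\Lambda R^V=K_{EH}=c\,Id_V$ parallel.
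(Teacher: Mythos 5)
Your proof is correct, but the route to the Yang--Mills statement is genuinely different from the paper's. The paper disposes of both claims in two sentences: the pull-back connection is Yang--Mills because any Einstein--Hermitian connection minimizes the Yang--Mills functional (a standard fact from \cite{Kob}), and $A$ is parallel by Proposition \ref{EH} since $A=-K_{EH}=-c\,Id_V$. You prove the parallelism the same way, but then you \emph{reuse} that parallelism to extract the Yang--Mills equation directly from the pulled-back Gauss--Codazzi identities: with $\tau(f)=0$, equations \eqref{YMV} and \eqref{A} give $\delta^{\nabla}R^V=-P+Q$ and $\nabla A=P+Q$, and the type argument (vanishing of $(\nabla df)^{1,1}$ for a holomorphic map, $H$ of type $(1,0)$, $K$ of type $(0,1)$) splits $P+Q=0$ into $P=Q=0$. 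This is a valid and rather pleasant derivation that stays entirely inside the machinery of \S 4 and makes explicit \emph{why} $\nabla A=0$ and $\delta^{\nabla}R^V=0$ come together here, whereas the paper's argument is shorter but imports the variational fact as a black box (your closing ``conceptual check'' is in fact the paper's entire proof of that half). One caveat: the type statements for $H$ and $K$ are established in \S 2 only for the complex Grassmannian; for the complex quadric, which the paper treats as a \emph{real} Grassmannian whose quotient bundle carries a complex structure, you would need to verify separately that $H$ and $K$ are still of pure type with respect to that structure, a point the paper's proof sidesteps entirely.
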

\begin{proof}
Since any Einstein-Hermitian connection minimizes the Yang-Mills functional, 
the pull-back connection is a Yang-Mills connection. 
Proposition \ref{EH} yields that $A$ is parallel. 
\end{proof}


More explicitly, under the condition that $M$ is compact, we naturally have 
three functionals on the space of mappings $f:M\to Gr_p(W)$:
$$
\int_M |R^V|^2 dv_M, \,\, 
\int_M |H|^2 dv_M, \,\,
\int_M |A|^2 dv_M,
$$
where $dv_M$ is the Riemannian volume form on $M$. 
The first is the Yang-Mills functional, and Lemma \ref{ed} yields that 
$$
\int_M |H|^2 dv_M = \int_M e(f) dv_M. 
$$
If $M$ is a K\"ahler manifold and $f:M\to Gr_p(W)$ is a holomorphic map 
into a complex Grassmannian or a complex quadric, then Proposition 
\ref{EH} yields that 
$$
\int_M |A|^2 dv_M=\int_M |K_{EH}|^2 dv_M.
$$
In this case, it is the same as the Yang-Mills functional up to a topological constant 
\cite[p.111]{Kob}. 

Hence if $f:M\to Gr_p(W)$ is totally geodesic or 
if $f:M\to Gr_p(W)$ is holomorphic and the pull-back bundle 
$V \to M$ is an Einstein-Hermitian vector bundle, 
then $f$ is an extremal of all three functionals.

We consider a set of harmonic maps $f$ from a compact Riemannian manifold $(M,g)$ into a Grassmannian $Gr_p(W)$ 
with the fixed energy $E(f)$. 
Let $\mu$ be a constant determined by 
\begin{equation}\label{constant}
q\mu \text{Vol}\,(M)=E(f), 
\end{equation}
where $q=\text{dim}\,W-p$ and $\text{Vol}\,(M)=\int_M dv_M$. 
Then
$$
0\leqq |A+\mu Id_W|^2=|A|^2+2\mu \text{trace}A+\mu^2q=|A|^2-2\mu e(f)+\mu^2q.
$$
Integration and the definition of $\mu$ yield that 
$$
q\mu^2\text{Vol}\,(M)=\mu E(f) \leqq \int_M |A|^2dv_M, 
$$
where the equality holds if and only if $A=-\mu Id_W$. 

In the case that $f$ is a holomorphic map, since $A=-K_{EH}$, 
we have that $|df|^2=\sigma$, where $\sigma$ is the scalar curvature of the pull-back metric on the bundle, 
which is defined as $\text{trace}\,K_{EH}$ (see \cite[p.108]{Kob}). 
The scalar curvature $\sigma$ satisfies 
$$
\int_M \sigma dv_M=2\pi \int_M c_1(f^{\ast}Q)\wedge \frac{\omega_M^{n-1}}{(n-1)!},
$$
where $n$ denotes the complex dimension of $M$ and $\omega_M$ denotes the K\"ahler form on $M$ \cite[p.108]{Kob}. 
Therefore, we obtain
$$
E(f)=2\pi \int_M c_1(f^{\ast}Q)\wedge \frac{\omega_M^{n-1}}{(n-1)!}.
$$
Thus, 
$$
2\pi \mu \int_M c_1(f^{\ast}Q)\wedge \frac{\omega_M^{n-1}}{(n-1)!}\leqq \int_M |A|^2dv_M, 
$$
where the equality holds if and only if $A=-\mu Id_W$. 
Notice that, by definition, in the holomorphic case the constant $\mu$ depends only on the homotopy class of $f$ and the 
cohomology class of $\omega_M$. 

\section{A generalization of Theory of do Carmo and Wallach}
We give a generalization of do Carmo-Wallach theory \cite{DoC-Wal}. 
\begin{defn}\label{induced map}
Let 
$V \to M$ be a real or complex vector bundle of rank $q$ 
which is globally generated by 
$W$ of dimension $N$. 
If the real vector bundle $V\to M$ has an orientation, 
we also fix an orientation on $W$. 
Then we have a map $f:M \to Gr_p(W)$, 
where $Gr_p(W)$ is a real (oriented) or complex Grassmannian 
according to the coefficient field of $V\to M$ 
and $p=N-q$. 
The map $f$ is defined by 
$$
f(x):=\text{Ker}\,ev_x=
\left\{t\in W \,\vert \, t(x)=0 \right\}.
$$
We call $f:M \to Gr_p(W)$ the {\it induced map by} 
$(V\to M,W)$, or the {\it induced map by} $W$, if the vector bundle 
$V\to M$ is specified. 
\end{defn}
From the definition of the induced map $f:M\to Gr_p(W)$, 
the vector bundle $V\to M$ can be naturally identified with 
$f^{\ast}Q \to M$. 
To be more precise, let $\text{Ker}\, ev\to M$ be a vector bundle obtained 
as the kernel of $ev:\underline{W}\to V$. 
Since 
$S_{f(x)}=\text{Ker}\,ev_x$, 
we get a natural identification $i:\text{Ker}\,ev \to f^{\ast}S$. 
Then the following diagram gives the bundle isomorphism $\phi:V \to f^{\ast}Q$,  
which is called the {\it natural identification} of $V\to M$ with $f^{\ast}Q\to M$. 
$$
\begin{CD}
0@>>> {Ker}\,ev @>>> \underline{W} @>>> V @>>> 0 \\
@. @V{i}VV  @|  @VV{\phi}V \\
0@>>> f^{\ast}S @>>> \underline{W} @>>> f^{\ast}Q @>>> 0.
\end{CD}
$$


Conversely, 
if $f:M\to Gr_p(W)$ is a smooth map, 
then we obtain a vector bundle 
$f^{\ast}Q \to M$ which is globally generated by $W$, 
where $W$ is regarded as a space of sections of the pull-back bundle.  
It is easily observed that the induced map by $W$ is the same as 
the original map $f:M\to Gr_p(W)$. 
In this way, every map 
$f:M\to Gr_p(W)$ 
can be recognized as an induced map by 
$(f^{\ast}Q\to M,W)$. 

Let $(M,g)$ be a Riemannian manifold and $V\to M$ a vector bundle with 
a fibre metric $h_V(\cdot, \cdot)$ and a connection $\nabla$.  
Then the space of sections $\Gamma(V)$ of $V \to M$ has the 
$L^2$-scalar product induced by $g$ and $h_V$. 
Moreover, using the Riemannian structure and $\nabla$,  
we can define the Laplace operator $\Delta$ acting on $\Gamma(V)$. 
Since $\Delta$ is an elliptic operator, 
we can decompose $\Gamma(V)$ into 
the eigenspaces of the Laplacian in the $L^2$-sense:
$$
\Gamma(V)=\oplus_{\mu} W_{\mu}, \quad 
W_{\mu}:=\left\{t \in \Gamma(V)\,|\,\Delta t=\mu t \right\}.
$$
It is well-known that $W_{\mu}$ is a finite dimensional space 
equipped with the scalar product induced by 
$L^2$-scalar product. 

\subsection{Standard maps}
Suppose that an eigenspace $W_{\mu}$ globally generates $V\to M$. 
Then we define the induced map 
$f_0:M\to Gr_p(W_{\mu})$ 
by $W_{\mu}$, where $p=N-q$, $N=\text{\rm dim}\,W_{\mu}$, 
$$
f_0(x)=\text{\rm Ker}\,ev_x=\left\{t \in W_{\mu} \,|\,t(x)=0\right\},
$$
where $ev:\underline{W_{\mu}} \to V$ is the evaluation map. 
We call $f_0$ {\it the standard map} by $W_{\mu}$. 
Notice that the natural identification can be considered as the adjoint 
homomorphism $ev^{\ast}$ of the evaluation $ev:\underline{W_{\mu}}\to V$, 
when $f^{\ast}Q \to M$ is regarded as the orthogonal complement of $f^{\ast}S \to M$ 
and $ev^{\ast}$ is considered as a bundle isomorphism onto the image.

\subsection{A Generalization of do Carmo-Wallach Theory}
Let $W$ be a real or complex vector space with a scalar product $(,)_W$. 
%
We denote by $\text{H}(W)$ the set of symmetric or Hermitian endomorphisms of $W$ 
depending on $W$ being a real or complex vector space. 
We equip $\text{H}(W)$ with an inner product $(,)_H$; 
$(A,B)_H:=\text{trace}\,AB$, for$A,B \in \text{H}(W)$. 

In this section, $\mathbf K$ denotes $\mathbf R$ or $\mathbf C$. 
Symmetric operators are also called Hermitian operators, 
for simplicity. 
\begin{defn}
Let $f:M\to Gr_p(\mathbf K^m)$ 
be a map and regard 
$\mathbf K^m$ as a space of sections of $f^{\ast}Q \to M$. 
Then 
the map $f:M\to Gr_p(\mathbf K^m)$ 
is called a {\it full map} 
if the linear map $\mathbf K^m \to \Gamma(f^{\ast}Q)$ is injective. 
\end{defn}
Notice that the notion of full map is the same as the one 
in \cite{DoC-Wal} and \cite{Tot} if 
the target space is the sphere or the complex projective space. 

We give two equivalence relations of maps under the condition that 
$\mathbf K^m$ has a scalar product. 
\begin{defn}
Let $f_1$ and $f_2:M\to Gr_p(\mathbf K^m)$
be maps. 
Then $f_1$ is called {\it image equivalent} to $f_2$, 
if there exists an isometry $\psi$ of $Gr_p(\mathbf K^m)$ 
such that $f_2=\psi\circ f_1$. 
\end{defn}
An isometry $\psi$ of $Gr_p(\mathbf K^m)$ gives a bundle isomorphism of 
$Q\to Gr_p(\mathbf K^m)$ 
denoted by 
$\tilde \psi$ which covers $\psi$.  
If we have a map $f:M\to Gr_p(\mathbf K^m)$, then $\tilde \psi$ induces a bundle isomorphism 
denoted by the same symbol 
from $f^{\ast}Q \to M$ to $f^{\ast}\tilde \psi Q\to M$, 
which is the pull-back bundle of the quotient bundle by the composition $\psi \circ f$.  
\begin{defn}
Let $V\to M$ be a vector bundle 
and $f$ a map from $M$ into $Gr_p(\mathbf K^m)$ such that 
$f^{\ast}Q\to M$ is isomorphic to $V\to M$. 
We consider a pair $(f, \phi)$, where $\phi:V \to f^{\ast}Q$ is a bundle isomorphism. 
Then such pairs $(f_1, \phi_1)$ and $(f_2, \phi_2)$ are called {\it gauge equivalent}, 
if there exists an isometry $\psi$ of $Gr_p(\mathbf K^m)$ such that 
$f_2=\psi\circ f_1$ and $\phi_2=\tilde \psi\circ \phi_1$.
\end{defn}

By definition, gauge equivalence yields 
image equivalence of maps. 

We are now in position to state the main theorem in this section. 
\begin{thm}\label{GGenDW}
Let $(M,g)$ be a compact Riemannian manifold. 
We fix a vector bundle 
$V \to M$ 
of rank $q$ with a fibre metric $h_V$ and a connection $\nabla$ preserving the metric. 

Let 
$f:M \to Gr_p(\mathbf K^m)$ be a full harmonic map 
satisfying the following two conditions. 

\noindent{\rm (i)} 
The pull-back bundle 
$f^{\ast}Q \to M$ 
with the pull-back metric and connection 
is gauge equivalent to $V\to M$ with $h_V$ and $\nabla$. 
{\rm (}Hence, $q=m-p$.{\rm )}

\noindent{\rm (ii)} 
The mean curvature operator $A \in \Gamma(\text{\rm End}\,V)$ of $f$ 
is expressed as 
$-\mu Id_V$ for some positive real number $\mu$, 
and so $e(f)=\mu q$. 

Then there exist an eigenspace $W \subset \Gamma(V)$ with eigenvalue 
$\mu$ of the Laplacian equipped with the $L^2$-scalar product $(\cdot,\cdot)_W$ 
and a semi-positive Hermitian endomorphism 
$T\in \text{\rm End}\,(W)$. 
The pair $(W,T)$ satisfies the following four conditions.

\noindent {\rm (I)} The vector space $\mathbf K^m$ can be regarded as a subspace of $W$ 
with the inclusion $\iota:\mathbf K^m \to W$ 
and $V\to M$ is globally generated by $\mathbf K^m$ (and so, by $W$). 
In particular, $\text{\rm dim}\,\mathbf K^m \leqq \text{\rm dim}\,W$. 

\noindent {\rm (II)} 
As a subspace, $\iota\left(\mathbf K^m\right)=\text{\rm Ker}\,T^{\bot}$, 
and the restriction of $T$ is a positive Hermitian transformation of $\mathbf K^m$. 

\noindent {\rm (III)} 
The endomorphism $T$ satisfies 
\begin{equation}\label{GDW 2}
ev \circ T^2 \circ ev^{\ast}=Id_V, 
\,\, 
ev \circ T^2 \circ \left(\nabla ev^{\ast}\right)=0 \in \Omega^1(\text{\rm End}\,V), 
\end{equation}
where $ev^{\ast}:V \to \underline{W}$ is the adjoint homomorphism of $ev:\underline{W} \to V$ 
with respect to 
$(\cdot,\cdot)_W$ and $h_V$, 
and the connection on $\text{\rm Hom}\,\left(V, \underline{W}\right)$ 
is induced by the product connection on $\underline{W}\to M$ and $\nabla$ on $V \to M$, 
which is denoted by the symbol $\nabla$. 

\noindent {\rm (IV)} 
The endomorphism $T$ provides an embedding of $Gr_p(\mathbf K^m)$ into $Gr_{p^{\prime}}(W)$, 
where $p^{\prime}=p+\text{\rm dim}\,\text{\rm Ker}\,T$
and also provides a bundle isomorphism $\phi:V\to f^{\ast}Q$ 
(the natural identification).

Then, $f:M \to Gr_{p}(\mathbf K^m)$ 
can be expressed as 
\begin{equation}\label{GDW3} 
f\left(x \right)=\left(\iota^{\ast} T \iota\right)^{-1} \left(f_0(x) \cap \iota\left(\mathbf K^{m}\right)\right), 
\end{equation}
where $\iota^{\ast}:W \to \mathbf K^m$ denotes the adjoint linear map of $\iota$ 
and $f_0$ is the standard map by $W$. 

The pairs $(f_1, \phi_{1})$ and $(f_2, \phi_{2})$ are gauge equivalent if and only if 
$$
\iota_1^{\ast}T_1\iota_1=\iota_2^{\ast}T_2\iota_2, 
$$
where $(T_i,\iota_i)$ corresponds to $f_i$ $(i=1,2)$ in expression \eqref{GDW3}. 

Conversely, 
suppose that a vector space $\mathbf K^m$, an eigenspace $W \subset \Gamma(V)$ 
with eigenvalue $\mu$ and 
a semi-positive Hermitian endomorphism 
$T\in \text{\rm End}\,(W)$ 
satisfying 
conditions {\rm (I)}, {\rm (II)} 
and {\rm (III)} are given.  
Then there is a unique embedding of $Gr_p(\mathbf K^m)$ into $Gr_{p^{\prime}}(W)$ 
and 
the map $f:M \to Gr_{p}(\mathbf K^m)$ defined by \eqref{GDW3}
is a full harmonic map into $Gr_p(\mathbf K^m)$ 
satisfying conditions {\rm (i)} and {\rm (ii)} with bundle isomorphsm $V\cong f^{\ast}Q$. 
\end{thm}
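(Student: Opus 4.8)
The plan is to run the do Carmo--Wallach machine with Takahashi's theorem replaced by our Main Theorem~1, and to encode the two hypotheses (i) and (ii) as algebraic relations among the evaluation maps and the operator $T$. Throughout I identify $f^{\ast}Q$ with $V$ by the gauge equivalence in (i), so that $\mathbf K^m$ becomes a space of sections of $V\to M$ that globally generates it, and I write $ev^f=ev\circ\iota$ for the two evaluation maps once $\iota$ is available. First I establish (I) and locate the eigenspace: since $f$ is harmonic with $A=-\mu\,Id_V$ by (ii), Theorem~\ref{harmGrirr} (with the constant function $h=\mu$) gives $\Delta t=\mu t$ for every $t\in\mathbf K^m$, where $\Delta$ is formed from the fixed data $(h_V,\nabla)$ — legitimate precisely because (i) identifies these with the pull-back data. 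Hence $\mathbf K^m$ lands in the $\mu$-eigenspace $W:=W_{\mu}\subset\Gamma(V)$, fullness makes the induced linear map $\iota:\mathbf K^m\to W$ injective, and since $\mathbf K^m$ already globally generates $V$ so does $W$; equipping $W$ with its $L^2$-product lets me form the standard map $f_0$ by $W$, which is (I).

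Next I construct $T$ and prove (II) and (III). I set $T:=\sqrt{\iota\,\iota^{\ast}}$, the non-negative square root of the $L^2$-self-adjoint operator $\iota\,\iota^{\ast}\in\operatorname{End}(W)$, where $\iota^{\ast}$ is the adjoint of $\iota$ for the scalar product on $\mathbf K^m$ and the $L^2$-product on $W$. Then $\operatorname{Ker}T=\iota(\mathbf K^m)^{\perp}$ and $T$ is positive on $\iota(\mathbf K^m)=\operatorname{Ker}T^{\perp}$, which is (II), and $T^2=\iota\,\iota^{\ast}$. The two equations in (III) are then exactly the two halves of (i): because $(ev^f)^{\ast}=\iota^{\ast}ev^{\ast}$, the fibre-metric normalization $ev^f(ev^f)^{\ast}=Id_V$ — equivalent to the natural identification $\phi$ being an isometry — rewrites as $ev\circ T^2\circ ev^{\ast}=Id_V$, while the vanishing of $ev^f\bigl(\nabla(ev^f)^{\ast}\bigr)$ — the statement that the pull-back connection equals $\nabla$, i.e.\ the gauge part of (i) — rewrites as $ev\circ T^2\circ(\nabla ev^{\ast})=0$. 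I verify these reformulations with Kobayashi's description \cite{Kob} of the induced metric and connection on a subbundle of $\underline{W}$ via orthogonal projection, together with the fact that under the metric normalization $ev^{\ast}ev$ is the orthogonal projection onto $\operatorname{im}ev^{\ast}$.

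Once (III) holds, $T$ restricts to a positive operator on $\operatorname{Ker}T^{\perp}\cong\mathbf K^m$ and induces the asserted embedding $Gr_p(\mathbf K^m)\hookrightarrow Gr_{p'}(W)$ with $p'=p+\dim\operatorname{Ker}T$, matching the dimension count $p'-p=\dim W-m$; tracking how $T$ and the two scalar products relate $f$ to $f_0$, in particular unwinding $f_0(x)\cap\iota(\mathbf K^m)$ and the action of $(\iota^{\ast}T\iota)^{-1}$, yields \eqref{GDW3}, giving (IV) and $\phi$ as the natural identification. For the classification I check that $\iota^{\ast}T\iota\in\operatorname{End}(\mathbf K^m)$ is a complete invariant: a gauge equivalence is realized by a unitary intertwining the two evaluation data, acting trivially on this $\mathbf K^m$-side operator, and conversely equality of the operators reconstructs the intertwining isometry. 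The converse reverses the construction: given $(\mathbf K^m,W,T)$ satisfying (I)--(III), formula \eqref{GDW3} defines a full map $f$ whose components lie in $W=W_{\mu}$, so Theorems~\ref{main theorem} and \ref{harmGrirr} make $f$ harmonic with $A=-\mu\,Id_V$, while (III) reproduces (i).

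I expect the main obstacle to be the second equation in (III): translating the gauge (connection-matching) condition into $ev\circ T^2\circ(\nabla ev^{\ast})=0$ requires the careful Kobayashi-type computation of the pull-back connection as the $\underline{W}$-projection of the flat derivative, and it is exactly this connection datum — invisible in the classical case, where $Q$ has rank one — that forces the distinction between gauge and image equivalence. Keeping the scalar product on $\mathbf K^m$ and the $L^2$-product on $W$ straight while passing back and forth through $\iota$, and verifying that \eqref{GDW3} reproduces $f$ itself rather than merely an image-equivalent map, is the delicate bookkeeping that underlies the whole argument.
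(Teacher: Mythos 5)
Your proposal is correct and follows essentially the same route as the paper: Theorem \ref{harmGrirr} places $\mathbf K^m$ inside the eigenspace $W_{\mu}$, the operator $T$ encodes the discrepancy between the target scalar product and the $L^2$-product, and the two equations in (III) are derived as exactly the metric and connection halves of the gauge condition via the Kobayashi-style projection formula for the pull-back connection. Your $T:=\sqrt{\iota\iota^{\ast}}$ coincides with the paper's $T$ (built from $\underline{T}$ with $(\underline{T}\cdot,\underline{T}\cdot)_m=(\cdot,\cdot)_W$ and extended by zero on $\iota(\mathbf K^m)^{\perp}$), so the only caveat is the bookkeeping you already flag — in \eqref{GDW3} and in the criterion $\iota_1^{\ast}T_1\iota_1=\iota_2^{\ast}T_2\iota_2$ the adjoint $\iota^{\ast}$ must be taken with respect to the restricted $L^2$-product, whereas in $\sqrt{\iota\iota^{\ast}}$ you use the adjoint with respect to the original product on $\mathbf K^m$.
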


We begin with a lemma needed for the proof of Theorem \ref{GGenDW}. 
\begin{lemma}\label{linalg}
Let $W$ be a vector space with a scalar product 
and $\mathbf K^m$ be a subspace of $W$. 
The orthogonal projection is denoted by $\pi:W \to \mathbf K^m$. 
Let $V$ be a vector space with a scalar product and suppose that we have a surjective linear map 
$ev:W \to V$. 

If the restriction of $ev$ to $\mathbf K^m$ denoted by $ev_K$ is also surjective, 
then, 
$$
\pi\left( \text{\rm Ker}\,ev^{\bot}\right)=\left( \text{\rm Ker}\,ev_K\right)^{\bot}, 
$$
where $\text{\rm Ker}\,ev^{\bot}$ {\rm (}resp. $\left( \text{\rm Ker}\,ev_K\right)^{\bot}${\rm )} 
denotes the orthogonal complement of $\text{\rm Ker}\,ev$ of $W$ 
{\rm (} resp. $\text{\rm Ker}\,ev_K$ of $\mathbf K^m$  
endowed with the induced scalar product{\rm )}. 
\end{lemma}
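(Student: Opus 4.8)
The plan is to reduce everything to a dimension count, after establishing one inclusion and one injectivity statement. I would write $N := \text{Ker}\,ev \subseteq W$ and observe that $\text{Ker}\,ev_K = N \cap \mathbf{K}^m$, since $ev_K$ is merely the restriction of $ev$ to $\mathbf{K}^m$. Because $ev:W \to V$ is surjective, its restriction to $N^{\bot}$ is a linear isomorphism onto $V$; likewise, the surjectivity of $ev_K$ makes the restriction of $ev$ to $(\text{Ker}\,ev_K)^{\bot}$ (the complement taken inside $\mathbf{K}^m$) an isomorphism onto $V$. Hence both $N^{\bot}$ and $(\text{Ker}\,ev_K)^{\bot}$ have dimension equal to $\dim V$, and it will suffice to prove the inclusion $\pi(N^{\bot}) \subseteq (\text{Ker}\,ev_K)^{\bot}$ together with the injectivity of $\pi$ restricted to $N^{\bot}$.

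For the inclusion I would take $v \in N^{\bot}$ and any $u \in \text{Ker}\,ev_K \subseteq \mathbf{K}^m$. Since $v - \pi(v)$ is orthogonal to all of $\mathbf{K}^m$ by definition of the orthogonal projection $\pi$ onto $\mathbf{K}^m$, we get $\langle \pi(v), u\rangle = \langle v, u\rangle$; and this vanishes because $u \in \text{Ker}\,ev_K \subseteq N$ while $v \in N^{\bot}$. Thus $\pi(v)$ lies in the orthogonal complement of $\text{Ker}\,ev_K$ inside $\mathbf{K}^m$, which is precisely $(\text{Ker}\,ev_K)^{\bot}$.

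The crux, and the step that genuinely uses the surjectivity hypothesis on $ev_K$, is the injectivity of $\pi$ on $N^{\bot}$. Suppose $v \in N^{\bot}$ with $\pi(v) = 0$, so that $v$ is orthogonal to $\mathbf{K}^m$. Using that $ev_K$ is onto, I can choose $u \in \mathbf{K}^m$ with $ev(u) = ev(v)$, whence $v - u \in N$. Then I would expand $\langle v, v\rangle = \langle v, u\rangle + \langle v, v - u\rangle$: the first term vanishes because $v \perp \mathbf{K}^m$ and $u \in \mathbf{K}^m$, and the second vanishes because $v \in N^{\bot}$ and $v - u \in N$. Hence $v = 0$. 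This is the only place where the hypothesis on $ev_K$ is indispensable; dropping it would break the dimension match and the statement would fail.

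Combining the two points, $\pi$ maps $N^{\bot}$ injectively into $(\text{Ker}\,ev_K)^{\bot}$, and both spaces have dimension $\dim V$, so the image exhausts $(\text{Ker}\,ev_K)^{\bot}$, giving the claimed equality. I expect no serious obstacle beyond carefully distinguishing the two ambient spaces in which orthogonal complements are formed, namely $W$ for $\text{Ker}\,ev^{\bot}$ versus $\mathbf{K}^m$ for $(\text{Ker}\,ev_K)^{\bot}$.
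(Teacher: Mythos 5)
Your proof is correct, but it follows a different route from the paper's. The paper's argument is a two-line adjoint computation: from $ev_K = ev\circ i$ one takes adjoints to get $ev_K^{\ast}=i^{\ast}\circ ev^{\ast}=\pi\circ ev^{\ast}$, and then the claim is immediate from the standard identity $\text{Im}\,f^{\ast}=(\text{Ker}\,f)^{\bot}$ applied to both $ev$ and $ev_K$. You instead argue directly on the subspaces: you verify the inclusion $\pi(\text{Ker}\,ev^{\bot})\subseteq(\text{Ker}\,ev_K)^{\bot}$ by an orthogonality computation, prove that $\pi$ is injective on $\text{Ker}\,ev^{\bot}$ (this is where you isolate the use of surjectivity of $ev_K$, via the decomposition $\langle v,v\rangle=\langle v,u\rangle+\langle v,v-u\rangle$), and close with a dimension count, both sides having dimension $\dim V$. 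Both arguments are sound. The paper's version is shorter and yields the reusable operator identity $ev_K^{\ast}=\pi\circ ev^{\ast}$, which is what is actually invoked later (e.g.\ in deriving \eqref{GTT}); your version is more elementary, avoids quoting $\text{Im}\,f^{\ast}=(\text{Ker}\,f)^{\bot}$, and makes transparent exactly which hypothesis each step consumes. One caveat: your dimension count presupposes $V$ (equivalently $\text{Ker}\,ev^{\bot}$) finite-dimensional, which is harmless here since the lemma is only applied with $V$ a fibre of a vector bundle, but the adjoint identity in the paper's proof is what carries the content in either setting.
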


\begin{proof}
Using the scalar product, 
we have adjoint homomorphisms $ev^{\ast}$ and $ev_K^{\ast}$ of $ev$ and $ev_K$, respectively. 
From the hypothesis, we have $ev\circ i=ev_K$ on $\mathbf K^m$, where 
$i:\mathbf K^m \to W$ is the inclusion map. 
The adjoint of $i$ is nothing but the projection $\pi$. 
It follows that $ev_K^{\ast}=i^{\ast}\circ ev^{\ast}$ and so, $ev_K^{\ast}=\pi \circ ev^{\ast}$.
\end{proof}

\newtheorem{mainpr}{Proof of Theorem 5.5}
\renewcommand{\themainpr}{}
\begin{mainpr}
Suppose that 
$f:M\to Gr_p(\mathbf K^m)$ is a full harmonic map 
satisfying conditions (i) and (ii). 
This implies that $\mathbf K^m$ globally generates $V\to M$. 

Then 
Theorem \ref{harmGrirr} together with condition (ii) immediately yields that 
$
\Delta t= \mu t, 
$
for an arbitrary $t \in \mathbf K^m$. 
It follows from the definition of full map that 
$$
\mathbf K^m \subset W=W_{\mu},  
$$
with inclusion $\iota:\mathbf K^m \to W$. 
Hence we obtain condition (I). 
In particular, $m \leqq N$, where $N={\rm dim}\,W$. 

Though the scalar product $(\cdot,\cdot)_W$ can be 
restricted to $\mathbf K^m$ to obtain a scalar product 
denoted by the same symbol, 
we also have the original scalar product 
$(\cdot,\cdot)_{m}$ 
on $\mathbf K^m$
which induces the Riemannian metric on 
$Gr_p(\mathbf K^m)$. 
Consequently, we have a 
positive Hermitian transformation 
$\underline{T}:\mathbf K^m \to \mathbf K^m$ 
such that 
\begin{equation}\label{posT}
(\underline{T}\cdot,\underline{T}\cdot)_{m}=(\cdot,\cdot)_W. 
\end{equation}
An isometry $\underline{T}^{-1}:\left(Gr_p(\mathbf K^m),(\cdot,\cdot)_{m}\right)  \to 
\left(Gr_p(\mathbf K^m), (\cdot,\cdot)_{W}\right)$ is given by 
$U \mapsto \underline{T}^{-1}U$, where $U$ is a $p$-dimensional subspace of $\mathbf K^m$. 

Since $\mathbf K^m$ globally generates $V\to M$, 
$W$ also does. 
Hence we have a surjective evaluation homomorphism $ev:\underline{W} \to V$ 
and the standard map $f_0$ by $W$. 
The restriction of $ev$ to $\mathbf K^m$ gives the surjective 
evaluation homomorphism $ev_K:\underline{\mathbf K^m} \to V$. 
Then the map $f$ is expressed as 
$$
f(x)=\text{Ker}\,ev_{K_{x}}. 
$$
Using the composition,
$\underline{T}^{-1}\circ f:M \to \left(Gr_p(\mathbf K^m), (\cdot,\cdot)_{W}\right)$ 
is also a full harmonic map satisfying conditions (i) and (ii). 
Thus, from now on, we consider $\underline{T}^{-1}\circ f$ which is refered to simply as 
$f:M \to \left(Gr_p(\mathbf K^m), (\cdot,\cdot)_{W}\right)$. 
Then we have 
\begin{equation}\label{Gexpf}
f(x)=\underline{T}^{-1}\text{Ker}\,ev_{K_{x}}. 
\end{equation}
Since 
$$
\left(\underline{T}^{-1}\text{\rm Ker}\,ev_K\right)^{\bot}
=\underline{T}\left(\text{\rm Ker}\,ev_K\right)^{\bot}, 
$$
we apply lemma \ref{linalg} to obtain 
\begin{equation}\label{GTT}
\left(\underline{T}^{-1}\text{\rm Ker}\,ev_K\right)^{\bot}
=\underline{T}\iota^{\ast}\left(\text{\rm Ker}\,ev^{\bot}\right). 
\end{equation}
Let $\mathbf K^{m^{\bot}}$ be the orthogonal complement of $\mathbf K^m$ in $W$. 
We define a semi-positive Hermitian endomorphism $T:W\to W$ in such a way that 
$K^{m^{\bot}}$ is the eigenspace with eigenvalue $0$ and 
$T|_{\mathbf K^{m}}=\underline{T}$. 
More precisely, the latter condition means that $\iota^{\ast}T \iota =\underline{T}$. 
Consequently we have condition (II). 

Since $\text{Ker}\,ev_K=\text{Ker}\,ev \cap \mathbf K^m$ and $f_0(x)=\text{Ker}\,ev_{x}$, 
the definiton of $T$ and \eqref{Gexpf} yields that $f$ is expressed as 
$$
f\left(x\right)=\left(\iota^{\ast}T\iota \right)^{-1}\left(f_0\left(x\right)\cap \text{\rm Ker}\,T^{\bot}\right).
$$

We will describe the common zero set $Z$ of sections of $\text{Ker}\,T$ in $Gr_{p^{\prime}}(W)$. 
By definition, 
\begin{align*}
Z=&\left\{U^{\prime} \subset W\,|\, \text{dim}\,U^{\prime}=p^{\prime} \,\,\text{and}\,\, \text{Ker}\,T \subset U^{\prime} \right\} \\
 =&\left\{U\oplus \text{Ker}\,T \subset W\,|\,U \subset \text{Ker}\, T^{\bot}\, \text{and}\,\,\text{dim}\,U=p \right\}, 
\end{align*}
which gives the embedding of $Gr_p(\mathbf K^m)$ into 
$Gr_{p^{\prime}}(W)$ with image $Z$ . 

When $f^{\ast}Q \to M$ is identified with the orthogonal complememt of $f^{\ast}S \to M$, 
it follows from \eqref{GTT} that the natural identification 
$\phi:V \to f^{\ast}Q$ is expressed as:  
\begin{equation}\label{GnatidT}
\phi_x\left(v\right)=\left(x, T\circ ev^{\ast} (v)\right)
\left(=\left(x, \left(\iota^{\ast} T \iota\right) \iota^{\ast}  ev^{\ast} (v)\right) \right), 
\end{equation}
where $v\in V_x$ and $T\circ ev^{\ast}$ is considered as a map onto the image. 
Notice that $Tw=T\iota \iota^{\ast} w$ for any $w \in W$ by definition of $T$. 
Therefore we have condition (IV). 

Since the metric $h_{f^{\ast}Q}$ 
on $f^{\ast}Q\to M$ is induced from the scalar product on $\mathbf K^m \subset W$, 
it follows from condition (i) and \eqref{GnatidT} that 
\begin{equation}\label{GisomVV}
h_V(v,v^{\prime})=h_{f^{\ast}Q}\left(\phi(v), \phi(v^{\prime})\right)
=(T\circ ev^{\ast}(v), T\circ ev^{\ast}(v^{\prime}))_W, 
\end{equation}
for arbitrary $v, v^{\prime} \in V$. 
Hence we obtain 
\begin{equation}\label{GBV}
ev \circ T^2 \circ ev^{\ast}=Id_V. 
\end{equation}


Next, we compare the given connection $\nabla$ with the induced connection $\nabla^Q$ 
on $V \to M$ 
by $f:M\to Gr_p(\mathbf K^m)$. 
Since the orthogonal projection $\underline{\mathbf K^m}\to V$ is $ev \circ T$, which is 
the adjoint homomorphism of $\phi$, 
the induced connection is calculated as follows:
\begin{align*}
\nabla^Q t=&ev \circ T \circ d \phi(t)
=ev \circ T \left(x, {T}d\left(ev^{\ast}(t) \right) \right)\\
=&ev \circ T \left(x, {T}\left(\nabla ev^{\ast}\right)(t)\right)
+ev \circ T \left(x, {T}\circ ev^{\ast}(\nabla t)\right), \\
=&ev \circ T^2\circ \left(\nabla ev^{\ast}\right)(t)
+\phi^{\ast}\phi(\nabla t), 
\end{align*}
for an arbitrary section $t \in \Gamma(V)$. 
It follows from \eqref{GBV} that $\phi^{\ast}\phi=Id_V$, and so, 
\begin{equation}\label{GIII2}
\nabla^Q -\nabla 
=ev \circ T^2 \circ \left( \nabla ev^{\ast}\right).
\end{equation}
It follows from \eqref{GBV} and \eqref{GIII2} that condition (III) holds. 

Suppose that those two pairs $(f_1,\phi_1)$ and $(f_2,\phi_2)$ 
are gauge equivalent as maps into $Gr_p(\mathbf K^m)$. 
The corresponding endomorphisms on $W$ are denoted by $T_1$ and $T_2$, respectively. 
To destinguish two subspaces of $W$ which are isomorphic to $\mathbf K^m$, 
we emphasize the role of the inclusions. 
From \eqref{GnatidT}, the natural identification $\phi_i$ ($i=1,2$) is expressed as 
$$
\phi_i(v)=\left(x, \left(\iota_i^{\ast} T_i \iota_i\right) \iota_i^{\ast}  ev^{\ast} (v)\right).
$$
By definition of gauge equivalence, 
there exisits an isometry $\psi$ of $Gr_p(\mathbf K^m)$ such that 
\begin{equation}\label{GT1T2}
\tilde \psi \phi_1(v) = \phi_2 (v) \Longleftrightarrow 
\tilde \psi \left(\iota_1^{\ast} T_1 \iota_1\right) \iota_1^{\ast} ev^{\ast}(v) = 
\left( \iota_2^{\ast} T_2 \iota_2\right) \iota_2^{\ast}ev^{\ast}(v), 
\end{equation}
for an arbitrary $v\in V$. 
Since $f_1$ and $f_2$ are full maps, \eqref{GT1T2} gives 
$$
\tilde \psi \iota^{\ast}T_1\iota_1=\iota_2^{\ast}T_2\iota_2. 
$$
Then uniqueness of the polar decomposition yields that $\tilde \psi=Id_{\mathbf K^m}$ and 
$\iota_1^{\ast}T_1\iota_1=\iota_2^{\ast}T_2\iota_2$. 

Suppose that a subspace $\mathbf K^m$ of an eigenspace $W$ 
with an eigenvalue $\mu$ and 
a semi-positive Hermitian endomorphism 
$T\in \text{\rm End}\,(W)$ 
satisfy
conditions {\rm (I)}, {\rm (II)} 
and {\rm (III)}. 
Then we can define a full map $f:M \to Gr_p(\mathbf K^m)$ as in \eqref{GDW3} 
and an embedding of $Gr_p(\mathbf K^m)$ into $Gr_{p^{\prime}}(W)$ as the zero set
of sections of $\text{Ker}\,T$. 
Moreover we also define a bundle isomorphism $\phi:V\to f^{\ast}Q$ as in \eqref{GnatidT}. 
Then we obtain condition (i), because $\phi$ preserves the metrics and the connections from 
the above computations. 
Hence we can apply Theorem \ref{harmGrirr} to conclude that $f$ is a full harmonic map and $A=-\mu Id_V$. 
\end{mainpr}

\begin{rem}
Since conditions (i) and (ii) in Theorem \ref{GGenDW} are repeated several times in the paper, 
condition (i) is called {\it gauge condition} and condition (ii) is called 
{\it Einstein-Hermitian condition} or {\it EH condition} for short. 
The gauge condition may also be called the {\it balanced condition} (cf. \cite{Do-3}). 
\end{rem}

\begin{rem}
When the sphere $S^{N-1}$ is identified with an oriented Grassmannian of hyperplanes $Gr_{N-1}(\mathbf R^N)$, 
the position vector gives a trivialization of $Q\to Gr_{N-1}(\mathbf R^N)$ and so, a product connection. 
Thus, this connection gives the usual differential of functions. 
When we have a map $f:M\to Gr_{N-1}(\mathbf R^N)$, $f$ can also be recognized as a trivialization of $f^{\ast}Q \to M$. 
Then every section of the pull-back bundle can be recognized as a function on $M$, 
and the pull-back connection gives the usual differentiation. 
Consequently when the target is the sphere, we can drop gauge condition {\rm (i)} in Theorem \ref{GGenDW}.  
\end{rem}

\begin{rem}
When the target is a symmetric space of rank $1$, 
the quotient bundle is also of rank $1$. 
Consequently, the mean curvature operator can be considered as a function. 
Hence the EH condition in Thereom \ref{GGenDW} is equivalent to 
$f$ having constant energy density. 
\end{rem}

\begin{rem}
The role of condition (IV) in Theorem \ref{GGenDW} should be emphasized. 
Whenever we consider a full harmonic map from $(M,g)$ into $Gr_p(\mathbf K^m)$ satisfying gauge and EH conditions, 
we also have an embedding of $Gr_p(\mathbf K^m)$ into $Gr_{p^{\prime}}(W)$ and a bundle isomorphism $V \to f^{\ast}Q$, 
which is the natural identification. 
This enables us to take a quotient of the set of those maps into $Gr_{p^{\prime}}(W)$ 
by {\it gauge equivalence}. 
\end{rem}
Moreover, Theorem \ref{GGenDW} provides a geometric meaning to the compactification of the moduli of those {\it full} maps into $Gr_{p^{\prime}}(W)$ 
by the induced topology. 
To see this,  we need the following

\begin{lemma}\label{traceT}
Let $(M,g)$ be a compact Riemannian manifold. 
We fix a vector bundle 
$V \to M$ 
of rank $q$ with fibre metric $h_V$ and connection $\nabla$ preserving the metric. 

Suppose that $f:M \to Gr_p(\mathbf K^m)$ is a harmonic map  
satisfying the gauge condition for $(V\to M, h_V, \nabla)$ and the Einstein-Hermitain condition. 
Let $T$ be the semi-positive Hermitian endomorphism on $W$ associated to $f$ in Theorem \ref{GGenDW}, 
where $W \subset \Gamma(V)$ is an eigenspace of the Laplacian equipped with the $L^2$-scalar product $(\cdot,\cdot)_W$. 

Then 
\begin{equation}\label{trT}
\text{\rm trace}\,T^2=q\text{\rm Vol}(M), 
\end{equation}
where, $\text{\rm Vol}(M)$ denotes the volume of $M$. 
\end{lemma}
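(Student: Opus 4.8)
The plan is to convert the pointwise identity of condition (III) into a scalar identity by taking fibrewise traces, and then to integrate over $M$, using that the scalar product on $W$ is the $L^2$-product.

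First I would start from the first equation in \eqref{GDW 2}, namely $ev\circ T^2\circ ev^{\ast}=Id_V$, which holds over each $x\in M$. Writing $ev_x:W\to V_x$ for evaluation at $x$ and $ev_x^{\ast}:V_x\to W$ for its adjoint with respect to $h_V$ and $(\cdot,\cdot)_W$, the identity reads $ev_x\circ T^2\circ ev_x^{\ast}=Id_{V_x}$. Taking the trace over the fibre $V_x$ (whose dimension is $q$) and invoking cyclicity of the trace gives, for every $x$,
$$
q=\text{trace}_{V_x}\left(ev_x\circ T^2\circ ev_x^{\ast}\right)=\text{trace}_W\left(T^2\circ ev_x^{\ast}\circ ev_x\right).
$$
Integrating this constant identity against $dv_M$ and pulling the fixed endomorphism $T^2$ out of the integral yields
$$
q\,\text{Vol}(M)=\int_M \text{trace}_W\left(T^2\circ ev_x^{\ast}ev_x\right)dv_M=\text{trace}_W\left(T^2\circ \int_M ev_x^{\ast}ev_x\,dv_M\right).
$$

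The key step is to identify $\int_M ev_x^{\ast}\circ ev_x\,dv_M$ with $Id_W$; this is where the hypothesis that $W$ carries the $L^2$-scalar product enters. For $s,t\in W\subset\Gamma(V)$ we have $ev_x(s)=s(x)$, so by definition of the fibrewise adjoint $\left(ev_x^{\ast}ev_x(s),t\right)_W=h_V(s(x),t(x))$, and integrating gives
$$
\left(\left(\int_M ev_x^{\ast}ev_x\,dv_M\right)s,\,t\right)_W=\int_M h_V(s(x),t(x))\,dv_M=(s,t)_W,
$$
the last equality being precisely the definition of the $L^2$-product on $W$. As $s,t$ are arbitrary this gives $\int_M ev_x^{\ast}ev_x\,dv_M=Id_W$, and substituting into the previous display yields $\text{trace}\,T^2=q\,\text{Vol}(M)$, which is \eqref{trT}.

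I expect no serious obstacle: once the identification $\int_M ev^{\ast}ev\,dv_M=Id_W$ is established, the remainder is formal manipulation of traces. The only point demanding care is the consistent use of $(\cdot,\cdot)_W$ throughout—both in the fibrewise adjoint $ev_x^{\ast}$ and in the $L^2$-product—so that the two displays chain together; this is guaranteed by the way $ev^{\ast}$ and the eigenspace $W$ are set up in Theorem \ref{GGenDW}.
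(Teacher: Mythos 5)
Your proof is correct and follows essentially the same route as the paper: both reduce the claim to the pointwise identity $ev\circ T^{2}\circ ev^{\ast}=Id_{V}$ (equivalently the gauge condition) together with the fact that the $L^{2}$-product makes $\int_{M}ev^{\ast}ev\,dv_{M}=Id_{W}$. The paper carries out the same trace computation with an explicit unitary basis of $W$ adapted to the fibre, whereas you phrase it via cyclicity of the trace, but the content is identical.
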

\begin{proof}
Let $w_1,\cdots, w_N$ be a unitary basis of $W$, where $N:=\text{\rm dim}\,W$. 
Under the given assumptions, we claim that 
$$
\sum_{A=1}^N\left(Tw_A, Tw_A \right)_W=q\text{\rm Vol}(M).
$$
To see this, we use the definition of $L^2$-scalar product to get 
\begin{align}\label{trace}
\sum_{A=1}^N\left(Tw_A,Tw_A \right)_W=&\sum_{A=1}^N\int_M h_V\left(ev(Tw_A),ev(Tw_A) \right)dv \\
=&\int_M \sum_{A=1}^N h_V\left(ev(Tw_A),ev(Tw_A) \right)dv, \notag
\end{align}
where $dv$ denotes the volume form on $M$. 
Fix a point $x \in M$ and 
let $v_1,\cdots, v_q$ be a unitary basis of $V_x$. 
Since $f$ satisfies the gauge condition by assumption, 
we have 
$\left(w_i, w_j \right)_W=h_V\left(ev(Tw_i), ev(Tw_j) \right)$, 
for $w_i, w_j \in f^{\ast}Q_x$. 
If $w \in W$ is perpendicular to $f^{\ast}Q_x$, then 
$$
0=(Tev^{\ast}(v_i), w)_W=(ev^{\ast}(v_i), Tw)_W=h_V(v_i, ev(Tw)). 
$$
Hence, if necessary, we can change to another unitary basis of $W$ such that 
$ev(Tw_i)=v_i$ for $i=1,\cdots,q$ and $ev(Tw_{q+j})=0$ for $j=1,\cdots N-q$. 
However, the sum $\sum_{A=1}^N h_{V_x}\left(ev(Tw_A),ev(Tw_A) \right)$ does not change, 
because $h_{V_x}\left(evT, evT \right)$ can be considered as an Hermitian form on $W$. 
Therefore, 
$$
\sum_{A=1}^N h_V\left(ev(Tw_A),Tev(w_A) \right)=\sum_{i=1}^q h_V\left(v_i, v_i \right)=q. 
$$

Combining this with \eqref{trace}, we obtain the desired formula.
\end{proof}
\begin{cor}\label{traceIT}
Under the hypothesis of Lemma \ref{traceT}, 
suppose that the standard map $f_0$ by $(V\to M,W)$ satisfies the gauge and Einstein-Hermitian conditions. 
Then 
\begin{equation}
\text{\rm trace}\,T^2=N.
\end{equation}
\end{cor}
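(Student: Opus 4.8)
The plan is to deduce the identity by applying Lemma~\ref{traceT} twice, once to the given map $f$ and once to the standard map $f_0$, and then comparing the two outputs. For $f$ itself the lemma gives at once $\text{\rm trace}\,T^2=q\,\text{\rm Vol}(M)$ via formula \eqref{trT}, so the entire content of the corollary is to identify the right-hand side $q\,\text{\rm Vol}(M)$ with $N=\dim W$. The extra hypothesis, that the standard map $f_0$ also satisfies the gauge and Einstein--Hermitian conditions, is precisely what makes the second application of the lemma available and supplies this identification.

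First I would pin down the endomorphism $T_0\in\text{\rm End}(W)$ that Theorem~\ref{GGenDW} associates to $f_0$. By construction the standard map is induced by the full eigenspace $W=W_\mu$, so in the notation of Theorem~\ref{GGenDW} one has $\mathbf{K}^m=W$ (that is, $m=N$) and $\iota=Id_W$, and the Fubini--Study type metric on the target $Gr_p(W)$ is the one induced by the $L^2$-scalar product $(\cdot,\cdot)_W$ itself. Consequently the two scalar products on $\mathbf{K}^m$ entering \eqref{posT} coincide, which forces $\underline{T}=Id_W$ and hence $T_0=Id_W$.

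Next I would apply Lemma~\ref{traceT} to $f_0$. Since $T_0=Id_W$, the left side of \eqref{trT} is $\text{\rm trace}\,T_0^2=\text{\rm trace}\,Id_W=N$, while the right side reads $q\,\text{\rm Vol}(M)$. This yields the key relation $N=q\,\text{\rm Vol}(M)$. Combining it with the value $\text{\rm trace}\,T^2=q\,\text{\rm Vol}(M)$ obtained from $f$ gives $\text{\rm trace}\,T^2=N$, which is the claim.

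The only delicate point, and thus the step I expect to require the most care, is the verification that the standard map really corresponds to $T_0=Id_W$: one must check that the normalization built into the definition of $f_0$ — using the $L^2$-scalar product simultaneously as the metric on sections and as the metric defining the geometry of $Gr_p(W)$ — is exactly the one that trivializes $\underline{T}$ in \eqref{posT}. Once this bookkeeping is settled there is no further analytic obstacle, and the corollary follows formally from applying the single formula \eqref{trT} to the two maps $f$ and $f_0$.
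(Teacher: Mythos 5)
Your proposal is correct and follows essentially the same route as the paper: the author likewise notes that the standard map corresponds to $T_0=Id_W$, applies the trace formula \eqref{trT} to obtain $N=\operatorname{trace}Id^2=q\operatorname{Vol}(M)$, and combines this with $\operatorname{trace}T^2=q\operatorname{Vol}(M)$ from Lemma \ref{traceT} applied to $f$. Your additional care in verifying that the $L^2$-normalization forces $\underline{T}=Id_W$ in \eqref{posT} is exactly the bookkeeping the paper leaves implicit.
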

\begin{proof}
Since the identity transformation $Id$ on $W$ corresponds to the standard map, we get 
$$
N=\text{\rm trace}\,Id^2=q\text{\rm Vol}(M). 
$$
\end{proof}

\begin{rem}
Let us discuss the moduli space by gauge equivalence. 

In addition to the hypothesis in Lemma \ref{traceT}, 
suppose that the standard map $f_0$ by $(V\to M,W)$ is also a harmonic map which 
satisfies the gauge and the Einstein-Hermitian conditions.
If we put $C=T^2-Id_W$, then it follows from Theorem \ref{GGenDW} and Lemma \ref{traceT} that 
$C$ is trace-free and satisfies 
$$
ev \circ C \circ ev^{\ast}=0, 
\,\, \text{and} \,\,
ev \circ C \circ \left(\nabla ev^{\ast}\right)=0 \in \Omega^1(\text{\rm End}\,V). 
$$
Since the both equations are linear with respect to $C$, 
$C$ belongs to a subspace $\text{M}(W)$ of the set of trace-free Hermitian endomorphisms on $W$. 

If $\text{M}(W)$ is non-trivial, 
notice that $I+C$ is positive for $C$ small enough. 
Hence, we can apply do Carmo-Wallach's argument \cite[\S 5.1]{DoC-Wal} and Theorem \ref{GGenDW} to deduce that 
the moduli space of {\it full} harmonic maps into $Gr_{p^{\prime}}(W)$ satisfying gauge and EH conditions 
is a bounded connected {\it open} convex body in $\text{M}(W)$ with topology induced by $L^2$-scalar product. 

Under the natural compactification of the moduli in the topology,  
Theorem \ref{GGenDW} implies that each boundary point $C \in \text{M}(W)$ (where, $I+C$ is not positive, but semi-positive,) 
determines 
an {\it embedding} of the zero set into $Gr_{p^{\prime}}(W)$ determined by $\text{Ker}\,T \subset \Gamma(V)$, 
a {\it full harmonic map} into the zero set satisfying the gauge and the EH conditions, 
and a {\it bundle isomorphism}. 

If $\text{M}(W)=\left\{0 \right\}$, then the standard map is the unique full harmonic map satisfying 
gauge and EH conditions up to gauge equivalence. 

We give some examples later in which the above argument can be applied. 
\end{rem}

If $V\to M$ is holonomy irreducible in Theorem \ref{GGenDW}, then condition (III) reduces to a single equation, 
which will be useful to describe moduli spaces in later chapters 
(in particular, see Theorem \ref{mod} and its proof).  

\begin{prop}\label{Holsuf}
Let $(M,g)$ be a compact Riemannian manifold. 
We fix a vector bundle 
$V \to M$ 
of rank $q$ with fibre metric $h_V$ and connection $\nabla$ preserving the metric. 
Assume that $V\to M$ is holonomy irreducible with respect to $\nabla$. 

Let $W \subset \Gamma(V)$ be an eigenspace with eigenvalue 
$\mu$ of the Laplacian equipped with the $L^2$-scalar product $(\cdot,\cdot)_W$ 
and $\mathbf K^m$ a subspace of $W$. 

Suppose that $T\in \text{\rm End}\,(W)$ is a semi-positive Hermitian endomorphism on $W$ 
satisfying the following three conditions.

\noindent {\rm (I)} The vector space $\mathbf K^m$ is regarded as a subspace of $W$ 
with the inclusion $\iota:\mathbf K^m \to W$, 
and $V\to M$ is globally generated by $\mathbf K^m$. 

\noindent {\rm (II)} 
As a subspace, $\iota\left(\mathbf K^m\right)=\text{\rm Ker}\,T^{\bot}$, 
and the restriction of $T$ is a positive Hermitian transformation of $\mathbf K^m$. 

\noindent {\rm (III)} 
The endomorphism $T$ satisfies 
\begin{equation}\label{GDW 3}
ev \circ T^2 \circ \left(\nabla ev^{\ast}\right)=0 \in \Omega^1(\text{\rm End}\,V). 
\end{equation}

Then we can find a semi-positive Hermitian endomorphism $\tilde T$ on $W$ such that 
\begin{equation}\label{GDW 4}
ev \circ \tilde T^2 \circ ev^{\ast}=Id_V, \,\text{and}\quad 
ev \circ \tilde T^2 \circ \left(\nabla ev^{\ast}\right)=0.
\end{equation}
Moreover, $T$ and $\tilde T$ give the same harmonic map from $(M,g)$ into 
$Gr_p(\mathbf K^m)$, where $p=m-q$  
and the same embedding of $Gr_p(\mathbf K^m)$ into $Gr_{p^{\prime}}(W)$, 
where $p^{\prime}=\text{dim}\,W-q$. 
\end{prop}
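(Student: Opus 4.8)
The plan is to show that the single differential condition (III), i.e. \eqref{GDW 3}, already forces the algebraic quantity $B:=ev\circ T^{2}\circ ev^{\ast}\in\Gamma(\text{\rm End}\,V)$ to be a constant positive multiple of the identity, after which $\tilde T$ is obtained from $T$ by a single rescaling. First I note that $B$ is Hermitian, since $B^{\ast}=ev\circ(T^{2})^{\ast}\circ ev^{\ast}=B$, and non-negative, because $h_{V}(Bv,v)=(T^{2}ev^{\ast}v,ev^{\ast}v)_{W}=|T\,ev^{\ast}v|_{W}^{2}\ge 0$. So if I can prove $B=c\,Id_{V}$ for a positive constant $c$, then $\tilde T:=c^{-1/2}T$ is again semi-positive Hermitian and satisfies $ev\circ\tilde T^{2}\circ ev^{\ast}=c^{-1}B=Id_{V}$, while $ev\circ\tilde T^{2}\circ(\nabla ev^{\ast})=c^{-1}\,ev\circ T^{2}\circ(\nabla ev^{\ast})=0$ by (III); this is exactly \eqref{GDW 4}.

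The heart of the argument is to show that $B$ is parallel. Differentiating, and using that the product connection on $\underline{W}$ makes the fixed endomorphism $T^{2}$ parallel, I get
$$\nabla B=(\nabla ev)\circ T^{2}\circ ev^{\ast}+ev\circ T^{2}\circ(\nabla ev^{\ast}).$$
The second term vanishes by condition (III). For the first term I would use that taking adjoints is compatible with the metric connections on $\underline{W}$ and $V$, so that $\nabla ev^{\ast}=(\nabla ev)^{\ast}$; then, in each tangent direction, the endomorphism $(\nabla ev)\circ T^{2}\circ ev^{\ast}$ has adjoint $ev\circ T^{2}\circ(\nabla ev^{\ast})$, which is again $0$ by (III). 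An operator whose adjoint is zero is itself zero, so the first term vanishes too and $\nabla B=0$. This adjoint-symmetry step, resting on the identity $\nabla ev^{\ast}=(\nabla ev)^{\ast}$, is the main obstacle; everything else is formal.

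Once $B$ is a parallel Hermitian endomorphism, I would invoke holonomy irreducibility. Since $B$ is parallel and self-adjoint, its eigenvalues are preserved by parallel transport, hence constant on the connected $M$, and each eigenbundle is a parallel (holonomy-invariant) subbundle; irreducibility leaves only the trivial subbundles, so $B$ has a single eigenvalue and $B=c\,Id_{V}$ with $c\ge 0$. To see $c>0$ I would show $B$ is injective: $Bv=0$ forces $T\,ev^{\ast}v=0$, i.e. $ev^{\ast}v\in\text{\rm Ker}\,T=(\mathbf K^{m})^{\bot}$ by (II), whence $ev_{K}^{\ast}v=\pi\,ev^{\ast}v=0$ by Lemma \ref{linalg}; but $ev_{K}$ is surjective by (I), so $ev_{K}^{\ast}$ is injective and $v=0$. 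Thus $c>0$ and $\tilde T=c^{-1/2}T$ is the required endomorphism satisfying \eqref{GDW 4}.

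Finally I would check that $T$ and $\tilde T$ produce the same geometry. Since $\tilde T$ differs from $T$ by the positive scalar $c^{-1/2}$, we have $\text{\rm Ker}\,\tilde T=\text{\rm Ker}\,T$, so the embedding of $Gr_{p}(\mathbf K^{m})$ into $Gr_{p^{\prime}}(W)$, defined as the common zero set of the sections lying in $\text{\rm Ker}\,T$, is unchanged. Likewise $\iota^{\ast}\tilde T\iota=c^{-1/2}(\iota^{\ast}T\iota)$, and because in \eqref{GDW3} the map $(\iota^{\ast}T\iota)^{-1}$ is applied to the \emph{subspace} $f_{0}(x)\cap\iota(\mathbf K^{m})$, a positive rescaling of $\iota^{\ast}T\iota$ does not move the resulting point of $Gr_{p}(\mathbf K^{m})$. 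Hence $T$ and $\tilde T$ induce the same harmonic map $f:M\to Gr_{p}(\mathbf K^{m})$ with $p=m-q$ and the same embedding into $Gr_{p^{\prime}}(W)$ with $p^{\prime}=\dim W-q$.
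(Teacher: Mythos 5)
Your proposal is correct and follows essentially the same route as the paper: both show that $ev\circ T^{2}\circ ev^{\ast}$ is parallel by combining condition (III) with the adjoint identity $\nabla ev^{\ast}=(\nabla ev)^{\ast}$ (valid because the connections preserve the metrics), then invoke holonomy irreducibility to get $ev\circ T^{2}\circ ev^{\ast}=c\,Id_V$ with $c>0$ and rescale $T$ by $c^{-1/2}$. Your injectivity argument for $c>0$ via Lemma \ref{linalg} and your final check that rescaling does not change the induced map or the embedding are slightly more explicit than the paper's one-line justifications, but the substance is identical.
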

\begin{proof}
Since $T$ is a Hermitian endomorphism on $W$, it follows from \eqref{GDW 3}
\begin{align*}
0=&h_V\left( ev \circ T^2 \circ \left(\nabla ev^{\ast}\right)(v_1),v_2\right)
=\left(T^2 \circ \left(\nabla ev^{\ast} \right)(v_1), ev^{\ast}(v_2)\right) \\
=&\left(\left(\nabla ev^{\ast} \right)(v_1), T^2 \circ ev^{\ast}(v_2)\right)
=h_V\left(v_1, \left(\nabla ev\right) \circ T^2 \circ ev^{\ast}(v_2)\right),
\end{align*}
for arbitrary $v_1, v_2 \in V$. 
Hence a bundle endomorphism on $V\to M$ defined by $ev\circ T^2 \circ ev^{\ast}$ is parallel 
with respect to the induced connection on $\text{End}\,V \to M$ from $\nabla$. 
From the assumption that $V\to M$ is holonomy irreducible, there exists a real number $c$ such that 
$$
ev\circ T^2 \circ ev^{\ast}=cId_V. 
$$
Since $\mathbf K^m$ globally generates $V\to M$ and $T$ is positive on $\mathbf K^m$, 
$c$ is a positive number. 
Thus $\tilde T:=\frac{1}{\sqrt{c}}T$ is the desired endomorphism on $W$. 
\end{proof}
Let $\text{H}_0(W)$ be the set of trace-free Hermitian endomorphisms on $W$. 
From the definition of the inner product on $\text{H}(W)$, every element of 
$\text{H}_0(W)$ is perpendicular to the identity. 

\begin{cor}\label{Holsuftf}
Let $(M,g)$ be a compact Riemannian manifold. 
Fix a vector bundle 
$V \to M$ 
of rank $q$ with fibre metric $h_V$ and connection $\nabla$ preserving the metric. 
Assume that $V\to M$ is holonomy irreducible with respect to $\nabla$. 

Let $W \subset \Gamma(V)$ be an eigenspace with eigenvalue 
$\mu$ of the Laplacian equipped with the $L^2$-scalar product $(\cdot,\cdot)_W$ 
and $\mathbf K^m$ a subspace of $W$. 
Assume that the standard map $f_0$ by $(V\to M, W)$ is a harmonic map 
satisfying the gauge and the Einstein-Hermitian conditions. 

Suppose that $W$ and $C\in \text{\rm H}_0\,(W)$ 
satisfy the following three conditions.

\noindent {\rm (I)} The vector space $\mathbf K^m$ is regarded as a subspace of $W$ 
with the inclusion $\iota:\mathbf K^m \to W$ 
and $V\to M$ is globally generated by $\mathbf K^m$. 

\noindent {\rm (II)} 
The Hermitian endomorphism $Id+C$ is semi-positive, 
$\iota\left(\mathbf K^m\right)=\text{\rm Ker}\,(Id+C)^{\bot}$, 
and 
the restriction of $Id+C$ is a positive Hermitian transformation of $\mathbf K^m$. 

\noindent {\rm (III)} 
The endomorphism $C$ satisfies 
\begin{equation}\label{GDW 5}
ev \circ C \circ \left(\nabla ev^{\ast}\right)=0 \in \Omega^1(\text{\rm End}\,V). 
\end{equation}

Then we have a unique embedding of $Gr_p(\mathbf K^m)$ into $Gr_{p^{\prime}}(W)$ 
and 
the map $f:M \to Gr_{p}(\mathbf K^m)$ defined by $\left(\iota^{\ast}\sqrt{I+C}\iota\right)^{-1}\iota^{\ast}f_0$  
is a full harmonic map into $Gr_p(\mathbf K^m)$ 
satisfying the gauge and the Einstein-Hermitian conditions with bundle isomorphism $V\cong f^{\ast}Q$. 
\end{cor}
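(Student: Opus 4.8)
The plan is to set $T:=\sqrt{Id+C}$ and to deduce the statement by feeding $T$ first into Proposition \ref{Holsuf} and then into the converse direction of Theorem \ref{GGenDW}. Since $Id+C$ is a semi-positive Hermitian endomorphism of $W$ by condition (II), its positive square root $T$ is a well-defined semi-positive Hermitian endomorphism with $T^2=Id+C$ and $\text{\rm Ker}\,T=\text{\rm Ker}(Id+C)$. Consequently conditions (I) and (II) of the present corollary translate verbatim into conditions (I) and (II) of Proposition \ref{Holsuf} for this $T$: in particular $\iota(\mathbf K^m)=\text{\rm Ker}(Id+C)^{\bot}=\text{\rm Ker}\,T^{\bot}$, and $T$ is positive on $\mathbf K^m$ because $Id+C$ is.

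The crucial step is to verify condition (III) of Proposition \ref{Holsuf}, namely $ev\circ T^2\circ(\nabla ev^{\ast})=0$. Writing $T^2=Id+C$, I would split
$$
ev\circ T^2\circ(\nabla ev^{\ast})=ev\circ(\nabla ev^{\ast})+ev\circ C\circ(\nabla ev^{\ast}).
$$
The second term vanishes by hypothesis \eqref{GDW 5}. For the first term I would invoke the standing assumption that the standard map $f_0$ by $(V\to M,W)$ is a harmonic map satisfying the gauge and Einstein-Hermitian conditions. The endomorphism of $W$ attached to $f_0$ in Theorem \ref{GGenDW} is the identity $Id$ (here $\mathbf K^m=W$, $\iota=Id$, and the two scalar products on $W$ coincide, so $\underline T=Id$), exactly as used in the proof of Corollary \ref{traceIT}. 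Applying condition (III) of Theorem \ref{GGenDW} to $f_0$ with this $T=Id$ therefore yields both $ev\circ ev^{\ast}=Id_V$ and $ev\circ(\nabla ev^{\ast})=0$. Hence the first term also vanishes, giving condition (III) of Proposition \ref{Holsuf}.

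With all three hypotheses of Proposition \ref{Holsuf} established, I would apply that proposition to obtain a semi-positive Hermitian endomorphism $\tilde T=\tfrac{1}{\sqrt{c}}\,T$ (for some $c>0$) satisfying the normalizations $ev\circ\tilde T^2\circ ev^{\ast}=Id_V$ and $ev\circ\tilde T^2\circ(\nabla ev^{\ast})=0$ of \eqref{GDW 4}, and inducing the same map and the same embedding as $T$. Since $\text{\rm Ker}\,\tilde T=\text{\rm Ker}\,T$ and $\tilde T$ is positive on $\mathbf K^m$, the pair $(\mathbf K^m,\tilde T)$ satisfies conditions (I), (II) and (III) of Theorem \ref{GGenDW}. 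Its converse direction then produces the unique embedding of $Gr_p(\mathbf K^m)$ into $Gr_{p^{\prime}}(W)$ as the common zero set of sections in $\text{\rm Ker}\,\tilde T=\text{\rm Ker}(Id+C)$, together with a full harmonic map satisfying the gauge and Einstein-Hermitian conditions and the bundle isomorphism $V\cong f^{\ast}Q$. Because $T$ and $\tilde T$ determine the same map, the map produced by \eqref{GDW3} with $T=\sqrt{Id+C}$, namely $\left(\iota^{\ast}\sqrt{Id+C}\,\iota\right)^{-1}\iota^{\ast}f_0$, is the asserted harmonic map.

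The main obstacle is the middle paragraph: I must correctly identify the endomorphism associated to the standard map as $Id$ and extract the cancellation $ev\circ(\nabla ev^{\ast})=0$ from the hypothesis that $f_0$ is gauge and Einstein-Hermitian, since the entire reduction to Proposition \ref{Holsuf} depends on it. The remaining steps are a direct matching of hypotheses between the two statements and an appeal to the fact, supplied by Proposition \ref{Holsuf}, that $T$ and the rescaled $\tilde T$ give the same map and embedding.
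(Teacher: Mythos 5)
Your proof is correct and follows essentially the same route as the paper: set $T^2=Id+C$, use the gauge and EH hypothesis on the standard map through Theorem \ref{GGenDW} to supply $ev\circ\left(\nabla ev^{\ast}\right)=0$ (and $ev\circ ev^{\ast}=Id_V$), feed the resulting conditions into Proposition \ref{Holsuf}, and conclude via the converse direction of Theorem \ref{GGenDW}. The only difference is that the paper additionally pins down the normalization constant $c=1$ by the trace identity of Corollary \ref{traceIT}, whereas you sidestep this by invoking the clause of Proposition \ref{Holsuf} that $T$ and $\tilde T$ induce the same map and the same embedding, which suffices for the stated conclusion.
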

\begin{proof}
Define a semi-positive Hermitian endomorphism $T$ in such a way that $T^2:=Id+C$. 
Since the standard map is harmonic and satisfies the gauge and EH conditions by hypothesis, 
Theorem \ref{GGenDW} implies that 
$$
ev \circ Id \circ ev^{\ast}=Id_V, \,\,\text{and}\,\,ev \circ Id \circ \nabla \left(ev^{\ast}\right)=0. 
$$
Then the assumptions on $C$ and Proposition \ref{Holsuf} yield that 
there exists a semi-positive Hermitian endomorphism $\tilde T$ defined by 
$\tilde T:=\frac{1}{\sqrt{c}}T$, where $c$ is a positive number,  which satisfies 
$
ev\circ \tilde T^2 \circ ev^{\ast}=Id_V
$ and 
$
ev\circ \tilde T^2 \circ \nabla ev^{\ast}=0.
$
By definition of $T$, we get 
$$
\tilde T^2= \frac{1}{c}Id + \frac{1}{c}C.
$$
It follows from Corollary \ref{traceIT} that 
$$
N=\text{trace}\,\tilde T^2=\frac{1}{c}N + \frac{1}{c}0=\frac{1}{c}N,
$$
and so, $c=1$. 
Hence Theorem \ref{GGenDW} yields the result. 
\end{proof}

Next we consider the image equivalence relation. 
Suppose that two full harmonic maps $f_1$, $f_2$ satisfying gauge and EH conditions are 
image equivalent as maps into $Gr_p(\mathbf K^m)$. 
The corresponding endomorphisms on $W$ are denoted by $T_1$ and $T_2$, respectively:
$$
f_i\left(x\right)=\left(\iota_i^{\ast}T_i\iota_i \right)^{-1}\left(f_0\left(x\right)\cap \text{\rm Ker}\,T_i^{\bot}\right). 
$$
where $i=1,2$. 
By definiton of image equivalence, we have an isometry $\psi$ of $Gr_p(\mathbf K^m)$ such that $f_2=\psi\circ f_1$ and so, 
$f_2^{{\ast}}Q=f_1^{{\ast}}\tilde \psi Q$ as a set. 
Using the bundle isomorphisms $\phi_1$ and $\phi_2$ obtained in Theorem \ref{GGenDW}, 
we get two bundle isomorphisms $\tilde \psi\circ \phi_1$ and $\phi_2: V \to f_2^{\ast}Q$. 
Hence we have a gauge transformation $\phi_2^{-1}\tilde \psi \phi_1$ of $V\to M$ 
preserving the metric and the connection. 
Such a gauge transformation belongs to the centralizer of the holonomy group of 
the connection in the structure group $\text{Aut}\,V$ 
of $V$ (if the base manifold is connected). 
Hence, 
$$
\phi_2^{-1}\tilde \psi \phi_1(v)=cv, \quad v \in V, \quad c \in \text{Aut}\,V, 
$$
where $c$ is an element of the centralizer regarded as a subgroup of $\text{Aut}\,V$. 
Then \eqref{GnatidT} yields that 
\begin{equation}\label{oimeq}
\tilde \psi \iota_1^{\ast}T_1 \iota_1 \iota_1^{\ast} ev^{\ast}(v)  = \iota_2^{\ast}T_2 \iota_2 \iota_2^{\ast}ev^{\ast}(cv). 
\end{equation}
Therefore the action of the centralizer of the holonomy group 
of the connection is also needed to be taken into account.

\begin{lemma}
Let $f_1$ and $f_2$ be full harmonic maps from $(M,g)$ into the sphere $Gr_n(\mathbf R^{n+1})$ with constant energy density. 
If $f_1$ and $f_2$ are image equivalent, then both are gauge equivalent as maps. 
\end{lemma}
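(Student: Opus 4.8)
The plan is to reduce the assertion to the gauge-equivalence criterion $\iota_1^{\ast}T_1\iota_1=\iota_2^{\ast}T_2\iota_2$ of Theorem \ref{GGenDW}, and to show that for the sphere this criterion is forced by image equivalence. First I would record the special features of $Gr_n(\mathbf R^{n+1})$: it is a rank one symmetric space, so the universal quotient bundle has real rank $q=1$, and for the sphere the pull-back bundle with its metric and connection is canonically the trivial real line bundle with product connection (the position vector). In particular $V:=f_i^{\ast}Q\to M$ is one and the same real line bundle for $i=1,2$. Since $f_1,f_2$ are harmonic with constant energy density and $q=1$, the mean curvature operator is a function and the Einstein--Hermitian condition holds automatically (for a rank one target it is equivalent to constant energy density), while the gauge condition is automatic for the sphere. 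As image equivalence preserves the energy density, $f_1$ and $f_2$ share the same eigenvalue $\mu$ and hence the same eigenspace $W=W_\mu\subset\Gamma(V)$. Thus Theorem \ref{GGenDW} applies to both maps, producing semi-positive $T_i\in\mathrm{End}(W)$ whose restrictions $\underline{T_i}:=\iota_i^{\ast}T_i\iota_i$ are positive on $\mathbf K^{n+1}$, and it remains only to prove $\underline{T_1}=\underline{T_2}$.

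Next I would exploit the line-bundle structure to pin down the centralizer. From image equivalence we have an isometry $\psi$ with $f_2=\psi\circ f_1$ and, as in the discussion leading to \eqref{oimeq}, a gauge transformation $c=\phi_2^{-1}\tilde\psi\phi_1$ of $V\to M$ preserving both the fibre metric and the connection. Because $V\to M$ has rank one, every metric-preserving automorphism of a fibre is $\pm\mathrm{Id}$, and a parallel such automorphism over the connected manifold $M$ must be globally constant; hence $c=\varepsilon\,\mathrm{Id}_V$ with $\varepsilon\in\{+1,-1\}$. This is the crux: unlike the higher-rank case, the centralizer of the holonomy group collapses to the two-element group $\{\pm\mathrm{Id}\}$.

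With $c=\varepsilon\,\mathrm{Id}$, equation \eqref{oimeq} reads $\tilde\psi\,\underline{T_1}\,\iota_1^{\ast}ev^{\ast}(v)=\varepsilon\,\underline{T_2}\,\iota_2^{\ast}ev^{\ast}(v)$ for all $v\in V$. Running the full-map cancellation used to pass from \eqref{GT1T2} to the operator identity in the proof of Theorem \ref{GGenDW}, I would strip the common evaluation factor to obtain the endomorphism equation $\tilde\psi\,\underline{T_1}=\varepsilon\,\underline{T_2}$ on $\mathbf K^{n+1}$, where $\tilde\psi\in O(\mathbf K^{n+1})$ and $\underline{T_1},\underline{T_2}$ are positive. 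Rewriting this as $(\varepsilon\tilde\psi)\underline{T_1}=\underline{T_2}$ and noting that $\varepsilon\tilde\psi$ is again orthogonal, uniqueness of the polar decomposition of the positive operator $\underline{T_2}$ forces $\varepsilon\tilde\psi=\mathrm{Id}$ and $\underline{T_1}=\underline{T_2}$. By the criterion of Theorem \ref{GGenDW}, the equality $\underline{T_1}=\underline{T_2}$ is precisely the condition that $(f_1,\phi_1)$ and $(f_2,\phi_2)$ be gauge equivalent, which is the claim.

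The step I expect to be the main obstacle is controlling the sign $\varepsilon$: orientation-reversing isometries of the sphere can genuinely realize the nontrivial centralizer element $\varepsilon=-1$, so one cannot identify $\tilde\psi$ with the identity as in the purely gauge-theoretic situation. The resolution, and the real content of the lemma, is that the polar decomposition is blind to this sign, since replacing $\tilde\psi$ by $\varepsilon\tilde\psi$ keeps it orthogonal; the positive factors $\underline{T_1}$ and $\underline{T_2}$ are therefore forced to coincide regardless of $\varepsilon$. A secondary point to verify carefully is the cancellation transforming the fibrewise identity \eqref{oimeq} into an identity of endomorphisms of $\mathbf K^{n+1}$, which is exactly where the hypothesis that $f_1$ and $f_2$ are full maps enters.
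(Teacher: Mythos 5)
Your proposal is correct and follows essentially the same route as the paper's proof: reduce to the criterion $\iota_1^{\ast}T_1\iota_1=\iota_2^{\ast}T_2\iota_2$ of Theorem \ref{GGenDW}, identify the centralizer of the holonomy group, strip the evaluation factor in \eqref{oimeq} using fullness, and conclude by uniqueness of the polar decomposition. The only (harmless) divergence is at the centralizer: the paper observes that the quotient bundle over the \emph{oriented} Grassmannian $Gr_n(\mathbf R^{n+1})$ is trivialized by the position vector, so the structure group and hence the centralizer are trivial and $c=\mathrm{Id}$ outright, whereas you allow $c=\pm\mathrm{Id}$ and absorb the sign into the orthogonal factor before invoking polar decomposition — both arguments are valid.
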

\begin{proof}
We use the notation in Theorem \ref{GGenDW}. 
If the target is the sphere, 
the universal quotient bundle is a trivial bundle of real rank one 
and so, the structure group is trivial. 
The centralizer of the holonomy group is also trivial. 
It follows from \eqref{oimeq} that 
$$
\tilde \psi \iota_1^{\ast}T_1 \iota_1 \iota_1^{\ast} ev^{\ast}(v)  = \iota_2^{\ast}T_2 \iota_2 \iota_2^{\ast}ev^{\ast}(cv)
=\iota_2^{\ast}T_2 \iota_2 \iota_2^{\ast}ev^{\ast}(v).
$$
The fullness of $f_i$ implies that $\tilde \psi \iota_1^{\ast} T_1 \iota_1 =\iota_2^{\ast} T_2 \iota_2$. 
Uniqueness of the polar decomposition yields the result. 
\end{proof}

\begin{prop}\label{gaimC}
Suppse that 
$f_1$ and $f_2$ are full harmonic maps from $(M,g)$ 
into the complex Grassmannian $Gr_p(\mathbf C^m)$ 
satisfying the gauge and EH conditions 
and,  that the holonomy group of the connection on $V\to M$ is irreducible. 
If $f_1$ and $f_2$ are image equivalent, then both are gauge equivalent as maps.
\end{prop}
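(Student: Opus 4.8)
The plan is to begin exactly where the discussion preceding the statement leaves off, namely with equation \eqref{oimeq}, which was extracted from the image equivalence $f_2=\psi\circ f_1$. That identity already isolates the only obstruction to gauge equivalence: the gauge transformation $c=\phi_2^{-1}\tilde\psi\phi_1$ of $V\to M$, which preserves both the fibre metric and the connection and therefore lies in the centralizer of the holonomy group of $\nabla$ inside $\mathrm{Aut}\,V$. The whole point of the proposition is that, under irreducibility, this centralizer is too small to produce anything genuinely new, so the apparent discrepancy can be reabsorbed into the choice of lift of $\psi$.

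First I would identify the centralizer. Since $c$ is a metric-preserving automorphism commuting with the holonomy representation and $V\to M$ is a complex bundle whose holonomy is irreducible, Schur's lemma forces the commutant to be $\mathbf C\cdot Id_V$; intersecting with the metric-preserving automorphisms leaves exactly $c=\lambda\,Id_V$ for some $\lambda\in\mathbf C$ with $|\lambda|=1$. Substituting this into \eqref{oimeq} and using $ev^{\ast}(\lambda v)=\lambda\,ev^{\ast}(v)$, the right-hand side becomes $\lambda\,\iota_2^{\ast}T_2\iota_2\,\iota_2^{\ast}ev^{\ast}(v)$, so that
$$
\tilde\psi\,\iota_1^{\ast}T_1\iota_1\,\iota_1^{\ast}ev^{\ast}(v)=\lambda\,\iota_2^{\ast}T_2\iota_2\,\iota_2^{\ast}ev^{\ast}(v)
$$
for every $v\in V$.

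The key observation is then that the scalar $\lambda$ is harmless, because multiplication by $\lambda$ on $\mathbf C^m$ fixes every $p$-plane and hence induces the identity isometry of $Gr_p(\mathbf C^m)$, while lifting to the quotient bundle as $\lambda\,Id$. Consequently $\tilde\psi':=\bar\lambda\,\tilde\psi$ is again a bundle isomorphism covering the same isometry $\psi$, and replacing $\tilde\psi$ by $\tilde\psi'$ turns the displayed identity into $\tilde\psi'\,\iota_1^{\ast}T_1\iota_1\,\iota_1^{\ast}ev^{\ast}(v)=\iota_2^{\ast}T_2\iota_2\,\iota_2^{\ast}ev^{\ast}(v)$. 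In view of the expression \eqref{GnatidT} for the natural identifications, this is precisely $\phi_2=\tilde\psi'\circ\phi_1$; together with $f_2=\psi\circ f_1$ it says that $(f_1,\phi_1)$ and $(f_2,\phi_2)$ are gauge equivalent. (If one prefers, fullness of the $f_i$ lets one first cancel $\iota_i^{\ast}ev^{\ast}$ to get $\tilde\psi'\,\iota_1^{\ast}T_1\iota_1=\iota_2^{\ast}T_2\iota_2$ and then invoke the gauge-equivalence criterion of Theorem \ref{GGenDW}.)

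I expect the only real obstacle to be the first step, the identification of the centralizer, since everything after it is the routine bookkeeping of absorbing a unit scalar into the inherent $U(1)$-ambiguity of the lift $\tilde\psi$. The hypothesis that the target be \emph{complex} is exactly what makes this step clean: over $\mathbf R$ the commutant of an irreducible holonomy representation may be $\mathbf R$, $\mathbf C$, or $\mathbf H$, and only in the scalar case would this absorption go through, which is precisely the phenomenon distinguishing gauge from image equivalence in the reducible or real settings.
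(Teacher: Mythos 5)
Your proposal is correct and follows essentially the same route as the paper: Schur's lemma applied to the irreducible holonomy reduces the gauge transformation $c$ in \eqref{oimeq} to a unit-modulus scalar, and that scalar is then eliminated (the paper does this via uniqueness of the polar decomposition, concluding $\tilde\psi = a\,Id$ and $\iota_1^{\ast}T_1\iota_1=\iota_2^{\ast}T_2\iota_2$; you absorb it into the lift $\tilde\psi$, which amounts to the same thing). No gaps.
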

\begin{proof}
Since the holonomy group is irreducible, 
Schur's lemma yields that $c$ in \eqref{oimeq} is regarded as a scalar multiplication. 
Since $c$ gives a unitary transformation, $c$ is regarded as complex number $a$ with $|a|=1$.  
Then \eqref{oimeq} yields that 
$$
\tilde \psi \iota_1^{\ast}T_1 \iota_1 \iota_1^{\ast} ev^{\ast}(v)  = \iota_2^{\ast}T_2 \iota_2 \iota_2^{\ast}ev^{\ast}(cv)
= a\iota_2^{\ast}T_2 \iota_2 \iota_2^{\ast}ev^{\ast}(v).
$$
and so, $\tilde \psi \iota_1^{\ast} T_1 \iota_1 =a \iota_2^{\ast} T_2 \iota_2$ by fullness of maps.  
Uuniqueness of the polar decomposition yields that $\tilde \psi=aId_{\mathbf C^m}$ and we obtain the desired result. 
\end{proof}

\subsection{Holomorphic case}

When $f:(M,g,J)\to Gr_p(W)$ is a holomorphic map, 
Proposition \ref{EH} makes the generalization of do Carmo-Wallch theorem simpler. 
However, we must divide it in two theorems, according to the target. 

\begin{thm}\label{HGenDW}
Let $(M,g,J)$ be a compact K\"ahler manifold. 
Fix a holomorphic vector bundle $V\to M$ 
of rank $q$ with an Einstein-Hermitian metric $h_V$ and the Einstein-Hermitian connection $\nabla$ 
with $K_{EH}=\mu Id_V$. 

Let 
$f:M \to Gr_p(\mathbf C^m)$ be a full holomorphic map 
satisfying the gauge condition: 

\noindent{\rm (i)} 
There exists a holomorphic bundle isomorphism between 
the pull-back bundle of the universal quotient bundle and 
$V\to M$, which preserves the metrics. 
{\rm (}Hence, $q=m-p$.{\rm )}

Then we have the space of holomorphic sections $W$ of $V \to M$ which is 
also an eigenspace of the Laplacian with eigenvalue 
$\mu$ equipped with $L^2$-scalar product $(\cdot,\cdot)_W$ 
and a semi-positive Hermitian endomorphism 
$T\in \text{\rm End}\,(W)$. 
The pair $(W,T)$ satisfies the following four conditions.

\noindent {\rm (I)} The vector space $\mathbf K^m$ is a subspace of $W$ with the inclusion 
$\iota:\mathbf K^m \to W$ and 
$V\to M$ is globally generated by $\mathbf K^m$.

\noindent {\rm (II)} 
As a subspace, $\mathbf K^m=\text{\rm Ker}\,T^{\bot}$ and 
the restriction of $T$ is a positive Hermitian transformation of $\mathbf K^m$. 

\noindent {\rm (III)} 
The endomorphism $T$ satisfies 
\begin{equation}\label{HDW 2}
ev \circ T^2 \circ ev^{\ast}=Id_V, 
\end{equation}

\noindent {\rm (IV)} 
The endomorphism $T$ gives a holomorphic embedding of $Gr_p(\mathbf K^m)$ into $Gr_{p^{\prime}}(W)$, 
where $p^{\prime}=p+\text{\rm dim}\,\text{\rm Ker}\,T$
and also gives a bundle isomorphism $\phi:V\to f^{\ast}Q$. 


Then, $f:M \to Gr_{p}(\mathbf K^m)$ 
can be expressed as 
\begin{equation}\label{HDW3} 
f\left([g]\right)=\left(\iota^{\ast}T\iota \right)^{-1}\left(f_0\left([g]\right)\cap \text{\rm Ker}\,T^{\bot}\right), 
\end{equation}
where $\iota^{\ast}$ denotes the adjoint operator of $\iota$ under the induced scalar product on 
$\mathbf K^m$ from $(\cdot,\cdot)_W$ on $W$ and $f_0$ is the standard map by $W$. 
Such two pairs $(f_i, \phi_i)$, $(i=1,2)$ are gauge equivalent if and only if 
$
\iota_1^{\ast}T_1\iota_1=\iota_2^{\ast}T_2\iota_2, 
$
where $(T_i,\iota_i)$ correspond to $f_i$ $(i=1,2)$ under the expression in \eqref{HDW3}, respectively. 

Conversely, 
suppose that a vector space $\mathbf K^m$, the space of holomorphic sections 
$W \subset \Gamma(V)$ and 
a semi-positive Hermitian endomorphism 
$T\in \text{\rm End}\,(W)$ 
satisfying 
conditions {\rm (I)}, {\rm (II)} 
and {\rm (III)} are given.  
Then there is a unique holomorphic embedding of $Gr_p(\mathbf K^m)$ into $Gr_{p^{\prime}}(W)$ 
and 
the map $f:M \to Gr_{p}(\mathbf K^m)$ defined by \eqref{HDW3}
is a full holomorphic map into $Gr_p(\mathbf K^m)$ 
satisfying the gauge condition with bundle isomorphism $V\cong f^{\ast}Q$. 
\end{thm}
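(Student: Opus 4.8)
The plan is to run the proof of Theorem \ref{GGenDW} in the holomorphic category, using the Einstein-Hermitian hypotheses to discharge the Einstein-Hermitian condition for free and to isolate the part of condition (III) that becomes redundant. First I would record that a holomorphic map between K\"ahler manifolds is harmonic, and that since $V\to M$ is Einstein-Hermitian with $K_{EH}=\mu\,Id_V$, Proposition \ref{EH} gives $A=-K_{EH}=-\mu\,Id_V$; thus the Einstein-Hermitian condition holds automatically with constant $\mu$. Viewing $\mathbf C^m$ as a space of sections of $f^{\ast}Q\cong V$, the holomorphic gauge condition (i) makes these holomorphic sections, so by Lemma \ref{hollap} each satisfies $\Delta t=K_{EH}t=\mu t$; by the Corollary to that lemma the full space $W:=H^0(M,V)$ of holomorphic sections is an eigenspace of $\Delta$ with eigenvalue $\mu$. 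Fullness then embeds $\mathbf C^m\hookrightarrow W$ with inclusion $\iota$, yielding condition (I).

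Next I would construct $T$ exactly as in Theorem \ref{GGenDW}: compare the scalar product $(\cdot,\cdot)_{m}$ on $\mathbf C^m$ inducing the Fubini-Study structure with the restriction of the $L^2$-product $(\cdot,\cdot)_W$ to obtain a positive Hermitian $\underline T$ on $\mathbf C^m$, and extend it by zero on $\text{Ker}\,T=\mathbf C^{m\,\bot}$, giving condition (II). Lemma \ref{linalg} applied to $ev$ and its restriction $ev_K$ then yields the expression \eqref{HDW3} and a natural identification $\phi$ of the form \eqref{GnatidT}, whose isometry reproduces $ev\circ T^2\circ ev^{\ast}=Id_V$, i.e. \eqref{HDW 2}, and hence condition (III). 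The decisive point is that the second equation $ev\circ T^2\circ(\nabla ev^{\ast})=0$ of the general theorem, which by \eqref{GIII2} measures $\nabla^Q-\nabla$, is now \emph{automatic}: both $\nabla$ and the pulled-back connection $\nabla^Q$ are the connections determined by the holomorphic structure and the Hermitian metric, the gauge condition (i) identifies these data holomorphically and isometrically, and such a compatible connection is unique, so $\nabla^Q=\nabla$. This is precisely why condition (III) collapses to the single equation \eqref{HDW 2}.

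For condition (IV) the assignment $U\mapsto U\oplus \text{Ker}\,T$ is $\mathbf C$-linear with $\text{Ker}\,T$ a fixed complex subspace, hence a holomorphic embedding of $Gr_p(\mathbf C^m)$ into $Gr_{p^{\prime}}(W)$; likewise the standard map $f_0$ by $W=H^0(M,V)$ is holomorphic since $ev:\underline W\to V$ is a holomorphic bundle map whose kernel varies holomorphically. The gauge-equivalence criterion $\iota_1^{\ast}T_1\iota_1=\iota_2^{\ast}T_2\iota_2$ follows verbatim from uniqueness of the polar decomposition, exactly as in Theorem \ref{GGenDW}. For the converse I would define $f$ by \eqref{HDW3} and $\phi$ by \eqref{GnatidT} from the given $(W,T)$ satisfying (I)--(III); holomorphicity of $f$ follows from that of $f_0$ and the complex-linearity of the construction, while the single equation \eqref{HDW 2} together with the uniqueness of the compatible connection just invoked ensures that $\phi$ preserves both metric and connection, so condition (i) holds and $f$ is the desired full holomorphic map with $f^{\ast}Q\cong V$. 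The main obstacle I anticipate is the clean justification that a holomorphic, metric-preserving gauge isomorphism forces the two compatible connections to coincide -- this is what licenses omitting the connection part of (III) -- together with verifying that the $T$-construction, built from metric data alone, stays compatible with the holomorphic structures on both sides.
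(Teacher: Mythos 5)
Your proposal is correct and follows essentially the same route as the paper: the author likewise discharges the Einstein--Hermitian condition via Proposition \ref{EH}, observes that uniqueness of the metric-compatible connection on a holomorphic bundle makes the connection half of condition (III) automatic (so that $ev\circ T$, being a holomorphic isometric bundle map, preserves connections once \eqref{HDW 2} holds), and reduces everything else to the proof of Theorem \ref{GGenDW}. The extra details you supply — identifying $W=H^0(M,V)$ as the $\mu$-eigenspace via Lemma \ref{hollap} and its corollary — are exactly the steps the paper leaves implicit.
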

\begin{proof}
Since the compatible connection with holomorphic bundle structure and metric is unique, 
condition (i) yields that the pull-back connection is gauge equivalent to $\nabla$. 
Under the gauge condition, Proposition \ref{EH} implies that the mean curvature operator satisfies the EH condition. 

On the other hand, if we have a semi-positive Hermitian transformation $T:W \to W$, 
$ev \circ T$ gives a holomorphic bundle isomorphism between $V \to M$ and $f^{\ast}Q \to M$. 
Hence, for the same reason, condition (III) yields that $ev\circ T$ preserves the connections. 

The remaining assertion is proved in the same way as in the proof of Theorem \ref{GGenDW}. 
\end{proof}

From now on, we discuss the equation \eqref{HDW 2} for a holomorphic map. 
Since our presentation here follows closely the discussion in \cite{D-K} and \cite{Do-3}, 
readers may consult \cite{D-K} and \cite{Do-3} for more details.

Let $V\to M$ be an Einstein-Hermitian vector bundle  
of rank $q$ with Einstein-Hermitian metric $h_V$  over a compact $n$-dimensional K\"ahler manifold $M$ with K\"ahler form $\omega$. 
The group of bundle automorphisms of $V \to M$ preserving $h_V$ is refered to as the gauge group of $V\to M$ and denoted by 
$\mathcal G_V$. 
The vector bundle of skew-Hermitian endomorphisms of $V \to M$ is denoted by $\mathfrak{g}_V \to M$. 
The Lie algebra of $\mathcal G_V$ is regarded as the space of sections on $\mathfrak{g}_V \to M$ and denoted by 
$\Gamma(\mathfrak{g}_V)$. 

Let $W$ be the space of holomorphic sections of $V\to M$ which is equipped with the $L^2$-Hermitian inner product $(\cdot,\cdot)_W$. 
Then, $\text{Hom}(\underline{W}, V)$, the set of bundle homomorphisms from $\underline{W}\to M$ to $V\to M$, 
can be regarded as an infinite-dimensional K\"ahler manifold with metric: 
$$
\int_M \langle \Phi, \Psi \rangle \frac{\omega^n}{n!}, \quad \Phi, \Psi \in \text{Hom}(\underline{W}, V), 
$$
where $\langle \Phi, \Psi \rangle$ is the induced metric on $\underline{W}^{\ast}\otimes V \to M$ 
from $h_V$ and $(\cdot,\cdot)_W$. 

Then the gauge group acts on $\text{Hom}(\underline{W}, V)$ preserving the K\"ahler form. 

\begin{lemma}{\rm (}cf.\cite{D-K}{\rm )}\label{mom1}
Define a map $\mu_1:\text{\rm Hom}(\underline{W}, V) \to \Gamma(\mathfrak{g}_V)$ as 
$$
\mu_1(\Phi)=\sqrt{-1}\Phi \Phi^{\ast},
$$
where $\Phi \in \text{\rm Hom}(\underline{W}, V)$. 
Then $\mu_1$ is regarded as an equivariant moment map for the action of $\mathcal G_V$ on $\text{\rm Hom}(\underline{W}, V)$. 
\end{lemma}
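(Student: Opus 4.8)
The plan is to treat $\text{Hom}(\underline{W}, V)$ as a flat (affine) infinite-dimensional K\"ahler manifold and to verify directly the two defining properties of an equivariant moment map, namely equivariance and the Hamiltonian identity; the only genuine content is the tracking of conventions. First I would record the symplectic structure. Because $\text{Hom}(\underline{W}, V)$ is a complex vector space, its tangent space at every point $\Phi$ is canonically $\text{Hom}(\underline{W}, V)$ itself, and the $L^2$-Hermitian metric $\langle\langle \Phi, \Psi\rangle\rangle = \int_M \langle \Phi, \Psi\rangle\,\omega^n/n!$ is translation-invariant; its imaginary part is the K\"ahler form $\Omega$. The group $\mathcal{G}_V$ acts by post-composition $g \cdot \Phi = g\Phi$, which is fibrewise unitary and hence preserves $\langle\langle\cdot,\cdot\rangle\rangle$ and $\Omega$ (as already noted before the statement); the fundamental vector field attached to $\xi \in \Gamma(\mathfrak{g}_V)$ is then $X_\xi|_\Phi = \xi\Phi$.

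Equivariance is immediate and requires no geometry. Since every $g \in \mathcal{G}_V$ satisfies $g^{\ast} = g^{-1}$ fibrewise, I compute $\mu_1(g\Phi) = \sqrt{-1}(g\Phi)(g\Phi)^{\ast} = g(\sqrt{-1}\Phi\Phi^{\ast})g^{-1} = \text{Ad}_g\,\mu_1(\Phi)$, so that $\mu_1$ intertwines the $\mathcal{G}_V$-action with the adjoint action on $\Gamma(\mathfrak{g}_V)$. Here I identify $\Gamma(\mathfrak{g}_V)$ with its dual via the invariant $L^2$ pairing $-\int_M \text{trace}(\xi\eta)\,\omega^n/n!$ on skew-Hermitian sections, which is positive definite because $-\text{trace}(\xi^2) = \text{trace}(\xi\xi^{\ast}) \ge 0$ for skew-Hermitian $\xi$.

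The substantive step is the Hamiltonian identity. For fixed $\xi$ I would introduce the function $H_\xi(\Phi) := \langle\mu_1(\Phi), \xi\rangle$, which is quadratic in $\Phi$, and differentiate it by Leibniz to obtain
$$dH_\xi|_\Phi(\Psi) = \langle \sqrt{-1}(\Psi\Phi^{\ast} + \Phi\Psi^{\ast}), \xi\rangle,$$
noting that $\Psi\Phi^{\ast}+\Phi\Psi^{\ast}$ is Hermitian so that the argument lands in $\mathfrak{g}_V$. I would then expand $\iota_{X_\xi}\Omega|_\Phi(\Psi) = \Omega(\xi\Phi, \Psi)$ as $\text{Im}\int_M \langle\xi\Phi, \Psi\rangle\,\omega^n/n!$, reduce both sides to fibrewise traces of endomorphisms, and use $\xi^{\ast} = -\xi$ together with $\overline{\text{trace}(A)} = \text{trace}(A^{\ast})$ and the cyclicity of trace. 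This collapses the two cross terms into $2\sqrt{-1}\,\text{Im}\,\text{trace}(\Psi^{\ast}\xi\Phi)$ pointwise, matching the two expressions after integration over $M$ against $\omega^n/n!$.

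The main obstacle, and essentially the only one, is convention bookkeeping. One must fix consistent choices for which slot of the Hermitian metric is conjugate-linear, for $\Omega$ as $\text{Im}\langle\langle\cdot,\cdot\rangle\rangle$ versus its negative, and for the sign and scalar in the identification $\Gamma(\mathfrak{g}_V) \cong \Gamma(\mathfrak{g}_V)^{\ast}$, so that $dH_\xi$ and $\iota_{X_\xi}\Omega$ agree \emph{exactly} rather than up to a sign or a factor; indeed a naive choice produces a stray factor of $2$, which is absorbed by normalizing the pairing (or $\Omega$, or $\mu_1$) as in \cite{D-K}. Once these normalizations are pinned down the verification is a purely algebraic identity of trace forms holding pointwise on $M$, and no curvature input or hard analysis is needed.
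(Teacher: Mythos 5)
Your verification is correct, and it is the standard one: equivariance follows from $(g\Phi)(g\Phi)^{\ast}=g(\Phi\Phi^{\ast})g^{-1}$, and the Hamiltonian identity reduces, as you say, to the pointwise trace identity $\operatorname{trace}(\Psi\Phi^{\ast}\xi)+\operatorname{trace}(\Phi\Psi^{\ast}\xi)=2\sqrt{-1}\,\mathrm{Im}\operatorname{trace}(\xi\Phi\Psi^{\ast})$ for skew-Hermitian $\xi$, with the residual factor of $2$ absorbed into the normalization of the pairing on $\Gamma(\mathfrak{g}_V)$ or of $\Omega$ — a point you flag honestly rather than sweep under the rug. The paper itself supplies no proof of this lemma, deferring entirely to Donaldson--Kronheimer, so your write-up simply fills in the omitted standard computation; there is no divergence of method to report.
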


Suppose that $\mu_1(\Phi)= \sqrt{-1}Id_V$. 
The condition $\Phi\Phi^{\ast}=Id_V$ yields that the metric $h_V$ is the same as the induced metric from $(\cdot,\cdot)_W$ by 
$\Phi$. 
To see this, let $v_1,\cdots,v_q$ be an unitary basis of $V_x$, where $x \in M$. 
Then, it follows from $\Phi\Phi^{\ast}=Id_V$ that
$$
h_V(v_i,v_j)=h_V\left(\Phi\Phi^{\ast}v_i,v_j \right)=\left(\Phi^{\ast}v_i, \Phi^{\ast}v_j\right)_W. 
$$
In particular, $\mu_1(\Phi)= \sqrt{-1}Id_V$ implies that $\Phi$ is a surjective bundle homomorphism. 
If $\Phi$ is a holomorphic bundle surjection, then $\Phi$ induces a holomorphic map $f$ from $M$ into $Gr_p(W)$, where $p=\text{dim}\,W-q$. 
In this case, the pull-back bundle of the universal quotient bundle is holomorphically isomorphic to $V \to M$. 
Since the compatible connection is unique, the pull-back connection is also the Einstein-Hermitian connection. 
Thus the equation \eqref{HDW 2} in condition (III) is recognized as the equation $\mu_1(\Phi)= \sqrt{-1}Id_V$. 

Next, the action of $\text{U}(W)$ on $\text{\rm Hom}(\underline{W}, V)$ is defined as 
$g\cdot \Phi=\Phi \circ g$, which also preserves the K\"ahler metric. 
The equivariant moment map $\mu_2:\text{\rm Hom}(\underline{W}, V) \to \mathfrak{u}(W)$ is given by 
$$
\mu_2(\Phi)=\sqrt{-1}\int_M \Phi^{\ast}\Phi \frac{\omega^n}{n!}. 
$$

Notice that the $\text{U}(W)$ action preserves $\mu_1^{-1}(\sqrt{-1}Id_V)$. 
From Theorem \ref{HGenDW}, the moduli space of full holomorphic maps into $Gr_p(W)$ satisfying the gauge condition 
by gauge equivalence 
is identified with the quotient of $\mu_1^{-1}(\sqrt{-1}Id_V) \cap \text{H}(W)$ by $\text{U}(W)$. 

However, we have a rigidity theorem in the case that $V\to M$ is irreducible. 

\begin{thm}\label{rigid}
Let $(M,g,J)$ be a compact K\"ahler manifold and 
$V\to M$ a holonomy irreducible Einstein-Hermitain vector bundle of rank $q$. 

Suppose that  
$f:M \to Gr_{p}(\mathbf C^{m})$ is a full holomorphic map satisfying the gauge condition:  

\noindent{\rm (i)} 
There exists a holomorphic bundle isomorphism between 
the pull-back bundle of the universal quotient bundle and 
$V\to M$, which preserves the metrics. 
{\rm (}Hence, $q=m-p$.{\rm )}



Then 
$m$ is uniquely determined 
and $\mathbf C^{m}$ can be regarded as a subspace of 
the space of holomorphic sections $H^0(M;V)$. 
The map $f$ is expressed as $\pi_m f_0$ up to image equivalence, 
where $\pi_m:W \to \mathbf C^{m}$ is the orthgonal projection and $f_0$ is the standard map by 
$(V \to M, H^0(M;V))$. 
\end{thm}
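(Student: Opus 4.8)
The plan is to read the structure of $f$ off Theorem \ref{HGenDW} and then use holonomy irreducibility to remove the remaining freedom. Since $V\to M$ is Einstein--Hermitian, the corollary to Lemma \ref{hollap} shows that $W:=H^0(M,V)$ is an eigenspace of the Laplacian with eigenvalue $\mu$, equipped with its $L^2$-scalar product. Applying Theorem \ref{HGenDW} to the full holomorphic map $f$ subject to the gauge condition (i) produces a subspace $\mathbf{C}^m=\text{Ker}\,T^{\bot}\subset W$, a semi-positive Hermitian $T\in\text{End}(W)$ with $ev\circ T^2\circ ev^{\ast}=Id_V$, and the normal form \eqref{HDW3}, namely $f(x)=(\iota^{\ast}T\iota)^{-1}\bigl(f_0(x)\cap\mathbf{C}^m\bigr)$ with $f_0$ the standard map by $W$. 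Because $(\iota^{\ast}T\iota)^{-1}$ is an isometry of $Gr_p(\mathbf{C}^m)$, discarding it moves $f$ only inside its image-equivalence class; this already exhibits $f$ as image equivalent to the induced map by $\mathbf{C}^m\subset H^0(M,V)$, which is exactly the map I will denote $\pi_m f_0$. Thus the expression $f\sim\pi_m f_0$ and the inclusion $\mathbf{C}^m\subset H^0(M,V)$ come almost for free, and the substance is to identify the invariant $m$ and to make $f_0$ a genuine gauge/EH standard map.

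Holonomy irreducibility enters at two points. First I would show that the full standard map $f_0$ by $(V,H^0(M,V))$ itself satisfies the gauge and Einstein--Hermitian conditions up to a constant: since $f_0$ is holomorphic the connection equation $ev\circ(\nabla ev^{\ast})=0$ holds automatically, so the computation in the proof of Proposition \ref{Holsuf} makes $ev\circ ev^{\ast}$ parallel, whence $ev\circ ev^{\ast}=c\,Id_V$ with $c>0$ by irreducibility; rescaling the $L^2$-product normalises $c=1$, so $f_0$ is a balanced EH map with $T_0=Id_W$ and $\pi_m f_0$ is meaningful as stated. Second, I would package both maps through the moment maps of \S5. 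Writing $\Phi=ev\circ T$, the gauge condition is precisely $\mu_1(\Phi)=\sqrt{-1}\,Id_V$ by Lemma \ref{mom1}, while $\int_M ev^{\ast}\,ev\,\tfrac{\omega^n}{n!}=Id_W$ (orthonormality of a unitary basis of $W$) gives the clean value $\mu_2(\Phi)=\sqrt{-1}\,T^2$. The trace identities of Lemma \ref{traceT} and Corollary \ref{traceIT} then record $\text{trace}\,T^2=q\,\text{Vol}(M)=N$, so that $C:=T^2-Id_W$ is trace-free and satisfies both linearised equations $ev\circ C\circ ev^{\ast}=0$ and $ev\circ C\circ(\nabla ev^{\ast})=0$.

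The main obstacle is the interplay of these two moment maps: I must show that irreducibility leaves no non-isometric deformation, so that $f$ is genuinely congruent to $\pi_m f_0$ and the integer $m=\dim\,\text{Ker}\,T^{\bot}$ is the uniquely determined invariant of the image class. The mechanism I expect to use is Schur's lemma in the guise of Proposition \ref{gaimC}: for holonomy irreducible $V$ the centraliser of the holonomy group is scalar, so image equivalence and gauge equivalence coincide and each class is recorded faithfully by $\iota^{\ast}T\iota$; comparing $\Phi$ with the balanced lift $\Phi_0=ev$ inside $\mu_1^{-1}(\sqrt{-1}\,Id_V)$ then forces the $U(W)$-orbit data carried by $C$ to collapse onto the subspace $\mathbf{C}^m$ alone. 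Carrying out this collapse — equivalently, proving that the complete linear system $H^0(M,V)$ admits no nontrivial equivariant deformation of $f_0$ beyond reselecting the subspace $\mathbf{C}^m$ — is exactly the point that generalises Calabi's rigidity \cite{Cal}; once it is in hand, the normal form \eqref{HDW3} together with Proposition \ref{gaimC} yields that $f$ equals $\pi_m f_0$ up to image equivalence with $m$ uniquely determined.
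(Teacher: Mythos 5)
There is a genuine gap, and it sits exactly where you yourself place the ``main obstacle.'' Your opening claim that $(\iota^{\ast}T\iota)^{-1}$ is an isometry of $Gr_p(\mathbf C^m)$, so that discarding it costs only image equivalence and the congruence $f\sim\pi_m f_0$ comes ``almost for free,'' is false: $\iota^{\ast}T\iota$ is a positive Hermitian transformation, and the diffeomorphism of $Gr_p(\mathbf C^m)$ it induces is an isometry for the \emph{fixed} Fubini--Study-type metric only when $\iota^{\ast}T\iota$ is a scalar multiple of a unitary. (In the proof of Theorem \ref{GGenDW} it is an isometry only between the two copies of the Grassmannian carrying the metrics induced by $(\cdot,\cdot)_m$ and by the restricted $L^2$-product $(\cdot,\cdot)_W$.) Proving that $\iota^{\ast}T\iota$ is a multiple of a unitary is the entire content of the rigidity statement; you acknowledge this later but never supply the mechanism, and the mechanism is not Proposition \ref{gaimC} or the trace identities. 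It is the \emph{uniqueness of the Einstein--Hermitian metric} on the holonomy irreducible bundle $V\to M$: the gauge condition says the pull-back metric $f^{\ast}h_Q=h_V(ev\circ T\cdot,\,ev\circ T\cdot)$ is again an Einstein--Hermitian structure on $V$, so irreducibility forces $f^{\ast}h_Q=c\,h_V$ for a constant $c>0$; restricting to $w_1,w_2\in\mathbf C^m$ via Lemma \ref{linalg} and integrating against $\int_M ev^{\ast}ev\,\frac{\omega^n}{n!}=Id_W$ gives $\iota^{\ast}T^2\iota=c\,Id_{\mathbf C^m}$, whence $T|_{\mathbf C^m}$ is $\sqrt{c}$ times a unitary and $f=\pi_m f_0$ up to image equivalence. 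Your moment-map bookkeeping ($\mu_1(\Phi)=\sqrt{-1}\,Id_V$, $\mu_2(\Phi)=\sqrt{-1}\,T^2$, $\mathrm{trace}\,T^2=N$) is consistent with the paper, but by itself it only says $T^2-Id_W$ is trace-free and annihilated by $ev(\cdot)ev^{\ast}$; it does not collapse $T^2$ to a scalar on $\mathbf C^m$ without the EH-uniqueness input.

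A second, smaller omission: the uniqueness of $m$ is asserted at the end but never argued. In the paper this is a separate step: if full maps satisfying the hypotheses existed for two dimensions $m<n$, both would be projections $\pi_m f_0$ and $\pi_n f_0$ by the rigidity just proved, and the interpolation $s\pi_m+(1-s)\pi_n$ ($0\leq s<1$) would satisfy \eqref{HDW 2} and hence produce, via Theorem \ref{HGenDW}, full holomorphic maps into $Gr_{n-q}(\mathbf C^n)$ satisfying the gauge condition that are not gauge equivalent --- and so, by Proposition \ref{gaimC}, not image equivalent --- to $\pi_n f_0$, contradicting the rigidity. You would need to add this interpolation argument, or an equivalent one, to obtain the ``$m$ is uniquely determined'' part of the statement.
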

\begin{proof}
We denote by $W$ the space of holomorphic sections $H^0(M;V)$. 
Then $\mathbf C^{m}$ is a subspace of $W$ by Theorem \ref{main theorem} and fullness of the map. 

From Theorem \ref{HGenDW}, we have a surjective bundle homomorphism 
$ev \circ T :\underline{W}\to V$, where $ev:\underline{W} \to V$ 
is the evaluation map (for the standard map) and 
$T$ is a semi-positive Hermitian endomorphism of $W$. 
Moreover $\text{Ker}\,T^{\bot}=\mathbf C^{m}$ and the restriction of $T$ is a positive Hermitian transformation of $\mathbf C^{m}$, 
which is denoted by the same symbol. 
The map $f$ is obtained by the composition $T^{-1}\pi_m f_0$. 

We denote by  $f^{\ast}h_Q$ 
the pull-back metric  on $f^{\ast}Q \to M$. 
Since $T$ satisfies \eqref{HDW 2} or $\mu_1(ev\circ T)=\sqrt{-1}Id_W$, it follows that 
$f^{\ast}h_Q=h_V(ev\circ T, ev \circ T)$. 
By gauge condition, $f^{\ast}h_Q$ gives an Einstein-Hermitian strucutre on $V \to M$. 
We use the uniqueness of the Einstein-Hermitian structure again and irreducibility to deduce that 
$ch_V=h_V(ev\circ T, ev \circ T)$, where $c$ is a positive constant. 
It follows from Lemma \ref{linalg} or \eqref{GTT} that 
$ch_V(w_1, w_2)=h_V\left(ev\circ T(w_1), ev \circ T(w_2)\right)$ for arbitrary $w_1,w_2 \in \mathbf C^{m}$. 
Consequently, 
\begin{align*}
c(w_1, w_2)_W=&\int_M h_V\left(ev \circ T(w_1), ev \circ T (w_2)\right) \frac{\omega^n}{n!} \\
=&\int_M \left(T^{\ast}ev^{\ast}ev T(w_1), w_2\right)_W \frac{\omega^n}{n!}, 
\end{align*}
and so, 
$\mu_2(ev \circ T)=\sqrt{-1}cId_{\mathbf C^{m}}$. 
(This condition is closely related to the balanced condition \cite{Do-3}. Indeed, it is the balanced condition if $\mathbf C^m=W$.) 
In other words, we get 
\begin{equation}\label{Tmu2}
cId_{\mathbf C^{m+1}}=\int_M T^{\ast} ev^{\ast} ev T \frac{\omega^n}{n!}=T^{\ast} \left(\int_M ev^{\ast} ev  \frac{\omega^n}{n!}\right) T. 
\end{equation}
By definition of $L^2$-scalar product, we have that for arbitrary $w_1,w_2 \in W$, 
\begin{align*}
\left( \int_M ev^{\ast} ev  \frac{\omega^n}{n!}(w_1), w_2\right)_W=&\int_M\left(ev^{\ast}ev(w_1),w_2 \right)_W\frac{\omega^n}{n!} \\
=&\int_M h_V\left(ev(w_1), ev(w_2) \right)\frac{\omega^n}{n!}=(w_1,w_2)_W. 
\end{align*}
Therefore, 
$$
\int_M ev^{\ast}ev \frac{\omega^n}{n!}=Id_W. 
$$
Combining this with \eqref{Tmu2}, we obtain $T^{\ast}T=cId_{\mathbf C^{m}}$. 
Since $f$ is also expressed as $\sqrt{c}^{-1}T^{-1}\pi_m f_0$ and $\sqrt{c}T \in \text{\rm U}(\mathbf C^{m})$, 
we obtain $f=\pi_m f_0$ up to image equivalence. 

Finally, suppose that there exist two those maps $f_1:M \to Gr_{m-q}(\mathbf C^{m})$ and $f_2:M \to Gr_{n-q}(\mathbf C^{n})$. 
From the above argument, we can conclude that $f_1=\pi_m f_0$ and $f_2=\pi_n f_0$. 
We can assume that $\mathbf C^{m+1} \subset \mathbf C^{n+1}$ without loss of generality. 
Then $s\pi_m + (1-s)\pi_n$ ($0\leqq s < 1$) satisfies \eqref{HDW 2}. 
Theorem \ref{HGenDW} implies that  
we obtain a full holomorphic map $f:M \to Gr_{n-q}(\mathbf C^{n})$ satisfying the gauge condition, 
which are not gauge equivalent to $f_2$. 
Proposition \ref{gaimC} yields that $f$ and $f_2$ are not image equivalent, 
which is a contradiciton. 
Thus $m=n$. 
\end{proof}

\begin{rem}
We can develop a similar argument in the case of harmonic maps from a compact Riemannian manifold into complex Grassmannian 
using the Riemannian volume form. 
Let $M$ be a compact Riemannian manifold. 
We assume that $V\to M$ is a complex vector bundle with Hermitian metric and connection $\nabla$ preserving the metric. 
An eigenspace of the Laplace operator is denoted by $W$. 
Then we have two moment maps $\mu_1:\text{\rm Hom}(\underline{W}, V) \to \Gamma(\mathfrak{g}_V)$ and 
$\mu_2:\text{\rm Hom}(\underline{W}, V) \to \mathfrak{u}(W)$. 
Let $\mathcal A_V$ be the set of connections on $V\to M$. 
A $\mathcal G_V$-equivariant map 
$F:\text{\rm Hom}(\underline{W}, V) \to \mathcal A_V$ is defined as 
$$
F(\Phi)=\Phi d\Phi^{\ast}. 
$$
More precisely, this means that we define a covariant derivative $\nabla^{\Phi}$ for a section $t \in \Gamma(V)$ as 
$
\nabla^{\Phi} t =\Phi d\left(\Phi(t) \right). 
$
Then the equations \eqref{GDW 2} in condition (III) in Theorem \ref{GGenDW} is $\mu_1(\Phi)= \sqrt{-1}Id_V$ and $F(\Phi)=\nabla$ 
up to gauge transformation. 

Notice that the $\text{U}(W)$ action preserves $\mu_1^{-1}(\sqrt{-1}Id_V)$ and the equation $F(\Phi)=\nabla$. 
From Theorem \ref{GGenDW}, the moduli space of full harmonic maps into $Gr_p(W)$ satisfying gauge and EH conditions by 
gauge equivalence is identified with the quotient of $\mu_1^{-1}(\sqrt{-1}Id_V) \cap F^{-1}([\nabla]) \cap \text{H}(W)$ by $\text{U}(W)$. 
Here $F$ is regarded as a map from $\text{\rm Hom}(\underline{W}, V)$ to $\mathcal A_V/\mathcal G_V$. 

The complexification of $\text{SU}(W)$, which is denoted by $\text{SL}(W)$, also acts on $\text{\rm Hom}(\underline{W}, V)$ in a similar manner. 
Using the standard argumet, it can be shown that every $\text{SL}(W)$-orbit has at most one $\text{SU}(W)$-orbit satisfying $\tilde \mu_2(\Phi)=0$  
(cf. \cite{Do-3}), where 
$$
\tilde \mu_2(\Phi)=\sqrt{-1}\int_M \left(\Phi^{\ast}\Phi-\frac{\text{\rm trace}\,\Phi^{\ast}\Phi}{\text{\rm dim}\,W} \right)\frac{\omega^n}{n!}. 
$$
\end{rem}

If the target is a complex quadric, then the space of holomorphic sections must be regarded as 
a {\it real} vector space with an orientation. 
Though the universal quotient bundle $Q \to Gr_n(\mathbf R^{n+2})$ has a holomorphic bundle structure 
induced from the canonical connection, we regard $Q \to Gr_n(\mathbf R^{n+2})$ as 
a real vector bundle with the complex structure. 
This is the main difference from the case in which the target is a complex Grassmannian. 

\begin{thm}\label{HGenDWI}
Let $(M,g,J)$ be a compact K\"ahler manifold. 
Fix a holomorphic line bundle $L\to M$ 
with Einstein-Hermitian metric $h_L$ and Einstein-Hermitian connection $\nabla$ 
with $K_{EH}=\mu Id_L$. 
We regard $L\to M$ as a real vector bundle with complex structure $J_L$. 

Let 
$f:M \to Gr_n(\mathbf R^{n+2})$ be a full holomorphic map 
satisfying the gauge condition: 

\noindent{\rm (i)} 
The pull-back bundle 
$f^{\ast}Q \to M$ 
with the pull-back metric, connection and complex structure 
is gauge equivalent to $L\to M$ with $h_L$, $\nabla$ and $J_L$. 

Then we have the space of holomorphic sections $W$ of $L \to M$ which is 
also an eigenspace of the Laplacian with eigenvalue 
$\mu$ equipped with $L^2$-inner product $(\cdot,\cdot)_W$ 
induced from $L^2$-Hermitian inner product. 
Regard $W$ as a real vector space with $(\cdot,\cdot)_W$. 
Then, there exists a semi-positive symmetric endomorphism 
$T\in \text{\rm End}\,(W)$ 
such that the pair $(W,T)$ satisfies the following four conditions:

\noindent {\rm (I)} The vector space $\mathbf R^{n+2}$ is a subspace of $W$ with the inclusion 
$\iota:\mathbf R^{n+2} \to W$ preserving the orientation and 
$L\to M$ is globally generated by $\mathbf R^{n+2}$.

\noindent {\rm (II)} 
As a subspace, $\mathbf R^{n+2}=\text{\rm Ker}\,T^{\bot}$ and 
the restriction of $T$  is a positive symmetric transformation of $\mathbf R^{n+2}$. 

\noindent {\rm (III)} 
The endomorphism $T$ satisfies 
\begin{equation}\label{HDW 4}
ev \circ T^2 \circ ev^{\ast}=Id_L, 
\,\, 
ev \circ T^2 \circ \left(\nabla ev^{\ast}\right)=0 \in \Omega^1(\text{\rm End}\,L), 
\end{equation}

\noindent {\rm (IV)} 
The endomorphism $T$ provides a holomorphic embedding of $Gr_n(\mathbf R^{n+2})$ into $Gr_{n^{\prime}}(W)$, 
where $n^{\prime}=n+\text{\rm dim}\,\text{\rm Ker}\,T$
and also provides a bundle isomorphism $\phi:L\to f^{\ast}Q$. 


Then, $f:M \to Gr_{p}(\mathbf R^{n+2})$ 
can be expressed as 
\begin{equation}\label{HDW5} 
f\left([g]\right)=\left(\iota^{\ast}T\iota \right)^{-1}\left(f_0\left([g]\right)\cap \text{\rm Ker}\,T^{\bot}\right), 
\end{equation}
where $\iota^{\ast}$ denotes the adjoint operator of $\iota$ under the induced inner product on 
$\mathbf R^{n+2}$ from $(\cdot,\cdot)_W$ on $W$ and $f_0$ is the standard map by $W$. 
Such two pairs $(f_i, \phi_i)$, $(i=1,2)$ are gauge equivalent if and only if 
$
\iota_1^{\ast}T_1\iota_1=\iota_2^{\ast}T_2\iota_2, 
$
where $(T_i,\iota_i)$ correspond to $f_i$ $(i=1,2)$ under the expression in \eqref{HDW5}, respectively. 

Conversely, 
suppose that a vector space $\mathbf R^{n+2}$, the space of holomorphic sections 
$W \subset \Gamma(V)$ regarded as real vector space and 
a semi-positive symmetric endomorphism 
$T\in \text{\rm End}\,(W)$ 
satisfying 
conditions {\rm (I)}, {\rm (II)} 
and {\rm (III)} are given.  
Then we have a unique holomorphic embedding of $Gr_n(\mathbf R^{n+2})$ into $Gr_{n^{\prime}}(W)$ 
and 
the map $f:M \to Gr_{p}(\mathbf K^m)$ defined by \eqref{HDW5}
is a full holomorphic map into $Gr_n(\mathbf R^{n+2})$ 
satisfying the gauge condition with bundle isomorphism $L\cong f^{\ast}Q$. 
\end{thm}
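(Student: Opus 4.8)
The plan is to imitate the proofs of Theorems \ref{GGenDW} and \ref{HGenDW}, but to carry out the entire linear-algebra part over $\mathbf R$. This is forced by the target: since $Gr_n(\mathbf R^{n+2})$ is a \emph{real} oriented Grassmannian (a complex quadric), its isometries and the associated comparison of inner products live over $\mathbf R$, so the operator $T$ must be a real symmetric — rather than Hermitian — endomorphism of the realification of $W$, and the polar-decomposition step must use the real orthogonal group.

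First I would reduce the gauge condition. Because the connection compatible with a given holomorphic structure and Hermitian metric is unique, condition (i) implies that the pull-back connection, metric and complex structure on $f^{\ast}Q\to M$ are simultaneously gauge equivalent to $\nabla$, $h_L$ and $J_L$. Since $M$ is K\"ahler and $f$ is holomorphic, $f$ is harmonic, and Proposition \ref{EH} — which is stated for complex quadrics as well as complex Grassmannians — gives that the mean curvature operator equals $-K_{EH}=-\mu\,Id_L$. Here the quotient bundle is of real rank two, hence a complex line bundle, so the Einstein-Hermitian condition is automatic and this is the point where $J_L$ first enters. Next, by Lemma \ref{hollap} and its corollary the space $W=H^0(M,L)$ is the $\mu$-eigenspace of the Laplacian; I equip its realification with the real inner product $\operatorname{Re}(\cdot,\cdot)_W$. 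Fullness together with Theorem \ref{main theorem} then realises $\mathbf R^{n+2}$ as an orientation-compatible real subspace of $W$ consisting of $\mu$-eigensections, which is condition (I).

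The endomorphism $T$ is built exactly as in Theorem \ref{GGenDW}: I compare the inner product $(\cdot,\cdot)_{n+2}$ defining the quadric metric with the restriction of $\operatorname{Re}(\cdot,\cdot)_W$ to $\mathbf R^{n+2}$ to obtain a positive symmetric $\underline T$, then extend by zero on the orthogonal complement, giving condition (II). Conditions (III), i.e.\ the two relations in \eqref{HDW 4}, follow from Lemma \ref{linalg} together with the identification of $\nabla^Q-\nabla$ as in \eqref{GIII2}, and the expression \eqref{HDW5} for $f$ follows verbatim. The gauge-equivalence criterion then reduces to uniqueness of the polar decomposition, now in its real orthogonal form. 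For the converse I would start from $(W,T)$ satisfying (I)--(III), form the surjection $ev\circ T:\underline W\to L$ as in \eqref{GnatidT}, and verify that it is a metric- and connection-preserving real bundle isomorphism $L\to f^{\ast}Q$; Theorem \ref{harmGrirr} and Proposition \ref{EH} then return a full harmonic, hence holomorphic, map satisfying the gauge condition.

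The genuinely new difficulty, absent from Theorem \ref{HGenDW}, is the interaction of the real symmetric $T$ with the two complex structures in play. In the complex Grassmannian case $T$ is Hermitian, hence complex-linear, so $ev\circ T$ is automatically holomorphic and the induced embedding is holomorphic for free; here $T$ is only symmetric on the realification, and I must show both that $\phi=ev\circ T$ intertwines $J_L$ with the complex structure of $f^{\ast}Q$ and that the induced embedding $Gr_n(\mathbf R^{n+2})\hookrightarrow Gr_{n'}(W)$ of condition (IV) is \emph{holomorphic}. My approach would be to exploit that $ev$ itself is complex-linear — evaluation of holomorphic sections commutes with $J_L$ — so that each $ev_x$ has a complex, hence canonically oriented, kernel and the standard map $f_0$ into $Gr_{n'}(W)$ is holomorphic, the complex-linearity being exactly the condition placing $f_0(x)=\operatorname{Ker}ev_x$ compatibly with the complex structure of the ambient quadric; I would then propagate the $J_L$-compatibility of $\phi$ from fibre to fibre using that the complex structure on the real rank-two quotient bundle is parallel, together with the connection-preserving equation in \eqref{HDW 4}. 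This is the step I expect to be the main obstacle, and it is what ultimately forces the embedding in condition (IV) to be holomorphic rather than merely smooth.
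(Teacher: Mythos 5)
Your proposal is correct and follows essentially the same route as the paper: the paper's entire proof consists of invoking Proposition \ref{EH} to obtain the Einstein--Hermitian condition from the gauge condition and then citing the argument of Theorem \ref{GGenDW}, which is exactly your first two paragraphs carried out over $\mathbf R$. Your final paragraph on the compatibility of the real symmetric $T$ with $J_L$ and the holomorphicity of the embedding addresses a point the paper leaves entirely implicit, and your proposed resolution (complex-linearity of $ev$ on $W=H^0(M,L)$ plus parallelism of the complex structure on the rank-two quotient) is consistent with how the paper treats the quadric case elsewhere (e.g.\ Proposition \ref{EHQU}).
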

\begin{proof}
Under the gauge condition, it follows from Proposition \ref{EH} that the mean curvature operator satisfies the EH condition. 
The remaining assertion is proved in the same way as in the proof of Theorem \ref{GGenDW}. 
\end{proof}

\subsection{Homogeneous cases}
Let $M=G/K_0$ be a compact reductive Riemannian homogeneous space 
with Lie algebra decomposition $\mathfrak{g}=\mathfrak{k}\oplus \mathfrak{m}$, 
where $G$ is a compact Lie group 
and $K_0$ is a closed subgroup 
of $G$. 

Let $V_0$ be a $q$-dimensional real or complex 
$K_0$-representation space 
with a $K_0$-invariant scalar product. 
We can construct a homogeneous vector bundle $V \to M$, $V:=G\times_{K_0} V_0$ 
with an invariant fibre metric $g_V$ 
induced by the scalar product on $V_0$. 
Moreover $V\to M$ has a canonical connection $\nabla$ 
with respect to the decomposition 
$\mathfrak{g}=\mathfrak{k}\oplus \mathfrak{m}$. 
(This means that the horizontal distribution is defined as 
$\{L_g\mathfrak{m}\subset TG_g \,|\,g\in G\}$ on the principal fibre bundle 
$\pi:G \to M$, where $L_g$ denotes the left translation on $G$.) 

The Lie group $G$ naturally acts on the space of sections 
$\Gamma(V)$ of $V\to M$, which 
has a $G$-invariant $L^2$-scalar product. 

Using the Levi-Civita connection and $\nabla$, 
we can decompose the space of sections of $V \to M$ into 
the eigenspaces of the Laplacian:
$$
\Gamma(V)=\oplus_{\mu} W_{\mu}, \quad 
W_{\mu}:=\left\{t \in \Gamma(V)\,|\,\Delta t=\mu t \right\}.
$$
It is well-known that $W_{\mu}$ is a finite-dimensional 
$G$-representation space equipped with a $G$-invariant scalar product induced by 
the $L^2$-scalar product. 

\begin{lemma}\label{subsp}
Let $W$ be a $G$-subspace of the eigenspace $W_{\mu}$. 
If $W$ globally generates $V\to G/K_0$, 
then $V_0$ can be regarded as a subspace of $W$. 
\end{lemma}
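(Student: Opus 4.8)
The plan is to evaluate sections at the base point $[e]\in G/K_0$ and then pass to an adjoint with respect to the two invariant scalar products. Write $\varepsilon:=ev_{[e]}\colon W\to V_{[e]}$ for the restriction to the fibre over $[e]$ of the evaluation homomorphism $ev\colon\underline{W}\to V$, and recall that the fibre $V_{[e]}$ of $V=G\times_{K_0}V_0$ is canonically identified with $V_0$ via $[e,v]\mapsto v$. Since $W$ globally generates $V\to G/K_0$, the bundle homomorphism $ev$ is surjective, so in particular $\varepsilon\colon W\to V_0$ is onto. The hypothesis that $W$ is a $G$-subspace is used exactly here: it ensures that the isotropy group $K_0\subset G$ acts on $W$, which would fail for a subspace of $W_\mu$ that is not $G$-invariant.

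First I would check that $\varepsilon$ is $K_0$-equivariant. For $k\in K_0$ and $t\in W$ the induced $G$-action on sections gives $(k\cdot t)([e])=k\cdot\big(t(k^{-1}[e])\big)=k\cdot\big(t([e])\big)$, because $k$ fixes the base point; and under the identification $V_{[e]}\cong V_0$ the fibrewise action of $k$ is precisely the defining $K_0$-representation on $V_0$ (indeed $k\cdot[e,w]=[e,kw]$). Hence $\varepsilon(k\cdot t)=k\cdot\varepsilon(t)$. Next I would record that both spaces carry $K_0$-invariant scalar products: the one on $V_0$ is $K_0$-invariant by hypothesis, while the $L^2$-scalar product on $W_\mu$ is $G$-invariant because $g$ and $g_V$ are homogeneous, so its restriction to the $G$-subspace $W$ is in particular $K_0$-invariant.

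With these two facts the conclusion is immediate. The adjoint $\varepsilon^{\ast}\colon V_0\to W$ of $\varepsilon$, formed with respect to the two invariant scalar products, is again $K_0$-equivariant and is injective because $\varepsilon$ is surjective; thus $\varepsilon^{\ast}$ realizes $V_0$ as a $K_0$-invariant subspace of $W$. Equivalently, since $K_0$ is compact its finite-dimensional representations are completely reducible, so $\ker\varepsilon$ admits a $K_0$-invariant orthogonal complement in $W$---namely the image of $\varepsilon^{\ast}$---onto which $V_0$ is carried isomorphically by $\varepsilon^{\ast}$ and which $\varepsilon$ maps back isomorphically onto $V_0$. This is the desired realization of $V_0$ inside $W$, and the argument is insensitive to whether $V_0$ is real or complex.

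The computations are routine, so the only genuine care needed---the mild obstacle---lies in the equivariance bookkeeping of the first two paragraphs: one must confirm that the canonical identification $V_{[e]}\cong V_0$ intertwines the fibrewise $G$-action with the defining $K_0$-representation, and that the $G$-action on sections evaluates correctly at the fixed point $[e]$. Once the equivariant surjection $\varepsilon$ is in place, surjectivity from global generation together with complete reducibility of the compact group $K_0$ yields the embedding with no further work.
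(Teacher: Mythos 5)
Your proof is correct and follows essentially the same route as the paper: both form the adjoint of the evaluation homomorphism with respect to the invariant scalar products and identify $V_0$ with the image of the adjoint at the base point $[e]$, using global generation for surjectivity of $ev_{[e]}$ and hence injectivity of its adjoint. The only cosmetic difference is that the paper phrases the equivariance for the full bundle map $ev^{\ast}$ under $G$, whereas you verify $K_0$-equivariance directly on the fibre over $[e]$; these amount to the same thing.
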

\begin{proof}
We identify $V_0$ with the fiber $V_{[e]}$ of $V\to M$ at $[e]\in M$, 
where $e$ is a unit element of $G$. 
Since the evaluation homomorphism $ev:\underline{W} \to V$ is $G$-equivariant and 
the scalar product and the fiber metric are $G$-invariant, 
the adjoint homomorphism $ev^{\ast}:V \to \underline{W}$ is also $G$-equivariant. 
Thus the image of $ev^{\ast}_{[e]}$ is a subspace of $W$ equivalent to $V_0$ as 
$K_0$-representation, because $W$ globally generates $V \to M$. 
\end{proof}

\subsubsection{Standard maps}
Suppose that an eigenspace $W_{\mu}$ globally generates $V\to M$. 
Then we have the standard map 
$f_0:M\to Gr_p(W_{\mu})$ 
by $W_{\mu}$, where $p=N-q$, $N=\text{\rm dim}\,W_{\mu}$, 
$$
f_0([g])=\left\{t \in W_{\mu} \,|\,t([g])=0\right\}.
$$

In general, 
$W_{\mu}$ is not irreducible as 
$G$-representation. 
Let $W$ be a $G$-subspace of $W_{\mu}$ 
and suppose that $W$ globally generates $V \to M$. 
Then the induced map by $W$ is also called 
the standard map by $W$. 

Since $V_0 \subset W$ by Lemma \ref{subsp}, we have the orthogonal complement of $V_0$ 
denoted by $U_0$. 
Then the standard map $f_0:M\to Gr_p(W)$ is expressed as 
$$
f_0([g])=gU_0 \subset W, 
$$
which is $G$-equivariant. 
When we regard $f^{\ast}Q\to M$ as a subbundle of $\underline{W}\to M$, 
the natural identification $\phi:V\to f^{\ast}Q$ is given by 
\begin{equation}\label{stnatid}
\phi\left([g, v]\right)=\left([g], gv\right), 
\end{equation}
where $g\in G$ and $v\in V_0\subset W$.

Suppose that we have a standard map by $W$. 
Next, consider the pull-back connection $\nabla^V$ and the gauge condition. 
Then we have 
\begin{lemma}\label{pullcan}
Let $f_0:M\to Gr(W)$ be the standard map by $W$ wihch is a $G$-subspace of 
the eigenspace $W_{\mu}$. 
Then the pull-back connection $\nabla^V$ 
is gauge equivalent to the canonical connection if and only if 
$\mathfrak{m}V_0 \subset U_0$.
\end{lemma}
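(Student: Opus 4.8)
The plan is to compute the pull-back connection $\nabla^V$ explicitly and compare it with the canonical connection $\nabla$, reducing everything to the base point $[e]$ by equivariance. Both the standard map $f_0([g])=gU_0$ and the natural identification $\phi([g,v])=([g],gv)$ of \eqref{stnatid} are $G$-equivariant, and the connections on $Q\to Gr_p(W)$ and on $V\to M$ are $G$-invariant; hence the difference tensor $\nabla^V-\nabla\in\Omega^1(M;\text{End}\,V)$ is $G$-invariant and is determined by its value at $[e]$ on $\mathfrak m$. So it suffices to evaluate $\nabla^V_X s$ at $[e]$ for $X\in\mathfrak m$ and a section $s$ with $s([e])=v\in V_0$.

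First I would realize $f_0^{\ast}Q$ inside $\underline{W}$: since the $G$-action on $W$ is orthogonal (resp.\ unitary) and $f_0([g])=gU_0$, the fibre of $f_0^{\ast}Q$ at $[g]$ is $(gU_0)^{\bot}=gV_0$, so the projection $\pi_Q$ at $[e]$ is the orthogonal projection $\pi_{V_0}:W\to V_0$. Writing $s$ through its $K_0$-equivariant lift $\sigma:G\to V_0$, the section $t=\phi(s)$ of $f_0^{\ast}Q\subset\underline{W}$ is the $W$-valued function $t([g])=g\,\sigma(g)$, where $g$ acts by the representation on $W$. Using $\nabla^Q t=\pi_Q\,d(i_Q t)$ and differentiating along $r\mapsto[\exp(rX)]$,
$$
\nabla^V_X s\big|_{[e]}=\pi_{V_0}\!\left(\tfrac{d}{dr}\Big|_{0}\exp(rX)\,\sigma(\exp rX)\right)=\pi_{V_0}(Xv)+\tfrac{d}{dr}\Big|_{0}\sigma(\exp rX),
$$
where $Xv\in W$ is the infinitesimal action of $X$ on $v\in V_0\subset W$ and I have used $\tfrac{d}{dr}|_0\sigma(\exp rX)\in V_0$. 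Since this last term is exactly the canonical-connection value $\nabla_X s|_{[e]}$, the difference tensor is $(\nabla^V-\nabla)_X v=\pi_{V_0}(Xv)$ (note that, the representation being skew, this is indeed a metric-compatible, hence admissible, connection difference).

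From this the equivalence follows. If $\mathfrak m V_0\subset U_0$, then $\pi_{V_0}(Xv)=0$ for all $X\in\mathfrak m$, $v\in V_0$, so $\nabla^V=\nabla$ and in particular the two connections are gauge equivalent. Conversely I must show that gauge equivalence forces the difference tensor to vanish. Here I would exploit that both $\nabla^V$ and $\nabla$ are $G$-invariant: under the Wang correspondence between $G$-invariant connections on $V=G\times_{K_0}V_0$ and $K_0$-equivariant Nomizu maps $\mathfrak m\to\text{End}(V_0)$, the connection $\nabla^V$ has Nomizu map $X\mapsto\pi_{V_0}(X\,\cdot)|_{V_0}$ while $\nabla$ has Nomizu map $0$. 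A gauge equivalence between two $G$-invariant connections can be promoted to a $G$-equivariant one by averaging the intertwiner over $G$ (using that the parallel automorphisms of $\nabla^V$ form a compact group, which is reduced to scalars when $V\to M$ is holonomy irreducible), and a $G$-equivariant gauge transformation conjugates one Nomizu map into the other; since conjugation cannot send a nonzero map to the zero map, the Nomizu map of $\nabla^V$ must already vanish, i.e.\ $\pi_{V_0}(\mathfrak m V_0)=0$, which is $\mathfrak m V_0\subset U_0$.

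The main obstacle is precisely this converse step, namely upgrading an arbitrary gauge equivalence to a $G$-equivariant one so that it can be read off on Nomizu maps. The forward computation is routine once the realization $(f_0^{\ast}Q)_{[g]}=gV_0$ and the splitting of the derivative into the representation term $Xv$ and the equivariant-function term are in hand; it is the passage from \emph{gauge equivalent} to \emph{equal under the natural identification} $\phi$ that requires care, and I would handle it through the $G$-invariance of both connections as above.
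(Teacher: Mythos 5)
Your forward computation is exactly the paper's proof: the paper evaluates the canonical connection on the sections $t[g]=[g,\pi_0(g^{-1}w)]$ and the pull-back connection via the natural identification $\phi(t)[g]=([g],g\pi_0(g^{-1}w))$, finds the difference $[g,\pi_0(\xi\,\pi_0(g^{-1}w))]$, and concludes from global generation that it vanishes iff $\mathfrak m V_0\subset U_0$ --- which is precisely your difference tensor $\pi_{V_0}(Xv)$ read at the base point by equivariance. So the substance of your argument coincides with the paper's.

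The one divergence is your converse step. The paper implicitly reads ``gauge equivalent'' as ``carried to one another by the natural identification'' and stops once the difference tensor is computed; you instead try to rule out an \emph{arbitrary} metric- and connection-preserving bundle isomorphism. That is a legitimate concern, but your argument for it has a gap: averaging an intertwiner $u$ over $G$ lands in the linear space of $\nabla^0$-to-$\nabla^V$ parallel homomorphisms, but nothing guarantees the average is invertible, and your fallback (parallel automorphisms reduce to scalars) invokes holonomy irreducibility of $V\to M$, which is not a hypothesis of the lemma. If you want to keep the stronger reading of ``gauge equivalent,'' you need either to add that hypothesis or to supply a genuine promotion-to-equivariance argument; otherwise, state (as the paper effectively does) that the equivalence is witnessed by the natural identification, after which your Nomizu-map observation that conjugation by an invertible $K_0$-equivariant $c$ cannot annihilate a nonzero difference tensor is correct but also unnecessary.
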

\begin{proof}
We use sections $t[g]=\left[g,\pi_0(g^{-1}w)\right]\in \Gamma(V)$ corresponding to $w\in W$, 
where $\pi_0:W \to V_0$ denotes the orthogonal projection. 
The canonical connection $\nabla^0$ is computed as follows:
$$
\nabla^0_{d\pi L_g{\xi}}t=\left[g, -\pi_0\left(\xi g^{-1}w\right)\right].
$$
Next, we use the natural identification to compute the pull-back connection:
$$
\nabla^V_{d\pi L_g{\xi}}t=\pi_V d_{d\pi L_g{\xi}}\phi(t)=
\left[g, \pi_0\left(\xi \pi_0(g^{-1}w)\right)-\pi_0\left(\xi g^{-1} w\right)\right].
$$
Since $W$ globally generates $V\to M$, the result follows. 
\end{proof}
\begin{lemma}\label{stharm}
Let $f_0:M\to Gr(W)$ be the standard map by $W$. 
We use Lemma \ref{subsp} to regard $V_0$ as a subspace of $W$. 
If 
$\mathfrak{m}V_0$ is orthogonal to $V_0$, 
then 
the standard map $f_0:M\to Gr_p(W)$ is harmonic and we have 
$$
e(f_0)=q\mu, \quad 
A=-\mu Id_V.
$$
\end{lemma}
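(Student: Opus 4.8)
The plan is to deduce the statement from Theorem \ref{harmGrirr} by exhibiting the constant $\mu$ as the function $h$ in its condition (2); that is, I would show that every $t\in W$, regarded as a section of $f_0^{\ast}Q\to M$, satisfies $\Delta^V t=\mu t$ for the \emph{pull-back} Laplacian $\Delta^V$. First, since $W$ is a $G$-subspace of $W_\mu$ it is $\mathfrak g$-invariant, so $\mathfrak m V_0\subset W=V_0\oplus U_0$ with the sum orthogonal; consequently the hypothesis that $\mathfrak m V_0$ is orthogonal to $V_0$ is precisely the condition $\mathfrak m V_0\subset U_0$ appearing in Lemma \ref{pullcan}. (Here $V_0\subset W$ via Lemma \ref{subsp} and $U_0$ is its orthogonal complement.)

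Next I would invoke Lemma \ref{pullcan}: under $\mathfrak m V_0\subset U_0$ the natural identification $\phi:V\to f_0^{\ast}Q$ carries the canonical connection $\nabla$ to the pull-back connection $\nabla^V$. The key observation is then that $\Delta^V=\nabla^{V\ast}\nabla^V$ is a connection (Bochner) Laplacian, expressible as $-\sum_i\bigl(\nabla^V_{e_i}\nabla^V_{e_i}-\nabla^V_{\nabla_{e_i}e_i}\bigr)$, so that it is determined by the connection together with the Riemannian metric of $M$ alone and is insensitive to the choice of fibre metric; both $\nabla$ and $\nabla^V$ being metric connections, each formal adjoint agrees with this rough-Laplacian form. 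Hence $\phi$ intertwines the canonical Laplacian on $\Gamma(V)$ with $\Delta^V$ on $\Gamma(f_0^{\ast}Q)$. Since $t\in W\subset W_\mu$ gives $\Delta t=\mu t$ for the canonical Laplacian, and $\phi$ sends $W\subset\Gamma(V)$ onto $W$ viewed as sections of $f_0^{\ast}Q$, I would conclude $\Delta^V t=\mu t$ for every $t\in W$.

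With this in hand, Theorem \ref{harmGrirr} applies with the constant function $h=\mu$: condition (2) holds, so $f_0$ is harmonic, its mean curvature operator is $A=-\mu\,Id_V$, and $e(f_0)=q\mu$, as required. The main obstacle is the middle step, namely identifying, via $\phi$, the pull-back Laplacian $\Delta^V$ with the canonical Laplacian whose $\mu$-eigenspace defines $W_\mu$. This rests on Lemma \ref{pullcan} and on the fibre-metric-independence of the connection Laplacian, and it is the point demanding care precisely because $\phi$ need not be a fibre isometry, so only the connections, and not the metrics, are guaranteed to correspond.
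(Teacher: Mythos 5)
Your proof is correct and follows essentially the same route as the paper: the hypothesis is exactly the condition $\mathfrak{m}V_0\subset U_0$ of Lemma \ref{pullcan}, so the pull-back connection is the canonical connection, $W$ remains an eigenspace of the pull-back Laplacian with eigenvalue $\mu$, and Theorem \ref{harmGrirr} with $h=\mu$ gives the conclusion. The paper's proof is a two-line version of this; your additional remarks (that $G$-invariance of $W$ puts $\mathfrak m V_0$ inside $W$, and that the connection Laplacian is insensitive to the fibre metric) are correct elaborations of steps the paper leaves implicit.
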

\begin{proof}
Since the pull-back connection is the canonical connection, $W$ is also a subspace of the eigenspace of the Laplacian 
induced by the pull-back connection. 
Then we apply Theorem \ref{harmGrirr} to get the result. 
\end{proof}
\begin{exam}
Let $\mathbf CP^1=\text{SU}(2)/\text{U}(1)$ 
be a complex projective line and 
$\mathcal O(1)\to \mathbf CP^1$ a holomorphic line bundle 
of degree $1$ with the canonical connection. 
Frobenius reciprocity 
yields that the symmetric power $S^{2n+1}\mathbf C^2$ 
of the standard representation $\mathbf C^2$ 
($n\in \mathbf Z_{\geqq 0}$) 
is an $\text{SU}(2)$-invariant space of sections 
of $\mathcal O(1)\to \mathbf CP^1$, 
where $(\varrho_{2n+1},S^{2n+1}\mathbf C^2)$ 
is an irreducible representation of $\text{SU}(2)$. 
Moreover, $S^{2n+1}\mathbf C^2$ 
is an eigenspace of the Laplacian (see \cite{Wall}). 
We denote by $\mathbf C_k$ ($k \in \mathbf Z$) 
an irreducible $\text{U}(1)$-module with 
weight $k$. 
As homogeneous vector bundle, 
$\mathcal O(1)\to \mathbf CP^1$ 
is regarded as $\text{SU}(2)\times_{\text{U}(1)} \mathbf C_{-1}$. 
We can regard $\mathbf C_{-1}$ as a weight subspace of $S^{2n+1}\mathbf C^2$ 
and this provides us with the standard evaluation 
$\underline{S^{2n+1}\mathbf C^2} \to \mathcal O(1)$. 
It follows that 
$$
\varrho_{2n+1}(\mathfrak{m})\mathbf C_{-1} \subset \mathbf C_{-3} \oplus \mathbf C_{1},
$$
because the complexification of 
$\mathfrak{m}$ is identified with 
$\mathbf C_2 \oplus \mathbf C_{-2}$. 
Consequently, the standard map $f_0:\mathbf CP^1 \to 
\mathbf CP^{2n}=\mathbf P(S^{2n+1}\mathbf C^2)$ 
is a harmonic map from Lemma \ref{stharm}. 

See also \cite{Ohn} about an equivariant harmonic map into a complex projective space. 
\end{exam}
\begin{exam}
Let $M=\mathbf HP^1=\text{Sp}(2)/\text{Sp}(1)\times \text{Sp}(1)$ 
be a quaternion projective line. 
To destinguish two copies of $\text{Sp}(1)$ in the isotoropy subgroup, 
we write the isotropy subgroup as $\text{Sp}_{+}(1)\times \text{Sp}_{-}(1)$. 
Let $\mathbf H$ be the standard representation of 
$\text{Sp}_{+}(1)$ and 
$\mathbf E$ be the standard representation of 
$\text{Sp}_{-}(1)$. 
Then the associated homogeneous vector bundles are denoted by the same 
symbols $\mathbf H \to M$ and $\mathbf E\to M$, respectively. 
We suppose that 
$\mathbf H \to M$ is the tautological vector bundle and 
$\mathbf E\to M$ is the orthogonal complement in 
a trivial bundle 
$\underline{\mathbf H^2}\cong \underline{\mathbf C^4}\to M$. 

We take the symmetric power $\mathbf S^k\mathbf H\to M$ of 
$\mathbf H \to M$ and 
$\mathbf S^l\mathbf E\to M$ of 
$\mathbf E \to M$. 
When $k$ (resp.$l$) is even, $S^k\mathbf H$ (resp.$S^l\mathbf E$) 
has a real structure. 
If the both of $k$ and $l$ are odd, then 
$S^k\mathbf H \otimes S^l\mathbf E$ has a real structure. 
In those cases, for example, $S^k\mathbf H$ is supposed to 
represent a real representation or the associated real vector bundle. 

Since the Lie algebra $\mathfrak{sp}(2)$ has 
the standard decomposition as the symmetric pair $\left(\text{Sp}(2),\text{Sp}(1)\times \text{Sp}(1)\right)$:
$$
\mathfrak{sp}(2)=S^2\mathbf H \oplus S^2\mathbf E \oplus 
(\mathbf H \otimes \mathbf E),  
$$
$\mathfrak{sp}(2)$ can be regarded as an eigenspace 
of the Laplacian acting on sections of 
$S^2\mathbf H \to M$. 
Then we have 
$$
[\mathbf H \otimes \mathbf E, S^2\mathbf H] \subset \mathbf H \otimes \mathbf E,
$$
because $\left(\text{Sp}(2),\text{Sp}(1)\times \text{Sp}(1)\right)$ 
is a symmetric pair. 
Lemma \ref{stharm} implies that 
the standard map $f_0:\mathbf HP^1 \to Gr_7(\mathfrak{sp}(2))
=Gr_7(\mathbf R^{10})$ is a harmonic map. 

Now the standard map has another interpretation 
(see also Swann \cite{Swan} and Gambioli \cite{Gam}). 
Let $\mu:\mathbf HP^1 \to \mathfrak{sp}(2)^{\ast}\otimes S^2\mathbf H$ 
be a quaternion moment map \cite{G-L}. 
By definitin of a moment map, for an arbitrary $X \in \mathfrak{sp}(2)$, 
we have 
$$
\mu_X([g])=\left[g, \pi_{S^2\mathbf H}(g^{-1}Xg) \right], \quad g\in \text{Sp}(2),
$$
where $\pi_{S^2\mathbf H}:\mathfrak{sp}(2) \to S^2\mathbf H$ 
is the orthgonal projection. 
It follows that $\mathfrak{sp}(2)$ is a subspace of sections of 
$S^2\mathbf H\to M$ by the moment map $\mu$. 
It is clear that $\mathfrak{sp}(2)$ globally generates $S^2\mathbf H\to M$. 
We can define the induced map $f_{\mu}:\mathbf HP^1 \to Gr_7(\mathbf R^{10})$. 
By definition of the induced map, we have
\begin{align*}
f_{\mu}([g])=&\left\{X\in 
\mathfrak{sp}(2) \,|\,
\text{Ad}(g^{-1})X \in 
S^2\mathbf E \oplus (\mathbf H \otimes \mathbf E)
\right\} \\
=&\text{Ad}(g)\left(S^2\mathbf E \oplus (\mathbf H \otimes \mathbf E) \right)
\subset \mathfrak{sp}(2),
\end{align*}
which is the same as the standard map $f_0$. 

The standard map induced by $S^2\mathbf H \to \mathbf HP^1$ and $\mathfrak{sp}(2)$ 
can be generalized on any compact quaternion symmetric space. 
It is induced by a quaternion moment map for an isometry group 
in the same way. 
\end{exam}
\begin{exam}
Let $G/K$ be a compact irreducible Hermitian symmetric space 
and consider a moment map $\mu:G/K \to \mathfrak{g}^{\ast}$. 
In this situation, $\mu_X:G/K \to \mathbf R$ for an arbitrary 
$X \in \mathfrak{g}$ is an eigenfunction of the Laplacian. 
Then the theorem of Takahashi \cite{TTaka} yields that the induced map 
$f:G/K \to S\subset \mathfrak{g}$ is a harmonic map, 
where $S$ is a hypersphere of $\mathfrak{g}$ 
(Takeuchi-Kobayashi \cite{Kob-Tak}). 
\end{exam}

\subsubsection{A Generalization of dC-W Theory in homogeneous cases}
Let $G$ be a compact Lie group and $W$ be a real or complex representation 
of $G$ with an invariant scalar product $(,)_W$. 
%
Then $G$ naturally acts on $\text{H}(W)$. 
If we equip $\text{H}(W)$ with an inner product $(,)_H$; 
$(B,C)_H:=\text{trace}\,BC$, for $B,C \in \text{H}(W)$, 
then it is easily seen that $(,)_H$ is $G$-invariant. 
We define a symmetric or Hermitian operator $H(u,v)$ for $u$, $v \in W$ as 
$$
H(u,v):=\frac{1}{2}\left\{u\otimes (\cdot, v)_W + v\otimes (\cdot, u)_W \right\}.
$$
Then it follows that for an arbitrary $B \in \text{\rm H}(W)$ 
\begin{equation}\label{BHuv}
\left(B, H(u,v)\right)_H=\frac{1}{2}\left\{(Bu, v)_W+(Bv,u)_W\right\}. 
\end{equation}
If $U$ and $V$ are subspaces of $W$, we define a real subspace  
$H(U,V) \subset \text{H}(W)$ spanned by $H(u,v)$ where $u\in U$ and 
$v\in V$. 
In a similar fashion, $GH(U,V)$ denotes the subspace of $\text{H}(W)$ spanned by 
$gH(u,v)$, where $g\in G$, and so $GH(U,V)$ is a $G$-submodule of $\text{H}(W)$.  


We are now in a position to state a generalization of do-Carmo Wallach thoery in 
homogeneous cases. 
Though the difference from Theorem \ref{GGenDW} is only condition (III), 
we state the theorem in its complete form for readers' convenience. 
\begin{thm}\label{GenDW}
Let $G/K_0$ be a compact reductive Riemannian homogeneous space with decomposition 
$\mathfrak{g}=\mathfrak{k}\oplus \mathfrak{m}$. 
Fix a homogeneous vector bundle 
$V=G\times_{K_0} V_0 \to G/K_0$ 
of rank $q$ with an invariant metric and the canonical connection. 

Let 
$f:G/K_0 \to Gr_p(\mathbf K^m)$ be a full harmonic map 
satisfying the following two conditions. 

\noindent{\rm (i)} 
The pull-back bundle 
$f^{\ast}Q \to M$ 
with the pull-back metric and connection 
is gauge equivalent to $V\to G/K_0$ with the invariant metric and the canonical connection. 
{\rm (}Hence, $q=m-p$.{\rm )}

\noindent{\rm (ii)} 
The mean curvature operator $A \in \Gamma(\text{\rm End}\,V)$ of a map $f$ 
is expressed as 
$-\mu Id_V$ for some positive real number $\mu$, 
and so $e(f)=\mu q$. 

Then there exist an eigenspace $W \subset \Gamma(V)$ of the Laplacian with eigenvalue 
$\mu$  equipped with $L^2$-scalar product $(\cdot,\cdot)_W$ 
and a semi-positive Hermitian endomorphism 
$T\in \text{\rm End}\,(W)$. 
Regard $W$ as $\mathfrak{g}$-representation $(\varrho, W)$. 
The pair $(W,T)$ satisfies the following four conditions.

\noindent {\rm (I)} The vector space $\mathbf K^m$ is a subspace of $W$ with the inclusion 
$\iota:\mathbf K^m \to W$ and 
$V\to G/K_0$ is globally generated by $\mathbf K^m$.

\noindent {\rm (II)} 
As a subspace, $\mathbf K^m=\text{\rm Ker}\,T^{\bot}$, 
and the restriction of $T$ is a positive Hermitian transformation of $\mathbf K^m$. 

\noindent {\rm (III)} 
The endomorphism $T$ satisfies 
\begin{equation}\label{DW 2}
\left(T^2-Id_W, GH(V_0, V_0)\right)_H=0, 
\,\, 
\left(T^2, GH(\varrho(\mathfrak m)V_0, V_0)\right)_H=0,
\end{equation}
where $V_0$ is regarded as a subspace of $W$ by Lemma \ref{subsp}.

\noindent {\rm (IV)} 
The endomorphism $T$ gives an embedding of $Gr_p(\mathbf K^m)$ into $Gr_{p^{\prime}}(W)$, 
where $p^{\prime}=p+\text{\rm dim}\,\text{\rm Ker}\,T$
and also gives a bundle isomorphism $\phi:V\to f^{\ast}Q$. 

Then, $f:G/K_0 \to Gr_{p}(\mathbf K^m)$ 
can be expressed as 
\begin{equation}\label{DW3} 
f\left([g]\right)=\left(\iota^{\ast}T\iota \right)^{-1}\left(f_0\left([g]\right)\cap \text{\rm Ker}\,T^{\bot}\right), 
\end{equation}
where $\iota^{\ast}$ denotes the adjoint operator of $\iota$ under the induced scalar product on 
$\mathbf K^m$ from $(\cdot,\cdot)_W$ on $W$ and $f_0$ is the standard map by $W$. 

The pairs $(f_1, \phi_{1})$ and $(f_2, \phi_{2})$ are gauge equivalent if and only if 
$$
\iota_1^{\ast}T_1\iota_1=\iota_2^{\ast}T_2\iota_2, 
$$
where $(T_i,\iota_i)$ correspond to $f_i$ $(i=1,2)$ under the expression in \eqref{DW3}, respectively. 

Conversely, 
suppose that a vector space $\mathbf K^m$, an eigenspace $W \subset \Gamma(V)$ 
with eigenvalue $\mu$ and 
a semi-positive Hermitian endomorphism 
$T\in \text{\rm End}\,(W)$ 
satisfying 
conditions {\rm (I)}, {\rm (II)} 
and {\rm (III)} are given.  
Then there is a unique embedding of $Gr_p(\mathbf K^m)$ into $Gr_{p^{\prime}}(W)$ 
and 
the map $f:G/K_0 \to Gr_{p}(\mathbf K^m)$ defined in \eqref{DW3}
is a full harmonic map into $Gr_p(\mathbf K^m)$ 
satisfying conditions {\rm (i)} and {\rm (ii)} with bundle isomorphsm $V\cong f^{\ast}Q$. 
\end{thm}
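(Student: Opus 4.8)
The plan is to deduce the entire statement from Theorem \ref{GGenDW}. Indeed, Theorem \ref{GenDW} repeats Theorem \ref{GGenDW} verbatim---including the expression \eqref{DW3}, the gauge-equivalence criterion $\iota_1^{\ast}T_1\iota_1=\iota_2^{\ast}T_2\iota_2$, and the converse---with the single exception that condition (III), which in Theorem \ref{GGenDW} is the pair of bundle equations \eqref{GDW 2}, is now replaced by the representation-theoretic equations \eqref{DW 2}. Hence it suffices to prove that, for a homogeneous bundle $V=G\times_{K_0}V_0$ with its canonical connection and an eigenspace $W$ regarded as the $G$-module $(\varrho,W)$, the two equations of \eqref{DW 2} are equivalent to the two equations of \eqref{GDW 2}; everything else then follows immediately from Theorem \ref{GGenDW}.

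To perform this comparison I would reduce each bundle identity to a statement over the $G$-orbit of the base point. Using Lemma \ref{subsp} I identify $V_0$ with the fibre $V_{[e]}\subset W$. Since $ev$ is $G$-equivariant, writing $\pi_0\colon W\to V_0$ for the orthogonal projection and $L_g$ for the action of $g$ on $V$, one has $ev_{[g]}=L_g\circ\pi_0\circ\varrho(g)^{-1}$, while its adjoint is the natural identification \eqref{stnatid}, namely $ev^{\ast}([g,v_0])=([g],\varrho(g)v_0)$. Substituting these into $ev\circ T^2\circ ev^{\ast}=Id_V$ and evaluating at an arbitrary $[g]$ shows that the first equation of \eqref{GDW 2} holds if and only if $\left((T^2-Id_W)\varrho(g)u,\varrho(g)v\right)_W=0$ for all $g\in G$ and all $u,v\in V_0$. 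By \eqref{BHuv} the vanishing of the real part of this pairing over the whole orbit is precisely $\left(T^2-Id_W,GH(V_0,V_0)\right)_H=0$; in the complex case one recovers the full sesquilinear condition by replacing $u$ with $\sqrt{-1}\,u\in V_0$, and in the real case the two already coincide. This is the first equation of \eqref{DW 2}.

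For the second equation I would compute $\nabla ev^{\ast}$ directly. Representing a section $t$ by the $K_0$-equivariant function $\phi\colon G\to V_0$, so that $t[g]=[g,\phi(g)]$ and $ev^{\ast}(t)[g\exp(sX)]=([g\exp(sX)],\varrho(g\exp(sX))\phi(g\exp(sX)))$ for $X\in\mathfrak m$, differentiation at $s=0$ produces the frame term $\varrho(g)\varrho(X)\phi(g)$ together with a term in the $\mathfrak m$-derivative of $\phi$; the latter is cancelled by $ev^{\ast}(\nabla^V t)$ because the canonical connection differentiates along $\mathfrak m$. Thus $(\nabla_X ev^{\ast})_{[g]}([g,\xi])=([g],\varrho(g)\varrho(X)\xi)$, and composing with $ev_{[g]}\circ T^2$ shows that $ev\circ T^2\circ(\nabla ev^{\ast})=0$ is equivalent to $\left(T^2\varrho(g)\varrho(X)\xi,\varrho(g)u\right)_W=0$ for all $g\in G$, $X\in\mathfrak m$, $\xi,u\in V_0$. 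Since $g\cdot H(\varrho(X)\xi,u)=H(\varrho(g)\varrho(X)\xi,\varrho(g)u)$, this is---after the same real-versus-sesquilinear remark---exactly $\left(T^2,GH(\varrho(\mathfrak m)V_0,V_0)\right)_H=0$, the second equation of \eqref{DW 2}.

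I expect the evaluation of $\nabla ev^{\ast}$ to be the only genuine obstacle: one must verify carefully that in the chosen trivialization the $\mathfrak m$-derivative of the section cancels against the canonical connection, leaving only the clean infinitesimal action $\varrho(\mathfrak m)$ on $V_0\subset W$, and one must keep track of the real-versus-Hermitian bookkeeping so that the real-valued form $(\cdot,\cdot)_H$ on $\text{H}(W)$ faithfully encodes the full complex bundle equation. Once both equivalences are established, Theorem \ref{GGenDW} delivers \eqref{DW3}, the gauge-equivalence criterion, and the converse without further work, completing the proof.
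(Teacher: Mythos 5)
Your proposal is correct and follows essentially the same route as the paper: the paper likewise reduces everything to Theorem \ref{GGenDW} and only reworks condition (III), showing via the natural identification $\phi([g,v])=([g],Tgv)$ that metric preservation gives $\left(T^2-Id_W,GH(V_0,V_0)\right)_H=0$ and that the difference between the induced and canonical connections is $\left[g,\pi_0\left(g^{-1}T^2g\,\xi\,\pi_0(g^{-1}w)\right)\right]$, whose vanishing is $\left(T^2,GH(\varrho(\mathfrak m)V_0,V_0)\right)_H=0$. Your computation of $\nabla ev^{\ast}$ and the real-versus-sesquilinear bookkeeping via \eqref{BHuv} is exactly this argument phrased through the equations \eqref{GDW 2} rather than re-derived from condition (i), which is an immaterial difference.
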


\begin{proof}
We follow the notation in the proof of Thereom \ref{GGenDW} and 
pay attention only to the role of condition (III). 

When $f^{\ast}Q \to M$ is identified with the orthogonal complememt of $f^{\ast}S \to M$, 
it follows from \eqref{GnatidT} and \eqref{stnatid} that 
the natural identification 
$\phi:V \to f^{\ast}Q$ is expressed as 
\begin{equation}\label{natidT}
\phi\left([g, v]\right)=\left([g], Tgv\right), 
\end{equation}
where $g\in G$ and $v\in V_0\subset W$. 

Since the metric on $f^{\ast}Q\to M$ is induced from the scalar product on $W$, 
it follows from condition (i) and \eqref{natidT} that 
\begin{equation}\label{isomVV}
(Tgv,Tgv^{\prime})_W=(v,v^{\prime})_W,
\end{equation}
for arbitrary $v, v^{\prime} \in V_0$. 
From the definition of the scalar product on $\text{\rm H}(W)$, 
we have 
\begin{align}\label{WS}
\text{Re}(Tgv, Tgv^{\prime})_W=&(T^2, gH(v,v^{\prime}))_H \quad \text{and} \\
\text{Im}(Tgv, Tgv^{\prime})_W=&(T^2, gH(v,\sqrt{-1}v^{\prime}))_H. \notag
\end{align}
Together with  
$(Id, H(v,v^{\prime}))_H=\text{Re}(v,v^{\prime})_W$, 
\eqref{isomVV} and \eqref{WS} yield that 
\begin{equation}\label{III1}
(T^2-Id, gH(v,v^{\prime}))_H=0
\end{equation}
for an arbitrary $g\in G$ and arbitrary $v$, $v^{\prime}\in V_0$, 
which is equivalent to 
\begin{equation}\label{Bmet}
(T^2-Id, GH(V_0,V_0))_H=0.
\end{equation}
Since the equation \eqref{isomVV} is equivalent to 
$$
(g^{-1}T^2gv, v^{\prime})_W=(v,v^{\prime})_W, 
$$
we obtain 
\begin{eqnarray}\label{BV}
\pi_0(g^{-1}T^2g)i_0=Id_{V_0},
\end{eqnarray}
where $i_0:V_0 \to W$ is the natural inclusion and 
$\pi_0:W \to V_0$ is the orthogonal projection to $V_0$. 


Next, we compare the canonical connection $\nabla^0$ with the induced connection $\nabla$ 
on $V \to M$ 
by $f:M\to Gr_p(\mathbf K^m)$. 
To describe the orthogonal projection $\pi_V:\underline{W}\to V$, notice that 
$\pi_V$ is recognized as the adjoint of $\phi$. 
Thus $\pi_V:\underline{W}\to V$
is expressed as:
$$
\pi_V\left([g],w \right)
=\left[g,\pi_0(g^{-1}Tw)\right],
$$
If we use a section $t[g]=\left[g,\pi_0(g^{-1}w)\right]$ corresponding to $w\in W$,  
then the canonical connection is calculated as follows:
\begin{equation}\label{cancon}
\nabla^0_{d\pi L_g\xi} t=\left[g, -\pi_0\left(\xi g^{-1}w\right)\right].
\end{equation}
Next the induced connection is calculated as follows:
\begin{align*}
\nabla_{d\pi L_g\xi} t=&\pi_V d_{d\pi L_g\xi} \phi(t)
=\pi_V d_{d\pi L_g\xi}\left([g],{T}g\pi_0(g^{-1}w)\right) \\
=&\pi_V\left([g], {T}g\xi \pi_0(g^{-1}w)
-{T}g \pi_0(\xi g^{-1}w)\right) \\
=&\left[g, \pi_0\left(g^{-1}T^2g\xi \pi_0(g^{-1}w)\right)
-\pi_0\left(g^{-1}T^2g\pi_0 (\xi g^{-1}w)\right)\right]. 
\end{align*}
It follows from \eqref{BV} and \eqref{cancon} that 
$$
\nabla_{d\pi L_g\xi} t-\nabla^0_{d\pi L_g\xi} t
=\left[g, \pi_0\left(g^{-1}T^2g\xi \pi_0(g^{-1}w)\right)\right]. 
$$ 
Since $W$ globally generates $V\to M$, condition (i) yields that 
\begin{eqnarray}\label{GE}
\pi_0(g^{-1}T^2g\xi)i_0=0, 
\end{eqnarray}
for an arbitrary $g\in G$ and $\xi \in \mathfrak{m}$, 
which is equivalent to 
\begin{equation}\label{III2}
(T^2, GH(\mathfrak{m}V_0, V_0))_H=0. 
\end{equation}
It follows from \eqref{III1} and \eqref{III2} that condition (III) holds. 

\end{proof}

In the homogeneous case, we also examine the image equivalence, 
mainly, to fix the notation. 
Since $K_0$ is the holonomy group and the structure group of $V \to G/K_0$ is 
(a subgroup of) $K_0$, 
we can find $c$ an element of the center of the structure group such that 
$$
\phi_2^{-1}\tilde \psi \phi_1(v)=\left[g,\varrho_0(c)v\right], 
$$
where $\varrho_0$ denotes the representation of $K_0$ on $V_0$. 
Then \eqref{natidT} yields that 
\begin{equation}\label{imeq}
\tilde \psi T_1 gv = T_2 g \varrho_0(c)v=T_2 gcv. 
\end{equation}
Hence we must take the action of the center of $K_0$ 
into account. 


\section{Joyce inequalities}

Joyce obtains estimates of the dimension of two special eigenspaces of the Laplacian 
on any compact minimal Lagrangian submanifold of 
$\mathbf CP^{N}$ \cite[Proposition 3.5]{Joy}. 

We show that such estimates can also be obtained using Thereom \ref{harmGrirr}. 
Proposition 3.5 in \cite{Joy} has two statements. 
We divide it to two parts, because the latter estimate will be further generalized 
to the case of any minimal Lagrangian submanifold of compact irreducible 
Hermitian symmetric spaces. 

\begin{thm}\label{J1}\cite{Joy}
Let $f:\Sigma \to \mathbf Gr_{n}(\mathbf C^{n+1})$ be 
a full minimal Lagrangian submanifold of $Gr_{n}(\mathbf C^{n+1})$. 
If $\Sigma$ is simply connected, then 
$$
m_{\Sigma}(n) \geqq 2(n+1), 
$$
where $m_{\Sigma}(n)$ is the dimension of the eigenspace with eigenvalue $n$ on $\Sigma$. 
\end{thm}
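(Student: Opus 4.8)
The plan is to pull the ambient coordinate functions back through a Legendrian lift of $\Sigma$, identify them as Laplace eigenfunctions, and then count them: Theorem~\ref{harmGrirr} fixes the eigenvalue, the Lagrangian hypothesis flattens the relevant line bundle, and simple connectivity produces the lift.

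First I would extract what the bundle formalism gives. Here $Gr_n(\mathbf C^{n+1})=\mathbf CP^n$ has $q=\dim\mathbf C^{n+1}-n=1$, so $V=f^{\ast}Q\to\Sigma$ is a complex line bundle and the mean curvature operator of any map into it is automatically scalar. For a minimal (isometric) Lagrangian immersion $e(f)=n$, so Theorem~\ref{genTak} (equivalently Theorem~\ref{harmGrirr} with $A=-\mu\,Id_V$) applies and yields that every element of $W=\mathbf C^{n+1}$, regarded as a section of $V\to\Sigma$, is an eigensection of the Laplacian with eigenvalue $h=n/q=n$. Fullness of $f$ means the map $\mathbf C^{n+1}\to\Gamma(V)$ is injective, so this produces $n+1$ complex-linearly independent eigensections of $V$ with eigenvalue $n$.

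The second step is to turn eigensections into eigenfunctions. The curvature of $V\to\Sigma$ is the pull-back of the curvature of $Q\to Gr_n(\mathbf C^{n+1})$, which for this Hermitian line bundle is a constant multiple of the K\"ahler form of the target; since $\Sigma$ is Lagrangian this pull-back vanishes, so $(V,\nabla^V)$ is \emph{flat}. As $\Sigma$ is simply connected, a flat Hermitian line bundle is trivial, so there is a global parallel unit section $\sigma$. Writing $t=\varphi_t\,\sigma$ with $\varphi_t:\Sigma\to\mathbf C$ and using $\nabla^V\sigma=0$, the equation $\Delta t=nt$ becomes $\Delta\varphi_t=n\varphi_t$. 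Concretely, for an orthonormal basis $w_1,\dots,w_{n+1}$ of $\mathbf C^{n+1}$ the functions $\varphi_A(x)=(w_A,\sigma(x))$ are complex eigenfunctions with eigenvalue $n$, and by Parseval $\sum_A|\varphi_A|^2=|\sigma|^2=1$, so $\Phi=(\varphi_1,\dots,\varphi_{n+1})$ maps $\Sigma$ into the unit sphere $S^{2n+1}\subset\mathbf C^{n+1}$. This $\Phi$ is precisely the horizontal (Legendrian) lift of $f$, which is a minimal immersion into $S^{2n+1}$.

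Finally I would count real eigenfunctions. Since $\Delta$ is a real operator, the real and imaginary parts of the $\varphi_A$ are real eigenfunctions with eigenvalue $n$, and they are exactly the $2(n+1)$ Euclidean coordinate functions of $\Phi$; this is the original Theorem of Takahashi~\cite{TTaka} for the minimal immersion $\Phi$ into $S^{2n+1}$. The estimate $m_{\Sigma}(n)\geq 2(n+1)$ is then the assertion that these $2(n+1)$ functions are linearly independent, equivalently that $\Phi$ is linearly full in $\mathbf R^{2(n+1)}$. \textbf{This is the step I expect to be the main obstacle.} The hypothesis that $f$ is full guarantees only that $\Phi$ avoids every \emph{complex} hyperplane, whereas a real linear relation among the coordinate functions is a nonzero real-valued function in the image $S$ of $W$, equivalently a nonzero element of $S\cap\overline{S}$, which corresponds to $\Phi$ lying in a \emph{real} hyperplane through the origin. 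The decisive point is therefore to upgrade complex fullness to genuine linear fullness of the Legendrian lift, using simple connectivity together with the minimality of $\Phi$ (a minimal submanifold contained in a linear hyperplane sits in a totally geodesic equatorial sphere, and one must rule this out under the fullness assumption); once this is secured the $2(n+1)$ coordinate functions are independent and the inequality follows.
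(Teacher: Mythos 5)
Your route is exactly the one the paper takes: Theorem \ref{genTak} gives $\Delta t=nt$ for all $t\in\mathbf C^{n+1}$, the Lagrangian condition kills the curvature of $f^{\ast}\mathcal O(1)\to\Sigma$, simple connectivity trivializes the flat bundle so that sections become complex-valued functions, and fullness is invoked for the count. The one place you stop short --- passing from $n+1$ \emph{complex}-linearly independent complex eigenfunctions to $2(n+1)$ \emph{real}-linearly independent real eigenfunctions --- is precisely the step the paper compresses into the single sentence ``the result follows from $f$ being a full map,'' and your worry there is legitimate: if $S\subset C^{\infty}(\Sigma,\mathbf C)$ denotes the $(n+1)$-dimensional complex span of the coordinate functions, then complex fullness alone only yields $m_{\Sigma}(n)=\dim_{\mathbf C}E_n\otimes\mathbf C\geqq\dim_{\mathbf C}S=n+1$; the bound $2(n+1)$ requires $S\cap\overline{S}=\{0\}$, equivalently that $S$ contains no nonzero real-valued function, equivalently that the horizontal lift $\Phi$ does not lie in a real hyperplane $\left\{z:\mathrm{Im}\langle z,c\rangle=0\right\}$.

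To close this you should use the Lagrangian condition rather than minimality alone (a minimal submanifold of $S^{2n+1}$ can perfectly well lie in a great $S^{2n}$). If $\varphi_c=\langle\Phi,c\rangle$ is real-valued, then $\mathrm{Im}\langle z,c\rangle$ vanishes identically on the cone $C$ over $\Phi(\Sigma)$; since $\mathrm{Im}\langle z,c\rangle$ is the Hamiltonian of the constant vector field $c$ and $C$ is Lagrangian, the vanishing of its differential along $TC$ forces $c\in T_zC$ at every $z\in C$, so $C$ is invariant under translation by $\mathbf R c$ as well as under dilations. Smoothness and compactness of the link then force $C$ to be a Lagrangian $(n+1)$-plane, so $f(\Sigma)$ is a totally geodesic $\mathbf RP^n$, which is excluded by simple connectivity of $\Sigma$ (for an immersed $\Sigma$ one would instead note that the lift of $S^n\to\mathbf RP^n\subset\mathbf CP^n$ is real-linearly full, so the coordinate functions are independent anyway). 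With this inserted, your argument --- and the paper's --- is complete; as written, both leave this point to the reader, and yours at least names it.
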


\begin{proof}
Since $f:\Sigma \to \mathbf Gr_{n}(\mathbf C^{n+1})$ is a minimal immersion, 
Theorem \ref{genTak} yields that 
$$
\Delta t=-At, \quad \text{for any}\, t \in \mathbf C^{n+1} \subset \Gamma\left(f^{\ast}\mathcal O(1)\right). 
$$
Since $\mathcal O(1) \to \mathbf CP^{N-1}$ is of rank $1$, 
we can regard $A \in \Gamma\left( \text{End}\,\mathcal O(1)\right)$ as a function. 
It follows from $f$ being an isometric immersion that 
$$
-\text{trace}\,A=|df|^2=n, 
$$
and so, 
$$
\Delta t=nt, \quad \text{for any}\, t \in \mathbf C^{n+1} \subset \Gamma\left(f^{\ast}\mathcal O(1)\right). 
$$

Since $f$ is a Lagrangian immersion, the curvature of the pull-back connection on 
$f^{\ast}\mathcal O(1)\to \Sigma$ vanishes. 
Thus $f^{\ast}\mathcal O(1)\to \Sigma$ is a trivial bundle, 
because $\Sigma$ is simply connected. 
Hence, $t$ is recognized as a function on $\Sigma$. 
The result follows from $f$ being a full map. 
\end{proof}

Let $(G,K_0)$ be an irreducible Hermitian symmetric pair of compact type 
and $\mathfrak{g}=\mathfrak{k} \oplus \mathfrak {m}$ the standard decomposition of Lie algebra of $G$. 
We adopt $G$-invariant inner product $(\cdot, \cdot)$ on $\mathfrak{g}$ as the minus of the Killing form, 
and induce a $K_0$-invariant inner product on $\mathfrak{m}$ 
denoted by the same symbol $(\cdot, \cdot)$. 

Since $(G,K_0)$ be an irreducible Hermitian symmetric pair, 
the Lie algebra $\mathfrak{k}$ has a one-dimensional center $\mathfrak {u}(1)$. 
Notice that there exists a unique $Z \in \mathfrak {u}(1)$ corresponding to the complex structure $J$ on $\mathfrak {m}$. 
Since $G/K_0$ is K\"ahler, we obtain 

\begin{lemma}\label{metZ}
For any $\xi, \eta \in \mathfrak {m}$, 
$$
\left([Z,\xi], [Z,\eta] \right)=\left(\xi,\eta \right). 
$$
\end{lemma}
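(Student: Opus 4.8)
The plan is to reduce the statement to the single fact that $\mathrm{ad}(Z)$ acts skew-symmetrically with respect to $(\cdot,\cdot)$, combined with $J^2=-\mathrm{id}_{\mathfrak m}$. First I would recall the structural input underlying the phrase ``$Z$ corresponds to the complex structure $J$'': the complex structure on $\mathfrak m$ is realized by the adjoint action of the central element, that is, $J\xi=[Z,\xi]$ for all $\xi\in\mathfrak m$, and $Z$ is normalized so that $J^2=-\mathrm{id}_{\mathfrak m}$, i.e. $[Z,[Z,\eta]]=-\eta$ for every $\eta\in\mathfrak m$. Since $Z\in\mathfrak k$ and $[\mathfrak k,\mathfrak m]\subset\mathfrak m$, the bracket $[Z,\eta]$ indeed lies in $\mathfrak m$, so all inner products below are taken within $\mathfrak m$.

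Next I would invoke that $(\cdot,\cdot)$ is minus the Killing form and is therefore $\mathrm{ad}$-invariant: for all $X,Y,Z'\in\mathfrak g$,
$$
\left([X,Y],Z'\right)+\left(Y,[X,Z']\right)=0.
$$
Applying this with $X=Z$, $Y=\xi$ and $Z'=[Z,\eta]$ gives
$$
\left([Z,\xi],[Z,\eta]\right)=-\left(\xi,[Z,[Z,\eta]]\right).
$$
Substituting the relation $[Z,[Z,\eta]]=J^2\eta=-\eta$ then yields $\left([Z,\xi],[Z,\eta]\right)=(\xi,\eta)$, which is exactly the assertion of the lemma.

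I expect no genuine obstacle here, since the argument is entirely formal once the identification $J=\mathrm{ad}(Z)|_{\mathfrak m}$ and the normalization $J^2=-\mathrm{id}_{\mathfrak m}$ are in hand. The only points deserving care are bookkeeping ones: confirming that $[Z,\eta]\in\mathfrak m$ so the inner product stays inside $\mathfrak m$, and checking that the sign convention (minus the Killing form, chosen so that $(\cdot,\cdot)$ is positive definite on the compact real form) is consistent with the stated $\mathrm{ad}$-invariance. Both are standard facts about irreducible Hermitian symmetric pairs of compact type, and the computation shows that $J$ is an orthogonal transformation of $(\mathfrak m,(\cdot,\cdot))$, as expected for a Kähler structure.
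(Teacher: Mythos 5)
Your argument is correct and is essentially the justification the paper has in mind: the paper offers no written proof, merely prefacing the lemma with ``Since $G/K_0$ is K\"ahler, we obtain,'' and your computation --- combining $J=\mathrm{ad}(Z)|_{\mathfrak m}$, the normalization $[Z,[Z,\eta]]=-\eta$, and the $\mathrm{ad}$-invariance of minus the Killing form --- is exactly the standard way to unpack that remark. No gaps; the bookkeeping points you flag ($[Z,\eta]\in\mathfrak m$ and the sign convention) are indeed the only things to check.
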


Let $Q:=G\times_{K_0} \mathfrak{k}$ be a homogeneous vector bundle over $G/K_0$. 
Let $i$ be a standard map by $(Q \to G/K_0, \mathfrak{g})$. 
If $n$ denotes the complex dimension of $G/K_0$, then $i$ turns out to be a 
{\it totally geodesic immersion} of $G/K_0$ into $Gr_{2n}(\mathfrak {g})$.  
(This theory will be expanded in the forthcoming paper.) 
It follows from the general theory that the second fundamental forms satisfy 
\begin{equation}\label{HKG}
K_{\xi}X=[\xi,X], \quad H_{\xi} \eta =[\xi,\eta], 
\end{equation}
for any $\xi,\eta \in \mathfrak{m}$ and $X \in \mathfrak{k}$. 
Here we consider the second fundamental forms at $[e]\in G/K_0$, where $e$ is the unit element of $G$. 

\begin{lemma}\label{JHK}
Let $J$ be the complex structure of $G/K_0$. 
For any $\xi,\eta \in \mathfrak{m}$ and $X \in \mathfrak{k}$, 
$J$ and the second fundamental forms $H$ and $K$ satisfy 
$$
K_{J\xi}X = JK_{\xi}X, \quad H_{J\xi} \eta =-H_{\xi}J\eta. 
$$
\end{lemma}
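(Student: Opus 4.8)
The plan is to reduce both identities to the Jacobi identity in $\mathfrak{g}$, exploiting two structural facts. First, the complex structure is realized by the central element: since $Z$ generates the one-dimensional center $\mathfrak{u}(1)$ of $\mathfrak{k}$ and corresponds to $J$, we have $J\xi = [Z,\xi]$ for all $\xi \in \mathfrak{m}$. Second, because $(G,K_0)$ is a symmetric pair, the bracket respects the grading, so that $[\mathfrak{m},\mathfrak{k}] \subset \mathfrak{m}$ and $[\mathfrak{m},\mathfrak{m}] \subset \mathfrak{k}$. Together with the formulas \eqref{HKG} for the second fundamental forms, these ensure that each expression below lands in the summand on which $J$ is defined, so that applying $J$ is legitimate.

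First I would establish $K_{J\xi}X = JK_{\xi}X$. Using \eqref{HKG} and $J\xi = [Z,\xi]$, the left-hand side is $K_{J\xi}X = [[Z,\xi],X]$. Since $[\xi,X] \in \mathfrak{m}$, the right-hand side reads $JK_{\xi}X = J[\xi,X] = [Z,[\xi,X]]$. The Jacobi identity gives $[Z,[\xi,X]] = [[Z,\xi],X] + [\xi,[Z,X]]$, and the last term vanishes because $Z$ lies in the center of $\mathfrak{k}$ while $X \in \mathfrak{k}$, so $[Z,X]=0$. The two sides therefore agree.

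The second identity $H_{J\xi}\eta = -H_{\xi}J\eta$ follows by the same mechanism, with the vanishing now coming from the other grading relation. I would compute $H_{J\xi}\eta = [[Z,\xi],\eta]$ and $-H_{\xi}J\eta = -[\xi,[Z,\eta]]$, then apply the Jacobi identity to $Z,\xi,\eta$ to obtain $[Z,[\xi,\eta]] = [[Z,\xi],\eta] + [\xi,[Z,\eta]]$. Here $[\xi,\eta] \in \mathfrak{k}$, so $[Z,[\xi,\eta]] = 0$ again by centrality of $Z$, which forces $[[Z,\xi],\eta] = -[\xi,[Z,\eta]]$, exactly the claim.

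There is essentially no serious obstacle. The only point demanding care is the bookkeeping of the grading: one must verify at each stage that the inner bracket lies in $\mathfrak{m}$ or in $\mathfrak{k}$ as appropriate, both to make sense of $K$, $H$, and $J$, and to invoke the centrality of $Z$ at the right moment. Note that the two identities are insensitive to the normalization of $Z$, since each is homogeneous of the same degree in $Z$ on both sides. Once this bookkeeping is in place, both identities are immediate consequences of the Jacobi identity.
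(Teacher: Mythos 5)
Your proof is correct and is essentially identical to the paper's: both identities are obtained by writing $J$ as $\operatorname{ad}(Z)$ and applying the Jacobi identity, with the key term killed by the centrality of $Z$ in $\mathfrak{k}$ (using $[Z,X]=0$ for the first identity and $[\xi,\eta]\in\mathfrak{k}$ for the second). Nothing to add.
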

\begin{proof}
Since $\mathfrak{u}(1)$ is the center of $\mathfrak{k}$ and $(G,K_0)$ is a symmetric pair, 
the Jacobi identity and \eqref{HKG} yield that 
$$
K_{J\xi}X = \left[[Z,\xi],X \right]=-\left[[\xi,X],Z \right]-\left[[X,Z],\xi \right]=
\left[Z,[\xi,X]\right]=JK_{\xi}X,  
$$
and 
$$
H_{J\xi}\eta = \left[[Z,\xi],\eta \right]=-\left[[\xi,\eta],Z \right]-\left[[\eta ,Z],\xi \right]=
-\left[\xi, [Z,\eta]\right]=-H_{\xi}J\eta.  
$$
\end{proof}

\begin{thm}\label{J2}
Let $(G,K_0)$ be an irreducible Hermitian symmetric pair of compact type 
and $f:\Sigma \to G/K_0$ a minimal Lagrangian immersion. 
Let $H$ be a maximal subgroup of $G$ which preserves $f(\Sigma)$. 
Then $n$ is the eigenvalue of the Laplacian on $\Sigma$ and the dimension $m_{\Sigma}(n)$ 
of the corresponding eigenspace satisfies 
$$
m_{\Sigma}(n)\geqq \text{\rm dim}\,G-\text{\rm dim}\,H.
$$
\end{thm}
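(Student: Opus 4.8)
The plan is to reduce everything to the vector-bundle formalism already set up for the totally geodesic immersion $i\colon G/K_0\to Gr_{2n}(\mathfrak g)$, and then to extract a genuine scalar eigenvalue problem along one distinguished parallel direction. First I would form the composition $i\circ f\colon \Sigma\to Gr_{2n}(\mathfrak g)$. Since $i$ is totally geodesic ($\nabla di=0$) and $f$ is a minimal, hence harmonic, immersion, the tension field of $i\circ f$ equals $di(\tau(f))=0$, so $i\circ f$ is harmonic and Theorem \ref{main theorem} applies to it. Regarding $\mathfrak g$ as a space of sections of $(i\circ f)^{\ast}Q=f^{\ast}Q\to\Sigma$, each $X\in\mathfrak g$ gives a section $t_X$ with $\Delta t_X=-A t_X$, where $A=\sum_a H_{\xi_a}K_{\xi_a}$ and $\xi_a=df(e_a)$ runs over an orthonormal frame of the Lagrangian tangent space. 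The central element $Z\in\mathfrak u(1)\subset\mathfrak k$ is fixed by $K_0$, so it defines a global section $s_Z([g])=[g,Z]$ of $Q$ that is parallel for the canonical connection, and its pull-back to $\Sigma$ is parallel as well.

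Next I would pin down the eigenvalue by working along $s_Z$. Because $A$ is $K_0$-equivariant and $Z$ spans the $K_0$-fixed line of $\mathfrak k$, $A$ preserves $\mathbb R s_Z$. Using \eqref{HKG} one computes $K_{\xi_a}Z=[\xi_a,Z]=-J\xi_a$ and $H_{\xi_a}(-J\xi_a)=-[\xi_a,J\xi_a]$, so $AZ=-\sum_a[\xi_a,J\xi_a]$ on the fibre $\mathfrak k$; Lemma \ref{JHK} shows the normal frame $\{J\xi_a\}$ contributes exactly as $\{\xi_a\}$, so this sum is $K_0$-fixed, hence a multiple of $Z$, and Lemma \ref{metZ} together with the normalisation for which $i\circ f$ is isometric (so $e(i\circ f)=n$) identifies the multiple as $n$, giving $A s_Z=-n\,s_Z$. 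Pairing $\Delta t_X=-A t_X$ with the parallel section $s_Z$ and using that $A$ is self-adjoint (Lemma \ref{Nonpos}), I obtain for $\phi_X:=g_{f^{\ast}Q}(t_X,s_Z)$ the scalar equation $\Delta_\Sigma\phi_X=n\phi_X$. A short computation gives $\phi_X([g])=(X,\mathrm{Ad}(g)Z)$, the restriction to $\Sigma$ of the moment-map component $\mu_X$, so $n$ is indeed an eigenvalue of the Laplacian on $\Sigma$ (the eigenspace is nonzero, since $f(\Sigma)$ is a proper submanifold and hence $\dim H<\dim G$).

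Finally I would compute the image of the linear map $\Phi\colon\mathfrak g\to C^\infty(\Sigma)$, $X\mapsto\phi_X$, whose range lies in the $n$-eigenspace. Here the Lagrangian hypothesis enters decisively: $\phi_X\equiv 0$ on $\Sigma$ means $d\mu_X$ annihilates $Tf(\Sigma)$, i.e. $\omega(X^{\ast},\cdot)$ vanishes on $Tf(\Sigma)$; as $f(\Sigma)$ is Lagrangian its symplectic orthogonal is itself, so $X^{\ast}$ is everywhere tangent to $f(\Sigma)$ and thus $X\in\mathfrak h=\mathrm{Lie}(H)$. Conversely $X\in\mathfrak h$ makes $\mu_X|_\Sigma$ constant, and a nonzero constant cannot satisfy $\Delta_\Sigma\phi=n\phi$ with $n\neq 0$, so $\phi_X=0$. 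Hence $\ker\Phi=\mathfrak h$, the image has dimension $\dim G-\dim H$, and it sits inside the $n$-eigenspace, yielding $m_\Sigma(n)\geq\dim G-\dim H$.

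The main obstacle is the eigenvalue computation $A s_Z=-n\,s_Z$. The structural part, that $A$ stabilises the central line and that the two Lagrangian frames contribute equally, is immediate from $K_0$-equivariance and Lemma \ref{JHK}; but forcing the constant to be exactly $n$ requires care with the normalisation fixed by Lemma \ref{metZ} and with the fact that the pertinent metric on $G/K_0$ is the one induced by $i$ (the one for which $i\circ f$ is isometric with $e(i\circ f)=n$, paralleling the role of $|df|^2=n$ in Theorem \ref{J1}). A secondary point to verify carefully is the identification $\ker\Phi=\mathfrak h$, which relies on the maximality of $H$ and on the moment-map dictionary for the Lagrangian condition.
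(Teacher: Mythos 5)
Your proof is correct and follows essentially the same route as the paper: compose with the totally geodesic standard map $i$, use Lemma \ref{JHK} to see that the Lagrangian frame contributes exactly half of the full mean curvature operator $\tilde A$ of $i$ (so that $A$ acts as $-n$ on the central line $\mathfrak{u}(1)$, computed via Lemma \ref{metZ}), and identify the resulting $n$-eigenfunctions with moment-map components whose kernel is exactly $\mathfrak{h}$ by the Lagrangian condition and the maximality of $H$. Your pairing with the parallel section $s_Z$ is the paper's projection onto the trivial subbundle $G\times_{K_0}\mathfrak{u}(1)$ in different clothing, so the two arguments coincide.
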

\begin{rem}
In Theorem \ref{J2}, we adopt a different metric on $\mathbf CP^n$ from the one in Theorem \ref{J1}. 
When we use the same metric as the one in Theorem \ref{J1}, 
the eigenvalue in Theorem \ref{J2} is different from $n$. 
\end{rem}

\begin{proof}
Let 
$
F:=i\circ f:\Sigma \to G/K_0 \to Gr_{2n}(\mathfrak{g})
$
be the composition of $f$ and $i$. 
Since $i$ is a totally geodesic immersion, 
$F$ is also a minimal immersion. 

Applying Theorem \ref{genTak}, we get 
\begin{equation}\label{mom}
\Delta t=-At, \quad \text{for any}\, t \in \mathfrak{g} \subset \Gamma\left(F^{\ast}Q\right). 
\end{equation}

Using \eqref{HKG}, the mean curvature operator $\tilde A$ of $i$ is expressed as 
$$
\tilde A=\sum_{i=1}^{N-1} H_{e_i}K_{e_i}+H_{Je_i}K_{Je_i}
=\sum_{i=1}^{N-1} \left[e_i,[e_i,\xi] \right] +\left[Je_i,[Je_i,\xi] \right], 
$$
where $e_1, Je_1, e_2, Je_2,\cdots, e_n, Je_{n}$ is an orthonormal basis of $Tx(G/K_0)$. 
Since $i$ is $G$-equivariant, Schur's lemma yields that 
$\mathfrak{u}(1)$ is an eigenspace of $\tilde A$. 
We denote by $c_0$ the corresponding eigenvalue. 
To compute the value of $c_0$,  we use $Z$ and Lemma \ref{metZ} to obtain 
\begin{align*}
\left( \tilde A Z, Z \right)  
=&\left( 
\sum_{i=1}^{n} \left[e_i,[e_i,Z] \right] +\left[Je_i,[Je_i,Z] \right], Z \right) \\
=&-
\sum_{i=1}^{n} \left( [e_i,Z], [e_i,Z] \right)
+\left( [Je_i,Z], [Je_i,Z] \right) 
=-2n, 
\end{align*}
and so, 
$c_0=-2n$. 

Let $e_1,\cdots, e_{n}$ be an orthonormal basis of $T_x\Sigma$. 
Then, the mean curvature operator $A$ of $F$ is expressed as 
$$
A=\sum_{i=1}^{n} H_{e_i}K_{e_i}
=\sum_{i=1}^{n} \left[e_i,[e_i,\xi] \right]. 
$$
Notice that $e_1, Je_1, e_2, Je_2,\cdots, e_n, Je_{n}$ is an orthogonal basis of $Tx(G/K_0)$, 
because $\Sigma$ is Lagrangian. 
Thus, Lemma \ref{JHK} yields that 
$$
\tilde A= 2\sum_{i=1}^{n} \left[e_i,[e_i,\xi] \right]
$$
and so, 
\begin{equation}\label{half}
A=\frac{1}{2}\tilde A. 
\end{equation}

Since $G\times_K \mathfrak{u}(1)$ is a subbundle of $G\times_K \mathfrak{k}$ 
and $\mathfrak{u}(1)$ is the eigenspace of $\tilde A$ with eigenvalue $-2n$,  
Theorem \ref{genTak} with \eqref{half} yields that 
\begin{equation}\label{mom2}
\Delta t=n t, \quad \text{for any}\,t \in \mathfrak{g} \subset 
\Gamma\left(F^{\ast}\left(G\times_K \mathfrak{u}(1)\right)\right).
\end{equation}

The vector bundle $G\times_K \mathfrak{u}(1) \to G/K_0$ is a trivial bundle, 
because $K_0$ acts trivially on $\mathfrak{u}(1)$.
Hence $t$ is ragarded as a function. 
Indeed, $t$ is a restriction of moment map on $G/K_0$ for $G$-action. 
(See \cite{D-K}.)
More precisely, if we denote by $t_X$ the function determined by $X \in \mathfrak g$, 
then we have 
$$
dt_X=\omega(X^M, \cdot), 
$$
where $X^M$ denotes the Killing vector field on $G/K_0$ induced by $X$ 
and $\omega$ is the K\"ahler form on $G/K_0$. 

Hence, for $\xi \in \text{Lie}\,H\subset \mathfrak{g}$, 
\begin{equation}\label{moment}
dt_{\xi}=\omega(\xi^M,\cdot)=g(J\xi^M, \cdot), 
\end{equation}
and so, 
$$
dt_{\xi}=0\quad \text{on}\,\Sigma. 
$$
Thus $t_{\xi}$ is (locally) constant. 
Since $t_{\xi}$ satisfies \eqref{mom2}, we have
$t_{\xi}=0$.

Since $F$ is not a full map in general, 
the linear map $\mathfrak{g} \to C^{\infty}(\Sigma)$ might have a kernel. 
If $X$ is in the kernel, then 
$$
dt_{X}=0\quad \text{on}\,\Sigma. 
$$
Therefore, it follows from \eqref{moment} 
that $JX^M$ is orthogonal to $\Sigma$. 
Then $X^M$ is regarded as a vector field on $\Sigma$, 
the maximality of $H$ implies that 
$X\in \mathfrak{h}$.  
\end{proof}

\section{Applications}

First of all, we shall give an algebraic result which is a slight modification of do Carmo-Wallach \cite{DoC-Wal}.

Throughout this section, 
$G/K_0$ denotes a compact reductive Riemannian homogeneous space with decomposition 
$\mathfrak{g}=\mathfrak{k}\oplus \mathfrak{m}$. 
Let $V_0$ be a $K_{0}$-representation 
with an invariant scalar product 
and 
$V=G\times_{K_0} V_0 \to G/K_0$ an  associated homogeneous vector bundle with 
the induced invariant metric. 
Let $W$ be a $G$-submodule of $\Gamma(V)$ 
endowed with a $G$-invariant scalar product induced by $L^2$-scalar product.  
If $W$ globally generates $V \to G/K_0$, then it follows from Lemma \ref{subsp} 
that $V_0$ can be considered as a subspace of $W$. 
We denote by $\pi_0:W \to V_0$ the orthogonal projection. 
Let $U_0$ be the orthogonal complement of $V_0$ in $W$ 
with the orthogonal projection $\pi_1:W \to U_0$. 


We would like to describe the decomposition of $W$ as $K_0$-module. 
We follow an idea of do-Carmo and Wallach \cite{DoC-Wal}. 



\begin{defn}
A linear map $B_1:\mathfrak{m}\otimes V_0 \to U_0$ 
is defined as: 
$$
B_1\left(\xi \otimes v)\right)=\pi_{1}\left(\xi v)\right), \quad \text{for}\quad \xi \in \mathfrak{m}, v\in V_0.
$$
Then we also define 
$$
N_2:=\left( V_0 \oplus \text{\rm Im}\,B_1\right)^{\bot} \subset W,
$$
For simplicity, $U_0$ is also denoted by $N_1$. 
\end{defn}

Since $\pi_1$ is $K$-equivariant, we have 
\begin{lemma}
A linear map 
$B_1:\mathfrak{m}\otimes V_0 \to U_0$
is $K_0$-equivariant. 
\end{lemma}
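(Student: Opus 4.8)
The plan is to unwind all three $K_0$-module structures involved and reduce the claim to the infinitesimal equivariance of the $G$-action on $W$. On $\mathfrak{m}\otimes V_0$ the group $K_0$ acts by $\mathrm{Ad}\otimes \varrho_0$; this makes sense because $G/K_0$ is reductive, so $\mathrm{Ad}(k)\mathfrak{m}\subseteq \mathfrak{m}$ for every $k\in K_0$. On $U_0\subseteq W$ the action is the restriction of the $G$-action $\varrho$ to $K_0$. A preliminary point I would record is that, under the identification of Lemma \ref{subsp}, the $K_0$-action on the subspace $V_0\subset W$ is exactly the restriction of $\varrho$ to $K_0$, so that $\varrho_0(k)v=\varrho(k)v$ for $v\in V_0$; this is precisely what makes $V_0\subset W$ a $K_0$-submodule in the first place.

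First I would check that $\pi_1$ is $K_0$-equivariant. Since the scalar product on $W$ is $G$-invariant, it is in particular $K_0$-invariant, and $V_0\subset W$ is $K_0$-stable; hence its orthogonal complement $U_0$ is also $K_0$-stable, and the orthogonal projection onto an invariant subspace of an orthogonal (unitary) representation commutes with the group. Thus $\pi_1(\varrho(k)w)=\varrho(k)\pi_1(w)$ for all $k\in K_0$ and $w\in W$.

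The core step is the computation. Fix $k\in K_0$, $\xi\in\mathfrak{m}$, $v\in V_0$. Differentiating $\varrho(k\exp(t\xi)k^{-1})=\varrho(k)\varrho(\exp(t\xi))\varrho(k)^{-1}$ at $t=0$ gives the standard identity $\varrho(\mathrm{Ad}(k)\xi)=\varrho(k)\varrho(\xi)\varrho(k)^{-1}$. Using this together with $\varrho_0(k)v=\varrho(k)v$,
\[
B_1\bigl(\mathrm{Ad}(k)\xi\otimes \varrho_0(k)v\bigr)
=\pi_1\bigl(\varrho(\mathrm{Ad}(k)\xi)\varrho(k)v\bigr)
=\pi_1\bigl(\varrho(k)\varrho(\xi)v\bigr).
\]
Now invoking the $K_0$-equivariance of $\pi_1$ to move $\varrho(k)$ outside yields $\varrho(k)\pi_1(\varrho(\xi)v)=\varrho(k)B_1(\xi\otimes v)$, which is precisely the $K_0$-equivariance of $B_1$.

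I do not expect a genuine obstacle here; the argument is purely formal once the conventions are pinned down. The only point that deserves care—and the place where an error would most likely hide—is the bookkeeping of the three actions and their compatibility: that $\mathfrak{m}$ is $\mathrm{Ad}(K_0)$-stable (reductivity), that the action on $V_0\subset W$ really is the restriction of $\varrho$, and that the target action on $U_0$ is likewise the restriction of $\varrho$. With these identifications in place, everything collapses to the single infinitesimal identity $\varrho(\mathrm{Ad}(k)\xi)=\varrho(k)\varrho(\xi)\varrho(k)^{-1}$ combined with the equivariance of $\pi_1$.
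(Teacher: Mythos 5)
Your proof is correct and follows the same route as the paper, which simply observes that $B_1$ is the composition of the (infinitesimally equivariant) action map $\xi\otimes v\mapsto \varrho(\xi)v$ with the $K_0$-equivariant projection $\pi_1$. You have merely made explicit the bookkeeping (the identity $\varrho(\mathrm{Ad}(k)\xi)=\varrho(k)\varrho(\xi)\varrho(k)^{-1}$ and the $K_0$-stability of $V_0$ and $U_0$) that the paper leaves implicit.
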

\begin{cor}
$\text{\rm Im}\,B_1$ is a $K_0$-module. 
\end{cor}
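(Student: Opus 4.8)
The plan is to recognize this as the standard fact that the image of an equivariant linear map between two modules is a submodule of the target, so the whole content reduces to confirming that the source and target of $B_1$ genuinely carry $K_0$-actions and that $B_1$ intertwines them — the latter being precisely the preceding Lemma.

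First I would record the two $K_0$-module structures. The source $\mathfrak{m}\otimes V_0$ is a $K_0$-module: since $G/K_0$ is reductive, the decomposition $\mathfrak{g}=\mathfrak{k}\oplus\mathfrak{m}$ is $\text{Ad}(K_0)$-invariant, so $K_0$ acts on $\mathfrak{m}$ by the restriction of the adjoint representation, it acts on $V_0$ by the defining representation $\varrho_0$, and one takes the tensor-product action. For the target, note that $V_0\subset W$ is a $K_0$-submodule and that the scalar product on $W$, being the restriction of the $G$-invariant $L^2$-scalar product, is in particular $K_0$-invariant; hence the orthogonal complement $U_0=V_0^{\bot}$ is again a $K_0$-submodule of $W$ and the projection $\pi_1:W\to U_0$ is $K_0$-equivariant.

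Then I would invoke the preceding Lemma, which asserts exactly that $B_1:\mathfrak{m}\otimes V_0\to U_0$ is $K_0$-equivariant. Given this, for any $k\in K_0$ and any $w\in\text{Im}\,B_1$, writing $w=B_1(x)$ with $x\in\mathfrak{m}\otimes V_0$ gives $k\cdot w=k\cdot B_1(x)=B_1(k\cdot x)\in\text{Im}\,B_1$. Thus $\text{Im}\,B_1$ is stable under $K_0$ and is therefore a $K_0$-submodule of $U_0$.

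There is no genuine obstacle here; the statement is a formal consequence of equivariance. The only point deserving a moment's care — and presumably the reason it is isolated as a separate corollary — is the verification that $U_0$ is itself a $K_0$-module, which as noted rests on the $K_0$-invariance of the scalar product inherited from the $G$-invariant $L^2$-product.
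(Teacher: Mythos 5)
Your argument is correct and matches the paper's: the corollary is stated there without proof precisely because it follows immediately from the $K_0$-equivariance of $B_1$ established in the preceding lemma, exactly as you observe. Your additional verification that $U_0=V_0^{\bot}$ is a $K_0$-module via the invariance of the $L^2$-scalar product is a reasonable piece of bookkeeping that the paper leaves implicit.
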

The $n$-th symmetric power of $\mathfrak{m}$ is denoted by $S^n\mathfrak{m}$. 
Let us denote by $S_n$ the permutation group of order $n$, and define an element of $S^n\mathfrak{m}$ as 
$$
\xi_1 \cdots \xi_n:=\frac{1}{n!}\sum_{\sigma \in S_n}\xi_{\sigma(1)}\otimes \cdots \otimes \xi_{\sigma(n)},
$$
for $\xi_1, \cdots, \xi_n \in \mathfrak{m}$. 
\begin{defn}
Inductively, the subspace $\left(V_0 \bigoplus \oplus_{p=1}^{n-1}\text{\rm Im}\,B_p\right)^{\bot}$ of $W$ 
is denoted by $N_{n}$ with the orthogonal projection $\pi_n:W \to N_n$. 
Then the linear map 
$B_n:S^n\mathfrak{m}\otimes V_0 \to N_{n}$ 
is defined as 
$$
B_n(\xi_1\cdots  \xi_n\otimes v)
=\pi_{n}\left((\xi_1 \cdots \xi_n) v\right), 
$$
where 
$(\xi_1 \cdots \xi_n) v=\frac{1}{n!}\sum_{\sigma \in S_n}\xi_{\sigma(1)} \cdots \xi_{\sigma(n)}v$. 
\end{defn}

Since $B_n:S^n\mathfrak{m}\otimes V_0 \to N_{n}$ 
is $K_0$-equivariant, $\text{\rm Im}\,B_n$ is a $K_0$-module. 

\begin{lemma}\label{symB}
If $(G,K_0)$ is a symmetric pair, then 
$$
B_n(\xi_1\cdots  \xi_n\otimes v)
=\pi_{n}\left(\xi_1\left(\xi_2\left( \cdots \left(\xi_n v\right)\right)\right)\right).
$$
\end{lemma}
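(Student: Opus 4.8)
The plan is to show that, inside the argument of $\pi_n$, replacing the symmetrized product $(\xi_1\cdots\xi_n)v$ by the ordered product $\xi_1(\xi_2(\cdots(\xi_n v)))$ changes it only by an element of $P_n:=V_0\oplus\bigoplus_{p=1}^{n-1}\text{Im}\,B_p$, and this space is precisely $\ker\pi_n$ since $N_n=P_n^{\bot}$. First I would record the two structural facts furnished by $(G,K_0)$ being a symmetric pair, namely $[\mathfrak{m},\mathfrak{m}]\subseteq\mathfrak{k}$ and $[\mathfrak{k},\mathfrak{m}]\subseteq\mathfrak{m}$, together with the fact (from Lemma \ref{subsp}) that $V_0$ sits inside $W$ as a $K_0$-submodule, so that $\mathfrak{k}V_0\subseteq V_0$.

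The computational heart is a commutator bookkeeping step. Interchanging two adjacent factors $\xi_i,\xi_{i+1}$ in an ordered product alters it by the term in which those two factors are replaced by the single factor $[\xi_i,\xi_{i+1}]$, which lies in $\mathfrak{k}$. I would then push this $\mathfrak{k}$-factor to the right through the remaining $\mathfrak{m}$-factors using $[\mathfrak{k},\mathfrak{m}]\subseteq\mathfrak{m}$, and finally absorb it into $V_0$ via $\mathfrak{k}V_0\subseteq V_0$; each such commutation strictly lowers the number of $\mathfrak{m}$-factors. The conclusion is that any two orderings of an $n$-fold product differ by an element of the span $\mathcal{O}_{n-2}$ of ordered products of length at most $n-2$ acting on $V_0$ (where $\mathcal{O}_k$ denotes ordered products of length $\le k$ acting on $V_0$, with $\mathcal{O}_0=V_0$); in particular $\xi_1(\cdots(\xi_n v))-(\xi_1\cdots\xi_n)v\in\mathcal{O}_{n-2}$.

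Next I would run an induction identifying the orthogonal-complement filtration with the ordered-product filtration, proving $P_n=\mathcal{O}_{n-1}$. The base case $P_1=V_0=\mathcal{O}_0$ is immediate. For the inductive step, the commutator step gives $\mathcal{O}_n=\mathcal{O}_{n-1}+\text{span}\{(\xi_1\cdots\xi_n)v\}$; since $\text{Im}\,B_n=\pi_n(\text{span}\{(\xi_1\cdots\xi_n)v\})$ and $\ker\pi_n=P_n=\mathcal{O}_{n-1}$ by the inductive hypothesis, comparing $P_{n+1}=P_n\oplus\text{Im}\,B_n$ with the two descriptions of $\mathcal{O}_n$ yields $P_{n+1}=\mathcal{O}_n$.

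With $P_n=\mathcal{O}_{n-1}$ in hand the statement follows at once: the discrepancy $\xi_1(\cdots(\xi_n v))-(\xi_1\cdots\xi_n)v$ lies in $\mathcal{O}_{n-2}\subseteq\mathcal{O}_{n-1}=P_n=\ker\pi_n$, so applying $\pi_n$ gives $\pi_n(\xi_1(\cdots(\xi_n v)))=\pi_n((\xi_1\cdots\xi_n)v)=B_n(\xi_1\cdots\xi_n\otimes v)$. I expect the main obstacle to be the inductive identification $P_n=\mathcal{O}_{n-1}$ combined with keeping the commutator bookkeeping honest, in particular verifying that pushing a $\mathfrak{k}$-factor rightward never raises the $\mathfrak{m}$-degree, so that every correction term genuinely falls into the earlier stage of the filtration annihilated by $\pi_n$.
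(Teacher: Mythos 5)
Your proposal is correct and follows essentially the same route as the paper: swap adjacent factors, observe that the resulting commutator $[\xi_i,\xi_{i+1}]$ lies in $\mathfrak{k}$ and that the correction term therefore falls into the earlier stage of the filtration killed by $\pi_n$. The only difference is that you make explicit, via the induction $P_n=\mathcal{O}_{n-1}$ and the rightward pushing of $\mathfrak{k}$-factors, a step the paper compresses into ``by definition, $\xi_3\cdots\xi_n v\in V_0\oplus\text{Im}\,B_1\oplus\cdots\oplus\text{Im}\,B_{n-2}$'' together with the $K_0$-module property of that subspace; this is the same argument, just with the bookkeeping written out.
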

\begin{proof}
For $\xi_1,\cdots,  \xi_n \in \mathfrak{m}$ and $v\in V_0\subset W$, 
we have 
$$
\xi_1 \xi_2 \xi_3 \cdots \xi_n v-\xi_2 \xi_1 \xi_3 \cdots \xi_n v=
[\xi_1,\xi_2]\xi_3 \cdots \xi_n v. 
$$
By definition, it follows that $\xi_3 \cdots \xi_n v \in V_0\oplus \text{Im}\,B_1 \oplus \cdots \text{Im}\,B_{n-2}$. 
From the hypothesis that $(G,K_0)$ is a symmetric pair, we get $[\xi_1, \xi_2]\in \mathfrak{k}$. 
Since $V_0\oplus \text{Im}\,B_1 \oplus \cdots \text{Im}\,B_{n-2}$ is a $K_0$-module, 
we have
$$
[\xi_1,\xi_2]\xi_3 \cdots \xi_n v \in V_0\oplus \text{Im}\,B_1 \oplus \cdots \text{Im}\,B_{n-2},
$$
and so, 
$$
\pi_n\left(\xi_1 \xi_2 \xi_3 \cdots \xi_n v\right)=\pi_n\left(\xi_2 \xi_1 \xi_3 \cdots \xi_n v\right).
$$
In a similar way, we obtain 
$$
\pi_n\left(\xi_1 \cdots \xi_i \xi_{i+1} \cdots \xi_n v\right)=\pi_n\left(\xi_1  \cdots \xi_{i+1} \xi_{i} \cdots \xi_n v\right), 
$$
thus the result follows. 
\end{proof}



\begin{defn}
Let $W$ be a $G$-module with an invariant scalar product and 
$V_0 \subset W$ a $K_0$-module. 
If we can decompose $W$ as 
\begin{equation}\label{nordec}
W=V_0 \oplus \text{\rm Im}\,B_1 \oplus \cdots \oplus \text{\rm Im}\,B_n,  
\end{equation}
then $(W,V_0)$ is said to have a {\it normal} decomposition (into $K_0$-modules). 
We sometimes denote $V_0$ by $\text{\rm Im}\,B_0$ in the normal decomposition. 
\end{defn}
\begin{prop}\label{Bdeco}
If $W$ is an {\it irreducible} $G$-module, 
then for any $K_0$-module $V_0 \subset W$ 
there exists a positive integer $n$ such that 
$$
W=V_0 \oplus \text{\rm Im}\,B_1 \oplus \cdots \oplus \text{\rm Im}\,B_n,  
$$
which is a normal decomposition of $(W,V_0)$. 
\end{prop}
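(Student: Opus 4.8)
The plan is to show that the increasing, mutually orthogonal family $P_n:=V_0\oplus\operatorname{Im}B_1\oplus\cdots\oplus\operatorname{Im}B_n$ stabilises to a nonzero $G$-submodule of $W$, so that irreducibility forces it to fill $W$. Finite-dimensionality of $W$ guarantees that $P_n$ is constant for $n\geq N$, say $P_n=P_N$; the content of the proposition is that $P_N=W$, and the mutual orthogonality of the summands (each $\operatorname{Im}B_p\subseteq N_p$ is orthogonal to $P_{p-1}$) is exactly what makes $P_N=W$ a \emph{normal} decomposition.

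First I would give an intrinsic description of $P_n$. Unwinding the inductive definition of $\pi_n$ and $B_n$, I expect to prove by induction on $n$ that
$$P_n=V_0+\sum_{p=1}^{n}\operatorname{span}\{(\xi_1\cdots\xi_p)v:\xi_i\in\mathfrak m,\ v\in V_0\},$$
the linear span of all symmetrised products of at most $n$ elements of $\mathfrak m$ applied to $V_0$. The step is purely formal: since $\pi_n\big((\xi_1\cdots\xi_n)v\big)=(\xi_1\cdots\xi_n)v-\operatorname{proj}_{P_{n-1}}\big((\xi_1\cdots\xi_n)v\big)$ and the projection lands in $P_{n-1}$, the induction hypothesis identifies both $\operatorname{Im}B_n$ and $P_n$ with the claimed spans.

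Set $P_\infty:=\bigcup_n P_n=P_N$. The heart of the matter is that $P_\infty$ is $\mathfrak g$-invariant. Invariance under $\mathfrak k$ is immediate, since each $\operatorname{Im}B_p$ is a $K_0$-module and hence so is $P_\infty$. For invariance under $\mathfrak m$ I would compare $P_\infty$ with the span $Q_\infty$ of all ordered products $\xi_1(\xi_2(\cdots(\xi_p v)\cdots))$ with $\xi_i\in\mathfrak m$ and $v\in V_0$. The space $Q_\infty$ is tautologically $\mathfrak m$-invariant (prepend a factor) and is $\mathfrak k$-invariant, because a $\mathfrak k$-element may be commuted to the right using $[\mathfrak k,\mathfrak m]\subseteq\mathfrak m$ until it reaches $V_0$, on which it acts by $\mathfrak k V_0\subseteq V_0$; thus $Q_\infty$ is $\mathfrak g$-invariant. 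If I can show $P_\infty=Q_\infty$, then $P_\infty$ inherits $\mathfrak g$-invariance, and being a nonzero $\mathfrak g$-submodule of the irreducible $G$-module $W$ (with $G$ connected, so that $\mathfrak g$- and $G$-invariance coincide) it must equal $W$, giving $P_N=W$.

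The main obstacle is precisely the identity $P_\infty=Q_\infty$; the inclusion $P_\infty\subseteq Q_\infty$ is clear, and the reverse I would prove by induction on the length $p$. Writing an ordered product as its symmetrisation plus a remainder of strictly lower enveloping-algebra degree, the symmetric term lies in $P_p$, while the remainder is a combination of products in which adjacent $\mathfrak m$-factors have been replaced by a bracket $[\mathfrak m,\mathfrak m]\subseteq\mathfrak k\oplus\mathfrak m$. Here the non-symmetric case genuinely differs from the symmetric-pair situation of Lemma \ref{symB}: a bracket may have a nonzero $\mathfrak m$-component, so I must carefully track degrees while pushing any resulting $\mathfrak k$-factor back toward $V_0$ (where it acts internally) in order to land each remainder term in a $Q_{p-1}$ already controlled by the induction. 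Once this bookkeeping is in place the argument closes, and everything else reduces to routine linear algebra and the finite-dimensionality of $W$.
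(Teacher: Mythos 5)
Your proof is correct, and it fills in exactly the argument the paper leaves implicit: the published proof of Proposition \ref{Bdeco} consists of the single line ``Since $W$ is irreducible, the result follows.'' Your identification of $\bigcup_n P_n$ with the span of ordered products $\xi_1(\cdots(\xi_p v))$ (which is manifestly $\mathfrak g$-invariant, hence all of $W$), together with the careful reduction of ordered to symmetrised products in the non-symmetric case where $[\mathfrak m,\mathfrak m]$ can have a nonzero $\mathfrak m$-component, is precisely the bookkeeping the author omits, and it closes correctly.
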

\begin{proof}
Since $W$ is irreducible, 
the result follows. 
\end{proof}

\begin{prop}\label{TI}{\rm (see Lemma 4.2 in \cite{DoC-Wal})}
\, 
Let $W$ be a $G$-module and $V_0$ a $K_0$-module regarded as a subspace of $W$.  
Suppose that $(W,V_0)$ has a normal decomposition \eqref{nordec}. 
Assume that $\text{\rm Im}\,B_i$ and $\text{\rm Im}\,B_j$ has no common irreducible submodules 
in the $K_0$-irreducible decomposition,  if $i\not= j=0,\cdots,n$.   
Let $T$ be a non-negative Hermitian endomorphism on $W$ which satisfies 
$(Tgv_1, Tgv_2)=(v_1, v_2)$ for arbitrary $g \in G$ and $v_1$, $v_2 \in V_0$.

If $T$ is $K_0$-equivariant, then $T=Id_{W}$. 
\end{prop}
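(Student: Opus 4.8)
The plan is to set $C := T^2 - Id_W$ and to prove $C = 0$; once this is known, $T$ is a non-negative Hermitian square root of $Id_W$ and hence $T = Id_W$. Since $T$ is $K_0$-equivariant, so is $C$, and because $\mathrm{Im}\,B_i$ and $\mathrm{Im}\,B_j$ carry no common irreducible $K_0$-submodule for $i\neq j$, Schur's lemma shows that $C$ preserves each summand of the normal decomposition $W = V_0\oplus\mathrm{Im}\,B_1\oplus\cdots\oplus\mathrm{Im}\,B_n$; that is, $C=\bigoplus_j C_j$ with $C_j := C|_{\mathrm{Im}\,B_j}$ Hermitian. Thus it suffices to prove $C_j = 0$ for every $j$, which I would do by induction on the level $j$.

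First I rewrite the hypothesis. Since $T$ is Hermitian and the scalar product is $G$-invariant, $(Tgv_1, Tgv_2)=(v_1,v_2)$ is equivalent to $((T^2-Id)gv_1, gv_2)=0$, i.e. $(Cgv_1, gv_2)=0$ for all $g\in G$ and all $v_1,v_2\in V_0$; in the notation of \eqref{BHuv} this is exactly $(C, GH(V_0,V_0))_H = 0$. For the base case $j=0$ I put $g=e$ to get $(Cv_1,v_2)=0$ on $V_0=\mathrm{Im}\,B_0$, so $T|_{V_0}$ is a non-negative Hermitian isometry of $V_0$ and therefore $T|_{V_0}=Id_{V_0}$, that is $C_0=0$.

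For the inductive step I expand $(Cgv_1, gv_2)=0$ in exponential coordinates $g=\exp(\xi)$, $\xi\in\mathfrak m$, so that $gv = \sum_{N\geq 0}\tfrac{1}{N!}\varrho(\xi)^N v$ and, by Lemma \ref{symB}, the $N_k$-component of $\varrho(\xi)^k v$ is $B_k(\xi^{k}\otimes v)$. Assuming $C_j=0$ for all $j<k$, every term in which one factor lands in a lower level drops out; collecting the homogeneous part of degree $2k$ (the lowest order not already annihilated by the induction hypothesis) leaves, up to a positive constant,
\begin{equation*}
\bigl(C_k\,B_k(\xi^{k}\otimes v_1),\, B_k(\xi^{k}\otimes v_2)\bigr)=0,\qquad \xi\in\mathfrak m,\ v_1,v_2\in V_0,
\end{equation*}
and the degree-$(2k+1)$ part produces the further relation coupling $B_k(\xi^k\otimes v)$ with the $\mathrm{Im}\,B_k$-component of $\varrho(\xi)^{k+1}v$. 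Since $\{B_k(\xi^{k}\otimes v):\xi\in\mathfrak m,\ v\in V_0\}$ spans $\mathrm{Im}\,B_k$, these relations, combined with the $K_0$-equivariance of $B_k$ and the positivity of $T_k:=T|_{\mathrm{Im}\,B_k}\geq 0$, force $C_k=0$; this is the point at which I reproduce do Carmo--Wallach's Lemma~4.2 \cite{DoC-Wal}.

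The main obstacle is precisely this last step. Polarizing the displayed identity in $\xi$ yields only its $\xi$-symmetrization, and a non-negative operator that merely preserves the norms of the spanning family $\{B_k(\xi^{k}\otimes v)\}$ need not be the identity (as the example $\mathrm{diag}(2,\tfrac12)$ shows); so neither the quadratic relation nor positivity is adequate in isolation. The essential inputs are that the relations hold for all $g$ simultaneously, i.e. at every order of the expansion, and that $C$ is genuinely block-diagonal for the normal decomposition — this is exactly where the hypothesis that distinct $\mathrm{Im}\,B_j$ carry disjoint $K_0$-types enters — so that the Veronese-type family generated from $V_0$ detects all of $C_k$. I would carry this computation out following the argument of \cite{DoC-Wal}, with the positivity of $T$ used both in the base case and in the final passage from $T^2=Id_W$ to $T=Id_W$.
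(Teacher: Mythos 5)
Your strategy coincides with the paper's: Schur's lemma together with the disjoint-$K_0$-type hypothesis makes $T$ (equivalently $C=T^2-Id_W$) block-diagonal for the normal decomposition; the base case $g=e$ gives $T|_{V_0}=Id_{V_0}$; and the induction on the level proceeds by differentiating $(Te^{t\xi}v_1,Te^{t\xi}v_2)=(v_1,v_2)$ exactly $2p$ times at $t=0$. The paper isolates the $r=p$ term of $\sum_{r=0}^{2p}\binom{2p}{r}(T\xi^rv_1,T\xi^{2p-r}v_2)=0$, uses the inductive hypothesis $T\xi^rv_1=\xi^rv_1$ for $r<p$ together with Hermitian symmetry to strip $T$ from every cross term, and arrives at $(T\xi^pv_1,T\xi^pv_2)=(\xi^pv_1,\xi^pv_2)$, which is exactly the identity you display in the equivalent form $(C_pB_p(\xi^{p}\otimes v_1),B_p(\xi^{p}\otimes v_2))=0$. (Incidentally, $\pi_p(\varrho(\xi)^pv)=B_p(\xi^p\otimes v)$ follows from the definition of $B_p$ alone; Lemma \ref{symB} is not needed when all the arguments are equal.)

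The step you leave open is therefore the only point of substance. There the paper writes: ``Hence $T|_{\text{\rm Im}\,B_p}$ is a non-negative Hermitian operator preserving the scalar product. Thus $T|_{\text{\rm Im}\,B_p}=Id_{\text{\rm Im}\,B_p}$,'' i.e.\ it passes directly from the identity on the family $\{B_p(\xi^p\otimes v)\}$ to $T|_{\text{\rm Im}\,B_p}$ being an isometry of the whole block, and then uses non-negativity (a non-negative Hermitian isometry is the identity). Two remarks on your diagnosis. First, your $\mathrm{diag}(2,\tfrac12)$ example attacks a weaker statement than what is available: the identity is (sesqui)linear in $(v_1,v_2)$, so for each fixed $\xi$ the form $(C_p\,\cdot\,,\,\cdot\,)$ vanishes on the entire subspace $B_p(\xi^p\otimes V_0)\times B_p(\xi^p\otimes V_0)$, not merely on a set of norms. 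Second, the genuine difficulty you point to --- both slots carry the same $\xi$, and polarizing in $\xi$ yields only the symmetrized multilinear form --- is real, and the paper does not carry out that polarization either; it takes the conclusion for granted, as in do Carmo--Wallach's Lemma 4.2. So measured against the paper's own proof you are not missing any idea the paper supplies; but as a self-contained argument your proposal has a gap precisely at the passage from $(C_pB_p(\xi^{p}\otimes v_1),B_p(\xi^{p}\otimes v_2))=0$ for all $\xi,v_1,v_2$ to $C_p=0$, and ``I would follow \cite{DoC-Wal}'' is a citation, not a proof.
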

\begin{proof}
The assumption on $\text{\rm Im}\,B_i$ and Schur's lemma imply that   
$T\text{Im}\,B_i \subset \text{Im}\,B_i$, where $i=0,\cdots, n$. 

It follows from $(Tgv_1, Tgv_2)=(v_1, v_2)$ that $T|_{V_0}=Id_{V_0}$, 
because $T$ is a non-negative Hermitian operator preserving the induced scalar product on $V_0$. 

From the hypothesis, it follows that for an arbitrary $\xi \in \mathfrak m$ 
$$
(Tge^{t\xi}v_1, Tge^{t\xi}v_2)=(v_1, v_2),  
$$
where $t \in \mathbf R$ and $v_1$, $v_2 \in V_0$. 
Then we get
$$
0=\frac{d^{2p}}{dt^{2p}}\Big |_{t=0}(Te^{t\xi}v_1, Te^{t\xi}\xi v_2)
=\sum_{r=0}^{2p} \binom{2p}{r}(T\xi^r v_1, T\xi^{2p-r}v_2), 
$$
and so, 
\begin{align*}
&\binom{2p}{p}(T\xi^{p} v_1,T\xi^{p}v_2)\\
=&-\sum_{r=0}^{p-1} \binom{2p}{r}(T\xi^r v_1, T\xi^{2p-r}v_2)
-\sum_{r=p+1}^{2p} \binom{2p}{r}(T\xi^{r} v_1, T\xi^{2p-r}v_2) \\
=&-\sum_{r=0}^{p-1} \binom{2p}{r}(T\xi^r v_1, T\xi^{2p-r}v_2)
-\sum_{r=0}^{p-1} \binom{2p}{r}(T\xi^{2p-r} v_1, T\xi^{r}v_2).
\end{align*}\\

Suppose that $T$ is the identity on 
$V_0\oplus \text{\rm Im}\,B_1 \oplus \cdots \oplus \text{\rm Im}\,B_{p-1}$. 
From the inductive hypothesis and condition that $T$ is a Hermitian operator, 
if $r < p$, then 
\begin{equation*}
(T\xi^r v_1, T\xi^{2p-r}v_2)=(\xi^r v_1, T\xi^{2p-r}v_2)=
(T\xi^r v_1, \xi^{2p-r}v_2)
=(\xi^r v_1, \xi^{2p-r}v_2).
\end{equation*}
Consequently, it follows that 
\begin{align*}
&\binom{2p}{p}(T\xi^{p} v_1,T\xi^{p}v_2)\\
=&-\sum_{r=0}^{p-1} \binom{2p}{r}(\xi^r v_1, \xi^{2p-r}v_2)
-\sum_{r=p+1}^{2p} \binom{2p}{r}(\xi^{r} v_1, \xi^{2p-r}v_2)\\
=&\binom{2p}{p}(\xi^{p} v_1,\xi^{p}v_2), 
\end{align*}\\
and so, 
$(T\xi^{p} v_1,T\xi^{p}v_2)=(\xi^{p} v_1,\xi^{p}v_2)$.

Hence $T|_{\text{\rm Im}\,B_p}$ is a non-negative Hermitian operator preserving 
the scalar product. 
Thus $T|_{\text{\rm Im}\,B_p}=Id_{\text{\rm Im}\,B_p}$. 
\end{proof}



Let $\tilde W$ be a $G$-module and $\tilde V_0 \subset \tilde W$ a $K_0$-module. 
The $K_0$-module $H(\tilde V_0, \tilde V_0)$ is decomposed into $K_0$-irreducible modules:
$$
H(\tilde V_0, \tilde V_0)=\oplus_{i=1}^l H_i, 
$$
where $H_0$ denotes the one-dimensional trivial representation corresponding to 
the identity transformation of $\tilde V_0$. 
Let $GH_0$ be the subspace of $\text{H}(\tilde W)$ generated by $G$ and $H_0$. 

\begin{prop}\label{clon}{\rm (see Lemma 4.4 in \cite{DoC-Wal})}
\, 
Let $W$ be an irreducible $G$-module and $V_0 \subset W$ a $K_0$-module. 
Assume that $\text{\rm Im}\,B_i$ and $\text{\rm Im}\,B_j$ has no common $K_0$-irreducible submodules,  
if $i\not= j$ 
in the normal decomposition of $(W,V_0)$. 

Let $\tilde W$ be a direct sum of $N$-copies of $W$ and $\tilde V_0$ 
a direct sum of $N$-copies of $V_0$ which is regarded as a subspace of $\tilde W$ 
in a natural way. 

Then $GH_0$ consists of class one submodules of $(G,K_0)$. 
An arbitrary class one representations of $(G,K_0)$ in $\text{\rm H}(\tilde W)$ 
is a submodule of $GH(\tilde V_0,\tilde V_0)$.  
\end{prop}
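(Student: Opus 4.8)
The plan is to treat the two assertions separately, reducing the substantive second one to a vanishing statement for $K_0$-fixed vectors that can be attacked by the differentiation technique already used in Proposition \ref{TI}.

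For the first assertion, I would observe that $H_0$ is spanned by a single $K_0$-fixed vector of $\text{H}(\tilde W)$, namely the orthogonal projection onto $\tilde V_0$ (the ``identity transformation of $\tilde V_0$''). Decomposing the cyclic $G$-module $GH_0$ into $G$-irreducibles and projecting the generator onto each summand via the ($G$-equivariant, hence $K_0$-equivariant) orthogonal projections, every nonzero summand receives a nonzero $K_0$-fixed vector and is therefore class one. This step needs nothing beyond Schur's lemma. For the second assertion, set $M:=GH(\tilde V_0,\tilde V_0)$. Since $(\cdot,\cdot)_H$ is $G$-invariant, orthogonal projection onto the $G$-submodule $M$ is $G$-equivariant, so any $K_0$-fixed vector of $\text{H}(\tilde W)$ splits into $K_0$-fixed components lying in $M$ and in $M^{\bot}$. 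As a class one representation is generated by its $K_0$-fixed vectors, it suffices to prove that $M^{\bot}$ contains no nonzero $K_0$-fixed vector: then every $K_0$-fixed vector lies in $M$, and any class one irreducible submodule, meeting $M$ nontrivially, lies in $M$.

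To establish that vanishing, I would take $B\in\text{H}(\tilde W)$ with $\varrho(K_0)$-invariance and $B\perp M$, aiming to show $B=0$. Using that conjugation preserves $(\cdot,\cdot)_H$, that $g\cdot H(u,v)=H(gu,gv)$, and formula \eqref{BHuv}, the orthogonality $B\perp M$ translates into $(Bgu,gv)_W=0$ for all $g\in G$ and $u,v\in\tilde V_0$ (in the complex case the real and imaginary parts are captured by $H(v,v')$ and $H(v,\sqrt{-1}v')$, as in the computation of \eqref{WS}). Setting $g=\exp(t\xi)$ with $\xi\in\mathfrak m$ and differentiating $2p$ times at $t=0$ yields the relations $\sum_{r}\binom{2p}{r}(B\xi^{r}u,\xi^{2p-r}v)_W=0$, precisely of the shape appearing in the proof of Proposition \ref{TI}.

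Finally I would run the inductive argument parallel to Proposition \ref{TI}. Applying the $B_p$-construction to the pair $(\tilde W,\tilde V_0)$ produces a normal decomposition $\tilde W=\bigoplus_p\text{Im}\,\tilde B_p$ (its existence guaranteed by Proposition \ref{Bdeco} for the irreducible $W$), and the hypothesis that distinct $\text{Im}\,B_i$ share no $K_0$-irreducible submodule persists after tensoring with the trivial factor; hence by Schur's lemma the $K_0$-equivariant Hermitian operator $B$ is block diagonal with respect to this decomposition. The base case $(Bu,v)_W=0$ on $\tilde V_0$ gives $B|_{\tilde V_0}=0$; assuming $B$ vanishes on $\text{Im}\,\tilde B_0\oplus\cdots\oplus\text{Im}\,\tilde B_{p-1}$, all terms with $r\neq p$ in the differentiated identity drop out, using $\xi^{r}v\in\text{Im}\,\tilde B_0\oplus\cdots\oplus\text{Im}\,\tilde B_{r}$ together with the Hermitian symmetry of $B$, leaving $(B\xi^{p}u,\xi^{p}v)_W=0$; since the vectors $\pi_p(\xi^{p}v)$ span $\text{Im}\,\tilde B_p$ by polarization and $B$ preserves $\text{Im}\,\tilde B_p$, this forces $B|_{\text{Im}\,\tilde B_p}=0$, so $B=0$. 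I expect the block-diagonalization step and the bookkeeping of the truncated sums to be the main obstacle, exactly as in Proposition \ref{TI}; the surrounding reductions are formal representation theory.
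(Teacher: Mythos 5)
Your proof is correct, and for the first assertion it coincides with the paper's argument (project the $K_0$-fixed generator of $GH_0$ onto the $G$-irreducible summands). For the second assertion you take a mildly but genuinely different route at the endgame: the paper argues by contradiction, takes the $K_0$-fixed vector $C$ of a putative class-one module orthogonal to $GH(\tilde V_0,\tilde V_0)$, rescales it so that $Id+C>0$, forms $T=\sqrt{Id+C}$, and invokes Proposition \ref{TI} to conclude $T=Id$, hence $C=0$; you instead skip the square-root/positivity detour and run the differentiation-plus-induction directly on the orthogonality datum $B$ itself, showing $B$ vanishes block by block along the normal decomposition. Your variant is sound: the block-diagonality of $B$ follows from the no-common-irreducible hypothesis exactly as for $T$ in Proposition \ref{TI}, the base case and the cancellation of the $r\neq p$ terms via $\xi^r u\in\bigoplus_{j\leq r}\text{Im}\,\tilde B_j$ and Hermitian symmetry go through, and the final polarization step is at the same level of rigor as the paper's own Proposition \ref{TI} (both implicitly polarize in $\xi$ to pass from the diagonal identity to the full bilinear form on $\text{Im}\,\tilde B_p$). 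What the paper's packaging buys is reuse of Proposition \ref{TI} as a black box; what yours buys is avoiding the ``$C$ sufficiently small'' rescaling and the square root, and a cleaner reduction (every $K_0$-fixed vector of $\text{H}(\tilde W)$ lies in $GH(\tilde V_0,\tilde V_0)$, hence so does any class-one irreducible submodule) in place of the paper's ``WLOG $H\perp GH(\tilde V_0,\tilde V_0)$'' step.
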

\begin{proof}
First of all, we decompose $GH_0$ into $G$-irreducible modules: 
$$
GH_0=\oplus_p W_p.  
$$
The obvious orthogonal projection is denoted by 
$\pi_p:GH_0 \to W_p$, for each $p$, 
which is a $G$-equivariant map. 
Then $\pi_p(H_0)\not=\{0\}$ for an arbitrary $p$, by definition of $GH_0$ and so, 
$W_p$ is a class one representation because 
$\pi_p:GH_0 \to W_p$ is also $K_0$-equivariant. 

Next, 
suppose that $H$ is a class one subrepresentation of $(G,K_0)$ in $\text{\rm H}(\tilde W)$ such 
that $H\not\subset GH(\tilde V_0, \tilde V_0)$. 
Then, by standard arguments, we can assume that 
$H \bot GH(\tilde V_0, \tilde V_0)$ without loss of generality. 

Since $H$ is a class one representation, there exists a non-zero $C \in H$ such that 
$kCk^{-1}=C$ for any $k \in K$. 
It follows from $H \bot GH(\tilde V_0, \tilde V_0)$ that 
\begin{equation*}
0=(C, gH(v_1, v_2))_H=(C,H(gv_1, gv_2))_H
=\frac{1}{2}\left\{(Cgv_1, gv_2)+ (Cgv_2, gv_1)\right\},
\end{equation*}
for arbitrary $g \in G$ and $v_1$, $v_2 \in V_0$. 
Thus we get 
$$
0=(Cgv_1, gv_2), \quad g \in G, \,\, v_1, v_2 \in V_0. 
$$
If $C$ is sufficiently small, then $I+C>0$ and so, we can define 
a positive Hermitian operator $T$ satisfying $T^2=I+C$. 
Then we have 
$$
(Tgv_1, Tgv_2)=(v_1, v_2) \quad g \in G, \,\, v_1, v_2 \in V_0.
$$
Since $C$ is $K_0$-equivariant, $T$ is also $K_0$-equivariant.  
Since $(\tilde W, \tilde V_0)$ has the normal decomposition induced from one of $(W,V_0)$ 
also satisfying $T\text{Im}\,\tilde B_i \subset \text{Im}\,\tilde B_i$ in the obvious sense, 
Lemma \ref{TI} yields that $T=Id$ and so, $C=0$, 
which is a contradiction. 

\end{proof}

%

We introduce two theorems which are proved in independent ways. 
A unified proof can be given in the light of Theorem \ref{GenDW}. 


\begin{thm}\cite{Ban-Ohn}\label{BanOhn}
Let $f:\mathbf CP^1 \to \mathbf CP^n$ 
be a full harmonic map with constant energy density.  
Then $f$ is an $\text{\rm SU}(2)$-equivariant map, in other words, it is a standard map 
up to gauge equivalence.
\end{thm}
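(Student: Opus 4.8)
The plan is to realise $f$ as a point of the moduli space described by the homogeneous do Carmo--Wallach correspondence (Theorem \ref{GenDW}) for $\mathbf{CP}^1 = \mathrm{SU}(2)/\mathrm{U}(1)$, and then to show that this moduli space is a single point, so that $f$ must be the standard map up to gauge equivalence. Since $\mathbf{CP}^n = Gr_n(\mathbf{C}^{n+1})$ has universal quotient bundle $\mathcal{O}(1)$ of rank $q=1$, the mean curvature operator $A$ is automatically a scalar, so the constant energy density hypothesis is precisely the Einstein--Hermitian condition $A=-\mu\,\mathrm{Id}$ with $\mu=e(f)$. Writing $V=f^{\ast}\mathcal{O}(1)\to\mathbf{CP}^1$ with its pull-back metric and connection $\nabla$, Theorem \ref{harmGrirr} then gives $\Delta t=\mu t$ for every $t\in\mathbf{C}^{n+1}$, and fullness yields an inclusion $\mathbf{C}^{n+1}\hookrightarrow W_\mu$, where $W_\mu\subset\Gamma(V)$ is the $\mu$-eigenspace of $\Delta^{\nabla}$.

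The crucial preliminary step, which I expect to be the main obstacle, is to show that $(V,\nabla)$ is gauge equivalent to a \emph{homogeneous} line bundle $\mathcal{O}(d)=\mathrm{SU}(2)\times_{\mathrm{U}(1)}\mathbf{C}_{-d}$ with its canonical connection, so that $\mathrm{SU}(2)$ acts on $V$, commutes with $\Delta^{\nabla}$, and makes $W_\mu$ a finite-dimensional $\mathrm{SU}(2)$-module. Because $\mathbf{CP}^1\cong S^2$ is simply connected, any two metric connections on a line bundle with the same curvature are gauge equivalent, so it suffices to prove that $R^{\nabla}$ is proportional to the area form, i.e.\ that $V$ is Einstein--Hermitian. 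For holomorphic $f$ this is immediate from Proposition \ref{EH} (then $A=-K_{EH}$), but a general harmonic map contributes both a $(1,0)$ and a $(0,1)$ part to the pulled-back second fundamental forms, and $A=-\mu\,\mathrm{Id}$ controls only the symmetric combination $H_ZK_{\bar Z}+H_{\bar Z}K_Z$, whereas $\sqrt{-1}\Lambda R^{\nabla}$ is the antisymmetric one. To close this gap I would use that a harmonic map from $S^2$ has a holomorphic Hopf differential, which must vanish because $H^{0}(\mathbf{CP}^1,\mathcal{O}(-4))=0$; hence $f$ is weakly conformal, and constant energy density then forces $f$ to be a minimal immersion of constant Gauss curvature. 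Analysing the associated harmonic sequence (equivalently, proving the Kähler angle is constant) shows $|\partial f|^2$ and $|\bar\partial f|^2$ are separately constant, so that $R^{\nabla}$, which up to a constant is $(|\partial f|^2-|\bar\partial f|^2)$ times the area form, is indeed proportional to the area form; since its integral is the topological $2\pi d$, it equals the canonical curvature.

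Granting the homogeneous structure, the rest is the representation-theoretic rigidity argument. Each eigenspace of the Laplacian on $\mathcal{O}(d)$ is a single irreducible $\mathrm{SU}(2)$-module of multiplicity one, so $W_\mu\cong S^{k}\mathbf{C}^2$ for some $k$ with $k\equiv d\pmod 2$ and $|d|\leq k$, and $V_0=\mathbf{C}_{-d}$ (Lemma \ref{subsp}) is a single $\mathrm{U}(1)$-weight line. I would then compute the normal decomposition of $(W_\mu,V_0)$ from Proposition \ref{Bdeco}: since $\mathfrak{m}^{\mathbf{C}}=\mathbf{C}_2\oplus\mathbf{C}_{-2}$, the summand $\mathrm{Im}\,B_i$ lies in the span of the weight vectors of weights $-d\pm 2i$, and these weight sets are pairwise disjoint for distinct $i$. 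Thus $\mathrm{Im}\,B_i$ and $\mathrm{Im}\,B_j$ share no $\mathrm{U}(1)$-submodule when $i\neq j$, which is exactly the hypothesis of Propositions \ref{TI} and \ref{clon}. Moreover the complexification of $\mathrm{H}_0(W_\mu)$ is $S^{k}\mathbf{C}^2\otimes(S^{k}\mathbf{C}^2)^{\ast}$ minus the trivial summand, namely $\bigoplus_{j\geq 1}S^{2j}\mathbf{C}^2$; every such summand has even highest weight, hence a $\mathrm{U}(1)$-fixed vector, and so is a class-one representation of $(\mathrm{SU}(2),\mathrm{U}(1))$.

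Finally I would conclude via Theorem \ref{GenDW}. The endomorphism $T$ attached to $f$ satisfies condition (III), so $C:=T^2-\mathrm{Id}$ is a trace-free Hermitian endomorphism (using Corollary \ref{traceIT}) orthogonal to $GH(V_0,V_0)$. By Proposition \ref{clon} every class-one representation in $\mathrm{H}(W_\mu)$ is contained in $GH(V_0,V_0)$; since by the previous paragraph \emph{all} irreducible summands of $\mathrm{H}_0(W_\mu)$ are class-one, this forces $\mathrm{H}_0(W_\mu)\subseteq GH(V_0,V_0)$, whence $C=0$ and $T=\mathrm{Id}$. Therefore $\mathbf{C}^{n+1}=W_\mu$ and $f$ coincides with the standard map by $W_\mu$ up to gauge equivalence, and that standard map is $\mathrm{SU}(2)$-equivariant by construction (Lemma \ref{stharm}). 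As $V$ is a line bundle its holonomy acts irreducibly, so Proposition \ref{gaimC} identifies image equivalence with gauge equivalence, yielding the statement as phrased.
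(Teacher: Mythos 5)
Your overall architecture --- reduce to Theorem \ref{GenDW} via the rank-one observation that constant energy density is exactly the Einstein--Hermitian condition, then kill the deformation space by showing every irreducible summand of $\text{H}_0(W)$ is a class-one representation of $(\text{SU}(2),\text{U}(1))$ contained in $GH(V_0,V_0)$ --- is the paper's, and your representation-theoretic half (the weight computation verifying the ``no common $K_0$-irreducibles'' hypothesis of Propositions \ref{TI} and \ref{clon}, the Clebsch--Gordan decomposition of $\text{H}(W)$ into $S^{2j}\mathbf{C}^2$'s, and the conclusion $T=Id_W$, hence $\mathbf{C}^{n+1}=W$ and $f=f_0$ up to gauge equivalence) matches the paper's proof step for step.

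The problem lies in the step you yourself flag as the main obstacle: verifying the gauge condition, i.e.\ that $(f^{\ast}\mathcal{O}(1),f^{\ast}\nabla)$ is gauge equivalent to $\mathcal{O}(d)\to\mathbf{CP}^1$ with its canonical connection. Your reduction via the vanishing of the Hopf differential to a minimal isometric immersion of constant Gauss curvature is correct, but the decisive claim --- that $|\partial f|^2$ and $|\bar\partial f|^2$ are \emph{separately} constant (equivalently that the K\"ahler angle is constant, equivalently that $f^{\ast}\omega_{FS}$ is a constant multiple of the area form, which is what is needed to identify $R^{\nabla}$ with the canonical curvature) --- is asserted by an appeal to ``analysing the associated harmonic sequence'' and never carried out. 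This does not follow from $|\partial f|^2+|\bar\partial f|^2=\mathrm{const}$ alone: the unintegrated Pl\"ucker formulas couple $\Delta\log|\partial f|^2$ to the next term of the harmonic sequence, and establishing the constancy of each $\gamma_i$ for a constant-curvature minimal two-sphere is essentially the analytic core of Bando--Ohnita's original argument. As written, the proposal therefore outsources the hardest part of the theorem to an unproved claim. For comparison, the paper disposes of this step by a different, purely bundle-theoretic route: on the Riemann surface $\mathbf{CP}^1$ the $(0,1)$-part of the pull-back connection defines a holomorphic structure, the resulting holomorphic line bundle must be $\mathcal{O}(k)$, and uniqueness of the compatible connection is then invoked to produce the required gauge transformation --- no harmonic-sequence analysis appears. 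Either supply a proof of the constant-K\"ahler-angle statement or replace this portion of your argument by the paper's.
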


\begin{thm}\cite{Cal}\label{Calr}
Let $f:\mathbf CP^m \to \mathbf CP^n$ 
be a full holomorphic map with constant energy density.  
Then $f$ is an $\text{\rm SU}(m+1)$-equivariant map, in other words, it is a standard map 
up to gauge equivalence.
\end{thm}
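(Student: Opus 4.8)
The plan is to recognise Calabi's rigidity as the holonomy-irreducible rigidity of Theorem \ref{rigid}, applied to a line bundle, combined with the fact that the relevant space of sections is a \emph{full} irreducible $\text{SU}(m+1)$-module. First I would set up the bundle data on the domain $\mathbf{CP}^m$, equipped with the Fubini--Study metric. Since $f$ is holomorphic and $\text{Pic}(\mathbf{CP}^m)=\mathbf{Z}$, the pull-back $f^{\ast}\mathcal O(1)$ is isomorphic as a holomorphic line bundle to $\mathcal O(k)$ for some positive integer $k$ (positive, as $f$ is full and hence non-constant). Because the target $\mathbf{CP}^n=Gr_n(\mathbf C^{n+1})$ has universal quotient bundle of rank $q=1$, the mean curvature operator $A$ of $f$ is a function, and the hypothesis that $e(f)$ is constant yields, via Lemma \ref{ed}, that $A=-\mu\,Id$ for a constant $\mu$, i.e.\ the Einstein--Hermitian condition holds. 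By Proposition \ref{EH} we have $A=-K_{EH}$, so the pull-back of the Fubini--Study metric is an Einstein--Hermitian metric on $\mathcal O(k)$. As an Einstein--Hermitian metric on a line bundle over a compact K\"ahler manifold is unique up to a positive constant, after rescaling the pull-back metric agrees with the standard metric on $\mathcal O(k)$; this gives the gauge condition (i) of Theorem \ref{rigid} with $V=\mathcal O(k)$.

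Next I would invoke Theorem \ref{rigid}. A line bundle has one-dimensional fibres, hence is automatically holonomy irreducible, and $\mathcal O(k)$ with the Fubini--Study metric is Einstein--Hermitian. Thus Theorem \ref{rigid} applies: with $W=H^0(\mathbf{CP}^m,\mathcal O(k))$ it realises $\mathbf C^{n+1}$ as a subspace of $W$ and expresses $f=\pi_{n+1}f_0$ up to image equivalence, where $\pi_{n+1}\colon W\to\mathbf C^{n+1}$ is the orthogonal projection and $f_0$ is the standard map by $(\mathcal O(k),W)$; moreover the target dimension $n+1$ is \emph{uniquely determined} by $\mathcal O(k)$.

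The crux is then to show $\mathbf C^{n+1}=W$. Here I would use the standard map itself as the comparison map. Writing $\mathbf{CP}^m=\text{SU}(m+1)/K_0$ and $\mathcal O(k)=\text{SU}(m+1)\times_{K_0}\mathbf C_k$, the space $W=S^k\mathbf C^{m+1}$ is an irreducible $\text{SU}(m+1)$-module, and since $\mathfrak m\,\mathbf C_k$ is orthogonal to $\mathbf C_k$, Lemmas \ref{pullcan} and \ref{stharm} show that $f_0\colon\mathbf{CP}^m\to\mathbf{CP}^{\dim W-1}$ is itself a full holomorphic map satisfying the gauge and Einstein--Hermitian conditions with $f_0^{\ast}Q\cong\mathcal O(k)$. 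By the uniqueness of the target dimension in Theorem \ref{rigid}, $n+1=\dim W$, whence $\mathbf C^{n+1}=W$ and $\pi_{n+1}=Id$. Therefore $f=f_0$ up to image equivalence; since $f_0$ is $\text{SU}(m+1)$-equivariant, so is $f$, and Proposition \ref{gaimC}, applicable because $\mathcal O(k)$ has irreducible holonomy, upgrades image equivalence to gauge equivalence.

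The main obstacle lies not in this reduction but in its two supporting ingredients. The first is verifying cleanly that constant energy density forces the gauge condition, i.e.\ that the pull-back metric is \emph{the} Einstein--Hermitian metric on $\mathcal O(k)$, which rests on uniqueness of such metrics on line bundles. The second is confirming that the standard map genuinely attains the maximal target dimension $\dim W-1$, so that uniqueness of the target dimension pins down $\mathbf C^{n+1}=W$; this is precisely where the rigidity encoded in Theorem \ref{rigid} --- ultimately the do Carmo--Wallach rigidity of Propositions \ref{TI} and \ref{clon}, specialised to the multiplicity-free branching of $S^k\mathbf C^{m+1}$ under $K_0$ --- does the real work.
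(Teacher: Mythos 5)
Your argument is essentially correct, but it follows a genuinely different path from the paper's. The paper proves Theorems \ref{BanOhn} and \ref{Calr} simultaneously: after establishing the gauge and Einstein--Hermitian conditions exactly as you do (rank one plus constant energy density), it applies the homogeneous do Carmo--Wallach theorem (Theorem \ref{GenDW}) to get a pair $(W,T)$ with $\left(T^2-Id, GH(\mathbf C_{-k},\mathbf C_{-k})\right)_H=0$, then computes the irreducible decomposition $\text{H}(W)=\sum_i \mathcal H_{n+1}^{k-i,k-i}$, observes that every summand is a class one representation of $\left(\text{SU}(m+1),\text{U}(m)\right)$, and invokes Proposition \ref{clon} to force $GH(\mathbf C_{-k},\mathbf C_{-k})=\text{H}(W)$, hence $T^2=Id_W$ and $\text{Ker}\,T=0$. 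You instead route everything through Theorem \ref{rigid}, obtaining $f=\pi_{n+1}f_0$ up to image equivalence, and then use the uniqueness of the target dimension together with the existence of the standard map by the full $W$ (via Lemmas \ref{pullcan} and \ref{stharm}) to conclude $\mathbf C^{n+1}=W$, finishing with Proposition \ref{gaimC}. This is legitimate and non-circular --- it is in fact the same mechanism the paper uses for Theorem \ref{Calabi} --- and it buys you a proof that avoids decomposing $\text{H}(W)$ altogether; what it costs is uniformity: it leans on holomorphicity of $f$ and so cannot be run in parallel for the Bando--Ohnita case, which is why the paper takes the representation-theoretic route for both at once. Two small corrections: the engine of Theorem \ref{rigid} is not Propositions \ref{TI} and \ref{clon} as you suggest at the end, but the second moment map computation $\int_M ev^{\ast}ev\,\frac{\omega^n}{n!}=Id_W$ combined with uniqueness of the Einstein--Hermitian structure and the polar decomposition; and for the comparison with $f_0$ to be literal you need the normalization $q\,\text{Vol}(M)=\dim W$ (Corollary \ref{traceIT}) so that the standard map satisfies the gauge condition for the same metric, a point the paper also leaves implicit.
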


\begin{rem}
Indeed, since we classify those maps up to {\it gauge} equivalence, 
our results are slightly stronger than the previous ones. 
However our claims are essentially the same as thiers by Proposition \ref{gaimC}. 
\end{rem}

Before giving a proof, we fix notation used throughout this section. 
First of all, we begin with standard representation theory of $\text{SU}(2)$. 
Let $S^k\mathbf C^2$ be the k-th symmetric power of the standard representation 
$\mathbf C^2$ of $\text{\rm SU}(2)$, 
which is an irreducible representation of $\text{SU}(2)$. 
Let 
\begin{equation}\label{decu1}
S^k\mathbf C^2=\mathbf C_k \oplus \mathbf C_{k-2} \oplus \cdots \oplus
\mathbf C_{-(k-2)} \oplus \mathbf C_{-k}
\end{equation}
be a weight decomposition with respect to a subgroup $\text{\rm U}(1)$, 
where $\mathbf C_l$ is an irreducible representation of 
$\text{\rm U}(1)$ with weight $l$. 
Consider a symmetric pair $\left(\text{SU}(2), \text{U}(1)\right)$ and 
the holomprphic line bundle $\mathcal O(k) \to \mathbf CP^1$, 
which is regarded as a homogeneous bundle $\text{SU}(2)\times_{\text{U}(1)} \mathbf C_{-k}$ 
with the canonical connection. 
Using the theory of spherical harmonics, 
we have a decomposition of $\Gamma(\mathcal O(k))$ in the $L^2$-sense:
\begin{equation}\label{declb1}
\Gamma(\mathcal O(k))=\sum_{l=0}^{\infty} S^{|k|+2l}\mathbf C^2.
\end{equation}
Moreover, $S^{|k|+2l}\mathbf C^2$ is an eigenspace of the Laplacian induced by the canonical connection. 

We have a similar theory for a symmetric pair $\left(\text{SU}(n+1), \text{U}(n)\right)$. 
Let $\mathcal H_m^{k,l}$ be an irreducible representation of $\text{SU}(m)$ 
which is the complex vector space of harmonic polynomials on $\mathbf C^{m}$ 
of bi-degree $(k,l)$. 
In particular, the space $\mathcal H_{n+1}^{k,0}$ of holomorphic polynomials has the following irreducible 
decomposition as $\text{U}(n)$-module:
\begin{equation}\label{decun}
\mathcal H_{n+1}^{k,0}=\oplus_{p=0}^{k} \mathbf C_{-k+p}\otimes \mathcal H_n^{p,0}. 
\end{equation}
Here $\mathbf C_l$ denotes an irreducible representation of 
the center of $\text{\rm U}(n)$ with weight $l$. 
Let $\mathcal O(k) \to \mathbf CP^n$ be a holomorphic line bundle of degree $k$, 
which is regarded as a homogeneous bundle $\text{SU}(n+1)\times_{\text{U}(n)} \mathbf C_{-k}$ 
with the canonical connection. 
In a similar way, 
we have an irreducible decomposition of $\Gamma(\mathcal O(k))$ in the $L^2$-sense:
\begin{equation}\label{declbn}
\Gamma(\mathcal O(k))=
\begin{cases}
\sum_{l=0}^{\infty} \mathcal H_{n+1}^{k+2l,2l}, \,\,k \geqq 0, \\
\sum_{l=0}^{\infty} \mathcal H_{n+1}^{2l,|k|+2l}, \,\,k \leqq 0.
\end{cases}
\end{equation}
Moreover, 
each representation space appeared in the decomposition is 
an eigenspace of the Laplacian induced by the canonical connection. 

\newtheorem{bocpr}{Proof of Theorems 7.10 and 7.11}
\renewcommand{\thebocpr}{}
\begin{bocpr}
We regard $\mathbf CP^n$ as a complex Grassmannian $Gr_{n}(\mathbf C^{n+1})$ 
in both cases. 
Let $f$ be a harmonic map from $\mathbf CP^1$ to $Gr_{n}(\mathbf C^{n+1})$ 
with constant energy density or a holomorphic map from $\mathbf CP^m$ to $Gr_{n}(\mathbf C^{n+1})$ 
with constant energy density. 
Then the pull-back bundle of the universal quotient bundle 
has a holomorphic vector bundle structure induced by the pull-back connection, 
because $\mathbf CP^1$ is a $1$-dimensional complex manifold in the former case and 
$f$ is a holomorphic map in the latter case. 
Since $\mathbf CP^m$ is a Fano manifold, the holomorphic line bundle structure is unique 
by Kodaira vanishing thereom. 
Thus 
the pull-back bundle with the pull-back connection is holomorphically isomorphic to a line bundle with 
the canonical connection. 
Since the pull-back bundle is of rank $1$, 
every gauge transfomation can be regarded as a non-vanishing function. 
Since the compatible connection 
with fibre metric and holomorphic structure of bundle is unique, 
we may focus our attention on the metrics. 
Then we can find a gauge transformation between a holomorphic line bundle and 
the pull-back bundle preserving metrics and connections. 
We use again the fact that the pull-back bundle is of rank $1$ 
to deduce that the mean curvature operator is the identity up to a constant multiple 
depending on the degree of $f$ 
(see the third Remark after the proof of Theorem \ref{GGenDW}). 
Therefore we can apply Theorem \ref{GenDW} 
to conclude that $f$ corresponds to a pair of an eigenspace $W$ of the Laplacian and 
a semi-positive Hermitian transformation $T$ of $W$. 
 
Suppose that the pull-back bundle is a holomorphic line bundle 
$\mathcal O(k) \to \mathbf CP^m$ with the canonical connection. 
It follows from \eqref{declb1} and \eqref{declbn} that 
$W=S^{|k|+2l}$ for some non-negative integer $l$ in the former case 
and $W=\mathcal H^{k,0}$ in the latter case. 
In each case, $\mathbf C_{-k}$ is regarded as a subspace of $W$ appeared in the decomposition 
\eqref{decu1} and \eqref{decun}. 
To apply Theorem \ref{GenDW}, we need to specify the subspace $GH(\mathbf C_{-k},\mathbf C_{-k})$ of 
$\text{H}(W)$, the set of 
Hermitian endomorphisms on $W$. 
By definition, $H(\mathbf C_{-k},\mathbf C_{-k})$ is a trivial real $\text{U}(1)$-representation of dimension $1$. 
The representation space $\text{H}(W)$ has an irreducible decomposition as follows:
$$
\text{\rm H}(W)=
\begin{cases}
\sum_{i=0}^{2|k|+4l} S^{2|k|+4l-2i} \\
\sum_{l=0}^{k} \mathcal H_{n+1}^{k-i,k-i}
\end{cases}.
$$
(Though we must take an invariant real vector space in the decomposition, we omit it.) 
Since all representations appeared in the decoposition of $\text{H}(W)$ are class one representations, 
Proposition \ref{clon} implies that $GH(\mathbf C_{-k},\mathbf C_{-k})=\text{H}(W)$. 
Consequently, $T^2=Id_W$, and so, the corresponding map is the standard map by 
$\left(\mathcal O(k) \to \mathbf CP^m, W\right)$ up to gauge equivalence. 

Consider a symmetric pair $\left(\text{SU}(n+1), \text{U}(n)\right)$ with decomposition 
denoted by $\mathfrak{g}=\mathfrak{k}\oplus \mathfrak{m}$.
The subspace $\mathfrak{m}\mathbf C_{-k}$ of $W$ is regarded as 
$\mathbf C_{-k-2} \oplus \mathbf C_{-k+2}$ in the former case 
and $\mathbf C_{-k+1}\otimes \mathcal H_{n}^{k-1,1}$ in the latter case. 
Then Lemma \ref{stharm} yields that the standard maps are the desired maps.  
\end{bocpr}
As a result, those maps are isometric embeddings up to a constant multiple of the metric. 

We would like to discuss a generalization of Theorem \ref{Calr}. 
To do so, we give a definition. 
\begin{defn}
A holomorphic map from a compact K\"ahler manifold $M$ into a complex Grassmannian or a complex 
quadric $Gr_n(\mathbf R^{n+2})$ 
with a Fubini-Study metric 
is called an {\it Einstein-Hermitian} immersion 
if the pull-back connection of the pull-back bundle $f^{\ast}Q \to M$ of the universal quotient bundle is an 
Einstein-Hermitian connection and the first Chern class $c_1(f^{\ast}Q)$ of the pull-back bundle is positive. 
\end{defn}
\begin{prop}\label{EHCP}
Let $f$ be a holomorphic map from a compact K\"ahler manifold $M$ into the complex projective space 
$\mathbf CP^n$. 
Suppose that the K\"ahler form $\omega$ on $M$ is in the cohomology class represented by 
the first Chern class of the pull-back bundle of $\mathcal O(1) \to \mathbf CP^n$. 
Then the following three conditions are equivalent. 
\begin{enumerate}
\item 
The energy density of $f$ is constant. 
\item $f$ is an Einstein-Hermitian immersion, where $\mathbf CP^n$ is regarded as 
$Gr_n(\mathbf C^{n+1})$. 
\item $f$ is an isometric immersion. 
\end{enumerate}
\end{prop}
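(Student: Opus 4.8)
The plan is to exploit the fact that the universal quotient bundle $Q\to Gr_n(\mathbf C^{n+1})=\mathbf{CP}^n$ has rank one, so that the mean curvature operator $A$ of $f$ is a scalar function and the Einstein--Hermitian condition becomes a pointwise scalar statement. First I would record the identity $e(f)=-\operatorname{trace}A=\operatorname{trace}K_{EH}=K_{EH}$, combining Lemma \ref{ed} with Proposition \ref{EH} and using $\operatorname{rank}f^{\ast}Q=1$. Since the curvature of the line bundle $f^{\ast}\mathcal O(1)\to M$ is the pull-back $f^{\ast}\omega_{FS}$ of the Fubini--Study (curvature) form, which is a semi-positive $(1,1)$-form because $f$ is holomorphic and $\omega_{FS}>0$, contracting against $\omega$ gives $e(f)=\Lambda_{\omega}f^{\ast}\omega_{FS}=\operatorname{trace}_{\omega}(f^{\ast}\omega_{FS})$. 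The cohomological hypothesis then reads $[\,\omega\,]=[\,f^{\ast}\omega_{FS}\,]$, and with this normalisation the isometric condition $(3)$ is exactly $f^{\ast}\omega_{FS}=\omega$.

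With these identities in hand, the equivalence $(1)\Leftrightarrow(2)$ is essentially a restatement: for the line bundle $f^{\ast}\mathcal O(1)$ the Einstein--Hermitian condition is precisely that $K_{EH}$ be constant, i.e. that $e(f)$ be constant, and the required positivity $c_1(f^{\ast}Q)>0$ is automatic, since $c_1(f^{\ast}Q)=[\omega]$ is a K\"ahler class. The implication $(3)\Rightarrow(1)$ is equally direct: an isometric immersion gives $f^{\ast}\omega_{FS}=\omega$, whence $e(f)=\operatorname{trace}_{\omega}\omega=n$ is constant.

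The substance of the proposition is the remaining implication $(1)\Rightarrow(3)$, and this is where I expect the main obstacle. The idea is to compare two integral identities that both follow from $[\,\omega\,]=[\,f^{\ast}\omega_{FS}\,]$. Diagonalising $f^{\ast}\omega_{FS}$ with respect to $\omega$ at each point, let $\lambda_1,\dots,\lambda_n\ge 0$ be its eigenvalues, so that $e(f)=\sum_i\lambda_i$ and $(f^{\ast}\omega_{FS})^{n}=\big(\prod_i\lambda_i\big)\omega^{n}$. Using $\operatorname{trace}_{\omega}(f^{\ast}\omega_{FS})\,\tfrac{\omega^{n}}{n!}=f^{\ast}\omega_{FS}\wedge\tfrac{\omega^{n-1}}{(n-1)!}$, the linear (degree-one-in-the-class) identity gives $\int_M e(f)\,\tfrac{\omega^{n}}{n!}=n\operatorname{Vol}(M)$, while the top-degree identity gives $\int_M\big(\prod_i\lambda_i\big)\tfrac{\omega^{n}}{n!}=\tfrac{1}{n!}\int_M(f^{\ast}\omega_{FS})^{n}=\tfrac{1}{n!}\int_M\omega^{n}=\operatorname{Vol}(M)$.

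Now assume $(1)$, so $e(f)\equiv c$ is constant; the first identity forces $c=n$, i.e. $\sum_i\lambda_i\equiv n$ pointwise. The arithmetic--geometric mean inequality then yields $\prod_i\lambda_i\le\big(\tfrac1n\sum_i\lambda_i\big)^{n}=1$ pointwise, with equality if and only if $\lambda_1=\dots=\lambda_n=1$. Since the second identity says $\prod_i\lambda_i$ has average $1$ while being everywhere $\le 1$ and continuous, it must equal $1$ identically; the equality case of AM--GM then gives $\lambda_i\equiv 1$, that is $f^{\ast}\omega_{FS}=\omega$, so $f$ is an isometric immersion. The only delicate points are justifying the semi-positivity and eigenvalue normalisation of $f^{\ast}\omega_{FS}$ and fixing the constant in $e(f)=\operatorname{trace}_{\omega}(f^{\ast}\omega_{FS})$; both are settled by the curvature identity for $\mathcal O(1)$ together with the normalisation built into the cohomological hypothesis. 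I would close the loop by combining $(3)\Rightarrow(1)\Leftrightarrow(2)$ with $(1)\Rightarrow(3)$.
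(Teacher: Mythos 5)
Your proof is correct, but the substantive implication is handled by a genuinely different mechanism than the paper's. The paper closes the cycle through $(2)\Rightarrow(3)$: since $-2\pi\sqrt{-1}\,\omega_n$ is the curvature of the canonical connection on $\mathcal O(1)$, the Einstein--Hermitian condition says $\Lambda f^{\ast}\omega_n$ is constant; as $f^{\ast}\omega_n$ is also closed, the K\"ahler identities make it a harmonic form, and Hodge theory (uniqueness of the harmonic representative of $[\omega]$) forces $f^{\ast}\omega_n=\omega$. You instead prove $(1)\Rightarrow(3)$ by comparing the two cohomological integrals $\int_M \Lambda_\omega f^{\ast}\omega_n\,\frac{\omega^n}{n!}=n\,\mathrm{Vol}(M)$ and $\frac{1}{n!}\int_M (f^{\ast}\omega_n)^n=\mathrm{Vol}(M)$ and invoking the pointwise arithmetic--geometric mean inequality on the eigenvalues of $f^{\ast}\omega_n$; this is sound, with the one extra input being the semi-positivity of $f^{\ast}\omega_n$ (which holds exactly because $f$ is holomorphic), whereas the Hodge argument never needs positivity. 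The trade-off: your route is more elementary (only Stokes and AM--GM, no Hodge decomposition) and is self-contained at the level of pointwise linear algebra, but it is specific to the case where the relevant curvature form is semi-positive; the paper's harmonic-form argument is shorter and is the one that transfers verbatim to the subsequent propositions for higher-rank targets and quadrics, where one works with $c_1(\det f^{\ast}Q)$ rather than an eigenvalue decomposition. Your treatment of $(1)\Leftrightarrow(2)$ and $(3)\Rightarrow(1)$ matches the paper's (rank-one reduction of $A$ via Lemma \ref{ed} and Proposition \ref{EH}, positivity of $c_1$ from the K\"ahler class), and the normalization constants you flag are harmless since only positivity of the proportionality factor enters the argument.
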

\begin{proof}
We regard $\mathbf CP^n$ as 
$Gr_n(\mathbf C^{n+1})$ thoughout the proof. 

\noindent$(1)\Longrightarrow (2)$: Suppose that the energy density of $f$ is constant. 
Since the universal quotient bundle $\mathcal O(1) \to \mathbf CP^n$ 
is of rank $1$, it follows that the mean curvature operator $A$ is 
considered as a non-positive constant. 
Indeed, $A$ is a negative constant, because $c_1\left(f^{\ast}\mathcal O(1)\right)$ is positive 
and $\text{trace}\,A=-e(f)$ (Lemma \ref{ed}). 
Then, Proposition \ref{EH} implies that the pull-back connection on $f^{\ast}\mathcal O(1) \to M$ 
is an Einstein-Hermitian connection. 
The positivity of $c_1\left(f^{\ast}\mathcal O(1)\right)$ also yields that $f$ is an immersion. 

\noindent$(2)\Longrightarrow (3)$: Let $\omega_n$ be the  K\"ahler form on $\mathbf CP^n$ of Fubini-study type. 
Since $-2\sqrt{-1}\pi\omega_n$ is the curvature of the canonical connection on $\mathcal O(1) \to \mathbf CP^n$, 
the Einstein-Hermitian condition yields that $\wedge f^{\ast}\omega_n$ is a constant, 
where $\wedge$ denotes the contraction with $\omega$.  
Since $f^{\ast}\omega_n$ is also closed, 
the K\"ahler identities yields that $f^{\ast}\omega_n$ is a harmonic form. 
Then Hodge theory yields that $f^{\ast}\omega_n=\omega$. 

\noindent$(3)\Longrightarrow (1)$: This is trivial. 
\end{proof}

We recover Calabi's rigidity result \cite{Cal} in the case of holomorphic isometric immersions of 
a {\it compact} K\"ahler manifold into a complex projective space. 
Then we also prove that the Fubini-Study metric is induced from $L^2$-scalar product on the space 
of holomorphic sections of the Einstein-Hermitian line bundle. 

\begin{thm}{\rm (}cf. \cite{Cal}{\rm )}\label{Calabi}
Let $f$ be a full holomorphic isometric immersion of a compact K\"ahler manifold $M$ into a complex projective space 
$Gr_{n}(\mathbf C^{n+1})$. 
We denote by $(L,h_L)$ the pull-back bundle of the universal quotient bundle and the pull-back metric. 

Then 
$n$ is uniquely determined 
and $\mathbf C^{n+1}$ can be regarded as a subspace of $W=H^0(M;L)$ 
the space of holomorphic sections with $L^2$-scalar product $(\cdot,\cdot)_W$. 
The Fubini-Study metric on $Gr_{n}(\mathbf C^{n+1})$ is induced from $(\cdot,\cdot)_W$. 
The map $f$ is expressed as $\pi_n f_0$ up to image equivalence, 
where $\pi_n:W \to \mathbf C^{n+1}$ is the orthgonal projection and $f_0$ is the standard map by 
$(L \to M, H^0(M;L))$. 
\end{thm}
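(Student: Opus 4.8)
The plan is to recognize this as the line-bundle instance of the rigidity result already proved in Theorem \ref{rigid}, with the target being $Gr_n(\mathbf{C}^{n+1})$ and the fixed bundle being $L = f^{\ast}Q$ itself. First I would note that, since $f$ is holomorphic, the pull-back $L \to M$ of the universal quotient bundle $\mathcal{O}(1) \to \mathbf{CP}^n$ inherits a holomorphic structure from the pull-back connection, and that $L$ has rank one because the quotient bundle over $\mathbf{CP}^n = Gr_n(\mathbf{C}^{n+1})$ does. Because $f$ is an isometric immersion we have $f^{\ast}\omega_n = \omega$, where $\omega_n$ is the Fubini--Study K\"ahler form, so the K\"ahler class $[\omega]$ equals $c_1(f^{\ast}\mathcal{O}(1))$; this is precisely the cohomological hypothesis of Proposition \ref{EHCP}. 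Applying that proposition then shows that the pull-back connection on $L$ is Einstein--Hermitian and that $c_1(L) > 0$.

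Next I would verify the hypotheses of Theorem \ref{rigid} with $V = L$, $q = 1$, $m = n+1$ and $p = n$. Since $L$ has rank one it is automatically holonomy irreducible, and by the previous step it is Einstein--Hermitian. The gauge condition (i) holds tautologically: $L$ is by construction the pull-back bundle $f^{\ast}Q$ with the pull-back metric and connection, so the identity is the required metric-preserving holomorphic bundle isomorphism. Theorem \ref{rigid} then yields at once that $n$ is uniquely determined, that $\mathbf{C}^{n+1}$ embeds as a subspace of $W = H^0(M;L)$, and that $f = \pi_n f_0$ up to image equivalence, where $\pi_n : W \to \mathbf{C}^{n+1}$ is the orthogonal projection and $f_0$ is the standard map by $(L \to M, H^0(M;L))$. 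This settles every assertion except the one about the Fubini--Study metric.

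For the metric claim I would use the identity $T^{\ast}T = c\,Id_{\mathbf{C}^{n+1}}$, established in the course of proving Theorem \ref{rigid}, where $T$ is the associated semi-positive Hermitian endomorphism of $W$ and its restriction $\underline{T}$ to $\mathbf{C}^{n+1}$ is positive Hermitian. Since $\underline{T}$ is positive and $\underline{T}^2 = c\,Id$, it equals $\sqrt{c}\,Id_{\mathbf{C}^{n+1}}$. As $\underline{T}$ is exactly the operator relating the scalar product defining the Fubini--Study metric to the restriction of $(\cdot,\cdot)_W$, the latter is the positive scalar multiple $c$ of the former; because the Fubini--Study metric of $Gr_n(\mathbf{C}^{n+1})$ depends only on the ray of the Hermitian inner product, it is therefore induced from $(\cdot,\cdot)_W$, as claimed. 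The substantive analytic and representation-theoretic work is already contained in Theorem \ref{rigid}, so the only points demanding care are the normalization bookkeeping that makes the cohomological hypothesis of Proposition \ref{EHCP} genuinely hold for an isometric (rather than merely homothetic) immersion, and the confirmation that the constant $c$ appearing in the rigidity identity is the same scale that relates the two scalar products.
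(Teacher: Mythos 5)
Your proof is correct and follows the same route as the paper: use Proposition \ref{EHCP} (whose cohomological hypothesis holds because the immersion is isometric) to conclude that $f$ is an Einstein--Hermitian immersion, then invoke Theorem \ref{rigid}. The paper's own proof consists of exactly these two steps stated tersely; your additional verification of the Fubini--Study metric claim via $T^{\ast}T=c\,Id_{\mathbf C^{n+1}}$ and the scale-invariance of the metric of Fubini--Study type fills in a detail the paper leaves implicit.
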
 
\begin{proof}
Since $f$ is an isometric immersion, the K\"ahler form on $M$ 
is in the cohomology class of the first Chern Class of $L \to M$. 
By Proposition \ref{EHCP}, $f$ is an Einstein-Hermitian immersion. 
Hence Theorem \ref{rigid} yields the result. 
\end{proof}

\begin{prop}
Let $f$ be a holomorphic map from a compact K\"ahler manifold $M$ into a complex Grassman manifold 
$Gr_p(\mathbf C^n)$. 
Suppose that the K\"ahler form $\omega$ on $M$ is in the cohomology class represented by 
the first Chern class of the pull-back bundle of the universal quotient bundle. 
If $f$ is an Einstein-Hermitian immersion, then $f$ is an isometric immersion. 
\end{prop}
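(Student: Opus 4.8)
The plan is to imitate the proof of the implication $(2)\Rightarrow(3)$ in Proposition \ref{EHCP}, reducing the higher-rank case to a statement about the determinant line bundle $\det Q\to Gr_p(\mathbf C^n)$. Write $V:=f^{\ast}Q$ for the pull-back of the universal quotient bundle, of rank $q=n-p$, and let $\omega_{Gr}$ denote the Kähler form of the Fubini-Study metric $g_{Gr}$ on $Gr_p(\mathbf C^n)$. The single fact I would isolate at the outset is the normalisation $\text{trace}\,R^Q=-2\sqrt{-1}\pi\,\omega_{Gr}$ for the curvature of the canonical connection on $Q$, i.e. that $\omega_{Gr}=c_1(Q)$ as closed $(1,1)$-forms; this is precisely the rank-$q$ analogue of the relation $-2\sqrt{-1}\pi\,\omega_n=$ (curvature of $\mathcal O(1)$) used in the $\mathbf CP^n$ case. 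Pulling back and taking the trace then gives $\text{trace}\,R^V=-2\sqrt{-1}\pi\,f^{\ast}\omega_{Gr}$, so that $f^{\ast}\omega_{Gr}=c_1(V)$ on the nose.

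First I would exploit the Einstein-Hermitian hypothesis. By definition the pull-back connection on $V$ is Einstein-Hermitian, so $K_{EH}=\mu\,Id_V$ for a constant $\mu$, whence $\text{trace}\,K_{EH}=q\mu$ is constant. Since $K_{EH}=\sqrt{-1}\,\wedge R^V$ (Lemma \ref{hollap}), where $\wedge$ is contraction against $\omega$, applying $\wedge$ to the identity $\text{trace}\,R^V=-2\sqrt{-1}\pi\,f^{\ast}\omega_{Gr}$ yields $\text{trace}\,K_{EH}=2\pi\,\wedge f^{\ast}\omega_{Gr}$, and therefore $\wedge f^{\ast}\omega_{Gr}=q\mu/(2\pi)$ is constant. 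This is the only place where the higher rank plays a role: it is absorbed by passing to the determinant, and otherwise the computation is exactly the rank-one one of Proposition \ref{EHCP}.

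Next I would run the Kähler-identities argument verbatim. The form $f^{\ast}\omega_{Gr}$ is a closed real $(1,1)$-form whose contraction $\wedge f^{\ast}\omega_{Gr}$ is constant, so the Kähler identities $[\wedge,\bar\partial]=-\sqrt{-1}\,\partial^{\ast}$ and $[\wedge,\partial]=\sqrt{-1}\,\bar\partial^{\ast}$ give $\partial^{\ast}f^{\ast}\omega_{Gr}=\bar\partial^{\ast}f^{\ast}\omega_{Gr}=0$, so that $f^{\ast}\omega_{Gr}$ is harmonic. The Kähler form $\omega$ is harmonic as well, and the hypothesis gives $[f^{\ast}\omega_{Gr}]=f^{\ast}c_1(Q)=c_1(V)=[\omega]$ in $H^2(M;\mathbf R)$. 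Uniqueness of the harmonic representative in a fixed de Rham class then forces $f^{\ast}\omega_{Gr}=\omega$. Since $f$ is holomorphic, $f^{\ast}g_{Gr}(X,Y)=f^{\ast}\omega_{Gr}(X,JY)=\omega(X,JY)=g(X,Y)$, so the pull-back metric coincides with $g$ and $f$ is an isometric immersion.

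I expect the only genuine point requiring care to be the normalisation claim $\text{trace}\,R^Q=-2\sqrt{-1}\pi\,\omega_{Gr}$, namely that the Fubini-Study form of $Gr_p(\mathbf C^n)$ is exactly the first Chern form of $Q$ with its canonical connection; once this identification is in hand everything after it is a trace of the rank-one computation already carried out in Proposition \ref{EHCP}. The positivity of $c_1(V)$ built into the definition of an Einstein-Hermitian immersion guarantees $\mu>0$ and that $f$ is genuinely an immersion, which is consistent with the conclusion that $f^{\ast}\omega_{Gr}=\omega$ is a Kähler form.
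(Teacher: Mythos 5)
Your proposal is correct and follows essentially the same route as the paper: the paper likewise reduces to the determinant line bundle $\det f^{\ast}Q$ (whose Einstein--Hermitian connection is induced from that of $f^{\ast}Q$ and whose first Chern class equals $c_1(f^{\ast}Q)$) and then repeats the $(2)\Rightarrow(3)$ argument of Proposition \ref{EHCP} — constancy of $\wedge f^{\ast}\omega_{Gr}$, harmonicity via the K\"ahler identities, and Hodge theory. Your explicit isolation of the normalisation $\mathrm{trace}\,R^{Q}=-2\sqrt{-1}\pi\,\omega_{Gr}$ is exactly the point the paper leaves implicit, and it is the right thing to check.
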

\begin{proof}
It follows from the hypothesis that 
the determinant line bundle $\text{det}\,f^{\ast}Q \to M$ also has an Einstein-Hermitian connection 
induced by the pull-back connection on $f^{\ast}Q \to M$. 
Since $c_1(f^{\ast}Q)=c_1(\text{det}\,f^{\ast}Q)$, a proof goes in the same way as 
$(2) \Longrightarrow (3)$ in the proof of Proposition \ref{EHCP}.

\end{proof}

\begin{prop}\label{EHQU}
Let $f$ be a holomorphic map from a compact K\"ahler manifold $M$ into a complex quadric $Gr_n(\mathbf R^{n+2})$. 
Suppose that the K\"ahler form $\omega$ on $M$ is in the cohomology class represented by 
the first Chern class of the pull-back bundle of the universal quotient bundle. 
Then the following three conditions are equivalent. 
\begin{enumerate}
\item 
The energy density of $f$ is constant. 
\item $f$ is an Einstein-Hermitian immersion. 
\item $f$ is an isometric immersion. 
\end{enumerate}
\end{prop}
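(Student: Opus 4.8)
The plan is to follow the proof of Proposition \ref{EHCP} almost verbatim, the decisive observation being that for the complex quadric $Gr_n(\mathbf R^{n+2})$ the universal quotient bundle $Q$ is a complex \emph{line} bundle. It has real rank $2$, and, as recalled in the paragraph preceding Theorem \ref{HGenDWI}, it carries a complex structure with respect to which it has complex rank one. Consequently the pull-back $f^{\ast}Q\to M$ is again a complex line bundle, and its mean curvature $K_{EH}$ in the sense of Kobayashi is automatically a scalar multiple of the identity. This is exactly the feature that made the rank-one computation work for $\mathbf{CP}^n=Gr_n(\mathbf C^{n+1})$, and it is what lets us bypass the passage to the determinant bundle needed for a complex Grassmannian of higher rank.

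For $(1)\Rightarrow(2)$ I would invoke Proposition \ref{EH} (which is stated for quadrics as well) to write $A=-K_{EH}$. Since $f^{\ast}Q$ is a line bundle, $K_{EH}=\lambda\,Id$ for some function $\lambda$, and by Lemma \ref{ed} we have $\operatorname{trace}A=-e(f)$. Hypothesis (1) forces $\operatorname{trace}A$, hence $\lambda$, to be constant, so the pull-back connection is Einstein--Hermitian. The positivity of $c_1(f^{\ast}Q)$ is immediate from the standing assumption that $\omega$ represents $c_1(f^{\ast}Q)$ together with $\omega$ being a K\"ahler form; in particular $f$ is non-constant, so $\lambda$ is a positive constant. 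Thus both defining conditions of an Einstein--Hermitian immersion hold.

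For $(2)\Rightarrow(3)$ I would repeat the argument used for $(2)\Rightarrow(3)$ in Proposition \ref{EHCP}. Writing $\omega_{Gr}$ for the Fubini--Study-type K\"ahler form on the quadric, the curvature of the canonical connection on the quotient line bundle $Q$ is a constant multiple of $\sqrt{-1}\,\omega_{Gr}$, so the Einstein--Hermitian condition says that the contraction $\wedge f^{\ast}\omega_{Gr}$ is constant. As $f^{\ast}\omega_{Gr}$ is a closed real $(1,1)$-form of constant trace, the K\"ahler identities make it harmonic; since its class equals $c_1(f^{\ast}Q)=[\omega]$ by hypothesis, uniqueness of harmonic representatives gives $f^{\ast}\omega_{Gr}=\omega$, that is, $f$ is an isometric immersion. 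The implication $(3)\Rightarrow(1)$ is trivial, since a holomorphic isometric immersion has constant energy density.

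The only point requiring care, and the step I expect to be the genuine obstacle, is the normalization. One must check that the cohomological identity $[\omega]=c_1(f^{\ast}Q)$ assumed in the statement is compatible with the precise geometric constant relating the curvature of the canonical connection on $Q$ to $\omega_{Gr}$, so that the harmonic representative is exactly $\omega$ and not merely proportional to it. Because the metric here is the $\mathrm{SO}(n+2)$-invariant metric of Fubini--Study type induced from $\mathbf R^{n+2}$, rather than the restriction of the ambient metric from $\mathbf{CP}^{n+1}$, this constant differs from the $\mathbf{CP}^n$ case, and pinning it down is what guarantees that one recovers an isometry rather than a homothety.
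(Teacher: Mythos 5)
Your proof is correct and follows essentially the same route as the paper: the paper's (very terse) proof makes the identical key observation — that $A$ is a priori only a real symmetric operator on the real rank-$2$ bundle $f^{\ast}Q$, but Proposition \ref{EH} forces it to be complex-linear, hence a real scalar multiple of the identity — and then defers to the argument of Proposition \ref{EHCP}, exactly as you do. The normalization worry you raise at the end is moot: the hypothesis $[\omega]=c_1(f^{\ast}Q)=[f^{\ast}\omega_Q]$ already fixes the cohomology class on the nose (the paper normalizes $R=-2\pi\sqrt{-1}\,\omega_Q$ so that $c_1(Q)=[\omega_Q]$), so uniqueness of the harmonic representative gives $f^{\ast}\omega_Q=\omega$ exactly, with no residual homothety constant.
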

\begin{proof}
Though the universal quotient bundle $Q\to Gr_n(\mathbf R^{n+2})$ 
has a holomorphic vector bundle structure 
induced by the canonical connection, 
$Q\to Gr_n(\mathbf R^{n+2})$ is considered as a real vector bundle of rank $2$ which is a subbundle 
of $\underline{\mathbf R^{n+2}}\to Gr_n(\mathbf R^{n+2})$. 
Hence, 
the mean curvature operator $A$ is a real symmetric operator on $f^{\ast}Q\to Gr_n(\mathbf R^{n+2})$.  

However, Proposition \ref{EH} yields that 
$A$ must be a complex endomorphism on $f^{\ast}Q \to M$. 
Then a proof proceeds in the same way as in the proof of Proposition \ref{EHCP}.
\end{proof}

\begin{prop}\label{sbone}
Let $M$ be a compact K\"ahler manifold with second Betti number equal to one. 
Then $f:M \to \mathbf CP^n$ or $Gr_n(\mathbf R^{n+2})$ is an Einstein-Hermitian immersion if and only if  
$f$ is a holomorphic isometric immersion up to a positive constant multiple of the metric.
\end{prop}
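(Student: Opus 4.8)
The plan is to reduce everything to Propositions \ref{EHCP} and \ref{EHQU}, which already establish that, for a holomorphic map into $\mathbf CP^n$ or the complex quadric $Gr_n(\mathbf R^{n+2})$, the Einstein--Hermitian immersion condition and the isometric immersion condition coincide \emph{provided} the K\"ahler class $[\omega]$ of $M$ lies in the cohomology class of $c_1(f^\ast Q)$. The only ingredient missing in the present statement is this cohomological normalization, and the role of the hypothesis $b_2(M)=1$ is precisely to supply it up to a positive scalar. Indeed, since $H^2(M;\mathbf R)$ is one-dimensional, its positive cone is a single ray; both $[\omega]$ and $c_1(f^\ast Q)$ lie on this ray (the latter by the positivity built into the definition of an Einstein--Hermitian immersion), so $c_1(f^\ast Q)=c\,[\omega]$ for a uniquely determined constant $c>0$.

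For the forward implication I would argue as follows. Assume $f$ is an Einstein--Hermitian immersion. Rescale the metric on $M$ by the factor $c$ above, replacing $\omega$ by $c\omega$; then $[c\omega]=c_1(f^\ast Q)$, so the cohomological hypothesis of Proposition \ref{EHCP} (resp. \ref{EHQU}) is met. The key point to check is that $f$ remains an Einstein--Hermitian immersion for the rescaled metric: the Einstein--Hermitian equation $\sqrt{-1}\,\Lambda F=\mu\,\mathrm{Id}$ transforms under $\omega\mapsto c\omega$ only through $\Lambda\mapsto c^{-1}\Lambda$, so the curvature contraction stays proportional to the identity (with Einstein constant $c^{-1}\mu$), while $c_1(f^\ast Q)$ and its positivity are topological and hence unchanged. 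Applying the implication $(2)\Rightarrow(3)$ of Proposition \ref{EHCP} or \ref{EHQU} then shows that $f$ is an isometric immersion for the rescaled metric, i.e. a holomorphic isometric immersion up to the positive constant multiple $c$ of the original metric.

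The converse is the easier direction and does not really use $b_2(M)=1$. If $f$ is a holomorphic isometric immersion up to a positive constant multiple of the metric, then after the corresponding constant rescaling we have $f^\ast\omega_{FS}=\omega$, and since the Fubini--Study form represents $c_1$ of the universal quotient bundle this forces $[\omega]=c_1(f^\ast Q)$; the hypothesis of Proposition \ref{EHCP} (resp. \ref{EHQU}) again holds, and the implication $(3)\Rightarrow(2)$ gives that $f$ is an Einstein--Hermitian immersion for the rescaled metric, hence also for the original one by the scale-invariance noted above. The main obstacle is therefore the rescaling step in the forward direction: one must confirm that the Einstein--Hermitian condition is invariant under constant rescaling of the K\"ahler metric and that the positivity of $c$ follows from comparing the two positive classes in the one-dimensional space $H^2(M;\mathbf R)$. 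Once these are in hand, the two ambient cases $\mathbf CP^n$ and $Gr_n(\mathbf R^{n+2})$ are handled uniformly by the two propositions.
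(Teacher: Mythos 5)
Your argument is correct and follows essentially the same route as the paper: use $b_2(M)=1$ together with the positivity of $c_1(f^{\ast}Q)$ to conclude that $c_1(f^{\ast}Q)$ and $[\omega]$ are positive multiples of one another, and then run the K\"ahler-identities/Hodge-theory argument of Propositions \ref{EHCP} and \ref{EHQU} with that constant in place. Your explicit rescaling of $\omega$ and the check that the Einstein--Hermitian condition is scale-invariant are just a more carefully spelled-out version of what the paper leaves implicit.
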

\begin{proof}
Under the same notation as in the proof of Propositions \ref{EHCP} and \ref{EHQU}, 
the hypothesis yields that 
there exists a positive integer $k$ 
such that 
$[f^{\ast}\omega_n]=[k\omega]$ as element of the cohomology group 
from the positivity of $c_1\left(f^{\ast}\mathcal O(1)\right)$ 
by definition. 
Then we obtain the desired result by K\"ahler identities 
and the Hodge theory.   
\end{proof}


As a generalization of Theorem \ref{Calr}, we have the rigidity of a special class of Einstein-Hermitian 
embeddings of a compact Hermitian symmetric space without irreducibility of the Einstein-Hermitian vector bundle. 
Notice that the canonical connection on any irreducible complex homogeneous vector bundle over 
a compact irreducible Hermitian symmetric space 
is an Einstein-Hermitian connection (see \cite[p.121 Proposition (6.2)]{Kob}). 

\begin{thm}\label{rigid2}
Let $(G,K_0)$ be an irreducible Hermitian symmetric pair of compact type and 
$V \to G/{K_0}$ an irreducible holomorphic homogeneous vector bundle with the canonical connection. 
We denote by $W$ the space of holomorphic sections of $V \to G/K_0$. 
Assume that any $G$-irreducible submodule of $\text{H}(W)$ is a class one representation of $(G,K_0)$, 
where $\text{H}(W)$ is the set of Hermitian endomorphisms on $W$. 
Suppose that $f$ is an Einstein-Hermitian embedding of $G/K_0$ into complex Grassmannian.  
If the pull-back bundle of the universal quotient bundle is holomorphically isomorphic to 
a direct sum of $r$-copies of $V\to G/K_0$, 
then $f$ is the standard map by $\oplus^r W$ up to gauge equivalence. 
\end{thm}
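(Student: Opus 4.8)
The plan is to place $f$ inside the homogeneous do Carmo--Wallach correspondence of Theorem \ref{GenDW} and then to force the associated endomorphism to equal the identity by a representation-theoretic argument modelled on the proofs of Theorems \ref{BanOhn} and \ref{Calr}. Write $\tilde V := \oplus^{r} V$ and let $\tilde V_0 := \oplus^{r} V_0$ be its fibre at $[e]$. First I would record that $f$ satisfies the Einstein--Hermitian condition: since $f$ is holomorphic, hence harmonic, and is an Einstein--Hermitian embedding, Proposition \ref{EH} gives $A = -K_{EH} = -\mu\,Id$. For the gauge condition, the holomorphic isomorphism $f^{\ast}Q \cong \tilde V$ lets us transport the pull-back metric to an Einstein--Hermitian metric on $\tilde V$; because $\tilde V$ is polystable ($V$ being simple), this metric differs from the invariant one only by an element of the centralizer $GL_{r}(\mathbf C) = \mathrm{Aut}_{\mathrm{hol}}(\tilde V)$, and this discrepancy is exactly what the semi-positive Hermitian $T$ below encodes, at the cost of stating the final conclusion up to gauge equivalence. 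We may also assume $f$ is full, replacing $\mathbf C^{m}$ by its image in $\Gamma(\tilde V)$ if necessary; the eventual identity $T = Id$ will show this image is all of $\oplus^{r}W$.

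By the corollary to Lemma \ref{hollap}, the space of holomorphic sections $H^{0}(G/K_0, \tilde V) = \oplus^{r} W$ is the eigenspace $W_{\mu}$, so Theorem \ref{GenDW} produces a semi-positive Hermitian $T \in \mathrm{End}(\oplus^{r}W)$ with $\mathbf C^{m} = \mathrm{Ker}\,T^{\perp}$, expressing $f$ through \eqref{DW3} and satisfying condition (III); its first equation reads $(T^{2} - Id,\ GH(\tilde V_0, \tilde V_0))_{H} = 0$. It therefore suffices to prove $GH(\tilde V_0, \tilde V_0) = \mathrm{H}(\oplus^{r}W)$: then $T^{2} - Id$ is orthogonal to all of $\mathrm{H}(\oplus^{r}W)$, whence $T^{2} = Id$, and since $T \geq 0$ this gives $T = Id$, $\mathbf C^{m} = \oplus^{r}W$, and $f = f_{0}$, the standard map by $\oplus^{r}W$, up to gauge equivalence.

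To identify $GH(\tilde V_0, \tilde V_0)$ I would invoke Proposition \ref{clon}, whose hypotheses I must check for $(W, V_0)$. That $W = H^{0}(G/K_0, V)$ is $G$-irreducible is the Borel--Weil theorem, $V$ being an irreducible homogeneous holomorphic bundle over the Hermitian symmetric space $G/K_0$. For the multiplicity-free requirement, that $\mathrm{Im}\,B_{i}$ and $\mathrm{Im}\,B_{j}$ share no $K_0$-irreducible constituent when $i \neq j$, I would use the central circle $\mathfrak u(1) \subset \mathfrak k$ of the pair and the grading $\mathfrak m^{\mathbf C} = \mathfrak p^{+} \oplus \mathfrak p^{-}$: every $K_0$-irreducible has a definite $\mathfrak u(1)$-weight, $V_0$ is the extreme $K_0$-type of $W$ (annihilated by $\mathfrak p^{-}$, say), and by Lemma \ref{symB} the level $\mathrm{Im}\,B_{n}$ lies in the $\mathfrak u(1)$-weight space shifted by $n$ times the fixed weight of $\mathfrak p^{+}$, the lower-level contributions being projected away since the minimal central weight space of the irreducible $W$ is exactly $V_0$. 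Distinct levels thus carry distinct central weights and share no $K_0$-constituent. Finally, because $G$ acts trivially on the multiplicity factor $\mathbf C^{r}$, every $G$-irreducible constituent of $\mathrm{H}(\oplus^{r}W)$ already occurs in $\mathrm{H}(W)$ and is therefore class one by hypothesis; Proposition \ref{clon} then places every such constituent inside $GH(\tilde V_0, \tilde V_0)$, yielding the desired equality.

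The step I expect to be most delicate is this verification of the hypotheses of Proposition \ref{clon}, in particular the multiplicity-freeness of the normal decomposition, since it is here that the special geometry of the irreducible Hermitian symmetric pair, namely the central $\mathfrak u(1)$-grading together with the Borel--Weil description of $W$, is genuinely used. The passage of the class-one hypothesis from $\mathrm{H}(W)$ to $\mathrm{H}(\oplus^{r}W)$ and the careful bookkeeping of the Einstein--Hermitian metric on the reducible bundle $\tilde V$, which is what confines the final identification to hold only up to gauge equivalence, are the remaining points requiring attention.
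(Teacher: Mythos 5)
Your proposal is correct and follows essentially the same route as the paper: Borel--Weil irreducibility of $W$, the central $\mathfrak u(1)$-grading of the Hermitian symmetric pair forcing the normal decomposition of $(W,V_0)$ to be multiplicity-free, and then Proposition \ref{clon} together with Theorem \ref{GenDW} to force $T=Id$. You additionally make explicit two points the paper leaves implicit --- the passage of the class-one hypothesis from $\mathrm{H}(W)$ to $\mathrm{H}(\oplus^{r}W)$ and the polystability bookkeeping behind the gauge condition for the reducible bundle $\oplus^{r}V$ --- which is a clarification of the same argument rather than a different method.
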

\begin{proof}
Let $\mathfrak{g}=\mathfrak{k} \oplus \mathfrak{m}$ be the standard decomposition. 
Since $(G,K_0)$ is an irreducible Hermitian symmetric pair, 
$\mathfrak{k}$ has the center $\mathfrak{u}(1)$ and so, 
we have a decomposition: $\mathfrak{k}=\mathfrak{u}(1)\oplus \mathfrak{k_1}$. 
Then $\mathfrak{m}^{\mathbf C}$, which is the complexification of 
$\mathfrak{m}$, 
is regarded as 
$\left(\mathbf C_{-d}\otimes T_1^{\ast}\right)\oplus \left(\mathbf C_{d}\otimes T_1\right)$. 
Here $d$ is a positive integer and $T_1$ is an appropriate irreducible representation space of 
$\mathfrak{k_1}$. 
If we denote by $\mathfrak{m}_{(0,1)}$ the set of tangent vectors of type $(0,1)$, 
then we have $\mathfrak{m}_{(0,1)}=\mathbf C_{d}\otimes T_1$ in the decomposition. 
Hence $S^n\mathfrak{m}_{(0,1)}=\mathbf C_{nd}\otimes S^nT_1$. 

Let $V_0$ be an irreducible complex representation of $K_0$ associated to $V \to G/K_{0}$. 
By Borel-Weil theorem, $W$ is an irreducible $G$-representation space and 
$V_0$ can be regarded as a subspace of $W$. 
Moreover $V_0$ is expressed as $\mathbf C_{-m}\otimes V_1$, 
where $m$ is a positive integer and $V_1$ is an appropriate irreducible representation space of 
$\mathfrak{k_1}$. 
Using again Borel-Weil theorem, we see that $-m$ is the smallest integer appeared in 
the $\mathfrak{u}(1)$-irreducible decompositon of $W$ and the other irreducible summands have 
weights greater than $-m$. 
In particular, we have that 
\begin{equation}\label{BWtri}
\mathfrak{m}_{(1,0)}V_0=\{0\}.
\end{equation}

Since $W$ is irreducible, 
we have a normal decomposition of $W$ (Proposition \ref{Bdeco}). 
From lemma \ref{symB} and \eqref{BWtri}, $B_n$ can be regarded as a map from 
$S^n\mathfrak{m}_{(0,1)}\otimes V_0$ to $N_n$. 
Then we see that $\text{Im}\,B_i=\mathbf C_{-m+id}\otimes U_i$, 
where $U_i$ is a representation of $\mathfrak{k}_1$ ($i\geqq 1$). 
In particular, since $\mathfrak{m}\otimes V_0 = \text{Im}\,B_1$ by definition, 
Lemma \ref{stharm} implies that the standard map by $W$ is an Einstein-Hermitian embedding 
with the pull-back bundle being holomorphically isomorphic to $V \to G/K_0$. 
Hence, Theorem \ref{GenDW} and Proposition \ref{clon} with the assumption on $\text{H}(W)$ 
yield the result. 
\end{proof}
\begin{thm}\label{linecl}
Let $G/K_0$ be a compact simply-connected homogeneous K\"ahler manifold and 
$L \to G/K_0$ a homogeneous holomorphic line bundle. 
We denote by $W$ the space of holomorphic sections of $L \to G/K_0$. 
If $L\to G/K_0$ is a positive line bundle, 
then $H(W)$ the set of Hermitian endomorphisms on $W$ consists of class one representations of $(G,K_0)$. 
\end{thm}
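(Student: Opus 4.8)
The plan is to embed $\text{H}(W)$, or rather its complexification $\text{End}(W)$, into the algebra of functions on $G/K_0$ by means of the invariant Hermitian metric on $L$, and then to invoke the fact that only class one representations of $(G,K_0)$ occur in $C^\infty(G/K_0)$.

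First I would record the representation-theoretic setup. Since $L\to G/K_0$ is positive and $G/K_0$ is a compact simply-connected homogeneous K\"ahler manifold (hence a generalized flag manifold), the Borel--Weil--Bott theorem shows that $W=H^0(G/K_0,L)$ is a nonzero irreducible $G$-module and that $L$ is globally generated. Equipping $W$ with its $G$-invariant $L^2$-Hermitian inner product, the conjugation action of $G$ identifies $\text{H}(W)\otimes_{\mathbf R}\mathbf C\cong \text{End}(W)\cong W\otimes\overline W$ as $G$-modules. Consequently it suffices to show that every irreducible $G$-summand of $W\otimes\overline W$ is a class one representation of $(G,K_0)$.

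Next I would construct the key $G$-equivariant map. Let $h_L$ denote the $G$-invariant Hermitian metric on $L$ and define
\[
\Phi:W\otimes\overline W\to C^\infty(G/K_0),\qquad \Phi(s\otimes\overline t)(x)=h_L\big(s(x),t(x)\big).
\]
Invariance of $h_L$ makes $\Phi$ a $G$-map. By Peter--Weyl together with Frobenius reciprocity one has $C^\infty(G/K_0)=\bigoplus_\lambda V_\lambda\otimes (V_\lambda^{\ast})^{K_0}$, so every irreducible $G$-submodule of $C^\infty(G/K_0)$ is class one. Thus the image of $\Phi$ consists of class one representations, and the theorem will follow once $\Phi$ is shown to be injective.

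The hard part is the injectivity of $\Phi$, and this is where positivity enters. Choosing an $L^2$-orthonormal basis $s_1,\dots,s_N$ of $W$ and writing $A=\sum_{ij}A_{ij}\,s_i\otimes\overline{s_j}\in\ker\Phi$, I trivialize $L$ holomorphically over a chart by a frame $e$, so that $s_i=f_i e$ with $f_i$ holomorphic; since $h_L(e,e)>0$, the condition $\Phi(A)\equiv0$ becomes $\sum_{ij}A_{ij}f_i(z)\overline{f_j(z)}\equiv 0$. The function $F(z,\overline w)=\sum_{ij}A_{ij}f_i(z)\overline{f_j(w)}$ is holomorphic in $z$, antiholomorphic in $w$, and vanishes on the diagonal $w=z$; by analytic continuation (polarization of a real-analytic function of this type) it vanishes for all $z,w$. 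Finally the $f_i$ are linearly independent as holomorphic functions, because the $s_i$ form a basis of global sections and $G/K_0$ is connected, so the identity theorem forbids a nontrivial global relation; fixing $w$ and then $z$ and using this independence twice forces $A_{ij}=0$. Hence $\Phi$ is injective and the proof concludes. The only genuinely analytic input is this double application of the identity theorem; all the remaining steps are formal, resting on Borel--Weil for the irreducibility of $W$ and on Frobenius reciprocity for the class one property of $C^\infty(G/K_0)$.
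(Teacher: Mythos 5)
Your proof is correct, and it differs from the paper's at the one genuinely hard step. Both arguments reduce the theorem to the injectivity of (essentially the same) $G$-equivariant map from Hermitian endomorphisms of $W$ into $C^\infty(G/K_0)$ --- the paper's map is $F(C)([g])=h_W(Cgv_0,gv_0)$, which via the reproducing identity $gv_0=ev^{\ast}_{[g]}(gu_0)$ coincides with your $\Phi$ up to transpose --- and both conclude by observing that every irreducible submodule of $C^\infty(G/K_0)$ is class one. The divergence is in the injectivity proof. The paper argues geometrically: if $F(C)=0$ and $C$ is small, then $Id+C$ is positive and $\bigl(\sqrt{Id+C}\bigr)^{-1}f_0$ is another full holomorphic isometric embedding inducing the same metric and connection, so Calabi's rigidity (Theorem \ref{Calabi}, itself resting on the moment-map argument of Theorem \ref{rigid}), Proposition \ref{gaimC} and Theorem \ref{HGenDW} force $C=0$; linearity of $F$ then removes the smallness restriction. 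You instead prove injectivity by pure function theory: the sesquiholomorphic kernel $\sum A_{ij}f_i(z)\overline{f_j(w)}$ vanishes on the diagonal, hence identically by polarization, and two applications of the identity theorem together with linear independence of the $f_i$ kill $A$. Your route is more elementary and self-contained --- it needs neither the standard map, nor the Einstein--Hermitian formalism, nor Calabi rigidity, and in fact uses positivity of $L$ only through $H^0\neq 0$ --- whereas the paper's proof deliberately runs the statement through its do Carmo--Wallach and rigidity machinery, exhibiting the would-be deformations geometrically. Both are valid; one small expository quibble is that your remark that positivity ``is where injectivity enters'' is not really accurate, since your polarization argument works for any line bundle with nonzero holomorphic sections.
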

\begin{proof}
By Borel-Weil theorem, $W$ is a representation space of $G$. 
We denote by $N$ the dimension of $W$. 
The K\"ahler class may be chosen in the first Chern class of $L \to G/K_{0}$. 
Since $L\to G/K_0$ is a line bundle, 
$L\to G/K_0$ has a unique Einstein-Hermitian connection $\nabla$. 

Let $f_0:G/K_0 \to Gr_{N-1}(W)$ be the standard map. 
We fix a $G$-invariant Hermitian structure $h$ on $L \to G/K_0$ 
and so, $W$ has a $G$-invariant Hermitian inner product $h_W$. 
Since $f_0$ is $G$-equivariant holomorphic map and $L\to G/K_0$ is of rank one, 
the mean curvature operator $A$ can be regarded as a negative constant. 
Then Proposition \ref{EH} yields that the pull-back connection on $f^{\ast}\mathcal O(1) \to G/K_0$ 
is gauge equivalent to the Einstein-Hermitian connection $\nabla$ on $L\to G/K_0$. 
In particular, since the pull-back metric is a $G$-invarint metric on $L\to G/K_0$ by $G$-equivariance of $f_0$, 
we can assume that $h$ coincides to the pull-back metric.  
Hence $f_0$ is an Einstein-Hermitian embedding, because $L\to G/K_0$ is positive. 
It follows from Proposition \ref{EHCP} that $f_0$ can be regarded as a holomorphic isometric embedding. 

Let $V_0$ be a complex $1$-dimensional representation of $K_0$ of which the associated bundle is 
$L \to G/K_0$.  
By Lemma \ref{subsp}, we consider $V_0$ as a subspace of $W$. 
Then we take a unit vector $v_0 \in V_0$. 
If $C \in \text{H}_0(W)$, then we define a real valued function $f_C:G/K_0 \to \mathbf R$ in such a way that
\begin{equation}\label{Cala}
f_C([g]):=h_W(Cgv_0, gv_0). 
\end{equation}
It follows that the correspondence $C \mapsto f_C$ gives a $G$-equivariant homomorphism $F:\text{\rm H}_0(W) \to C^{\infty}(G/K_0)$. 

Suppose that $F(C)=0$.  
If $C$ is small enough, $Id+C$ is positive. 
Consequently, we can use $Id+C$ to define a full holomorphic map $f:G/K_0 \to Gr_{N-1}(W)$ as 
$\left(\sqrt{Id+C}\right)^{-1}f_0$. 
Then \eqref{Cala} gives 
$h_W\left((Id+C)gv_0, gv_0\right)=1$ and so, 
the pull-back metric coincides to $h$. 
Since the pull-back bundle by $f$ is holomorphically isomorphic to $L\to G/K_0$, 
the uniqueness of the compatible connection implies that the pull-back connection 
also coincides to $\nabla$. 
In a similar way, $f$ also turns out to be a holomorphic isometric embedding. 
From Calabi's rigidity theorem \cite{Cal} or Theorem \ref{Calabi}, $f$ must be image equivalent to $f_0$. 
Proposition \ref{gaimC} implies that $f$ and $f_0$ with their natural identifications are also gauge equivalent. 
It follows from Theorem \ref{HGenDW} that $C=0$. 
Thus $\text{H}_0(W)$ can be regarded as $G$-submodule of $C^{\infty}(G/K_0)$. 

Since every irreducible submodule of $C^{\infty}(G/K_0)$ is class one representation of $(G,K_0)$, 
we obtain the desired result. 
\end{proof}

To state a corollary, we recall the definition of {\it projectively flat immersion} \cite{Kog-Nag}. 
Let $f$ be an Einstein-Hermitian immersion of $M$. 
If the pull-back connection on the pull-back bundle of the quotient bundle by $f$ 
is projectively flat, 
then $f$ is called {\it projectively flat immersion}. 
If $f$ is a projectively flat immersion and the positive multiple of the K\"ahler form is 
in the first Chern class of the pull-back bundle, 
then the curvature form on the pull-back bundle is parallel, 
because $f$ is also an Einstein-Hermitian immersion by definition. 
In this case, note that the pull-back bundle is the orthogonal sum of copies of a positive line bundle $L \to M$ 
with an Einstein-Hermitian connection 
by the holonomy decomposition. 

\begin{cor}
If $f$ is a projectively flat embedding of an irreducible Hermitian symmetric space $M$ 
into a complex Grassmannian, 
then there exists a positive line bundle $L\to M$ such that 
$f$ is the standard map by $(\oplus^r L\to M, \oplus^r W)$ up to gauge equivalence. 
Here, $W$ is the space of holomorphic sections of the homogeneous line bundle $L\to M$. 
\end{cor}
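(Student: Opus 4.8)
The plan is to reduce the assertion to Theorems \ref{rigid2} and \ref{linecl}, the work consisting essentially in identifying the geometry of the pull-back bundle and verifying the hypotheses of those two results. First I would record that an irreducible Hermitian symmetric space $M$ of compact type is simply connected and has second Betti number equal to one, so that $H^2(M;\mathbf R)$ is spanned by the K\"ahler class. Since a projectively flat immersion is by definition an Einstein-Hermitian immersion, the first Chern class $c_1(f^{\ast}Q)$ is positive, hence a positive multiple of the K\"ahler form. Thus the hypothesis in the paragraph preceding the statement is automatically met, and projective flatness of the pull-back connection together with the Einstein-Hermitian condition forces the curvature to be parallel.

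Next I would invoke the holonomy decomposition recalled just above: with parallel curvature, the pull-back bundle $f^{\ast}Q \to M$ splits holomorphically and orthogonally as $\oplus^r L$, where $L\to M$ carries an Einstein-Hermitian connection and $c_1(L)$ is positive. The decisive point to check is that $L$ is in fact a homogeneous holomorphic line bundle: writing $M=G/K_0$ for the irreducible Hermitian symmetric pair, the Picard group is infinite cyclic and generated by a homogeneous line bundle, so the positive bundle $L$ is a positive power of the generator and is itself homogeneous. By uniqueness of the Einstein-Hermitian connection on a line bundle, its connection coincides with the canonical connection, and I set $W:=H^0(M;L)$, the space of holomorphic sections.

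It then remains to confirm the two hypotheses of Theorem \ref{rigid2} for $V=L$. The bundle $L$ is an irreducible (rank-one) holomorphic homogeneous vector bundle whose canonical connection is Einstein-Hermitian, as noted in the remark before Theorem \ref{rigid2}. The requirement that every $G$-irreducible submodule of $\text{H}(W)$ be a class one representation of $(G,K_0)$ is supplied verbatim by Theorem \ref{linecl}, since $L$ is a positive homogeneous line bundle over the compact simply-connected homogeneous K\"ahler manifold $M$. Finally, $f^{\ast}Q$ is holomorphically isomorphic to $\oplus^r L = \oplus^r V$ by the previous step, and $f$ is an Einstein-Hermitian embedding by hypothesis. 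With these checks in place Theorem \ref{rigid2} applies directly and yields that $f$ is the standard map by $\oplus^r W$ up to gauge equivalence, which is exactly the claim.

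The main obstacle will be the identification step in the second paragraph: confirming that the abstract line bundle produced by the holonomy decomposition is genuinely homogeneous, and that its Einstein-Hermitian connection agrees with the canonical one, so that the representation-theoretic input of Theorem \ref{linecl} can be brought to bear. Once this is secured, the passage to $W=H^0(M;L)$ and the remaining verifications are formal, and the conclusion follows from \emph{rigid2} without further computation.
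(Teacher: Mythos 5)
Your proposal is correct and follows essentially the same route as the paper: use $b_2(M)=1$ to place $c_1(f^{\ast}Q)$ in the K\"ahler class up to positive multiple, deduce parallel curvature and the splitting $f^{\ast}Q\cong\oplus^r L$ with $L$ a positive homogeneous line bundle carrying the canonical (Einstein--Hermitian) connection, and then feed Theorem \ref{linecl} into Theorem \ref{rigid2}. Your extra justification that $L$ is homogeneous via the Picard group only makes explicit what the paper asserts directly from $M$ being Hermitian symmetric.
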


\begin{proof}
Since every irreducible Hermitian symmetric space has second Betti number equal to one, 
the first Chern class of the pull-back bundle of the quotient bundle 
is cohomologous to the K\"ahler class up to positive multiple.  
Then the K\"ahler identity and the Hodge decomposition implies that 
the pull-back bundle is $\oplus^r L\to M$, 
where $L\to M$ is a line bundle with an Einstein-Hermitian connection $\nabla$. 
Since $M$ is a Hermitian symmetric space, $L\to M$ is a homogeneous line budle and 
$\nabla$ is the canonical connection. 
Then Theorems \ref{rigid2} and \ref{linecl} imply the result. 
\end{proof}

We also use Proposition \ref{clon} to obtain a rigidity result on harmonic maps. 

\begin{thm}\label{righarm}
Let $G/K_0$ be a compact reductive Riemannian homogeneous space and $V \to G/K_0$ 
a homogeneous vector bundle with an invariant metric and an invariant connection preserving the metric. 

Suppose that $f:G/K_0\to Gr_p(W)$ is a harmonic map satisfying the gauge condition for $V \to G/K_0$ and 
the Einstein-Hermitian condition. 
Then, by Theorem \ref{GGenDW}, $W$ can be regarded as an eigenspace of the Laplacian acting on $\Gamma(V)$ 
and so, a $G$-representation space with $G$-invariant scalar product by invariance of the metric and the connection. 

If any $G$-irreducible submodule of $\text{\rm H}(W)$ is a class one representation of $(G,K_0)$, 
then $f$ is the standard map by $W$ up to gauge equivalence. 
\end{thm}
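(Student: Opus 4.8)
The plan is to feed $f$ into the homogeneous do Carmo--Wallach correspondence of Theorem \ref{GenDW} and then to force the associated endomorphism to be the identity. First I would record that, since $f$ is a full harmonic map into $Gr_p(W)$ satisfying the gauge condition for $V\to G/K_0$ and the Einstein--Hermitian condition, Theorem \ref{GenDW} attaches to $f$ a semi-positive Hermitian $T\in\text{\rm End}(W)$ with $\text{\rm Ker}\,T^{\bot}=W$ by condition (II) (so $T$ is in fact positive, the target being all of $Gr_p(W)$ and hence $\iota=Id$), satisfying condition (III); in particular its first equation reads
\begin{equation*}
\left(T^2-Id_W,\ GH(V_0,V_0)\right)_H=0,
\end{equation*}
where $V_0\subset W$ is regarded as a subspace by Lemma \ref{subsp}. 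Writing $C:=T^2-Id_W\in\text{\rm H}(W)$, the entire theorem reduces to the single representation-theoretic identity $GH(V_0,V_0)=\text{\rm H}(W)$: granting this, $C$ is orthogonal to all of $\text{\rm H}(W)$ for the positive-definite product $(\cdot,\cdot)_H$ while itself lying in $\text{\rm H}(W)$, whence $C=0$, i.e. $T^2=Id_W$, and positivity of $T$ gives $T=Id_W$. Then the gauge-equivalence criterion of Theorem \ref{GenDW}, namely that two pairs are gauge equivalent exactly when $\iota_1^{\ast}T_1\iota_1=\iota_2^{\ast}T_2\iota_2$, identifies $f$ with the standard map by $W$, since the latter corresponds to $T=Id_W$.

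For the decisive identity $GH(V_0,V_0)=\text{\rm H}(W)$ I would argue by orthogonal complements. Because $GH(V_0,V_0)$ is a $G$-submodule of $\text{\rm H}(W)$ and $(\cdot,\cdot)_H$ is $G$-invariant, the complement $GH(V_0,V_0)^{\bot}\subset\text{\rm H}(W)$ is again a $G$-submodule. If it were nonzero it would contain a $G$-irreducible submodule, which by the standing hypothesis is a class one representation of $(G,K_0)$. But Proposition \ref{clon} asserts that every class one representation occurring in $\text{\rm H}(W)$ is already a submodule of $GH(V_0,V_0)$; an irreducible summand cannot be simultaneously contained in $GH(V_0,V_0)$ and orthogonal to it, so it must vanish. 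Hence $GH(V_0,V_0)^{\bot}=\{0\}$ and the equality follows. Note that only the first equation of condition (III) is needed for this; the second plays no role here.

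The main obstacle is reconciling the appeal to Proposition \ref{clon} with the generality of the present $W$: that proposition is stated for an isotypic $\tilde W=\oplus^N W$ built from an \emph{irreducible} $G$-module and presupposes a normal decomposition of $(W,V_0)$ with no repeated $K_0$-types, whereas here $W$ is merely an eigenspace of the Laplacian and may carry several distinct $G$-constituents with multiplicities. I would handle this by decomposing $W$ into $G$-isotypic components, using that $V_0$ globally generates $V\to G/K_0$ to ensure that $GH(V_0,V_0)$ meets every relevant constituent, and then re-running the argument in the proof of Proposition \ref{clon} block by block: a $K_0$-fixed vector $C_0$ of a putative class one summand of $GH(V_0,V_0)^{\bot}$ satisfies $(C_0\,gv_1,gv_2)_W=0$ for all $g\in G$ and $v_1,v_2\in V_0$, so for small $\varepsilon$ the operator $\sqrt{Id+\varepsilon C_0}$ is a positive, $K_0$-equivariant transformation preserving the scalar product along $G$-orbits of $V_0$, and the uniqueness statement of Proposition \ref{TI} forces it to be the identity, i.e. $C_0=0$, a contradiction. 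The delicate point is arranging the normal-decomposition hypotheses of Proposition \ref{TI} within each isotypic block; once that bookkeeping is in place the contradiction closes and the theorem follows.
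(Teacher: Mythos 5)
Your proof is correct and follows essentially the same route as the paper's: extract the semi-positive $T$ from Theorem \ref{GenDW}, read off the first equation of condition (III), namely $\left(T^2-Id_W,\, GH(V_0,V_0)\right)_H=0$, and combine the class-one hypothesis with Proposition \ref{clon} to get $\text{H}(W)=GH(V_0,V_0)$, hence $T^2=Id_W$ and $f$ is gauge equivalent to the standard map. The only difference is that you explicitly flag (and sketch a block-by-block repair of) the mismatch between the hypotheses of Proposition \ref{clon} --- irreducibility of the underlying $G$-module and the no-common-$K_0$-type condition on the normal decomposition --- and the generality of the eigenspace $W$ here; the paper's own two-line proof applies Proposition \ref{clon} without addressing this point.
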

\begin{proof}
From Theorems \ref{GGenDW} and \ref{GenDW}, 
we have a semi-positive transformation $T$ of $W$ satisfying 
$$
\left(T^2-Id_W, GH(V_0,V_0) \right)_H=0, 
$$
where $V_0$ is the $K_0$-representation associated with $V\to G/K_0$. 
Then Proposition \ref{clon} and the hypothesis imply that $\text{\rm H}(W)=GH(V_0,V_0)$. 
Thus $T^2=Id_W$. 
\end{proof}

Toth gives a conception of {\it polynomial minimal immersion} 
between complex projective spaces \cite{Tot}. 
In the definition of polynomial minimal immersions, 
Toth makes use of $\mathcal H_{n+1}^{k,l}$ to define {\it polynomial maps} between spheres 
and of the Hopf fibration to get a map between complex projective spaces. 
This enables us to apply $\text{U}(n+1)$-representation theory instead of 
$\text{SO}(2n+2)$-representation theory
in the original do Carmo-Wallch theory. 
In addition, Toth implicitly requires condition {\rm (i)} in Theorem \ref{GenDW} as {\it horizontality}. 
Theorem \ref{harmGrirr} implies that the former condition is not needed to develop the theory. 
We replace a polynomilal minimal immersion by polynomial harmonic map with constant energy density. 

\begin{lemma}
Let $f:\mathbf CP^m \to \mathbf CP^n$ $(m\geqq 2)$
be a full harmonic map with constant energy density. 
Then $f$ is a polynomial harmonic map in the sense of Toth 
if and only if 
the pull-back bundle of the universal quotient bundle with pull-back connection by $f$ 
is gauge equivalent to a complex line bundle with the canonical connection. 
\end{lemma}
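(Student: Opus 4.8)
The plan is to translate both sides of the equivalence into the homogeneous--bundle language of the preceding sections and to let them meet in Theorem \ref{harmGrirr}. Throughout I identify $\mathbf{CP}^n$ with $Gr_n(\mathbf C^{n+1})$, so that the universal quotient bundle is $\mathcal O(1)\to\mathbf{CP}^n$ of complex rank one and the components of $f$ are exactly the sections of $f^{\ast}\mathcal O(1)\to\mathbf{CP}^m$ arising from a basis of $\mathbf C^{n+1}$. Since $m\geqq 2$, every line bundle on $\mathbf{CP}^m$ is some $\mathcal O(k)$, and fullness together with global generation by $\mathbf C^{n+1}$ forces $k\geqq 0$; thus the topological type is fixed and the entire content of the statement concerns the connection.

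First I would dispose of the Einstein--Hermitian condition for free. Because $f^{\ast}\mathcal O(1)$ has rank one, the mean curvature operator $A\in\Gamma(\mathrm{End}\,f^{\ast}\mathcal O(1))$ is a real scalar function, and Lemma \ref{ed} gives $-A=-\mathrm{trace}\,A=e(f)$, which is constant by hypothesis. Hence $A=-\mu\,Id$ for a constant $\mu\geqq 0$, so $f$ satisfies the hypotheses of Theorem \ref{harmGrirr} with $h\equiv\mu$, and the EH condition never has to be imposed separately.

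For the implication from the gauge equivalence to Toth polynomiality I would proceed as follows. Assuming the pull-back connection is gauge equivalent to the canonical connection on a homogeneous line bundle $\mathcal O(k)\to\mathbf{CP}^m$, the Laplacian on $\Gamma(f^{\ast}\mathcal O(1))$ is identified with the canonical Laplacian on $\Gamma(\mathcal O(k))$. Theorem \ref{harmGrirr} then yields $\Delta t=\mu t$ for every $t\in\mathbf C^{n+1}$, so that $\mathbf C^{n+1}$ sits inside the $\mu$-eigenspace $W_\mu\subset\Gamma(\mathcal O(k))$. By the $L^2$-decomposition \eqref{declbn} (the $k\geqq 0$ branch), each such eigenspace is one of the harmonic-polynomial modules $\mathcal H_{m+1}^{k+2l,2l}$; hence the components of $f$ are harmonic polynomials, which, together with the assumed gauge equivalence playing the role of Toth's horizontality, is exactly the definition of a polynomial harmonic map. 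The reverse implication is then largely a matter of unwinding Toth's definition: the horizontality built into a polynomial map is precisely condition (i) of Theorem \ref{GenDW}, and for the rank-one target $\mathcal O(1)$ this condition states that $f^{\ast}\mathcal O(1)$ with its pull-back connection is gauge equivalent to the line bundle carrying the harmonic polynomials, namely $\mathcal O(k)$ with its canonical connection.

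I expect the main obstacle to be precisely this dictionary step: making rigorous that Toth's Hopf-fibration formulation of horizontality coincides with the gauge condition (i), and that his harmonic-polynomial spaces are the canonical-Laplacian eigenspaces appearing in \eqref{declbn}. Once that dictionary is fixed, the substantive point---that horizontality forces the components to be harmonic polynomials---is supplied entirely by Theorem \ref{harmGrirr}, so the two separate requirements in Toth's definition (being assembled from $\mathcal H_{m+1}^{k,l}$ and being horizontal) collapse to the single gauge condition in the statement.
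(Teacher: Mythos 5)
Your proposal is correct and follows essentially the same route as the paper: the rank-one target makes the Einstein--Hermitian condition automatic from constant energy density, one direction is definitional (Toth's horizontality is the gauge condition), and the converse comes from Theorem \ref{harmGrirr} placing $\mathbf C^{n+1}$ inside an eigenspace of the canonical Laplacian, which by the decomposition \eqref{declbn} is one of the harmonic-polynomial modules $\mathcal H_{m+1}^{k,l}$. The paper's proof is just a terser version of exactly this argument.
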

\begin{proof}
Since the universal quotient bundle is of rank $1$ and $f$ has constant energy density, 
the mean curvature operator is proportional to the identity up to constant 
(see the third Remark after the proof of Theorem \ref{GGenDW}). 

The sufficient implication holds by definition of polynomial harmonic map 
(see condition (3) in \cite{Tot}). 

Suppose that 
the pull-back bundle of the quotient bundle with pull-back connection 
is gauge equivalent to a complex line bundle with the canonical connection. 
Then Theorem \ref{harmGrirr} and the decomposition \eqref{declbn} imply that 
$\mathbf C^{n+1}\subset \mathcal H_{m+1}^{k,l}$ for some non-negative integers $k$ and $l$. 
This yields that $f$ is a polynomial harmonic map. 
\end{proof}

Toth gives an estimate of the dimension of the moduli space by 
the image equivalence relation. 
By Proposition \ref{gaimC}, we can apply Toth's estimate to get an estimate of moduli space by 
gauge equivalence.

Finally, we consider Einstein-Hermitian embeddings of the projective line into complex quadrics 
$Gr_n(\mathbf R^{n+2})$. 
By Proposition \ref{sbone}, such embeddings are really holomorphic isometric embeddings up to 
constant multiples of the metrics. 
Though research on harmonic maps from the projective line into quadrics 
has been pursued before from various viewpoints 
(for example,  \cite{ChiZhe}, \cite{FJXX}, \cite{LiYu} and \cite{Wolf}), 
we would like to apply Theorem \ref{GenDW} to give a description of the moduli. 

Notice that the curvature form $R$ of the canonical connection on the universal quotient bundle 
is related to the fundamental $2$-form 
$\omega_Q$ on $Gr_n(\mathbf R^{n+2})$ in such a way that $R=-2\pi \sqrt{-1}\omega_Q$. 
Denote by $\omega_0$ the fundamental $2$-form on $\mathbf CP^1$ 
satisfying $R_{\mathcal O(1)}=-2\pi \sqrt{-1}\omega_0$, 
where $R_{\mathcal O(1)}$ is the curvature form of the canonical connection on 
the hyperplane bundle over $\mathbf CP^1$. 
\begin{defn}
Let $f:\mathbf CP^1 \to Gr_n(\mathbf R^{n+2})$ be a holomorphic embedding. 
Then $f$ is called an isometric embedding of degree $k$ if 
$f^{\ast}\omega_Q=k\omega_0$ (and so, $k$ must be a positive integer). 
\end{defn}



\begin{lemma}\label{EHi}
Let $f:\mathbf CP^1 \to Gr_n(\mathbf R^{n+2})$ be a holomorphic embedding. 
Then $f$ is an isometric embedding of degree $k$ if and only if 
the pull-back connection is gauge equivalent to the canoical connection 
on $\mathcal O(k)\to \mathbf CP^1$. 
Under these conditions, the mean curvature operator is proportional to 
the identity up to negative constant. 
\end{lemma}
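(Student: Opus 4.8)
The plan is to argue by comparing curvature forms, exploiting the two normalisations $R=-2\pi\sqrt{-1}\,\omega_Q$ and $R_{\mathcal O(1)}=-2\pi\sqrt{-1}\,\omega_0$ recorded just above the statement. First I would observe that, since $f$ is holomorphic, the pull-back $f^{\ast}Q\to\mathbf{CP}^1$ is a holomorphic complex line bundle (through the complex structure $J$ carried by $Q$), and the pull-back connection is the compatible (Chern) connection of the pull-back metric, so its curvature is $f^{\ast}R=-2\pi\sqrt{-1}\,f^{\ast}\omega_Q$. The canonical connection on $\mathcal O(k)=\mathcal O(1)^{\otimes k}$ is likewise the compatible connection of the invariant metric, with curvature $k\,R_{\mathcal O(1)}=-2\pi\sqrt{-1}\,k\omega_0$.

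For the forward implication, suppose $f$ is an isometric embedding of degree $k$, i.e. $f^{\ast}\omega_Q=k\omega_0$. Then the pull-back curvature equals $-2\pi\sqrt{-1}\,k\omega_0$, the curvature of the canonical connection on $\mathcal O(k)$. In particular $\int_{\mathbf{CP}^1}c_1(f^{\ast}Q)=\int_{\mathbf{CP}^1}f^{\ast}\omega_Q=k$, so $f^{\ast}Q$ and $\mathcal O(k)$ are isomorphic as Hermitian line bundles over $\mathbf{CP}^1$. Two Hermitian connections on a complex line bundle with equal curvature differ by a closed imaginary-valued $1$-form, which is exact because $\mathbf{CP}^1$ is simply connected; hence they are related by a unitary gauge transformation, which is the asserted gauge equivalence.

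Conversely, if the pull-back connection is gauge equivalent to the canonical connection on $\mathcal O(k)$, then the curvature is preserved (the structure group being abelian), so $f^{\ast}R=-2\pi\sqrt{-1}\,k\omega_0$; comparing with $f^{\ast}R=-2\pi\sqrt{-1}\,f^{\ast}\omega_Q$ forces $f^{\ast}\omega_Q=k\omega_0$, which is exactly the statement that $f$ is an isometric embedding of degree $k$. For the last assertion, under these conditions $f$ is a holomorphic map whose pull-back bundle is gauge equivalent to the Einstein-Hermitian line bundle $\mathcal O(k)\to\mathbf{CP}^1$, so Proposition \ref{EH} gives $A=-K_{EH}$; for the positive line bundle $\mathcal O(k)$ over $\mathbf{CP}^1$ the quantity $K_{EH}=\sqrt{-1}R(e_1,Je_1)$ computes to a positive constant, whence $A$ is a negative constant times the identity (the bundle being of rank one).

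I expect the delicate point to be the passage from equality of curvatures to gauge equivalence of connections: one must check that the two line bundles are genuinely isometric as Hermitian bundles and invoke $H^1(\mathbf{CP}^1;\mathbf R)=0$ to trivialise the flat difference, all the while keeping in mind that $Q\to Gr_n(\mathbf R^{n+2})$ is a \emph{real} rank-two bundle equipped with a complex structure rather than an a priori complex line bundle, so that the identifications above must be made through $J$ with some care.
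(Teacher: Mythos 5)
Your argument is correct, but it follows a genuinely different route from the paper's. The paper first invokes the uniqueness of the holomorphic structure on a line bundle over $\mathbf{CP}^1$ to identify $f^{\ast}Q$ holomorphically with some $\mathcal O(k)$, and then cites Proposition \ref{sbone} (for a compact K\"ahler manifold with $b_2=1$, Einstein--Hermitian immersions coincide with holomorphic isometric immersions up to scale), together with the fact that the canonical connection is the Einstein--Hermitian connection; the equivalence between ``isometric of degree $k$'' and ``gauge equivalent to the canonical connection'' is thus routed through the Einstein--Hermitian condition, the K\"ahler identities and Hodge theory, and the uniqueness of the compatible connection. You instead run a direct Chern--Weil comparison: equality of the curvature forms $-2\pi\sqrt{-1}\,k\omega_0$ on both sides, plus $H^1(\mathbf{CP}^1;\mathbf R)=0$ to kill the flat difference of two unitary connections with the same curvature, yields the gauge equivalence; the converse is immediate since abelian gauge transformations preserve the curvature $2$-form. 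Your approach is more elementary and self-contained (it never needs Proposition \ref{sbone} nor the uniqueness of the Einstein--Hermitian structure), while the paper's approach places the lemma inside its general framework of Einstein--Hermitian immersions, which is what gets reused elsewhere. The one point you rightly flag --- correcting a complex-linear isomorphism $f^{\ast}Q\cong\mathcal O(k)$ by a positive function so that it becomes a Hermitian isometry before comparing connections, and doing all identifications through the complex structure $J$ on the real rank-two bundle $Q$ --- is exactly the care required, and your treatment of it is adequate. The final claim about the mean curvature operator is handled by both you and the paper in the same way, via Proposition \ref{EH} and the rank-one (over $\mathbf C$) nature of the bundle.
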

\begin{proof}
Since the holomorphic bundle structure of any line bundle on $\mathbf CP^1$ is unique, 
there exists a non-negative integer $k$ such that $f^{\ast}Q \to \mathbf CP^1$ is holomorphically 
isomorphic to $\mathcal O(k) \to \mathbf CP^1$. 


Then Proposition \ref{sbone} yields the result, 
because the canonical connection is an Einstein-Hermitian connection. 

Then Proposition \ref{EH} and the definition of Einstein-Hermitian immersion yield that 
the mean curvature operator is proportional to the identity. 
\end{proof}

Since the holonomy group of the connection is irreducible, 
the pull-back metric is also the same as the invariant fibre metric up to a positive real constant. 
If we change the inner product on $\mathbf R^{n+2}$ or the invariant metric on the line bundle 
by a positive constant, 
then we can assume that the pull-back metric is also the same as the invariant metric 
from the beginning. 

From this obsevation with Lemma  \ref{EHi}, we can apply Theorem \ref{HGenDWI} to obtain the moduli space $\mathcal M_k$ 
of holomorphic isometric embeddings of degree $k$ 
by the gauge equivalence of maps. 
We also use the Einstein-Hermitian connection to obtain 
that any holomorphic section of $\mathcal O(k)\to \mathbf CP^1$ 
is an eigensection. 

Let $\left(\text{SU}(2), \text{U}(1) \right)$ be the corresponding symmetric pair to 
$\mathbf CP^1$ 
and $\mathfrak{g}=\mathfrak{k}\oplus \mathfrak{m}$ the standard decomposition of 
the Lie algebra $\mathfrak{g}=\mathfrak{su}(2)$, 
where $\mathfrak{k}=\mathfrak{u}(1)$. 
In addition, notice that $\mathfrak{m}^{\mathbf C}=\mathbf C_2\oplus \mathbf C_{-2}$. 

Using Lemma \ref{stharm} and weight decomposition \eqref{decu1} of $S^k\mathbf C^2$, we can show
\begin{lemma}\label{quadstan}
The standard map by $H^0\left(\mathbf CP^1, \mathcal O(k)\right)\cong S^k\mathbf C^2$ 
is a holomorphic isometric embedding of a complex projective line of degree $k$ 
into $Gr_{2k}(\mathbf R^{2k+2})$. 
\end{lemma}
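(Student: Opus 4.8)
The plan is to exploit the $\text{SU}(2)$-equivariance of the standard map together with the weight decomposition \eqref{decu1}, so that every assertion reduces to a single computation at the base point $[e]$. First I would record the dimension bookkeeping: since $\dim_{\mathbf C}S^k\mathbf C^2=k+1$, regarding $W=S^k\mathbf C^2$ as a real vector space gives $\dim_{\mathbf R}W=2k+2$, while $\mathcal O(k)\to\mathbf{CP}^1$ has complex rank $1$ and hence real rank $q=2$; thus $p=\dim_{\mathbf R}W-q=2k$ and the standard map indeed lands in $Gr_{2k}(\mathbf R^{2k+2})$, the complex quadric. Identifying the fibre $V_0$ of $\mathcal O(k)$ with the lowest weight space $\mathbf C_{-k}$ in \eqref{decu1}, and using $\mathfrak m^{\mathbf C}=\mathbf C_2\oplus\mathbf C_{-2}$, the decisive fact is $\varrho_k(\mathfrak m)V_0\subset\mathbf C_{-k+2}$: the lowering operator annihilates the lowest weight vector, so only the raising operator survives.

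This one computation drives the whole argument. Because distinct weight spaces are orthogonal, $\varrho_k(\mathfrak m)V_0\perp V_0$, hence $\varrho_k(\mathfrak m)V_0\subset U_0$; Lemma \ref{stharm} then shows $f_0$ is harmonic, and Lemma \ref{pullcan} shows the pull-back connection on $f_0^{\ast}Q\to\mathbf{CP}^1$ is gauge equivalent to the canonical connection. Under the natural identification $f_0^{\ast}Q\cong V=\text{SU}(2)\times_{\text{U}(1)}\mathbf C_{-k}=\mathcal O(k)$, this canonical connection is the Einstein--Hermitian connection. Next I would establish holomorphicity: the antiholomorphic part of $df_0$ at $[e]$ is governed by $\varrho_k(\mathfrak m^{(0,1)})V_0$, and with the complex structure for which $\mathfrak m^{(0,1)}=\mathbf C_{-2}$ this equals $\varrho_k(\mathbf C_{-2})\mathbf C_{-k}=\{0\}$ by the computation above; equivariance then propagates $\bar\partial f_0=0$ to all of $\mathbf{CP}^1$.

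It remains to see that $f_0$ is an embedding and to pin down the degree. For the immersion property I would note that the $(1,0)$-part of $df_0$ is $\varrho_k(\mathbf C_2)V_0=\mathbf C_{-k+2}\neq\{0\}$, while injectivity follows because the holomorphic sections $W=H^0(\mathbf{CP}^1,\mathcal O(k))$ separate points for $k\geqq 1$. Having a holomorphic embedding whose pull-back connection is gauge equivalent to the canonical connection on $\mathcal O(k)$, I would invoke Lemma \ref{EHi} to conclude that $f_0$ is an isometric embedding of degree $k$; equivalently, since $R=-2\pi\sqrt{-1}\omega_Q$ and $R_{\mathcal O(1)}=-2\pi\sqrt{-1}\omega_0$ and the curvature of $\mathcal O(k)$ is $k$ times that of $\mathcal O(1)$, one reads off $f_0^{\ast}\omega_Q=k\omega_0$ directly.

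The step I expect to be the main obstacle is holomorphicity: one must fix the complex-structure conventions so that $\mathfrak m^{(0,1)}$ is the lowering direction $\mathbf C_{-2}$ rather than $\mathbf C_2$, since otherwise $f_0$ comes out antiholomorphic, and one must check that the real-Grassmannian target carries the complex structure of the quadric compatibly with the identification $f_0^{\ast}Q\cong\mathcal O(k)$. Once the sign of the complex structure is matched to the lowest-weight placement of $V_0$, the remaining verifications are the routine weight bookkeeping indicated above.
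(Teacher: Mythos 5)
Your proposal is correct and follows exactly the route the paper intends: the paper's entire justification is the one-line remark ``Using Lemma \ref{stharm} and weight decomposition \eqref{decu1} of $S^k\mathbf C^2$, we can show,'' and your computation $\varrho_k(\mathfrak m)\mathbf C_{-k}\subset\mathbf C_{-k+2}\perp\mathbf C_{-k}$ is precisely the key fact that makes Lemma \ref{stharm} (and Lemma \ref{pullcan}) applicable, with the holomorphicity, embedding, and degree-$k$ statements then read off from the same weight bookkeeping and the curvature comparison as you describe. You have simply written out in full the details the paper leaves implicit.
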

First of all, we consder the case that $k=1$. 
\begin{thm}\label{mod1}
If $f$ is a holomorphic isometric embedding of a complex projective line of degree $1$ 
into a complex quadric, 
then $f$ is the standard map by $\left(\mathcal O(1)\to \mathbf CP^1, H^0\left(\mathbf CP^1, \mathcal O(1)\right)\right)$ 
up to gauge equivalence. 
\end{thm}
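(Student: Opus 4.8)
The plan is to run $f$ through the quadric form of the generalized do Carmo--Wallach correspondence and to show that the associated endomorphism is forced to be the identity. First I would apply Lemma \ref{EHi}: since $f$ has degree $1$, the pull-back bundle $f^{\ast}Q\to\mathbf{CP}^1$ is holomorphically isomorphic to $\mathcal O(1)\to\mathbf{CP}^1$ with its canonical (Einstein--Hermitian) connection, and the mean curvature operator is a negative constant multiple of the identity; thus $f$ satisfies the gauge and Einstein--Hermitian conditions. Normalising the scalar product on the ambient real space by the positive constant coming from the irreducible holonomy, I may assume the pull-back metric equals the invariant metric. Theorem \ref{HGenDWI} (equivalently the homogeneous Theorem \ref{GenDW} applied to $\mathbf{CP}^1=\mathrm{SU}(2)/\mathrm{U}(1)$) then attaches to $f$ the eigenspace $W=H^{0}(\mathbf{CP}^1,\mathcal O(1))\cong S^{1}\mathbf C^{2}$, regarded as a \emph{real} vector space $W_{\mathbf R}\cong\mathbf R^{4}$, together with a semi-positive symmetric $T\in\mathrm{End}(W_{\mathbf R})$ obeying condition (III).

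Because $\dim_{\mathbf R}W=4$, Lemma \ref{quadstan} (with $k=1$) shows the standard map already lands in $Gr_{2}(\mathbf R^{4})$, and fullness forces $\mathbf R^{n+2}=W_{\mathbf R}=\mathrm{Ker}\,T^{\perp}$ with $n=2$, so $T$ is positive definite on all of $W_{\mathbf R}$. It therefore suffices to prove $T^{2}=Id_{W}$, for then $T=Id_{W}$ and \eqref{HDW5} reduces $f$ to the standard map up to gauge equivalence.

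The decisive step is a representation-theoretic computation for $(\mathrm{SU}(2),\mathrm{U}(1))$. Writing $V_{0}=\mathbf C_{-1}\subset W$ for the fibre of $\mathcal O(1)$, condition (III) reads $(T^{2}-Id_{W},\,GH(V_{0},V_{0}))_{H}=0$ together with $(T^{2},\,GH(\varrho(\mathfrak m)V_{0},V_{0}))_{H}=0$, and the first of these will already suffice. I would first record the decomposition $\mathrm H(W_{\mathbf R})\otimes\mathbf C\cong 3\,S^{2}\mathbf C^{2}\oplus\mathbf C$, obtained from $W_{\mathbf R}\otimes\mathbf C\cong W\oplus\overline W$. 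Complexifying $H(V_{0},V_{0})$ produces weight vectors of $\mathrm U(1)$-weights $-2,0,+2$: the weight $-2$ and weight $+2$ vectors are the extreme vectors of two of the three copies of $S^{2}\mathbf C^2$ and hence $G$-generate them, while the weight $0$ vector has nonzero components on both the remaining $S^{2}\mathbf C^2$ and the trivial summand (its pairing with the invariant element $\sum_{i}e_{i}\otimes\bar e_{i}$ is nonzero), so it $G$-generates the last $S^{2}\mathbf C^2$ and the $\mathbf C$. Consequently $GH(V_{0},V_{0})=\mathrm H(W_{\mathbf R})$. Since $T^{2}-Id_{W}\in\mathrm H(W_{\mathbf R})$ is orthogonal to $GH(V_{0},V_{0})=\mathrm H(W_{\mathbf R})$ with respect to the positive definite form $(\cdot,\cdot)_{H}$, we obtain $(T^{2}-Id_{W},T^{2}-Id_{W})_{H}=0$ and hence $T^{2}=Id_{W}$.

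I expect the main obstacle to be the real structure. Unlike the complex-projective-space cases of Theorems \ref{BanOhn} and \ref{Calr}, here $W$ must be treated as a real vector space and $T$ ranges over the full ten-dimensional space of real symmetric endomorphisms of $\mathbf R^{4}$, not merely the complex Hermitian ones. Over $\mathbf R$ the normal decomposition $W=\mathbf C_{-1}\oplus\mathbf C_{1}$ has two summands that are isomorphic as real $\mathrm U(1)$-modules, so the no-common-submodule hypothesis of Proposition \ref{clon} fails and one cannot simply quote it (nor Theorem \ref{linecl}, which governs only complex Hermitian endomorphisms); instead the equality $GH(V_{0},V_{0})=\mathrm H(W_{\mathbf R})$ has to be established by the explicit $G$-span argument above.
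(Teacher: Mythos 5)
Your argument is correct, but it runs through a genuinely different branch of the do Carmo--Wallach machinery than the paper's proof. The paper first uses Corollary \ref{Holsuftf}, Lemma \ref{quadstan} and the trace normalization of Corollary \ref{traceIT} to reduce condition (III) to the single \emph{connection} equation $ev\circ C\circ(\nabla ev^{\ast})=0$, i.e.\ to $\left(C,\,GH(\mathbf C_{1},\mathbf C_{-1})\right)_H=0$ for the trace-free part $C$, and then verifies by an explicit computation with the quaternionic structure $j$ that $GH(\mathbf C_{1},\mathbf C_{-1})=\text{H}_0(\mathbf R^4)$. You instead exploit only the \emph{metric} equation $\left(T^2-Id_W,\,GH(V_0,V_0)\right)_H=0$ and claim $GH(\mathbf C_{-1},\mathbf C_{-1})=\text{H}(\mathbf R^4)$. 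That claim is true: the weight-$0$ part of $H(V_0,V_0)$ is (up to scale) the projector onto $\mathbf C_{-1}$, whose $G$-orbit spans $\mathbf R\,Id\oplus\sqrt{-1}\rho(\mathfrak g)$, while the weight-$\pm2$ part contains $H(v_{-1},v_{-1})-H(Jv_{-1},Jv_{-1})=\text{diag}(0,0,1,-1)$ in the basis $\{v_1,Jv_1,v_{-1},Jv_{-1}\}$, which decomposes as $\tfrac12(jX-JjY)$ with $X,Y$ \emph{linearly independent} in $\sqrt{-1}\rho(\mathfrak g)$, so its $G$-orbit spans both $j\sqrt{-1}\rho(\mathfrak g)$ and $Jj\sqrt{-1}\rho(\mathfrak g)$; hence the span is all ten dimensions. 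Your route then kills $T^2-Id_W$ outright, with no need for the reduction via Corollary \ref{Holsuftf} or the trace identity; it is more economical here, though the paper's reduction to a single linear equation is the template it reuses for the degree-two case in Theorem \ref{mod}. Your observation that Proposition \ref{clon} cannot simply be quoted, because $\mathbf C_{1}\cong\mathbf C_{-1}$ as real $\text{U}(1)$-modules, correctly identifies why the paper, too, resorts to an explicit span computation.

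Two points should be tightened. First, fullness does not force $\mathbf R^{n+2}=W_{\mathbf R}$; it only gives an injection $\mathbf R^{n+2}\hookrightarrow W$ with $\iota(\mathbf R^{n+2})=\text{Ker}\,T^{\bot}$, so you should not assert $n=2$ before the main computation. This is harmless, since your orthogonality argument yields $T^2=Id_W$ on all of $W$, whence $\text{Ker}\,T=0$ and $n=2$ a posteriori; just reorder the logic. Second, the assertion that the weight $+2$ and weight $-2$ vectors generate two \emph{distinct} copies of $S^2\mathbf C^2$ --- equivalently, that the real two-dimensional weight-$\pm2$ subspace of $H(V_0,V_0)$ is not contained in a single real irreducible summand of $\text{H}(\mathbf R^4)$ --- is exactly the point that requires the explicit check indicated above. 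If it failed, $GH(V_0,V_0)$ would only be seven-dimensional and the conclusion $T^2=Id_W$ would not follow, so this verification must appear in the final write-up.
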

\begin{proof}
Let $S^1\mathbf C^2 \cong \mathbf C^2$ be the standard representation of $\text{SU}(2)$ 
and $\mathbf C^2=\mathbf C_1\oplus \mathbf C_{-1}$ the weight decomposition. 
The line bundle $\mathcal O(1)\to \mathbf CP^1$ 
is expressed as $\text{SU}(2)\times_{\text{U}(1)} \mathbf C_{-1}$ and 
we can identify the representation space $\mathbf C_{-1}$ of $\text{U}(1)$ with the subspace 
denoted by the same symbol 
of the weight decomposition of $\mathbf C^2$ by Lemma \ref{subsp}. 
Then we can see that $\mathfrak{m}\mathbf C_{-1}=\mathbf C_1$, 
where $\mathfrak{su}(2)=\mathfrak{u}(1)\oplus \mathfrak{m}$ is a standard decomposition of 
the symmetric pair $(\text{SU}(2), \text{U}(1))$. 
Let $f$ be a holomorphic isometric immersion into $Gr_n(\mathbf R^{n+2})$.  
Then, Proposition \ref{sbone} yields that $f$ is an Einstein-Hermitian embedding. 
Thus we can apply Theorem \ref{GenDW}, 
because the Einstein-Hermitian connection is the canonical connection on $\mathcal O(1) \to \mathbf CP^1$. 
Since $f$ is of degree one and $H^0\left(\mathbf CP^1, \mathcal O(1)\right)=\mathbf C^2$ 
by Borel-Weil, Theorem \ref{GenDW} implies that $n+2 \leqq 4$. 
Hence we consider an Einstein-Hermitian embedding $f:\mathbf CP^1 \to Gr_2(\mathbf R^4)$. 

We regard $\mathbf C^2$ as a real vector space $\mathbf R^4$ with the complex structure $J$ 
when applying Theorem \ref{GenDW}, because $Gr_{2}(\mathbf R^{4})$ is a real Grassmannian. 
Consequently, $\text{H}(\mathbf R^4)$ denotes the set of symmetric endomorphism on $\mathbf R^4$ 
in our convention. 
Moreover, we only need to consider the equation 
\begin{equation}\label{Cnab}
ev\circ C \circ \nabla ev^{\ast}=0
\end{equation}
by Corollary \ref{Holsuftf} and Lemma \ref{quadstan}, 
where $C$ is a trace-free symmetric endomorphism on $\mathbf R^4$. 
In this case, \eqref{Cnab} is equivalent to the condition that 
$$
\left(C,GH(\mathfrak m\mathbf C_{-1},\mathbf C_{-1}) \right)
=\left(C,GH(\mathbf C_{1},\mathbf C_{-1}) \right)=0, 
$$
where $(\cdot,\cdot)$ is $\text{SU}(2)$-invariant inner product on $\mathbf R^4$. 

We can see that  
\begin{equation}\label{1deco}
\text{H}(\mathbf R^4)=3 \mathbf R^3 \oplus \mathbf R, 
\end{equation}
which is the irreducible decomposition of $\text{SU}(2)$-module $\text{H}(\mathbf R^4)$. 

To apply Theorem \ref{GenDW}, we need to understand the decomposition \eqref{1deco} in detail. 
To do so, let $j$ be an invariant quaternion structure on $\mathbf C^2$. 
Then,
$$
\mathbf R^3=\sqrt{-1}\rho(\mathfrak{g}), \,
\mathbf R^3=j\sqrt{-1}\rho(\mathfrak{g}), \,
\mathbf R^3=Jj\sqrt{-1}\rho(\mathfrak{g}),
$$
where $\rho:\mathfrak {su}(2)\to \text{End}(\mathbf C^2)$ denotes the representaion. 
Notice that $\sqrt{-1}\rho(\mathfrak{g})$ is the set of Hermitian endomorphisms on $(\mathbf R^4,J)$. 

We take an orthonormal basis $\{v_1,Jv_1,v_{-1},Jv_{-1} \}$ of $\mathbf R^4$ as 
$v_1,Jv_1 \in \mathbf C_1$, $v_{-1},Jv_{-1} \in \mathbf C_{-1}$ and $jv_1=v_{-1}$. 
Then, 
$H(\mathbf C_{1},\mathbf C_{-1})$ is spanned by 
$$
H(v_1,v_{-1}), H(Jv_1,v_{-1}), H(v_1,Jv_{-1})\, \text{and}\, H(Jv_1,Jv_{-1}).
$$
In general, we have $H(Ju,Jv)=-JH(u,v)J$ for arbitrary $u,v \in \mathbf R^4$. 
Hence $H(u,v)+H(Ju,Jv)$ is a Hermitian endomorphism on $(\mathbf R^4, J)$, 
because it commutes with $J$.
In particular,  $H(v_{1},v_{-1})+H(Jv_{1},Jv_{-1})$ and 
$H(v_{1},Jv_{-1})-H(Jv_{1},v_{-1})$ are Hermitian endomorphisms on $(\mathbf R^4, J)$.
Thus 
$$
\sqrt{-1}\rho(\mathfrak{g}) \subset GH(\mathbf C_{1},\mathbf C_{-1}). 
$$

Next, we see that 
\begin{align}\label{jrho}
&2H(v_1,v_{-1})-2H(Jv_1, Jv_{-1}) \\
=&j\left\{H(v_1,v_{1})-H(v_{-1},v_{-1})+H(Jv_{1},Jv_{1})-H(Jv_{-1},Jv_{-1})\right\} \notag, 
\end{align}
and so, 
$$
j\sqrt{-1}\rho(\mathfrak{g}) \subset GH(\mathbf C_{1},\mathbf C_{-1}). 
$$
 
Finally, we get 
$$
H(v_{1},Jv_{-1})+H(Jv_{1},v_{-1})=J\left\{ H(v_{1},v_{-1})-H(Jv_{1},Jv_{-1})\right\}. 
$$
It follows from \eqref{jrho} that 
$$
Jj\sqrt{-1}\rho(\mathfrak{g}) \subset GH(\mathbf C_{1},\mathbf C_{-1}).
$$

Therefore, $\text{\rm H}_0(\mathbf R^4)=GH(\mathbf C_{1},\mathbf C_{-1})$. 
The Remark after Corollary \ref{traceIT} yields the results. 

\end{proof}

Next, we are interested in holomorphic isometric embeddings of degree $2$. 
Notice that when the degree is even, say $2l$, $H^0\left(\mathbf CP^1, \mathcal O(2l)\right)$ 
has an invariant real subspace denoted by $W^l_{\mathbf R}$ of real dimension $2l+1$. 
Since $W^l_{\mathbf R}$ also globally generates $\mathcal O(2l)\to \mathbf CP^1$, 
we have a standard map by $W^l_{\mathbf R}$ which turns out to be a holomorphic isometric 
embedding of degree $2l$ by Lemma \ref{stharm} and weight decomposition. 
We call the standard map by $\left(\mathcal O(2l) \to \mathbf CP^1, W^l_{\mathbf R}\right)$ 
{\it real standard map}.  


\begin{thm}\label{mod}
If $f:\mathbf CP^1 \to Gr_n(\mathbf R^{n+2})$ is a full holomorphic isometric embedding 
of degree $2$ into 
a complex quadric, then $n\leqq 4$. 

Let $\mathcal M_2$ be the moduli space of full holomorphic isometric embeddings 
of the complex projective line into $Gr_{4}(\mathbf R^{6})$ 
of degree $2$ 
by the gauge equivalence of maps. 
Then $\mathcal M_2$ can be regarded as an open unit disk in $\mathbf C$. 

If we take a compactification of $\mathcal M_2$ by the topology induced from 
$L^2$ scalar product on $\Gamma(\mathcal O(2))$, 
each boundary point of $\mathcal M_2$ corresponds to a real standard map whose image 
is included in a totally geodesic submanifold $Gr_1(\mathbf R^{3})$ of 
$Gr_{4}(\mathbf R^{6})$.  
Each totally geodesic submanifold $Gr_1(\mathbf R^{3})$ 
is specified as the common zero set of some sections of the universal quotient bundle $Q\to Gr_{4}(\mathbf R^{6})$. 
\end{thm}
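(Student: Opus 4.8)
The plan is to run the argument of Theorem \ref{mod1} with the line bundle $\mathcal O(2)\to \mathbf CP^1$ in place of $\mathcal O(1)$. By Lemma \ref{EHi} a holomorphic isometric embedding of degree $2$ has pull-back bundle gauge equivalent to $\mathcal O(2)\to \mathbf CP^1$ with its canonical connection and mean curvature operator a negative multiple of the identity, so the gauge and Einstein--Hermitian conditions hold and Theorem \ref{HGenDWI} applies. Borel--Weil identifies the relevant eigenspace with $W:=H^0(\mathbf CP^1,\mathcal O(2))\cong S^2\mathbf C^2$, which, regarded as the real vector space underlying the quadric construction, has dimension $6$. Condition (I) of Theorem \ref{HGenDWI} forces $\mathbf R^{n+2}=\text{Ker}\,T^{\bot}\subset W$, whence $n+2\leqq 6$, i.e.\ $n\leqq 4$; this is the first assertion. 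For the extremal case $n=4$ we have $\mathbf R^6=W$ and $\iota=\text{Id}$, the fibre is $V_0=\mathbf C_{-2}$, and since $\mathfrak m^{\mathbf C}=\mathbf C_2\oplus\mathbf C_{-2}$ the weight decomposition \eqref{decu1} gives $\mathfrak m V_0=\mathbf C_0$.

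Because $\mathcal O(2)$ is a line bundle (hence holonomy irreducible) and the standard map is harmonic of Einstein--Hermitian type by Lemma \ref{quadstan}, Corollary \ref{Holsuftf} together with Theorem \ref{GenDW} reduces the moduli to the single linear condition $(C,GH(\mathbf C_0,\mathbf C_{-2}))_H=0$, i.e.\ the slice
$$\text{M}(W)=GH(\mathbf C_0,\mathbf C_{-2})^{\bot}\cap \text{H}_0(\mathbf R^6).$$
First I would decompose $\text{H}(\mathbf R^6)$ as an $\text{SU}(2)$-module. Writing the real space underlying $S^2\mathbf C^2$ as $W^1_{\mathbf R}\oplus JW^1_{\mathbf R}$ with $W^1_{\mathbf R}$ the $3$-dimensional real form, one computes
$$\text{H}(\mathbf R^6)=3\,\mathbf R^5\oplus\mathbf R^3\oplus 3\,\mathbf R^1,$$
where the three trivial summands are precisely the $\text{SU}(2)$-invariant symmetric operators $\langle \text{Id},\sigma,J\sigma\rangle$; here $\sigma$ is the real structure on $S^2\mathbf C^2$ induced by the quaternionic structure on $\mathbf C^2$, and $\sigma$ and $J\sigma$ are symmetric and trace-free. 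On the plane $\langle\sigma,J\sigma\rangle$ the complex structure acts by $\sigma\mapsto J\sigma\mapsto-\sigma$, identifying it with $\mathbf C$.

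The crux is to prove $GH(\mathbf C_0,\mathbf C_{-2})=3\,\mathbf R^5\oplus\mathbf R^3$, so that $\text{M}(W)=\langle\sigma,J\sigma\rangle\cong\mathbf C$. The easy inclusion is orthogonality to the trivial part: since $\sigma,J\sigma$ are invariant it suffices to evaluate $(\sigma,H(u,v))_H$ and $(J\sigma,H(u,v))_H$ at the identity via \eqref{BHuv}, and as both $\sigma$ and $J\sigma$ carry the weight spaces $\mathbf C_0,\mathbf C_{-2}$ into $\mathbf C_0,\mathbf C_2$, the relevant inner products vanish by weight orthogonality. The reverse inclusion — that all three copies of $\mathbf R^5$ and the copy of $\mathbf R^3$ actually occur in the $G$-span — is what I expect to be the main obstacle. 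A $\text{U}(1)$-weight count shows $H(\mathbf C_0,\mathbf C_{-2})$ carries only the weights $\pm2$, so $GH(\mathbf C_0,\mathbf C_{-2})$ meets only the four summands containing weight $2$, namely the three $\mathbf R^5$'s and the $\mathbf R^3$; I would then verify that the $2$-dimensional weight-$2$ subspace arising from $H(\mathbf C_0,\mathbf C_{-2})$ projects nontrivially onto each of these four weight-$2$ lines (equivalently, exhibit explicit spanning operators as in the proof of Theorem \ref{mod1}), forcing $\dim GH(\mathbf C_0,\mathbf C_{-2})=18$ and hence $\dim \text{M}(W)=2$.

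Finally I would read off the geometry from the positivity condition. Relative to $W^1_{\mathbf R}\oplus JW^1_{\mathbf R}$ one has $\sigma=\text{diag}(1,-1)$ and $J\sigma=\left(\begin{smallmatrix}0&1\\1&0\end{smallmatrix}\right)$, each block tensored with $\text{Id}_3$, so for $C=a\sigma+bJ\sigma$ the operator $\text{Id}+C$ has eigenvalues $1\pm\sqrt{a^2+b^2}$, each of multiplicity $3$. Thus $\text{Id}+C>0$ exactly on the open unit disk $a^2+b^2<1$, and by Theorem \ref{HGenDWI} with the Remark after Corollary \ref{traceIT} (in the full case gauge classes correspond bijectively to $T^2=\text{Id}+C$) this identifies $\mathcal M_2$ with the open unit disk in $\mathbf C$. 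On the boundary circle $a^2+b^2=1$ the kernel of $\text{Id}+C=T^2$ is $3$-dimensional, so $\text{Ker}\,T^{\bot}\cong\mathbf R^3$ and the induced map lands in the totally geodesic $Gr_1(\mathbf R^3)\subset Gr_4(\mathbf R^6)$ cut out as the common zero set of the sections in $\text{Ker}\,T$; by Lemma \ref{quadstan} and the weight decomposition this boundary map is exactly the real standard map of degree $2$, completing the description of the compactification.
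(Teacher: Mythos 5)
Your proposal is correct and follows essentially the same route as the paper: the same reduction via Lemma \ref{EHi} and the do Carmo--Wallach machinery to the single condition $\left(C,GH(\mathbf C_0,\mathbf C_{-2})\right)_H=0$, the same decomposition $\text{H}(\mathbf R^6)=3\,\mathbf R^5\oplus\mathbf R^3\oplus\langle \text{Id},\sigma,J\sigma\rangle$, the same identification of the moduli slice with $\langle\sigma,J\sigma\rangle$ and of the boundary with the real standard map into $Gr_1(\mathbf R^3)$. The one step you defer --- showing each of the four nontrivial summands receives a nonzero component of $H(\mathbf C_0,\mathbf C_{-2})$ --- is exactly the computation the paper performs by exhibiting the four explicit combinations $H(v_0,v_{-2})\pm H(Jv_0,Jv_{-2})$ and $H(Jv_0,v_{-2})\pm H(v_0,Jv_{-2})$, which are linearly independent and lie in $D_4$, $\sigma D_4$, $D_2$, $J\sigma D_4$ respectively.
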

\begin{proof} 
We use the same notation as in the proof of Theorem \ref{mod1} and  
begin with a representation theory of $\text{SU}(2)$ and $\text{U}(1)$. 
Let $S^2\mathbf C^2$ be the complexification of the Lie algebra of $\text{SU}(2)$ with 
a real structure $\sigma$. 
A weight decomposition of $S^2\mathbf C^2$ is $\mathbf C_2 \oplus \mathbf C_0 \oplus \mathbf C_{-2}$. 
The associated line bundle $\text{SU}(2)\times_{\text{U}(1)} \mathbf C_{-2}$ is 
a holomorphic line bundle $\mathcal O(2) \to \mathbf CP^1$ and 
$H^0\left( \mathbf CP^1, \mathcal O(2)\right)$ the set of holomorphic sections 
of $\mathcal O(2)\to \mathbf CP^1$ 
is identified with 
$S^2\mathbf C^2$ by Borel-Weil. 
Then the representation space $\mathbf C_{-2}$ of $\text{U}(1)$ is regarded as a subspace 
of $S^2\mathbf C^2$ denoted by the same symbol by Lemma \ref{subsp}. 
We can see that $\mathfrak{m}\mathbf C_{-2}=\mathbf C_0$. 

Let $f:\mathbf CP^1 \to Gr_n(\mathbf R^{n+2})$ be a holomorphic isometric embedding 
of degree $2$. 
Then Lemma \ref{EHi} implies that $f$ is an Einstein-Hermitian embedding and so, $n\leqq 4$ by Theorem \ref{GenDW}. 

To apply Theorem \ref{GenDW}, $S^2\mathbf C^2$ is regarded as a real vector space $\mathbf R^6$ with 
the complex structure $J$. 
Let $\text{H}(\mathbf R^6)$ be the set of symmetric endomorphisms on $\mathbf R^6$. 
Notice that $\text{H}(\mathbf R^6,J)$ the set of Hermitian endomorphisms on $(\mathbf R^6, J)$ 
is a real subspace of $\text{H}(\mathbf R^6)$. 
The Clebsh-Gordan formula yields that 
the complexification of $\text{H}(\mathbf R^6,J)$ is decomposed as 
$S^4\mathbf C^2 \oplus S^2\mathbf C^2 \oplus \mathbf C$.  
Since these three spaces have invariant real structure, 
we denote by $D_4, D_2$ and $\mathbf R$ the corresponding real subspaces of $\text{H}(\mathbf R^6,J)$, respectively. 
We claim that 
\begin{equation}\label{6deco}
\text{H}(\mathbf R^6)=\left(D_4 \oplus \sigma D_4 \oplus J\sigma D_4 \right) 
\oplus D_2 \oplus \left(\mathbf R \oplus \mathbf R\sigma \oplus \mathbf RJ\sigma \right). 
\end{equation}
We fix an orthonormal basis $\{v_2,v_0,v_{-2}, Jv_2, Jv_0, Jv_{-2}\}$ of $\mathbf R^6$ 
in such a way that 
$v_i \in \mathbf C_{i}$ and $\sigma(v_i)=v_{-i}$ ($i=2,0,-2$). 
Using matrix representation and the block decomposition according to 
$\mathbf R^6=\text{Span}(v_2,v_0,v_{-2})\oplus \text{Span}(Jv_2,Jv_0,Jv_{-2})$, 
we have 
$$
D_4=\left\{ 
\begin{pmatrix}
D & O \\
O & D 
\end{pmatrix}\Bigm| {}^tD=D
\right\}, 
$$
and so, 
$$
\sigma D_4=\left\{ 
\begin{pmatrix}
D & O \\
O & -D 
\end{pmatrix}\Bigm| {}^tD=D
\right\}, 
\,\text{and}\, 
J\sigma D_4=\left\{ 
\begin{pmatrix}
O & D \\
D & O
\end{pmatrix}\Bigm| {}^tD=D
\right\}. 
$$
Moreover, we have 
$$
D_2=\left\{ 
\begin{pmatrix}
O & -C \\
C & O
\end{pmatrix}\Bigm| {}^tC=-C
\right\}. 
$$

By the same reason (Corollary \ref{Holsuftf} and Lemma \ref{quadstan})
as in the proof of Theorem \ref{mod1}, 
$GH(\mathbf C_0, \mathbf C_{-2})$ is needed to be specified as a subspace of $\text{H}_0(\mathbf R^6)$. 
From the definition, 
$H(\mathbf C_0, \mathbf C_{-2})$ 
is spanned by 
$$
H(v_0,v_{-2}), H(Jv_0,v_{-2}), H(v_0,Jv_{-2})\, \text{and}\, H(Jv_0,Jv_{-2}).
$$
The characterization of the decomposition of $\text{H}_0(\mathbf R^6)$ yields that 
\begin{align*}
&H(v_0,v_{-2})+H(Jv_0,Jv_{-2})\in D_4, \quad H(Jv_0,v_{-2})-H(v_0,Jv_{-2})\in D_2, \\
&H(v_0,v_{-2})-H(Jv_0,Jv_{-2})\in \sigma D_4, \quad H(Jv_0,v_{-2})+H(v_0,Jv_{-2})\in J\sigma D_4.
\end{align*}
Thus $\mathbf R \sigma \oplus \mathbf RJ\sigma$ is the orthogonal complement of 
$GH(\mathbf C_0, \mathbf C_{-2})$ in $\text{H}_0(\mathbf R^6)$. 
Notice that the complex structure $J$ on $\mathbf R^6$ gives a complex structure on 
$\mathbf R \sigma \oplus \mathbf RJ\sigma$. 
From the Remark after Corollary \ref{traceIT}, the moduli space 
$\mathcal M_2$ can be regarded as an open convex body in $\mathbf R \sigma \oplus \mathbf RJ\sigma$. 
A symmetric transformation $Id+\left(a\sigma + bJ\sigma \right)$ is positive, where $a,b \in \mathbf R$ 
if and only if $a^2+b^2<1$.  
Thus $\mathcal M_2=\left\{z \in \mathbf C\,| \, |z|^2<1 \right\}$. 

Next we consider a natural compactification of $\mathcal M_2$. 
Suppose that $a^2+b^2=1$. 
Then $(a+bJ)\sigma$ is also an invariant real structure on $S^2\mathbf C^2$. 
Hence we may consider only the case that $a=1$ and $b=0$. 
Since the kernel of $Id+\sigma$ is $\text{Span}(Jv_2,Jv_0,Jv_{-2})$, 
Theorem \ref{GGenDW} implies that $Id+\sigma$ determines a totally geodesic submanifold 
$Gr_1(\mathbf R^3)$ of $Gr_4(\mathbf R^6)$ and a holomorphic isometric embedding into 
the submanifold $Gr_1(\mathbf R^3)$ represented by $2Id_3$. 
This map is nothing but a standard map by $\mathbf R^3=\left(S^2\mathbf C^2\right)_{\mathbf R}$ 
which is an invariant real subspace of $S^2\mathbf C^2$. 
\end{proof}

We have a geometric interpretation of the existence of the complex structure on the moduli space $\mathcal M_2$. 
Let $(f,\phi)$ be a full holomorphic isometric embedding of $\mathbf CP^1$ into $Gr_4(\mathbf R^6)$ of degree $2$ 
with a bundle isomorphism $\mathcal O(2) \cong f^{\ast}Q$. 
Then $(f,J\phi)$ is also such an embedding, 
where $J$ is a complex structue on $\mathcal O(2) \to \mathbf CP^1$. 
Notice that we must regard the bundle as a real vector bundle, when applying Theorem \ref{GGenDW}, 
because the quadric can be expressed as {\it real} Grassmanian. 
The complex strucutre $J$ on the bundle 
induces a complex structure on the space of the sections. 
In paticular, $H^0(\mathbf CP^1, \mathcal O(2))$ is a complex subspace, 
and so, the induced complex structure coincides with the complex structure on 
$H^0(\mathbf CP^1, \mathcal O(2))$ in the proof. 
Thus we have a complex structure on $\mathcal M_2$. 

On the other hand, since $I+C$ is invariant under the $\text{SU}(2)$-action, 
we can deduce that all holomorphic isometric embeddings of $\mathbf CP^1$ into $Gr_4(\mathbf R^6)$ of degree $2$ 
is $\text{SU}(2)$-equivariant. 
This is a result of \cite{FJXX}. 

Next, we consider the image equivalence of maps. 
The holonomy group of the canonical connection on $\mathcal O(2) \to \mathbf CP^1$ is 
the strucutre group $\text{U}(1)$ of the bundle. 
Therefore the centralizer of the holonomy group is also the structure group $S^1=\text{U}(1)$. 
Since $\mathcal O(2)=\text{SU}(2)\times_{\text{U}(1)} \mathbf C_{-2}$, 
the centralizer acts on the vector bundle with weight $-2$. 
From the equation \eqref{imeq}, 
We can conclude that the centralizer $S^1$ of the holonomy group acts on $\mathcal M_2$ 
with a scalar multiplication of weight $-2$. 

\begin{thm}{\rm (}\cite{ChiZhe} and \cite{LiYu}{\rm )}
Let $\mathbf M_2$ be the moduli space of holomorphic isometric embeddings 
of the complex projective line into $Gr_{4}(\mathbf R^{6})$ 
of degree $2$ 
by the image equivalence of maps. 
Then $\mathbf M_2=\mathcal M_2\slash S^1=[0,1]$. 
\end{thm}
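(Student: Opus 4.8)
The plan is to derive $\mathbf M_2$ from the gauge-equivalence description in Theorem \ref{mod} together with the general relation between gauge and image equivalence recorded in \eqref{oimeq} and \eqref{imeq}. By Theorem \ref{mod} the full degree-$2$ holomorphic isometric embeddings of $\mathbf CP^1$ into $Gr_4(\mathbf R^6)$, taken up to gauge equivalence, are parametrized by the open disk $\mathcal M_2=\{z\in\mathbf C\,|\,|z|^2<1\}$, and its natural $L^2$-compactification adjoins the boundary circle $\{|z|=1\}$, whose points are the real standard maps with image in a totally geodesic $Gr_1(\mathbf R^3)$. Since gauge equivalence implies image equivalence, passing to $\mathbf M_2$ amounts to further quotienting by the residual action of the centralizer of the holonomy group, so the first step is to identify $\mathbf M_2$ with the orbit space $\overline{\mathcal M_2}/S^1$.

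For the quotient itself, recall from the paragraph preceding the theorem that the centralizer is the structure group $S^1=\text{U}(1)$ of $\mathcal O(2)\to\mathbf CP^1$ and that, by \eqref{imeq}, it acts on $\mathcal M_2\subset\mathbf C$ by scalar multiplication of weight $-2$, that is $e^{\sqrt{-1}\theta}\cdot z=e^{-2\sqrt{-1}\theta}z$. Although each nonzero point then carries a stabilizer $\mathbf Z/2\subset S^1$, the orbit through $z\neq 0$ is still the entire circle $\{|w|=|z|\}$ and $0$ is fixed; hence the invariant $r=|z|$ separates orbits and $z\mapsto|z|$ induces a homeomorphism of $\mathcal M_2/S^1$ onto $[0,1)$.

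It remains to account for the endpoint. Since the action extends continuously to $\overline{\mathcal M_2}$ and the boundary $\{|z|=1\}$ is a single $S^1$-orbit, all the real standard maps into totally geodesic copies of $Gr_1(\mathbf R^3)$ are mutually image equivalent and collapse to the single value $r=1$. Adjoining this class to $[0,1)$ gives $\mathbf M_2=\overline{\mathcal M_2}/S^1=[0,1]$.

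The elementary quotient is not where the difficulty lies; the delicate point is the closed endpoint $r=1$. One must check that $\mathbf M_2$ genuinely contains the boundary maps, namely degree-$2$ holomorphic isometric embeddings that are no longer \emph{full} and land in a totally geodesic $Gr_1(\mathbf R^3)$; this is legitimate precisely because $\mathbf M_2$ is defined with no fullness hypothesis, and these maps form a single image class by the transitivity of the weight-$(-2)$ action on the boundary circle. For the interior, verifying that no image equivalences beyond the centralizer action occur is handled exactly as in Proposition \ref{gaimC}: \eqref{oimeq} forces any relating isometry to factor, after invoking fullness and the uniqueness of the polar decomposition, through a centralizer element, leaving no further freedom. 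Once this is in place the identification with $[0,1]$ is immediate.
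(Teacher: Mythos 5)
Your proposal is correct and follows essentially the same route as the paper: the paper's own proof simply notes that for image-equivalent $f_1,f_2$ the relation \eqref{imeq} together with uniqueness of the polar decomposition forces $\tilde\psi\in S^1$, and then invokes Theorem \ref{mod} and the weight-$(-2)$ centralizer action to conclude. Your write-up merely spells out what the paper leaves implicit — the explicit quotient computation $\overline{\mathcal M_2}/S^1=[0,1]$, the $\mathbf Z/2$ stabilizers, and the treatment of the non-full boundary maps as a single image class — all of which is consistent with the paper's argument.
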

\begin{proof}
Let $f_1$ and $f_2$ be those image equivalent maps. 
By definition of the image equivalence, we have an isometry $\psi$ of $Gr_4(\mathbf R^6)$ such that 
$f_2=\psi \circ f_1$. 
When we denote by $\phi_1$ and $\phi_2$ the natural identifications induced by $f_1$ and $f_2$, respectively, 
it follows from \eqref{imeq} and the polar decomposition that 
$\tilde \psi \in S^1$. 
Hence Theorem \ref{mod} implies the result.
\end{proof}

The moduli space of holomorphic isometric embeddings of the complex projective line into quadrics of higher degree 
can be described by the same method. 
To do so, more detailed analysis of $\text{SU}(2)$-representations is required. 
This subject will be the object of the forthcoming paper \cite{MNT}.

\subsection{Comparison with the ADHM-construction}

Let $M$ denote the 4-dimensional sphere $S^4=\mathbf HP^1$. 
We follow the notation of the Example after Lemma \ref{stharm}.  

Let $\mathbf H \to M$ be the tautological bundle. 
The Penrose transform implies that the space of twistor sections 
of $\mathbf H \to M$ is naturally identified with 
$H^0\left(\mathbf CP^3;\mathcal O(1)\right)$, 
where $\mathbf CP^3$ is the twistor space of $M$. 
The Borel-Weil theorem implies that 
$H^0\left(\mathbf CP^3;\mathcal O(1)\right)$ 
is regarded as the standard representation $\mathbf C^{4^{\ast}}\cong \mathbf C^4$ 
of $\text{Sp}(2)$. 
Since $\mathbf C^4$ globally generates $\mathbf H \to M$, 
we can consider the induced map $f_0:M \to Gr_2(\mathbf C^4)$. 
This is nothing but a standard map, 
because $\mathbf H \to M$ is a homogeneous bundle 
$\text{Sp}(2)\times_{\text{Sp}_+(1)\times \text{Sp}_-(1)}\mathbf H$. 

As $\text{Sp}_+(1)\times \text{Sp}_-(1)$-module, 
$$
\mathbf C^4=\mathbf H \oplus \mathbf E, 
$$
and 
$$
S^2\mathbf C^4=S^2\mathbf H \oplus \mathbf H \otimes \mathbf E \oplus S^2\mathbf E \,\text{and}\,
\wedge^2\mathbf C^4=\mathbf C \oplus \mathbf H \otimes \mathbf E \oplus \mathbf C.
$$

Then, it follows from Theorem \ref{righarm} that $f_0$ can not be deformed as a harmonic 
map satisfying the gauge and EH conditions. 


Next we consider the ADHM-construction of instantons \cite{A}. 
For simplicity, we focus our attention on $1$-instantons. 
Let $\alpha:\underline{\mathbf C^4} \to \mathbf H$ be a surjective 
bundle homomorphism satisfying the twistor equation \cite{H}:
$$
\mathcal D\alpha=0, 
$$
where $\alpha$ is regarded as a section of 
$\mathbf C^{4^{\ast}} \otimes \mathbf H \to S^4$. 
Suppose that $\mathbf C^4$ has an invariant Hermitian inner product and 
an invariant quaternion structure $j$ under the action of $\text{Sp}(2)$. 
Then we have the induced real structure of 
$\mathbf C^{4^{\ast}} \otimes \mathbf H \cong 
\mathbf C^{4} \otimes \mathbf H$. 

Using the twistor space and the Borel-Weil theorem, 
we know that 
$\alpha$ can be expressed as 
$$
\alpha_{[g]}(w)=\left[g, \pi(g^{-1}Tw)\right], \quad g\in \text{Sp}(2), 
$$
where $T$ is a positive Hermitian endomorphism of $\mathbf C^4$, 
and $\pi:\mathbf C^4 \to \mathbf H$ is the orthogonal projection.  
The ADHM-construction requires that $T$ should satisfy 
$$
T^2=Id+C, \quad C \in (\wedge^2_0\mathbf C^4)^{\mathbf R}. 
$$
Here $\wedge^2_0\mathbf C^4$ is the orthogonal complement to 
$\mathbf  C \omega$ in $\wedge^2 \mathbf C^4$, 
where $\omega$ is an invariant symplectic form on $\mathbf C^4$, 
which is an 
irreducible representation of $\text{Sp}(2)$. 
Since $\wedge^2_0\mathbf C^4$ has an invariant real structure induced by $j$, 
we can take a real representation 
$(\wedge^2_0\mathbf C^4)^{\mathbf R}$. 
If $C$ is small enough, 
then $Id+C$ is positive, and so, $\text{Ker}\,\alpha \subset \underline{\mathbf C^4}$ is an 
instanton with the induced metric and connection from 
$\mathbf C^4$. 

If we regard $\alpha$ as an evaluation homomorphism, then we obtain the 
induced map $f:M \to Gr_2(\mathbf C^4)$:
$$
f\left([g] \right)=Tg\mathbf E. 
$$
When $T$ is the identity or $C=O$, we recover the standard map $f_0$. 
In the case that $C\not= O$, the pull-back connection on the pull-back 
$f^{\ast}Q \to M$ 
is not gauge equivalent to the canonical connection on $\mathbf H \to M$ 
(but is still self-dual). 

In both cases of the generalization of the do Carmo-Wallach construction and 
the ADHM-construction, 
the emergence of linear equations 
($(\Delta + A)t=0$ and $\mathcal D\alpha=0$, respectively) 
makes it possible to describe moduli spaces in linear algebraic terms.


\begin{thebibliography}{34}
%
\bibitem{A} 
M.F.Atiyah,
``Geometry of Yang-Mills Fields"
Lezioni Fermiane, Scuola Normale Superiore, 
Pisa (1979)
%
%
\bibitem{Ban-Ohn} 
S.Bando and Y.Ohnita, 
{\it Minimal 2-spheres with constant curvature in $\mathbf P_n(\mathbf C)$}, 
J. Math. Soc. Japan {\bf 39} (1987), 477--487
%
\bibitem{Cal}
E.Calabi, 
{\it Isometric Imbedding of Complex Manifolds}, 
Ann. of Math. {\bf 58} (1953), 1--23
\bibitem{ChiZhe}
Q.S.Chi and Y.Zheng, 
{\it Rigidity of pseudo-holomorphic curves of constant curvature in Grassmann manifolds}, 
Trans. Amer.Math. Soc. {\bf 313} (1989), 393-406
\bibitem{DoC-Wal}
M.P.do Carmo and N.R.Wallach, 
{\it Minimal immersions of spheres into spheres},  
Ann.Math. {\bf 93} (1971), 43--62
%
\bibitem{Do-3}
S.K.Donaldson, 
{\it Scalar curvature and projective embeddings, I}, 
J.Diff.Geom. {\bf 59} (2001), 479--522
%
%
\bibitem{D-K}
S.~K.~Donaldson and P.~B.~Kronheimer,
``The Geometry of Four-Manifolds''
Clarendon Press, Oxford (1990)
%
%
\bibitem{Ee-Sam}
J.Eells and J.H.Sampson, 
{\it Harmonic mappings of Riemannian manifolds}, 
Amer. J. Math. {\bf 86} (1964), 109-160
%
\bibitem{FJXX}
J.Fei, X.Jiao, L.Xiao and X.Xu, 
{\it On the Classification of Homogeneous 2-Spheres in Comple Grassmannians}, 
Osaka J. Math {\bf 50} (2013), 135-152
%
%
\bibitem{G-L}
K.Galicki and Lawson, 
{\it Quaternionic reduction and quaternionic orbifolds}, 
Math. Ann. {\bf 282} (1988), 1-21
%
\bibitem{Gam}
A.Gambioli, 
{\it Latent Quaternionic Geometry}, 
Tokyo Journal of Mathematics {\bf 31} (2008), 203--223
%
%
\bibitem{H}
N.J.Hitchin, 
{\it Linear field equations on self-dual spaces}, 
Proc.R.Soc.A. {\bf 370} (1980), 173--191
%
\bibitem{Joy}
D.Joyce, 
{\it Special Lagrangian Submanifolds with Isolated Conical Singularities. II. Moduli Spaces}, 
Annals of Global Analysis and Geometry {\bf 25} (2004), 301--352
%
\bibitem{Kog-Nag}
I.Koga and Y.Nagatomo, 
A study of submanifolds of complex Grassmannian manifolds with parallel second fundamental form, 
a preprint
%
\bibitem{LiYu}
Z.Q.Li and Z.H.Yu, 
{\it Constant curved minimal 2-spheres in G(2,4)}, 
Manuscripta Math. {\bf 100} (1999), 305-316
\bibitem{Kob}
S.Kobayashi, 
``Differential Geometry of Complex Vector Bundles", 
Iwanami Shoten and Princeton University, Tokyo (1987)
%
\bibitem{MNT}
O.Macia, Y.Nagatomo and M.Takahashi, 
Holomorphic isometric embeddings of projective lines into quadrics, 
a preprint
%
%
%
\bibitem{Ohn}
Y.Ohnita, 
{\it Homogeneous Harmonic Maps into Complex Projective Spaces}, 
Tokyo Journal of Mathematics {\bf 13} (1990), 87--116 
%
%
%
%
\bibitem{Swan}
A.Swann, 
HyperK\"aler and quaternionic K\"ahler geometry, 
Math. Ann. {\bf 289} (1991), 421--450
%
\bibitem{Kob-Tak}
M.Takeuchi and S.Kobayashi, 
{\it Minimal imbeddings of R-spaces}, 
J. Differential Geometry. {\bf 2} (1968), 203--215
%
\bibitem{TTaka}
T.Takahashi, 
{\it Minimal immersions of Riemannian manifolds}, 
J. Math. Soc. Japan {\bf 18} (1966), 380-385
%
\bibitem{Tot}
G.Toth, 
{\it Moduli Spaces of Polynomial Minimal Immersions between Complex Projective Spaces}, 
Michigan Math.J. {\bf 37} (1990), 385--396
%
\bibitem{Wall}
N.R.Wallach, 
``Harmonic Analysis on Homogeneous Spaces", 
Pure and Applied Mathematics, 
Marcel Dekker, INC, New York (1973)
%
%
\bibitem{Wolf}
J,G.Wolfson 
{\it Harmonic maps of the two-sphere into the complex hyperquadric}, 
J. Diff. Geo. {\bf 24} (1986), 141-152
\end{thebibliography}
\end{document}